\title{The $Z$-invariant Ising model via dimers}
\author{C\'edric Boutillier\thanks{{\small
Laboratoire de Probabilit\'es Statistiques et Mod\'elisation, Sorbonne Universit\'e, 4 place Jussieu, 
F-75005 Paris.} {\small Email:
  \href{mailto:cedric.boutillier@sorbonne-universite.fr}{\texttt{cedric.boutillier@sorbonne-universite.fr}}}
}\and B\'eatrice de Tili\`ere\thanks{{\small
Laboratoire d'Analyse et de Math\'ematiques Appliqu\'ees, Universit\'e Paris-Est Cr\'eteil, 61 avenue du G\'en\'eral de Gaulle,
F-94010 Cr\'eteil.}
{\small Email:
  \href{mailto:beatrice.taupinart-de-tiliere@u-pec.fr}{\texttt{beatrice.taupinart-de-tiliere@u-pec.fr}}}
} \and Kilian Raschel\thanks{{\small
CNRS, Institut Denis Poisson, Universit\'e de Tours, Parc de Grandmont, F-37200 Tours.}
{\small Email: \href{mailto:raschel@math.cnrs.fr}{\texttt{raschel@math.cnrs.fr}}}}
\thanks{This project has received funding from the European Research Council (ERC) under the EuropeanUnion's Horizon 2020 research and innovation programme under the Grant Agreement No.\ 759702.}
}
\date{\today}
\begin{document}

\maketitle

\begin{abstract}
The $Z$-invariant Ising model~\cite{Baxter:8V} is defined on an \emph{isoradial} graph and has coupling constants 
depending on an elliptic parameter $k$. When $k=0$ the model is critical, and as $k$ varies
the whole range of temperatures is covered. In this paper we study the corresponding dimer model on the Fisher graph, thus
extending our papers~\cite{BoutillierdeTiliere:iso_perio,BoutillierdeTiliere:iso_gen} to the \emph{full} $Z$-invariant case.
One of our main results is an explicit, \emph{local} formula for the inverse of the Kasteleyn operator. 
Its most remarkable feature is that it is an elliptic generalization of~\cite{BoutillierdeTiliere:iso_gen}:
it involves a local function and the massive discrete exponential function introduced in~\cite{BdTR1}. This shows in particular
that $Z$-invariance, and not criticality, is at the heart of obtaining local expressions. We then compute asymptotics and deduce an explicit, local expression 
for a natural Gibbs measure. We prove a local formula for the Ising model free
energy.
We also prove that this free energy is equal, up to constants, to that of the $Z$-invariant spanning forests of~\cite{BdTR1}, and deduce
that the two models have the same second order phase transition in $k$. Next, we prove a self-duality relation for this model, extending 
a result of Baxter to all isoradial graphs. In the last part we prove explicit, local expressions for the dimer model on a bipartite graph 
corresponding to the XOR version of this $Z$-invariant Ising model. 
\end{abstract}

\section{Introduction}
\label{sec:intro}

The $Z$-invariant Ising model, fully developed by Baxter~\cite{Baxter:8V,Baxter:Zinv,Baxter:exactly}, takes its roots in the work of Onsager~\cite{Onsager,Wannier},
see also~\cite{Perk2,Perk:YB,Martinez1,Martinez2,CostaSantos} for further developments in the physics community. It is defined on a planar, embedded graph $\Gs=(\Vs,\Es)$ satisfying a geometric constraint known as \emph{isoradiality}, 
imposing that all faces are inscribable in a circle of radius $1$. In this introduction, the graph $\Gs$ is assumed to be infinite and locally finite.
The \emph{star-triangle move} (see Figure \ref{fig:star_triangle}) preserves isoradiality; it transforms a three-legged star of the graph into
a triangle face. The Ising model is said to be \emph{$Z$-invariant} if, when decomposing the partition function according to the possible spins at vertices bounding the 
triangle/star, the contributions only change by an overall constant. This constraint imposes that the coupling constants 
$\Js=(\Js_e)_{e\in\Es}$ satisfy the \emph{Ising model Yang-Baxter equations}. The solution to these equations is parametrized by angles naturally assigned 
to edges in the isoradial embedding of the graph $\Gs$, and an \emph{elliptic parameter} $k$, with $k^2\in(-\infty,1)$:
\begin{equation*}
\forall\,e\in\Es,\quad
\Js_e=\Js(\overline{\theta}_e|k)=
\frac{1}{2}\log\Bigl(
  \frac{1+\sn(\theta_e|k)}{\cn(\theta_e|k)}
\Bigr),
\end{equation*}
where $\sn$ and $\cn$ are two of the twelve \emph{Jacobi trigonometric elliptic functions}. More details and precise references are to be found in 
Section~\ref{sec:Z_inv_versions}. When $k=0$, the elliptic functions $\sn,\cn$ degenerate to the usual trigonometric functions $\sin,\cos$
and one recovers the \emph{critical} $Z$-invariant Ising model, whose criticality is proved in~\cite{Li:critical,CimasoniDuminil,Lis}. Note that the coupling 
constants range from $0$ to $\infty$ as $k$ varies, thus covering the whole range of temperatures, see Lemma~\ref{lem:poids_croissants}.

A fruitful approach for studying the planar Ising model is to use Fisher's correspondence~\cite{Fisher} relating it to the dimer model on a decorated version $\GF$
of the graph $\Gs$, see for example the book~\cite{McCoyWu}. The dimer model on the Fisher graph arising from the \emph{critical} $Z$-invariant Ising model 
was studied by two of the present authors in~\cite{BoutillierdeTiliere:iso_perio,BoutillierdeTiliere:iso_gen}. One of the main goals of this
paper is to prove a generalization to the \emph{full} $Z$-invariant Ising model of the latter results.
Furthermore, we answer questions arising when the parameter $k$ varies. 
In the same spirit, we also solve the bipartite dimer model on the graph $\GQ$ associated to two independent $Z$-invariant 
Ising models~\cite{Dubedat,BoutillierdeTiliere:XORloops} and related to the XOR-Ising model~\cite{KadanoffBrown,WilsonXOR}. In order to explain the main features of our results, 
we now describe them in more details.

The \emph{Kasteleyn matrix/operator} \cite{Kasteleyn1,TF} is the key object used to obtain explicit expressions 
for quantities of interest in the dimer model, as 
the partition function, the Boltzmann/Gibbs measures and the free energy. It is a weighted, oriented, adjacency matrix of the dimer graph.
Our first main result proves an explicit, local expression for an inverse $\KF^{-1}$ of the Kasteleyn operator $\KF$ of the dimer model on 
the Fisher graph $\GF$ arising from the $Z$-invariant Ising model; it can loosely be stated as follows, see 
Theorem~\ref{thm:KFmoins_un} for a more precise statement. 
\begin{thm}\label{thm:main1_intro}
Define the operator ${\KF}^{-1}$ by its coefficients:
\begin{equation*}
\forall\,\xs,\ys\in\VF,\quad     \Ks^{-1}_{\xs,\ys}=\frac{ik'}{8\pi}\int_{\Gamma_{\xs,\ys}} \fs_\xs(u+2K)\fs_{\ys}(u)\expo_{(\xb,\yb)}(u)\ud u+C_{\xs,\ys},
\end{equation*}
where $\fs$ and $\expo$, see \eqref{equ:rewriting_fs} and \eqref{eq:recursive_def_expo}, respectively, are elliptic functions defined on the torus $\TT(k)$, whose aspect ratio depends on $k$.
The contour of integration $\Gamma_{\xs,\ys}$ is a simple closed curve winding
once vertically around $\TT(k)$, which intersects the horizontal axis away from the poles of the integrand; the constant $C_{\xs,\ys}$ is equal to  $\pm1/4$ when 
$\xs$ and $\ys$ are close, and $0$ otherwise, see \eqref{eq:expression_C_x_y}. 

Then ${\KF}^{-1}$ is an inverse of the Kasteleyn operator $\KF$ on $\GF$. When $k\neq 0$, it is the unique inverse with bounded coefficients.
\end{thm}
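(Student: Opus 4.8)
The plan is to verify directly that $\KF\Ks^{-1}=\mathrm{Id}$ and then to treat uniqueness separately; the identity $\Ks^{-1}\KF=\mathrm{Id}$ follows from the analogous transposed computation. Since $\KF$ is a weighted, oriented adjacency operator, it has finite support: for a fixed vertex $\xs$ the sum $(\KF\Ks^{-1})_{\xs,\xs'}=\sum_{\ys}\KF_{\xs,\ys}\Ks^{-1}_{\ys,\xs'}$ runs only over the finitely many neighbours $\ys$ of $\xs$ in $\GF$. The whole identity is therefore \emph{local}: for each pair $(\xs,\xs')$ it reduces to a finite relation among the integrals of $\fs$ and $\expo$ and the explicit constants $C$. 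I would organise the verification according to the graph distance between $\xs$ and $\xs'$.

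The far case is the cleaner one. When $\xs$ and $\xs'$ are far apart, each constant $C_{\ys,\xs'}$ vanishes and the contours $\Gamma_{\ys,\xs'}$ for $\ys\sim\xs$ can all be deformed to one common curve without crossing poles. Substituting the formula and factoring out $\fs_{\xs'}(u)$, which does not depend on the summation variable $\ys$, it suffices to prove the pointwise identity
\[
\sum_{\ys\sim\xs}\KF_{\xs,\ys}\,\fs_{\ys}(u+2K)\,\expo_{(\yb,\xb')}(u)=0 .
\]
This is exactly where the local structure enters: the massive discrete exponential $\expo$ propagates multiplicatively from vertex to vertex by \eqref{eq:recursive_def_expo}, and $\fs$, \eqref{equ:rewriting_fs}, is tailored so that the combination weighted by the Kasteleyn coefficients telescopes. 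Concretely, I would group the neighbours $\ys$ according to the local geometry of the Fisher decoration (the decoration triangle at $\xs$ and the edges leaving it) and check that the weights $\KF_{\xs,\ys}$ are precisely the coefficients annihilating this combination, reducing the claim to the massive-holomorphicity relation satisfied by $\expo$ around a single vertex.

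The close case is the main obstacle. When $\xs$ and $\xs'$ coincide or are adjacent, two effects combine to produce the Kronecker delta. First, some constants $C_{\ys,\xs'}$ are now nonzero, contributing $\sum_{\ys}\KF_{\xs,\ys}C_{\ys,\xs'}$. Second, the contours $\Gamma_{\ys,\xs'}$ are no longer mutually homotopic on $\TT(k)$: because the poles of the integrand move with $\ys$, aligning them to a common curve harvests residues. The pointwise cancellation of the previous paragraph still holds, so the surviving integral contribution comes entirely from these residues. I expect the delicate point to be their bookkeeping: one must identify, for each neighbour $\ys$, which poles of $\fs_{\ys}(u+2K)\fs_{\xs'}(u)\expo_{(\yb,\xb')}(u)$ are crossed, evaluate the corresponding residues on $\TT(k)$, and show that their sum together with $\sum_{\ys}\KF_{\xs,\ys}C_{\ys,\xs'}$ equals $\delta_{\xs,\xs'}$. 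This residue calculation is the technical heart of the argument, and it is exactly what forces the precise normalisation of the constants $C$ in \eqref{eq:expression_C_x_y}.

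For uniqueness, I would first read off from the asymptotics of $\expo$, hence of the integral, that the coefficients of $\Ks^{-1}$ decay exponentially in the distance between $\xs$ and $\ys$ when $k\neq0$; in particular $\Ks^{-1}$ has bounded coefficients. If $A$ is any inverse with bounded coefficients, then $\KF(A-\Ks^{-1})=0$, so for each fixed $\xs'$ the function $h_{\ys}=(A-\Ks^{-1})_{\ys,\xs'}$ is bounded and satisfies $\KF h=0$. It then remains to prove that $\KF$ has no nonzero bounded element in its kernel, which is where massiveness is essential: the exponential decay of the Green kernel, together with a spectral-gap (Phragm\'en--Lindel\"of type) argument, rules out bounded null vectors, forcing $h\equiv0$. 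This mechanism degenerates precisely at $k=0$, where the spectral gap closes and bounded kernel elements appear, which is why uniqueness is asserted only for $k\neq0$.
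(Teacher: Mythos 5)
Your proposal follows essentially the same route as the paper: establish that $\gs_{(\xs,\ys)}(u)=\fs_\xs(u+2K)\fs_\ys(u)\expo_{(\xb,\yb)}(u)$ is pointwise in the kernel of $\KF$, deform the contours $\Gamma_{\ys,\xs'}$ to a common curve in the generic case, account for the residues harvested plus the constants $C$ in the near-diagonal cases to produce $\delta_{\xs,\xs'}$, and deduce uniqueness from the exponential decay of the coefficients together with injectivity of $\KF$ on bounded functions. The only step you leave implicit that the paper must work for (via train-tracks and minimal paths) is the existence of a pole-free sector of size at least $2K$ guaranteeing that the contours of all neighbours can indeed be deformed to a common one, but your overall architecture and the identification of the residue bookkeeping as the technical heart match the paper's proof.
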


\begin{rem}$\,$
\begin{itemize}
 \item The expression for $\KF^{-1}_{\xs,\ys}$ has the remarkable feature of being \emph{local}. This property is inherited from 
 the fact that 
 the integrand, consisting of the function $\fs$ and the massive discrete exponential function, is itself \emph{local}: it is defined through a path joining two
 vertices corresponding to $\xs$ and $\ys$ in the isoradial graph $\Gs$. This locality property is unexpected when computing inverse operators in
 general.
 \item As for the other local expressions proved for inverse operators~\cite{Kenyon3,BoutillierdeTiliere:iso_gen,BdTR1},
 Theorem~\ref{thm:KFmoins_un} has the following interesting features: there is no periodicity assumption on the isoradial graph $\Gs$, the integrand has identified poles 
 implying that explicit computations can be performed using the residue theorem (see Appendix~\ref{app:explicit_integrals}), 
 asymptotics can be obtained via a saddle-point analysis (see Theorem~\ref{thm:asymptotics_inverse_Kasteleyn}). 
 
 \item The most notable feature is that Theorem~\ref{thm:KFmoins_un} is a generalization to the elliptic case of Theorem 1 
 of~\cite{BoutillierdeTiliere:iso_gen}. Let us explain why it is not evident that such a generalization should exist.
 Thinking of $Z$-invariance from a probabilist's point of view suggests
 that there should exist local expressions for probabilities. The latter are computed using the Kasteleyn operator $\KF$ and its 
 inverse, suggesting that there should exist a local expression for the inverse operator $\KF^{-1}$, but giving no tools for finding it. Until our recent paper~\cite{BdTR1}, local expressions for inverse operators were only proved for 
 critical models~\cite{Kenyon3,BoutillierdeTiliere:iso_gen}, leading to the belief that not only $Z$-invariance but also
 \emph{criticality} played a role in the existence of the latter. Another difficulty was that some key tools were missing. 
 We believed that if a local expression existed in the non-critical case, it should be an elliptic version of the one of the critical case, thus requiring
 an elliptic version of the discrete exponential function of~\cite{Mercat:exp}, which was unavailable. 
 This was our original motivation for the paper~\cite{BdTR1} introducing
 the \emph{massive discrete exponential function} and the \emph{$Z$-invariant massive Laplacian}. 
 The question of solving the dimer representation of the full $Z$-invariant Ising model turned out to be more intricate than expected, 
 but our original intuition of proving an elliptic version of the critical results turns out to be correct.
 \item On the topic of locality of observables for critical $Z$-invariant models,
 let us also mention the paper~\cite{GrimmettManolescu2} by Manolescu and Grimmett, recently extended to the random cluster 
 model~\cite{Manolescu}.
 Amongst other results, the authors prove the universality of typical critical
 exponents and Russo-Seymour-Welsh type estimates. 
 The core of the proof consists in iterating star-triangle moves in order to
 relate different lattices. This is also the intuition
 behind locality in $Z$-invariant models: if these critical exponents were
 somehow related to inverse operators (which could maybe be true for the $q=2$
 case), then one would expect local expressions for these inverses.
\end{itemize}
\end{rem}

In Theorem~\ref{thm:Gibbs_measure}, using the approach of~\cite{deTiliere:quadri}, see also~\cite{BoutillierdeTiliere:iso_gen}, we prove an explicit, local expression for 
a Gibbs measure on dimer configurations of the Fisher graph, involving the operator $\KF$ and the inverse $\KF^{-1}$ of 
Theorem~\ref{thm:main1_intro}. This allows to explicitly compute probability of edges in polygon configurations of the low or high 
temperature expansion of the Ising model, see Equation~\eqref{ex:proba_comput}.

Suppose now that the isoradial graph $\Gs$ is $\ZZ^2$-periodic, and let $\Gs_1=\Gs/\ZZ^2$ be the fundamental domain. 
Following an idea of~\cite{Kenyon3} and using the explicit expression of Theorem~\ref{thm:main1_intro}, we prove an explicit formula for the 
free energy of the $Z$-invariant Ising model, see also Corollary~\ref{cor:free_energy_Ising}.
 This expression is also \emph{local} in the sense
that it decomposes as a sum over edges of the fundamental domain $\Gs_1$. 
A similar expression is obtained by Baxter \cite{Baxter:8V,Baxter:exactly}, see Remark \ref{rem:feb} for a comparison between the two.
\begin{thm}
The free energy $\Fising^k$ of the $Z$-invariant Ising model is equal to:
\begin{multline*}
\Fising^k=-\vert\Vs_{1}\vert\frac{\log 2}{2}-\vert\Vs_{1}\vert\int_{0}^{K} 2{\Hh}'(2\theta\vert k)\log\sc(\theta\vert k)\,\ud\theta
\\+\sum_{e\in\Es_1}\left(
-\Hh(2\theta_e\vert k)\log\sc(\theta_e\vert k) +\int_{0}^{\theta_e} 2{\Hh}'(2\theta\vert k)\log(\sc\theta\vert k)\,\ud\theta
\right),
\end{multline*}
where $\sc=\frac{\sn}{\cn}$ and the function $H$ is defined in~\eqref{eq:definition_Hh_Hv_k2>0}--\eqref{eq:definition_Hh_Hv_k2<0}.
\end{thm}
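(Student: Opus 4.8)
The plan is to follow the differentiation method of Kenyon~\cite{Kenyon3}, feeding in the explicit local inverse of Theorem~\ref{thm:KFmoins_un}. First I would record the consequence of Fisher's correspondence at the level of free energies: on the $\ZZ^2$-periodic graph $\Gs$ the Ising partition function factorizes as an explicit product of local terms (a power of $2$ per vertex and a factor $\cosh\Js_e$ per edge, according to the high/low temperature expansion chosen) times the dimer partition function on the Fisher graph $\GF$. Passing to the limit $-\lim_n\frac{1}{n^2}\log Z_n$ over $n\times n$ tori, this writes $\Fising^k$ as the dimer free energy $F_{\mathrm{dim}}$ plus the angle-independent local constant $-\vert\Vs_1\vert\frac{\log 2}{2}$ coming from the vertex factor of Fisher's correspondence, minus a sum of $\log\cosh\Js_e$ over $e\in\Es_1$; the remaining work is to evaluate $F_{\mathrm{dim}}$.

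For the dimer part, I would view the free energy as a function of the edge-angle variables $(\theta_e)_{e\in\Es_1}$ (which is legitimate since the expression is local and valid for every isoradial graph) and differentiate. Writing $F_{\mathrm{dim}}=-\lim_n\frac{1}{2n^2}\log\vert\det\KF^{(n)}\vert$, one has $\frac{\partial}{\partial\theta_e}\log\vert\det\KF^{(n)}\vert=\mathrm{tr}\bigl((\KF^{(n)})^{-1}\,\partial_{\theta_e}\KF^{(n)}\bigr)$. The operator $\partial_{\theta_e}\KF$ is supported on the finitely many dimer edges of the Fisher decoration attached to the edge $e$ of $\Gs$, so the trace involves only finitely many coefficients of $(\KF^{(n)})^{-1}$ between neighbouring vertices. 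Using the convergence of the finite-volume inverse to the operator $\KF^{-1}$ of Theorem~\ref{thm:KFmoins_un} and its locality, $\frac{1}{n^2}\mathrm{tr}(\cdots)$ converges to a finite sum over the decoration of $e$ of products $\KF^{-1}_{\ys,\xs}\,(\partial_{\theta_e}\KF)_{\xs,\ys}$. Justifying this interchange of the infinite-volume limit and the derivative is the first genuine obstacle.

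Next I would evaluate these boundary coefficients of $\KF^{-1}$ explicitly. Since $\xs$ and $\ys$ are close, the integrand $\fs_\xs(u+2K)\fs_\ys(u)\expo_{(\xb,\yb)}(u)$ has a short defining path and identified poles, so the contour integrals over $\Gamma_{\xs,\ys}$ reduce, via the residue theorem as in Appendix~\ref{app:explicit_integrals}, to explicit elliptic expressions; here the constants $C_{\xs,\ys}=\pm 1/4$ must be carefully tracked. Assembling the contributions of the whole decoration of $e$ and combining with the $\partial_{\theta_e}\log\cosh\Js_e$ coming from Fisher's factor, I expect the clean identity $\frac{\partial}{\partial\theta_e}\Fising^k=-\Hh(2\theta_e\vert k)\,\frac{\mathrm d}{\mathrm d\theta_e}\log\sc(\theta_e\vert k)$. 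This is exactly the $\theta_e$-derivative of the per-edge term in the statement: differentiating $-\Hh(2\theta\vert k)\log\sc(\theta\vert k)+\int_0^{\theta}2\Hh'(2\theta'\vert k)\log\sc(\theta'\vert k)\,\mathrm d\theta'$ makes the two $2\Hh'\log\sc$ contributions cancel and leaves precisely $-\Hh(2\theta\vert k)\frac{\mathrm d}{\mathrm d\theta}\log\sc(\theta\vert k)$.

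Finally, I would integrate in each $\theta_e$ and pin down the integration constant. Integrating the per-edge derivative reproduces the summand $\sum_{e\in\Es_1}\bigl(-\Hh(2\theta_e\vert k)\log\sc(\theta_e\vert k)+\int_0^{\theta_e}2\Hh'(2\theta\vert k)\log\sc(\theta\vert k)\,\mathrm d\theta\bigr)$, while the remaining $k$-dependent, angle-independent constant is fixed by a reference computation, for instance by sending the angles to an endpoint of their range $(0,K)$ (where edges decouple or freeze) or by matching the critical limit $k=0$ with Theorem~1 of~\cite{BoutillierdeTiliere:iso_gen}; combined with the $-\vert\Vs_1\vert\frac{\log 2}{2}$ of the first step, this normalization produces the vertex term $-\vert\Vs_1\vert\int_0^K 2\Hh'(2\theta\vert k)\log\sc(\theta\vert k)\,\mathrm d\theta$. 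The two main obstacles are therefore the rigorous justification of the limit/derivative interchange in the second step and the residue bookkeeping of the third step, where the combinatorics of the Fisher decoration and the $C_{\xs,\ys}$ constants have to be handled with care.
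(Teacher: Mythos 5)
Your proposal follows essentially the same route as the paper: Kenyon's differentiation method applied to the Fisher-graph dimer free energy, with the per-edge derivative evaluated via the local inverse of Theorem~\ref{thm:KFmoins_un} (yielding exactly $\PPdimer(\es)=\frac12-\frac{1-2\Hh(2\theta_e)}{2\cn\theta_e}$ and hence $\partial_{\theta_e}\Fising^k=-\Hh(2\theta_e)\,\partial_{\theta_e}\log\sc\theta_e$), followed by the high-temperature relation $\Fising=\Fdimer-\sum_e\log\cosh\Js_e$. The two obstacles you flag are handled in the paper by differentiating the closed-form Fourier integral $-\frac12\iint\log\Pdimer$ directly (no finite-volume limit interchange needed) and by realizing the integration in $\theta_e$ as a smooth train-track deformation to a flat graph $\Gs_{\mathrm{flat}}$, whose free energy $-\vert\Vs_1\vert\log 2$ is computed by an elementary count, fixing the constant.
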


It turns out that the free energy of the Ising model is closely related to that of the $Z$-invariant spanning forests of~\cite{BdTR1},
see also Corollary~\ref{cor:link_free_energies}.
\begin{cor}\label{cor:free_energy_intro}
One has
\begin{equation*}
\Fising^k=-|\Vs_1|\frac{\log 2}{2}+\frac{1}{2}\Fforest^k. 
\end{equation*}
\end{cor}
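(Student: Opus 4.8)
The plan is to derive Corollary~\ref{cor:free_energy_intro} directly from the explicit formula for $\Fising^k$ stated in the preceding theorem, by comparing it term-by-term with the known expression for the spanning forest free energy $\Fforest^k$ from~\cite{BdTR1}. The key observation is structural: the theorem expresses $\Fising^k$ as a sum of a constant piece $-|\Vs_1|\frac{\log 2}{2}$, a vertex-indexed integral term proportional to $|\Vs_1|$, and an edge-indexed sum over $\Es_1$. The corollary asserts that everything except the first constant term equals exactly half of $\Fforest^k$. So the strategy is to isolate $-|\Vs_1|\frac{\log 2}{2}$ and show that what remains coincides with $\frac{1}{2}\Fforest^k$.

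First I would write down the explicit local formula for $\Fforest^k$ as established in~\cite{BdTR1}; since this is a result available to us, we may quote it directly. I expect it to have the same two-layer structure — a vertex term proportional to $|\Vs_1|$ built from an integral of $\Hh'$ against $\log\sc$, plus an edge sum over $\Es_1$ involving $\Hh(2\theta_e\vert k)\log\sc(\theta_e\vert k)$ and the antiderivative integral $\int_0^{\theta_e}$. The heart of the argument is then a purely algebraic comparison: I would verify that the vertex integrand $2\Hh'(2\theta\vert k)\log\sc(\theta\vert k)$ and the edge terms appearing in $\Fising^k$ are precisely one half of the corresponding integrands and terms in $\Fforest^k$. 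Because both free energies are written in terms of the \emph{same} special function $\Hh$ (defined in~\eqref{eq:definition_Hh_Hv_k2>0}--\eqref{eq:definition_Hh_Hv_k2<0}) and the same elliptic function $\sc=\frac{\sn}{\cn}$, this comparison should reduce to matching coefficients, with the factor of $2$ in front of $\Hh'$ and the factor $\frac{1}{2}$ relating them accounting for the $\frac{1}{2}$ in the statement.

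The main obstacle I anticipate is bookkeeping the normalizations and sign conventions so that the identification is exact rather than merely proportional. In particular, one must confirm that the domains of integration (from $0$ to $K$ for the vertex term and from $0$ to $\theta_e$ for the edge term) agree between the two free energies, and that the definition of $\Hh$ used in the forest computation of~\cite{BdTR1} is literally the one referenced here and not a rescaled variant. If the two papers normalize $\Hh$ or the angle parametrization differently, a change of variables or a constant rescaling would be needed to reconcile them; this is where subtle factors could hide. Once the integrands and edge terms are matched, the equality
\begin{equation*}
\Fising^k+|\Vs_1|\frac{\log 2}{2}=\frac{1}{2}\Fforest^k
\end{equation*}
follows immediately, and rearranging gives the stated corollary. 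I would present the proof as a short computation: quote $\Fforest^k$, substitute, and observe the term-by-term factor of two, flagging that the constant $-|\Vs_1|\frac{\log 2}{2}$ is exactly the discrepancy between the Ising partition function's built-in factor of $2$ per vertex (coming from Fisher's correspondence) and the forest normalization.
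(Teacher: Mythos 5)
Your proposal matches the paper's proof, which is exactly a one-line comparison of the explicit local formula for $\Fising^k$ in Corollary~\ref{cor:free_energy_Ising} with the expression for $\Fforest^k$ in \cite[Theorem~2]{BdTR1}; both are written with the same function $\Hh$ and the same integrals $\int 2\Hh'(2\theta)\log\sc\theta\,\ud\theta$, so the factor $\frac{1}{2}$ and the constant $-\vert\Vs_1\vert\frac{\log 2}{2}$ fall out term by term as you describe. Your cautions about normalization are sensible but turn out to be vacuous here, since the paper deliberately uses the conventions of~\cite{BdTR1} throughout.
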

This extends to the full $Z$-invariant Ising model the relation proved in the critical case~\cite{BoutillierdeTiliere:iso_gen} between the Ising model 
free energy and that of critical spanning trees of~\cite{Kenyon3}. Moreover,
in~\cite{BdTR1} we prove a continuous (i.e., \emph{second order}) phase transition at $k=0$ for 
$Z$-invariant spanning forests, by performing an expansion of the free energy
around $k=0$: at $k=0$, the free energy is continuous, but its derivative has a
logarithmic singularity.
As a consequence of Corollary~\ref{cor:free_energy_intro} we deduce that the $Z$-invariant Ising model has a 
second order phase transition at $k=0$ as well. This result in itself is not
surprising and other techniques, such as those
  of~\cite{DuminilHonglerNolin} and the fermionic
  observable~\cite{ChelkakSmirnov:ising}
could certainly be used in our setting too to derive this kind of result; but what is remarkable is that this phase transition is (up to a factor $\frac{1}{2}$) exactly 
the same as that of
$Z$-invariant spanning forests. More details are to be found in Section~\ref{sec:phase_transition}. 

It is interesting to note that the $Z$-invariant Ising model satisfies a duality relation in the sense of Kramers and 
Wannier~\cite{KramersWannier1,KramersWannier2}: the high temperature expansion of a $Z$-invariant Ising model with elliptic parameter $k$ 
on an isoradial graph $\Gs$, and the low temperature expansion of a $Z$-invariant Ising model with \emph{dual} elliptic parameter 
$k^*=i\frac{k}{\sqrt{1-k^2}}$ on the dual isoradial graph $\Gs^*$ yield the same probability measure on polygon configurations of the graph $\Gs$.
The elliptic parameters $k$ and $k^*$ can be interpreted as parametrizing dual temperatures, see Section~\ref{sec:duality} and also~\cite{CGNP,McCM}.

The next result proves a self-duality property for the Ising model free energy, see also Corollary~\ref{cor:free_energy_self_dual}. 
This is a consequence of Corollary~\ref{cor:free_energy_intro} and of Lemma~\ref{lem:Laplacian_proportional}, proving a self-duality property 
for the $Z$-invariant massive Laplacian. 
\begin{cor}
\label{cor:free_energy_self_dual_intro}
The free energy of the $Z$-invariant Ising model on the graph $\Gs$ satisfies the self-duality relation
\begin{equation*}
     \Fising^k+\frac{\vert\Vs_1\vert}{2}\log k'=\Fising^{k^*}+\frac{\vert\Vs_1\vert}{2}\log {k^{*}}',
\end{equation*}
where $k'=\sqrt{1-k^2}$ is the complementary elliptic modulus, and ${k^{*}}'=1/k'$.
\end{cor}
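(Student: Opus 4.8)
The plan is to deduce the statement from the two results quoted just before it, namely Corollary~\ref{cor:free_energy_intro} and the self-duality of the massive Laplacian (Lemma~\ref{lem:Laplacian_proportional}), by reducing the Ising self-duality to a self-duality for the spanning forest free energy. First I would invoke Corollary~\ref{cor:free_energy_intro}, which writes $\Fising^k$ in terms of $\Fforest^k$ up to the additive constant $-\vert\Vs_1\vert\tfrac{\log 2}{2}$. Since this constant depends neither on $k$ nor on $k^*$, it occurs identically on both sides of the target identity and cancels. After multiplying through by $2$, the claim becomes equivalent to the self-duality relation for the forest free energy
\begin{equation*}
\Fforest^k+\vert\Vs_1\vert\log k'=\Fforest^{k^*}+\vert\Vs_1\vert\log{k^{*}}'.
\end{equation*}

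The mechanism is then a determinant computation. I would recall from~\cite{BdTR1} that $\Fforest^k$ is, by the matrix-forest theorem, expressed (up to sign) as the normalized integral over the fundamental domain of $\log\det\Delta^k(z,w)$, where $\Delta^k$ is the $Z$-invariant massive Laplacian and $(z,w)$ are the Bloch parameters. The essential input is Lemma~\ref{lem:Laplacian_proportional}: on the same isoradial graph $\Gs$, the massive Laplacians at dual moduli are proportional, $\Delta^{k^*}=\lambda(k)\,\Delta^k$ for an explicit scalar $\lambda(k)$. Because this proportionality is entrywise, it persists for the Bloch-reduced $\vert\Vs_1\vert\times\vert\Vs_1\vert$ matrices, so that $\det\Delta^{k^*}(z,w)=\lambda(k)^{\vert\Vs_1\vert}\det\Delta^k(z,w)$ for every $(z,w)$. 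Substituting this into the integral representation, the factor $\lambda(k)^{\vert\Vs_1\vert}$ comes out of the logarithm as $\vert\Vs_1\vert\log\lambda(k)$ and out of the normalized spectral integral (whose total mass is $1$), giving $\Fforest^{k^*}=\Fforest^k\mp\vert\Vs_1\vert\log\lambda(k)$. Rearranging then yields the displayed forest self-duality exactly when $\lambda(k)=k'/{k^{*}}'$ up to sign, i.e. $\lambda(k)=(k')^2$ using ${k^{*}}'=1/k'$; feeding this back through Corollary~\ref{cor:free_energy_intro} produces the stated relation for $\Fising$.

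I expect the main obstacle to be the precise bookkeeping of the proportionality constant in Lemma~\ref{lem:Laplacian_proportional} together with the sign convention in the definition of $\Fforest^k$: one must check that the per-vertex factor $\log\lambda(k)$ delivered by the determinant matches precisely $\log k'-\log{k^{*}}'$. This is where the elliptic identities relating the conductances at $k$ and at $k^*$ are used, and where it is important that the graph $\Gs$ is unchanged, so that $\vert\Vs_1\vert$ is common to both moduli and enters symmetrically. Everything else is formal manipulation of the integral representation.
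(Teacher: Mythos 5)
Your strategy is the paper's: the corollary is asserted there as a direct consequence of Corollary~\ref{cor:free_energy_intro} and Lemma~\ref{lem:Laplacian_proportional}, with no further argument, so reducing the Ising identity to a forest identity and then to the scaling of $\det\Delta^{m}(z,w)$ is the intended skeleton. The gap is that the only non-formal step --- the bookkeeping of the proportionality constant --- is exactly the one you postpone, and it does not come out as you need. Lemma~\ref{lem:Laplacian_proportional} reads $\sqrt{{k^*}'}\,\Delta^{m(k^*)}=\sqrt{k'}\,\Delta^{m(k)}$ (note the square roots), so the entrywise scalar is
\begin{equation*}
\lambda(k)=\sqrt{k'/{k^*}'}=k',
\end{equation*}
a \emph{first} power, not the $(k')^{2}$ your reduction calls for. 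Consequently $\Plapm^{k^*}(z,w)=(k')^{\vert\Vs_1\vert}\,\Plapm^{k}(z,w)$ and, since $\Fforest=-\iint\log\Plapm$, one gets $\Fforest^{k^*}=\Fforest^{k}-\vert\Vs_1\vert\log k'$. But the forest identity you reduced the statement to, namely $\Fforest^k+\vert\Vs_1\vert\log k'=\Fforest^{k^*}+\vert\Vs_1\vert\log{k^{*}}'$, is equivalent (via $\log{k^*}'=-\log k'$) to $\Fforest^{k^*}=\Fforest^{k}+2\vert\Vs_1\vert\log k'$, which would require $\lambda=(k')^{-2}$. The argument as proposed therefore does not close.

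Carrying the lemma's actual constant through yields instead the self-dual combination $\Fforest^{k}-\frac{\vert\Vs_1\vert}{2}\log k'$, hence via Corollary~\ref{cor:free_energy_intro} the relation $\Fising^{k}-\frac{\vert\Vs_1\vert}{4}\log k'=\Fising^{k^*}-\frac{\vert\Vs_1\vert}{4}\log{k^*}'$, i.e.\ a prefactor $-\frac14$ rather than $+\frac12$. This is not an artefact of the forest detour: on the square lattice ($\vert\Vs_1\vert=1$, all $\overline{\theta}_e=\frac{\pi}{4}$, self-dual as an isoradial graph) Kramers--Wannier duality gives directly
\begin{equation*}
\Fising^{k}-\Fising^{k^*}=-\log 2-2\log\cosh\Js\bigl(\tfrac{\pi}{4}\vert k\bigr)+2\Js\bigl(\tfrac{\pi}{4}\vert k^*\bigr)=\tfrac12\log k',
\end{equation*}
using $\cn(K/2\vert k)=\sqrt{k'/(1+k')}$ and $\sn(K/2\vert k)=1/\sqrt{1+k'}$, which again corresponds to the prefactor $-\frac14$. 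So the constant-matching you flagged as mere ``bookkeeping'' is in fact the entire content of the step, and performing it contradicts the prefactor in the target identity; you must resolve this discrepancy (either by locating a missing factor in your reduction or by revisiting the constant in the statement) before the proof can be considered complete.
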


The above result extends to all isoradial graphs a self-duality relation proved by Baxter~\cite{Baxter:exactly} in the case of the triangular and honeycomb lattices.
Note that this relation and the assumption of uniqueness of the critical point was the argument originally used to derive the critical temperature of the 
Ising model on the triangular and honeycomb lattices, see also Section~\ref{subsec:self_duality_Ising}.

In Section~\ref{sec:double_Ising} we consider the dimer model on the graph $\GQ$ associated to two independent $Z$-invariant Ising models. This dimer model 
is directly related to the XOR-Ising model~\cite{Dubedat,BoutillierdeTiliere:XORloops}. Our main result is to prove an explicit, local expression for the 
inverse $\KQ^{-1}$ of the Kasteleyn operator associated to this dimer model. This is a generalization, in the specific case of the bipartite graph $\GQ$,
of the local expression obtained by Kenyon~\cite{Kenyon3} for all ``critical'' bipartite dimer models. 
\begin{thm}
Define the operator ${\KQ}^{-1}$ by its coefficients:
\begin{equation*}
\forall\ (b,w)\in\BQ\times\WQ,\quad
    {\KQ}^{-1}_{b,w}=\frac{1}{4i\pi} \int_{\Gamma_{b,w}} f_{(b,w)}(u) \ud u,
  \end{equation*}
  where $f_{(b,w)}$ is an elliptic function defined on the torus $\TT(k)$,
  defined in Section~\ref{sec:functions_kernel}.
  The contour $\Gamma_{b,w}$ is a simple closed curve winding once vertically
  around $\TT(k)$, which intersects the horizontal axis away from the poles of
  the integrand.

  Then ${\KQ}^{-1}$ is an inverse operator of $\KQ$. For $k\neq 0$, it is the
  only inverse with bounded coefficients.
\end{thm}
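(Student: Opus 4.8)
The plan is to follow the route used for Theorem~\ref{thm:KFmoins_un}, itself an elliptic refinement of Kenyon's argument in~\cite{Kenyon3}: check by a purely local computation that the stated operator is a two-sided inverse, then bound its coefficients and, for $k\neq 0$, deduce uniqueness from the massive structure.

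First I would verify the inverse relation. As $\KQ$ has finite range, checking $\sum_{w\sim b}\KQ_{b,w}(\KQ^{-1})_{b',w}=\delta_{b,b'}$ (and, with black and white interchanged, $\sum_{b\sim w'}(\KQ^{-1})_{b,w}\KQ_{b,w'}=\delta_{w,w'}$) reduces to a finite sum over the white neighbors $w$ of a fixed black vertex $b$. The crux is a \emph{local identity} for the kernel: the weighted sum $\sum_{w\sim b}\KQ_{b,w}\,f_{(b',w)}(u)$, regarded as a function of $u$ on $\TT(k)$, should be elliptic with controlled poles --- free of poles inside the contour when $b\neq b'$, and carrying a single simple pole of the correct residue when $b=b'$. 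Proving this is where the defining properties of the functions of Section~\ref{sec:functions_kernel} enter (the quasi-periodicity and addition relations of the Jacobi elliptic functions together with the recursive definition of the massive discrete exponential); it is the elliptic analogue of the discrete Cauchy--Riemann relation underlying the critical computation of~\cite{Kenyon3}. To assemble the integrals into a single contour integral I would deform the pair-dependent contours $\Gamma_{b',w}$, $w\sim b$, onto a common curve winding once around $\TT(k)$, tracking the residues collected as poles are crossed; once the local identity is in hand, the residue theorem produces exactly $\delta_{b,b'}$, the normalization $\frac{1}{4i\pi}$ being chosen so that the residue at the relevant pole contributes $1$.

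For $k\neq 0$ I would then bound the coefficients and conclude uniqueness. Boundedness, in fact exponential decay, follows from a saddle-point analysis of the integral as in Theorem~\ref{thm:asymptotics_inverse_Kasteleyn}, since for $k\neq 0$ the massive kernel $f_{(b',w)}$ decays exponentially in the graph distance. Uniqueness is then soft: the difference of two bounded inverses is a bounded element of the kernel of $\KQ$, and for the massive operator the gap created by the mass $k$ rules out a nonzero bounded null state, so the difference vanishes. At $k=0$ this gap closes and the uniqueness statement accordingly drops.

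The main obstacle will be the first step, namely establishing the local identity for $\sum_{w\sim b}\KQ_{b,w}\,f_{(b',w)}$ and controlling the residues picked up when the contours are merged. On the torus $\TT(k)$ the pole structure is richer than in the trigonometric case of~\cite{Kenyon3}, so the bookkeeping of which poles are enclosed, and the check that the spurious contributions cancel, is the genuinely delicate point; by contrast the boundedness and uniqueness arguments should be routine adaptations of the massive estimates of~\cite{BdTR1} already used for Theorem~\ref{thm:KFmoins_un}.
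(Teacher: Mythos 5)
Your overall strategy is the paper's: a local kernel identity for $f$, contour deformation with residue bookkeeping for the diagonal term, and exponential decay plus injectivity on bounded functions for uniqueness when $k\neq 0$. But the ``local identity'' you state as the crux is not the right one, and as formulated it would fail. The weighted sum $\sum_{i}\KQ_{w,b_i}\,f_{(b_i,w')}(u)$ does \emph{not} acquire ``a single simple pole of the correct residue'' when $w'=w$: it vanishes identically in $u$ for \emph{every} base point $w'$, including $w'=w$. This is Proposition~\ref{prop:function_f_kernel_Kasteleyn}, whose proof reduces, via the multiplicative definition of $f$, to the single three-term relation $\sn\theta\,f_{(b_1,w)}+i\cn\theta\,f_{(b_2,w)}-ie^{-i\overline{\theta}}f_{(b_3,w)}=0$, which is exactly the Jacobi addition formula $\sn(u+v)\cn u-\cn(u+v)\dn v\sn u-\dn u\sn v=0$; the massive discrete exponential plays no role here. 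Consequently, if the three contours could be merged, the diagonal entry would be $0$, not $1$. The entire contribution $\delta_{w,w'}$ comes from the fact that for $w'=w$ the contours $\Gamma_{b_i,w}$ sit in mutually incompatible sectors, so merging them forces you across poles of the \emph{individual} integrands $f_{(b_i,w)}$, and it is these crossed residues that sum to $1$. Your subsequent sentence about ``tracking the residues collected as poles are crossed'' is the correct mechanism, but it is in tension with the pole structure you assert for the summed kernel; you need to drop the latter claim and make the former the actual computation. (The paper organizes this differently but equivalently: it rewrites each coefficient as a trivial-contour integral against the quasi-periodic function $\Hh$ and evaluates all residues explicitly; the $1$ then emerges from the jump $\Hh(u+4K)=\Hh(u)+1$ applied to the pair of poles at $\beta$ and $\beta+4K$, while the residues at $2iK'$ cancel by the same three-term relation evaluated at $u=2iK'$.)

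Two smaller points. First, your uniqueness argument as phrased (``the gap created by the mass rules out a nonzero bounded null state'') assumes what must be proved; the actual argument is that the exponential decay of the coefficients (Theorem~\ref{thm:asymp_C}) makes $\KQ^{-1}\KQ=\operatorname{Id}$ an absolutely convergent identity on bounded functions, whence $\KQ g=0$ with $g$ bounded forces $g=\KQ^{-1}(\KQ g)=0$. Second, before any of this you need the well-definedness of $f_{(b,w)}$, i.e.\ independence of the path from $b$ to $w$ (Lemma~\ref{lem:fs_well}); this is a nontrivial verification around the three types of faces of $\GQ$ and should appear explicitly in your plan.
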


We also derive asymptotics and deduce an explicit, local expression for a Gibbs
measure on dimer configurations of $\GQ$, allowing to do explicit probability 
computations.

\paragraph{Outline of the paper.}

\begin{itemize}
 \item \textbf{Section~\ref{sec:themodelsinquestion}.} Definition of the Ising model, of the two corresponding dimer models 
 and of their $Z$-invariant versions. Definition of the $Z$-invariant massive Laplacian of~\cite{BdTR1}.
 \item \textbf{Section~\ref{sec:Ising_dimers}.} Study of the $Z$-invariant Ising model on $\Gs$ via the dimer model on the Fisher graph
 $\GF$ and the corresponding Kasteleyn operator $\KF$: definition of a one-parameter family of functions in the kernel of $\KF$, statement 
 and proof of a local formula for an inverse $\KF^{-1}$, explicit computation of asymptotics, specificities when the graph $\Gs$ is periodic 
 (connection with the massive Laplacian), and consequences for the dimer model on $\GF$. 
 \item \textbf{Section~\ref{sec:duality_and_phase_transition}.} Behavior of the model as the parameter $k$ varies: duality in the sense of Kramers
 and Wannier~\cite{KramersWannier1,KramersWannier2}, phase transition in $k$, self-duality property, connection with the modular group.
 \item \textbf{Section~\ref{sec:double_Ising}.} Study of the double $Z$-invariant Ising model on $\Gs$ via the dimer model 
 on the bipartite graph $\GQ$ and the Kasteleyn operator $\KQ$: one-parameter family of functions in the kernel of $\KQ$, statement and proof 
 of a local formula for an inverse $\KQ^{-1}$, explicit computation of asymptotics and consequences for the dimer model on $\GQ$.
\end{itemize}

\medskip

\emph{Acknowledgments:} 
We acknowledge support from the Agence Nationale de la Recherche (projet  MAC2:\ ANR-10-BLAN-0123) and from the R\'egion Centre-Val de Loire 
(projet MADACA). We are grateful to the referee for his/her many insightful comments. 

\section{The models in question}\label{sec:themodelsinquestion}
\subsection{The Ising model via dimers}
In this section we define the Ising model and two of its dimer
representations. The first is Fisher's correspondence~\cite{Fisher} providing a mapping
between the high or low temperature expansion of the Ising model on a graph $\Gs$
and the dimer model on a non-bipartite graph $\GF$. The second is a mapping between
two independent Ising models on $\Gs$ and the dimer model on a bipartite
graph $\GQ$~\cite{Dubedat,BoutillierdeTiliere:XORloops}.

\subsubsection{The Ising model}

Consider a finite, planar graph $\Gs=(\Vs,\Es)$ together with positive edge-weights
$\Js=(\Js_e)_{e\in\Es}$. The \emph{Ising model on $\Gs$ with coupling constants 
$\Js$} is defined as follows. A \emph{spin configuration} $\sigma$ of $\Gs$ is a function on vertices
of $\Gs$ with values in $\{-1,1\}$. The probability of occurrence of a spin configuration $\sigma$
is given by the \emph{Ising Boltzmann measure}, denoted $\PPising$:
\begin{equation*}
\PPising(\sigma)=\frac{1}{\Zising(\Gs,\Js)}\exp 
\Biggl(\sum_{e=\xb\yb\in\Es}\Js_e\sigma_\xb\sigma_\yb\Biggr),
\end{equation*}
where $\Zising(\Gs,\Js)$ is the normalizing constant known as the \emph{Ising partition function}.

A \emph{polygon configuration} of $\Gs$ is a subset of edges such that every vertex has even degree; let $\P(\Gs)$ denote 
the set of polygon configurations of $\Gs$. Then, the \emph{high temperature expansion}~\cite{KramersWannier1,KramersWannier2} of the Ising model 
partition function 
gives the following identity:
\begin{equation*}
\Zising(\Gs,\Js)=2^{\vert \Vs\vert }\prod_{e\in\Es} \cosh\Js_e\sum_{\Ps\in\P(\Gs)}\prod_{e\in\Ps}\tanh\Js_e.
\end{equation*}

\subsubsection{The dimer model}

Consider a finite, planar graph $G=(V,E)$ together with positive edge-weights $\nu=(\nu_e)_{e\in E}$.
A \emph{dimer configuration} $\Ms$ of $G$, also known as a \emph{perfect matching}, is a subset of edges of $G$ such that every
vertex is incident to exactly one edge of $\Ms$. Let $\M(G)$ denote the set of dimer configurations of the graph $G$. The probability
of occurrence of a dimer configuration $\Ms$ is given by the \emph{dimer Boltzmann measure}, denoted $\PPdimer$:
\begin{equation*}
\PPdimer(\Ms)=\frac{\prod_{e\in\Es} \nu_e}{\Zdimer(G,\nu)},
\end{equation*}
where $\Zdimer(G,\nu)$ is the normalizing constant, known as the \emph{dimer partition function}. 

\subsubsection{Dimer representation of a single Ising model: Fisher's correspondence}\label{sec:Fisher_correspondence}

Fisher's correspondence~\cite{Fisher,Dubedat}
gives a mapping
between polygon configurations of a graph $\Gs$ and dimer configurations of a
decorated version of the graph, denoted $\GF$ and called the \emph{Fisher graph}. For the purpose of this paper it suffices to consider
graphs with no boundary.
The decorated graph $\GF=(\VF,\EF)$ is constructed from $\Gs$ as follows. Every
vertex of $\Gs$ of degree $d$ is replaced by a decoration containing $2d$
vertices: a triangle is attached to every edge incident to this vertex and these
triangles are glued together in a circular way, see Figure~\ref{Fig:Fisher_graph}. 

The correspondence goes as follows. To a polygon configuration $\Ps$ of $\Gs$ one assigns $2^{\vert \Vs\vert }$ dimer configurations of $\GF$: edges
present (resp.\ absent) in $\Ps$ are present (resp.\ absent) in $\GF$; then there are exactly two ways to fill each decoration of $\GF$ so as 
to have a dimer configuration, see Figure~\ref{Fig:Fisher_graph}.

\begin{figure}[ht]
\centering
\begin{overpic}[width=12cm]{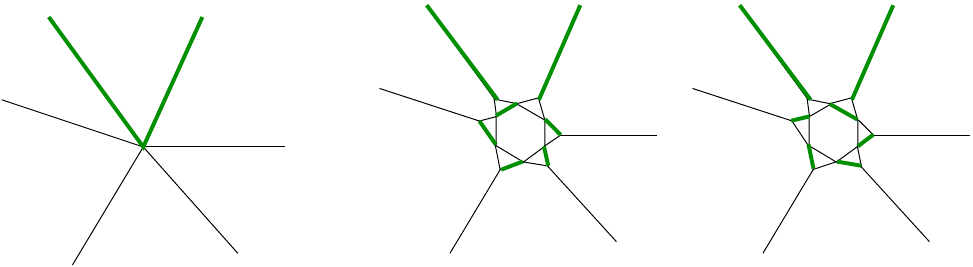}
  \put(3,0){\scriptsize $\Gs$}
  \put(70,0){\scriptsize $\GF$}
\end{overpic}
\caption{Left: a piece of a planar graph $\Gs$ and of a polygon configuration. Center and right: the corresponding Fisher graph $\GF$ 
and the two associated dimer configurations.}
\label{Fig:Fisher_graph}
\end{figure}

Let $\nu=(\nu_\es)_{\es\in\EF}$ be the dimer weight function corresponding to 
the high temperature expansion of the Ising model. Then $\nu$ is equal to
\begin{equation*}
\nu_\es
=  \begin{cases}
    1 & \text{if the edge $\es$ belongs to a decoration,} \\ 
    \tanh \Js_e & \text{if the edge $\es$ arises from an edge $e$ of $\Gs$,}\\
    0&\text{otherwise}.
  \end{cases}
\end{equation*}
From the correspondence, we know that:
\begin{equation}\label{equ:HTE}
\Zising(\Gs,\Js)=\left(\prod_{e \in\Es}\cosh\Js_e\right)\Zdimer(\GF,\nu).
\end{equation}
Note that the above is Dub\'edat's version of Fisher's correspondence~\cite{Dubedat}. It is more
convenient than the one used
in~\cite{BoutillierdeTiliere:iso_perio,BoutillierdeTiliere:iso_gen} because it allows to consider polygon configurations rather than
complementary ones, and the Fisher graph has less vertices, thus reducing the number of cases to handle.

\subsubsection{Dimer representation of the double Ising model}\label{sec:double_Ising_correspondence}

Based on results of physicists~\cite{KadanoffWegner,Wu71,FanWu,WuLin}, Dub\'edat~\cite{Dubedat}
provides a mapping between two independent Ising models, one living on the primal graph $\Gs$, the other on the dual graph $\Gs^*$,
to the dimer model on a bipartite graph $\GQ$. Based on results of~\cite{Nienhuis,Wu71}, two of the authors of the present paper exhibit 
an alternative mapping between two independent Ising models living on the \emph{same} graph $\Gs$ (embedded on a surface of genus $g$) to 
the bipartite dimer model on $\GQ$ \cite{BoutillierdeTiliere:XORloops}. 

Since the above mentioned mappings cannot be described
shortly, we refer to the original papers and only define the bipartite graph
$\GQ$ and the corresponding dimer weights. Note that dimer probabilities on the graph
$\GQ$ can be interpreted as probabilities of the low temperature expansion of
the \emph{XOR-Ising model}~\cite{BoutillierdeTiliere:XORloops}, also known as the polarization of the Ising model 
\cite{KadanoffBrown,WilsonXOR} obtained by taking the product of the spins of the two independent Ising models.

We only consider the case where the graph $\Gs$ is planar and infinite. 
The bipartite graph~$\GQ=(\VQ,\EQ)$ is obtained from 
$\Gs$ as follows. Every edge $e$ of $\Gs$ is replaced by a ``rectangle'', and the
``rectangles'' are joined in a circular way. The additional edges
of the cycles are referred to as \emph{external edges}. Note that in each
``rectangle'', two edges are ``parallel'' to an edge of the graph $\Gs$ and
two are ``parallel'' to the dual edge of $\Gs^*$, see Figure~\ref{fig:GQ}.

\begin{figure}[ht]
  \begin{center}
       \includegraphics[width=3.5cm]{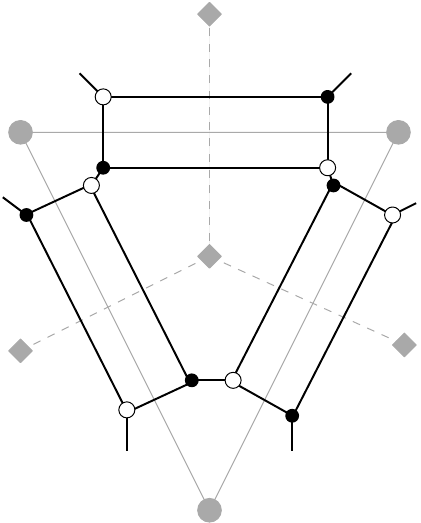}
    \caption{%
      A piece of a graph $\Gs$ (plain grey lines) and its dual graph $\Gs^*$ (dotted grey lines),
      and the corresponding bipartite graph $\GQ$ (plain black lines).}
    \label{fig:GQ}
  \end{center}
\end{figure}

Let $\overline{\nu}=(\overline{\nu}_\es)_{\es\in\EQ}$ be the dimer weight
function corresponding to two independent Ising models with coupling constants
$\Js$. Then $\overline{\nu}$ is equal to~\cite{Dubedat,BoutillierdeTiliere:XORloops}
\begin{equation*}
\overline{\nu}_\es
=  \begin{cases}
    \tanh(2\Js_e)& \text{if $\es$ belongs to a ``rectangle'' and is parallel to an edge $e$ of $\Gs$,} \\ 
    \cosh(2\Js_e)^{-1} & \text{if $\es$ belongs to a ``rectangle'' and is parallel to the dual of an edge $e$ of $\Gs$,}\\
     1 & \text{if $\es$ is an external edge,} \\ 
    0&\text{otherwise}.\\
  \end{cases}
\end{equation*}

\subsection{$Z$-invariant Ising model, dimer models and massive Laplacian}\label{sec:Z_inv_versions}

Although already present in the work of
Kenelly~\cite{Kennelly}, Onsager~\cite{Onsager} and Wannier~\cite{Wannier}, the notion of $Z$-invariance has been fully developed by
Baxter in the context of the integrable 8-vertex model~\cite{Baxter:8V}, and then applied to the Ising
model and self-dual Potts model~\cite{Baxter:Zinv}; see also~\cite{Perk:YB,Perk2,Kenyon6}. 
$Z$-invariance imposes a strong locality constraint which leads to the parameters of the model 
satisfying a set of equations known as the \emph{Yang-Baxter equations}. From the point of view of physicists it implies that transfer matrices
commute, and from the point of view of probabilists it suggests that there should exist local expressions for probabilities, but it provides no tool for 
finding such expressions if they exist. 

In Section~\ref{sec:iso_star_triangle} we define isoradial graphs, the associated diamond graph and star-triangle moves, all being key elements of $Z$-invariance.
Then in Section~\ref{sec:ZinvIsing} we introduce the 
$Z$-invariant Ising model~\cite{Baxter:8V,Baxter:Zinv,Baxter:exactly}, followed by the corresponding versions for the dimer models on $\GF$ and $\GQ$. 
Finally in Section~\ref{sec:massive_Lap} we define the $Z$-invariant massive Laplacian and the corresponding model of spanning
forests~\cite{BdTR1}.

\subsubsection{Isoradial graphs, diamond graphs and star-triangle moves}\label{sec:iso_star_triangle}

Isoradial graphs, whose name comes from the paper~\cite{Kenyon3}, see
also~\cite{Duffin,Mercat:ising}, are defined as follows. An infinite planar graph
$\Gs=(\Vs,\Es)$ is \emph{isoradial}, if it can be embedded in the plane in such
a way that all internal faces are inscribable in a circle, with all circles
having the same radius, and such that all circumcenters are in the interior of
the faces, see Figure~\ref{fig:Iso1} (left). This definition is easily adapted when $\Gs$ is finite or embedded
in the torus.

From now on, we fix an embedding of the graph, take the
common radius to be $1$, and also denote by $\Gs$ the embedded graph. An
isoradial embedding of the dual graph $\Gs^*$, with radius $1$, is obtained by
taking as dual vertices the circumcenters of the corresponding faces.

The \emph{diamond graph}, denoted $\GR$, is constructed from
an isoradial graph $\Gs$ and its dual $\Gs^*$.
Vertices of $\GR$ are those of~$\Gs$ and those of $\Gs^*$. A dual vertex of
$\Gs^*$ is joined to all primal
vertices on the boundary of the corresponding face, see Figure~\ref{fig:Iso1}
(right). Since edges of the diamond graph $\GR$ are radii of circles, 
they all have length $1$, and can be assigned a direction $\pm
e^{i\overline{\alpha}}$. Note that faces of $\GR$ are 
side-length $1$ rhombi.

\begin{figure}[ht]
  \begin{center}
    \begin{tabular}{cc}
      \includegraphics[width=7cm]{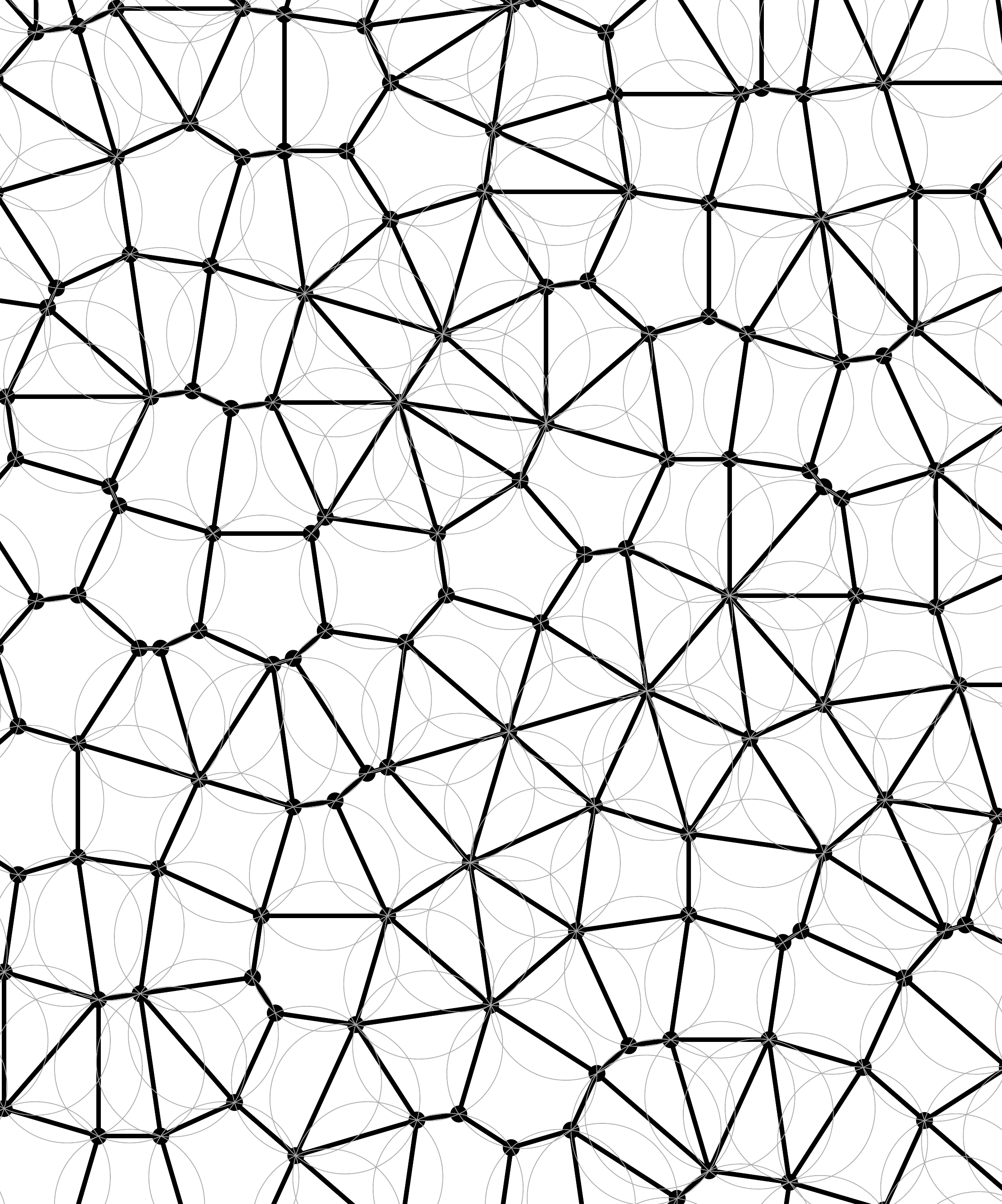} &
      \includegraphics[width=7cm]{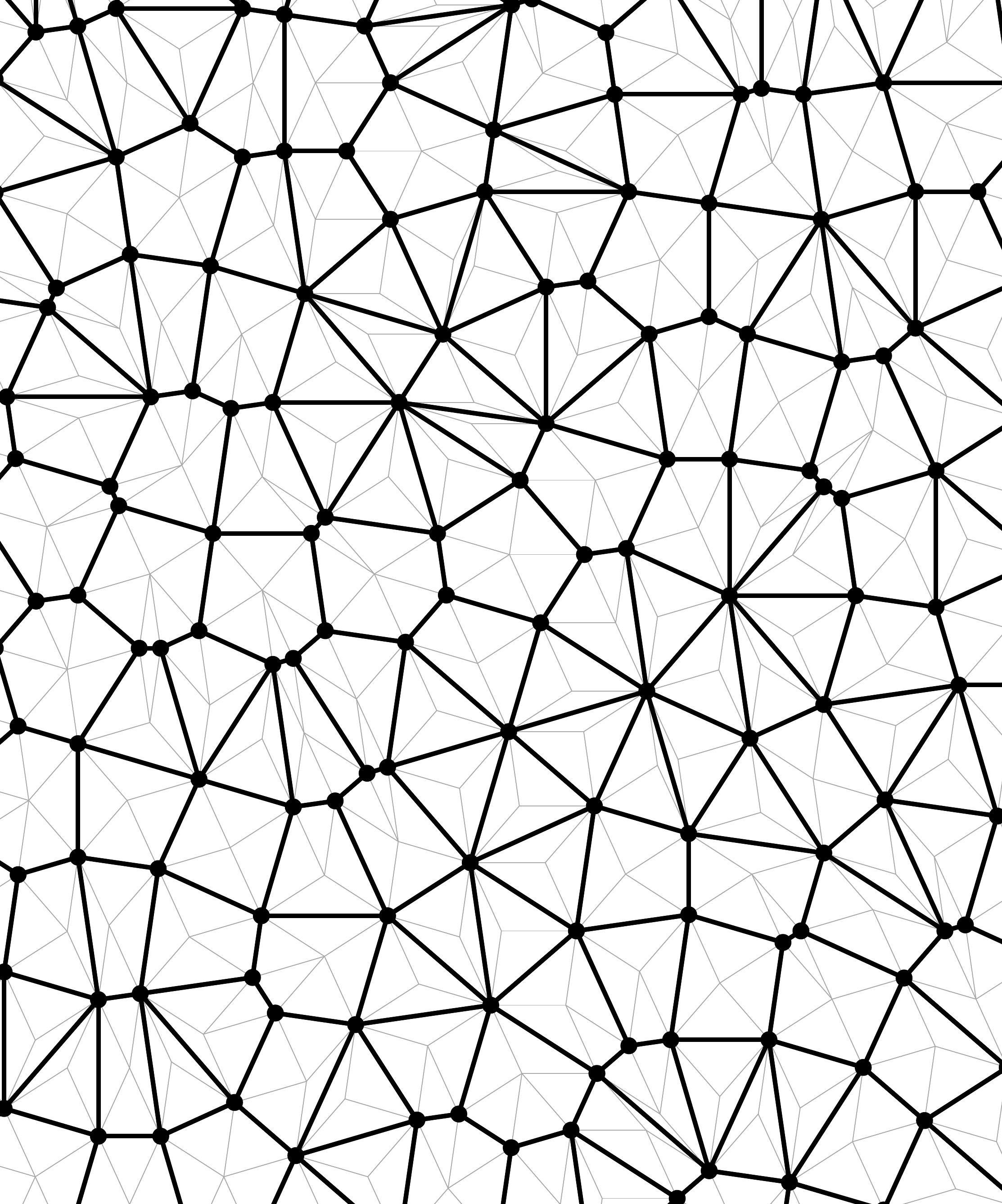} 
    \end{tabular}
    \caption{%
      Left: a piece of an infinite isoradial graph $\Gs$ (bold)
      with its circumcircles. Right: the diamond
      graph $\GR$.}
    \label{fig:Iso1}
  \end{center}
\end{figure}

Using the diamond graph, angles can naturally be assigned to edges of the
graph~$\Gs$ as follows.
Every edge $e$ of $\Gs$ is the diagonal of exactly one
rhombus of $\GR$, and we let $\overline{\theta}_e$ be the half-angle at
the vertex it has in common with $e$, see Figure~\ref{fig:rhombus_angle}.
We have $\overline{\theta}_e\in(0,\frac{\pi}{2})$, because circumcircles are
assumed to be in the interior of the faces.
From now on, we ask more and suppose that there exists $\eps>0$ such that
$\overline{\theta}_e\in(\eps,\frac{\pi}{2}-\eps)$.
We further assign two rhombus vectors to the edge $e$, denoted by 
$e^{i\overline{\alpha}_e}$ and $e^{i\overline{\beta}_e}$, see
Figure~\ref{fig:rhombus_angle}.

\begin{figure}[ht]
  \centering
\begin{overpic}[height=2cm]{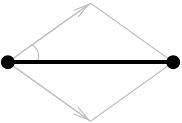}
\put(47,25){\scriptsize{$e$}}
\put(8,10){\scriptsize{$e^{i\overline{\alpha}_e}$}}
\put(8,49){\scriptsize{$e^{i\overline{\beta}_e}$}}
\put(25,38){\scriptsize{$\overline{\theta}_e$}}
\end{overpic}
\caption{An edge $e$ of $\Gs$, the corresponding rhombus half-angle $\overline{\theta}_e$ and rhombus vectors 
$e^{i\overline{\alpha}_e}$, $e^{i\overline{\beta}_e}$.
\label{fig:rhombus_angle}}
\end{figure}

A \emph{train-track} of $\Gs$ is a bi-infinite chain of edge-adjacent rhombi of $\GR$ which does not turn: on entering a face, it exits
along the opposite edge~\cite{KeSchlenk}. Each rhombus in a train-track $T$ has an edge parallel to a fixed unit vector $\pm e^{i\overline{\alpha}_T}$,
known as the \emph{direction of the train-track}. Train-tracks are also known as \emph{rapidity lines} or simply \emph{lines} 
in the field of integrable systems, see for example~\cite{Baxter:8V}.

The \emph{star-triangle move}, also known as the
\emph{$Y$-$\,\Delta$ transformation}, underlies $Z$-invariance~\cite{Baxter:8V,Baxter:Zinv}.
It is defined as follows: if $\Gs$ has a vertex of degree 3, that is a \emph{star} $Y$, it can be replaced by a
\emph{triangle} $\Delta$ by removing the vertex and connecting its three
neighbors. The
graph obtained in this way is still isoradial: its diamond graph is obtained by
performing a \emph{cubic flip} in $\GR$, that is by flipping the three rhombi of
the corresponding hexagon, see Figure~\ref{fig:star_triangle}. This operation is involutive.

\begin{figure}[ht]
\begin{center}
\resizebox{0.6\textwidth}{!}{\input{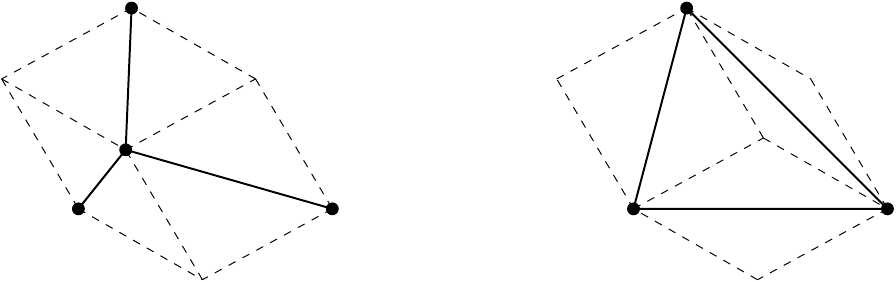_t}}
\end{center}
\caption{Star-triangle move on an isoradial graph $\Gs$ (plain lines) and cubic flip on the underlying diamond graph $\GR$ (dotted lines).}
\label{fig:star_triangle}
\end{figure}

\subsubsection{$Z$-invariant Ising model}\label{sec:ZinvIsing}

The Ising model defined on a graph $\Gs$ is said to be
\emph{$Z$-invariant}, if when decomposing the partition function according
to the possible spin configurations at the three vertices of a star/triangle, it
only changes by a constant when performing the 
$Y$-$\,\Delta$ move, this constant being independent of the choice of
spins at the three vertices. 

This strong constraint yields
a set of equations known as the \emph{Ising model Yang-Baxter equations}, see (6.4.8) of~\cite{Baxter:exactly} and also
\cite{Onsager,Wannier}.
The solution to these equations can be 
parametrized by the \emph{elliptic modulus} $k$, where $k$ is a complex number such that $k^2\in(-\infty,1)$,
and the \emph{rapidity parameters}, see Equation~(7.8.4) and page 478 of~\cite{Baxter:exactly}.
In this context it is thus natural to suppose that the graph $\Gs$ is isoradial. Extending the 
form of the coupling constants to the whole of $\Gs$ we obtain that they
are given by, for every edge $e$ of $\Gs$, 
\begin{equation}
\label{eq:def_weight}
\Js_e=\Js(\overline{\theta}_e\vert k)=\frac{1}{2}
\log\left(\frac{1+\sn(\theta_e\vert k)}{\cn(\theta_e\vert k)}\right),
\text{ or equivalently }
\sinh(2\Js(\overline{\theta}_e\vert k))= \sc(\theta_e\vert k)
\footnote{In Equation (7.8.4), Baxter actually uses the complementary parameter $k'=\sqrt{1-k^2}$ and the 
parametrization, $\sinh(2\Js_e)=-i\sn(i\theta_e\vert k')$. The latter is equal to $\sc(\theta_e\vert k)$ by \cite[(2.6.12)]{La89}.},
\end{equation}
where
$k$ is the \emph{elliptic modulus}, $\theta_e=\overline{\theta}_e \frac{2K}{\pi}$, 
$K=K(k)=\int_{0}^{\frac{\pi}{2}} \frac{1}{\sqrt{1-k^2\sin^2\tau}}\ud\tau$ is the 
\emph{complete elliptic integral of the first kind}, $\cn(\cdot\vert k)$, $\sn(\cdot\vert k)$ and $\sc(\cdot\vert k) =
\frac{\sn(\cdot\vert k)}{\cn(\cdot\vert k)}$ are three of the twelve \emph{Jacobi trigonometric
elliptic functions}. More on their definition can be found in the books~\cite[Chapter~16]{AS} and
\cite{La89}; a short introduction is also given in the paper~\cite[Section 2.2]{BdTR1}. 
Identities that are useful for this paper can be found in Appendix~\ref{app:elliptic}.

For a given isoradial graph $\Gs$, we thus have a one-parameter
family of coupling constants $(\Js)_{k}$, indexed by the
elliptic modulus $k$, with $k^2\in(-\infty,1)$. For every edge $e$, the coupling constant $\Js(\overline{\theta}_e\vert k)$ is analytic in $k^2$
and increases from $0$ to $\infty$ as $k^2$ increases from $-\infty$ to $1$, see Lemma~\ref{lem:poids_croissants}; 
the elliptic modulus $k$ thus parametrizes the whole range of temperatures. When $k=0$, elliptic functions degenerate to trigonometric functions,
and we have:
\begin{equation*}
\Js(\overline{\theta}_e\vert 0)=\frac{1}{2}
\log\left(\frac{1+\sin\theta_e}{\cos\theta_e}\right).
\end{equation*}
The Ising model is critical at $k=0$, see~\cite{Li:critical,CimasoniDuminil,Lis}. More on this subject is to be found in 
Section~\ref{sec:duality_and_phase_transition}.

\subsubsection{Corresponding dimer model on the Fisher graph $\GF$}
Let us compute the dimer weight function $\nu$ on $\GF$ corresponding to the
$Z$-invariant Ising model on $\Gs$ with coupling constants $\Js$
given by \eqref{eq:def_weight}. For every edge $e$ of $\Gs$, we have
\begin{align*}
\tanh(\Js_e)
=\frac{e^{2\Js_e}-1}{e^{2\Js_e}+1}
=\frac{%
  \frac{1+\sn\theta_e}{\cn\theta_e}-1
}{%
  \frac{1+\sn\theta_e}{\cn\theta_e}+1
}
=\frac{1+\sn\theta_e-\cn\theta_e}{1+\sn\theta_e+\cn\theta_e}
=\frac{\sn\theta_e}{1+\cn\theta_e} =
\sc\frac{\theta_e}{2}
\dn\frac{\theta_e}{2},
\end{align*}
see~\cite[(2.4.4)--(2.4.5)]{La89} for the last identity.

As a consequence of Section~\ref{sec:Fisher_correspondence},
the dimer weight function $\nu$ on the Fisher graph $\GF$ is
\begin{equation}\label{eq:dimer_weights_GF}
\nu_\es=
\begin{cases}
1 &
    \text{if $\es$ belongs to a decoration,}\\
\frac{\sn\theta_e}{1+\cn\theta_e}=\sc\frac{\theta_e}{2}
\dn\frac{\theta_e}{2} &
    \text{if $\es$ corresponds to an edge $e$ of $\Gs$,}\\
0&\text{otherwise}.
\end{cases}
\end{equation}

When $k=0$ we have $\dn=1$ and $\sc=\tan$, which corresponds to the critical case.

\subsubsection{Corresponding dimer model on the bipartite graph $\GQ$}
In a similar way, we compute the dimer weight function $\overline{\nu}$ of the
graph $\GQ$ corresponding to two independent $Z$-invariant Ising model.
We have
\begin{align}
\cosh(2\Js_e)&=\frac{1}{2}
\left(
\frac{1+\sn\theta_e}{\cn\theta_e}+\frac{\cn\theta_e}{1+\sn\theta_e}\right)={\nc\theta_e},\label{equ:dheart}\\
\tanh(2\Js_e) &=\frac{\sinh(2\Js_e)}{\cosh(2\Js_e)} = \sn\theta_e.\nonumber
\end{align}
As a consequence of Section~\ref{sec:double_Ising_correspondence},
the dimer weight function $\overline{\nu}$ on the bipartite graph $\GQ$ is
\begin{equation}\label{eq:dimer_weights_GQ}
  \overline{\nu}_\es=
\begin{cases}
\sn\theta_e & \text{if $\es$ is parallel to an edge $e$ of $\Gs$,}\\ 
\cn\theta_e & \text{if $\es$ is parallel to the dual edge of an edge $e$ of $\Gs$,}\\ 
1 &\text{if $\es$ is an external edge,}\\
0&\text{otherwise}.
\end{cases}
\end{equation}

\subsubsection{The $Z$-invariant massive Laplacian}\label{sec:massive_Lap}

We will be using results on the $Z$-invariant massive Laplacian introduced in~\cite{BdTR1}. 
Let us recall its definition and the key facts required for this paper.

Following \cite[Equation~(1)]{BdTR1}, the \emph{massive Laplacian operator} $\Delta^{m}:\CC^\Vs\rightarrow\CC^\Vs$ 
is defined
as follows. Let $\xb$ be a vertex of $\Gs$ of degree $n$; denote by $e_1,\dots,e_n$ edges incident to $\xb$ and by
$\overline{\theta}_1,\dots,\overline{\theta}_n$ the corresponding rhombus half-angles, then
\begin{equation}
\label{eq:Laplacian_operator}
     (\Delta^{m} f)(\xb)=\sum_{j=1}^n \rho(\overline{\theta}_i\vert k)[f(\xb)-f(\yb)]+m^{2}(\xb\vert k)f(\xb),
\end{equation}
where the \emph{conductances} $\rho$ and \emph{(squared) masses} $(m^{2})$ are defined by
\begin{align}\label{eq:conductances}
     \rho_e=\rho(\overline{\theta}_e\vert k)&=\sc(\theta_e\vert k),\\
     (m^{2})(\xb)=m^2(\xb\vert k)&=\sum_{j=1}^{n}(\Arm (\theta_j\vert k)-\sc(\theta_{j}\vert k)),\label{eq:mass}
\end{align}
with
\begin{equation*}
     \Arm (u\vert k)=
     \frac{1}{k'}\Bigl(
     \Dc(u\vert k)+\frac{E-K}{K}u
     \Bigr),
\end{equation*}
where $\Dc(u\vert k)=\int_{0}^u \dc^2(v\vert k)\ud v$, and $E=E(k)$ is the \emph{complete elliptic integral of the second kind}.

We also need the definition of the \emph{discrete $k$-massive exponential function}
or simply \emph{massive exponential function}, 
denoted $\expo_{(\cdot,\cdot)}(\cdot)$, of \cite[Section 3.3]{BdTR1}. It 
is a function from $\Vs\times\Vs\times\CC$ to $\CC$. Consider a pair of vertices
$\xb,\yb$ of $\Gs$ and an edge-path
$\xb=\xb_1,\dotsc,\xb_n=\yb$ of the diamond graph $\GR$ from $\xb$ to $\yb$; let
$e^{i\overline{\alpha}_j}$ be the vector corresponding to the edge
$\xb_j\xb_{j+1}$. Then $\expo_{(\xb,\yb)}(\cdot)$ is defined inductively 
along the edges of the path:
\begin{align}
\label{eq:recursive_def_expo}
\forall\,u\in\CC,\quad
\expo_{(\xb_j,\xb_{j+1})}(u) &= i \sqrt{k'}\,\sc\Bigl(\frac{u-{\alpha_j}}{2}\Bigr),\nonumber\\
\expo_{(\xb,\yb)}(u)         &= \prod_{j=1}^{n-1} \expo_{(\xb_j,\xb_{j+1})}(u),
\end{align}
where $\alpha_j=\overline{\alpha}_j\frac{2K}{\pi}$.
These functions are in the kernel of the massive Laplacian
\eqref{eq:Laplacian_operator}, see \cite[Proposition~11]{BdTR1}.

The \emph{massive Green function}, denoted $G^{m}$, is the inverse of the massive Laplacian
operator~\eqref{eq:Laplacian_operator}. The following local formula is proved in
\cite[Theorem~12]{BdTR1}:
\begin{equation}
\label{eq:def_Green_function}
     G^{m}(\xb,\yb) =\frac{k'}{4i\pi} \int_{\Gamma_{\xb,\yb}} \expo_{(\xb,\yb)}(u) \ud u,
\end{equation}
where $k'=\sqrt{1-k^2}$ is the complementary elliptic modulus, $\Gamma_{\xb,\yb}$ is a vertical contour on the torus 
$\TT(k):=\CC/(4K\ZZ+4iK'\ZZ)$, whose direction is given by the angle of the ray $\RR\overrightarrow{\xb\yb}$.

The massive Laplacian is the operator underlying the model of spanning forests, the latter being defined as follows. 
A \emph{spanning forest} of $\Gs$ is a subgraph spanning all vertices of the graph, such that every connected component is a rooted tree. 
Denote by $\F(\Gs)$ the set of spanning forests of $\Gs$ and for a rooted tree $\Ts$, denote its root by $\xb_\Ts$.
The \emph{spanning forest Boltzmann measure}, denoted $\PPforest$, is defined by:
\begin{equation*}
\forall\,\Fs\in\F(\Gs),\quad \PPforest(\Fs)=
\frac{\prod_{\Ts\in\Fs}\left(m^2(\xb_\Ts\vert k)\prod_{e\in\Ts}\rho(\overline{\theta}_e\vert k)\right)}{\Zforest(\Gs,\rho,m)},
\end{equation*}
where $\Zforest(\Gs,\rho,m)$ is the spanning forest partition function. In~\cite[Theorem 41]{BdTR1} we prove that 
this model is $Z$-invariant (thus explaining the name $Z$-invariant massive Laplacian). 
By Kirchhoff's matrix-tree theorem we have $\Zforest(\Gs,\rho,m)=\det(\Delta^{m})$.

\section{$Z$-invariant Ising model via dimers on the Fisher graph $\GF$}\label{sec:Ising_dimers}

From now on, we consider a fixed elliptic modulus
$k^2\in(-\infty,1)$,
so that we will remove the dependence in $k$ from the notation.

In the whole of this section, we let $\Gs$ be an infinite isoradial
graph and $\GF$ be the corresponding Fisher graph. We suppose that
edges of $\GF$ are assigned the weight function $\nu$
of~\eqref{eq:dimer_weights_GF} arising from the $Z$-invariant Ising model.

We give a full description of the dimer model on the Fisher graph $\GF$
with explicit expressions having the remarkable property of being \emph{local}. This extends to 
the $Z$-invariant \emph{non-critical} case the results of~\cite{BoutillierdeTiliere:iso_perio,BoutillierdeTiliere:iso_gen} 
obtained in the $Z$-invariant \emph{critical} case,
corresponding to $k=0$.
One should keep in mind that when $k=0$, the ``torus'' $\TT(0)$
  is in fact an infinite cylinder with two points at infinity, and that
 ``elliptic'' functions are trigonometric series.

Prior to giving a more detailed
outline, we introduce 
the main object involved in explicit expressions for the dimer model, namely, the 
\emph{Kasteleyn matrix/operator}~\cite{Kasteleyn1,TF}.

\subsection{Kasteleyn operator on the Fisher graph}

An orientation of the edges of $\GF$ is said to be \emph{admissible} if all
cycles bounding faces of the graph are \emph{clockwise odd},
meaning that, when following such a cycle clockwise, there is an odd number of
co-oriented edges. By Kasteleyn~\cite{Kasteleyn2}, such an orientation always exists.

Suppose that edges of $\GF$ are assigned an admissible orientation, then the
\emph{Kasteleyn matrix} $\KF$ is the 
corresponding weighted, oriented, adjacency matrix of $\GF$. It has rows and
columns indexed by vertices of $\GF$ and coefficients 
given by, for every $\xs,\ys\in\VF$,
\begin{equation*}
\KF_{\xs,\ys}=\sgn(\xs,\ys)\nu_{\xs\ys},
\end{equation*}
where $\nu$ is the dimer weight function \eqref{eq:dimer_weights_GF} and 
\begin{equation*}
\sgn(\xs,\ys)=
\begin{cases}
1&\text{if $\xs\sim\ys$ and $\xs\rightarrow\ys$,}\\ 
-1&\text{if $\xs\sim\ys$ and $\ys\rightarrow\xs$.}
\end{cases}
\end{equation*}
Note that $\KF$ can be seen as an operator acting on $\CC^{\VF}$:
\begin{equation*}
     \forall\,f\in\CC^{\VF},\,\forall\,\xs\in\VF,\quad (\KF f)_\xs=\sum_{\ys\in\VF}\KF_{\xs,\ys}f_\ys.
\end{equation*}

\paragraph{Outline.}
Section~\ref{sec:Ising_dimers} is structured as follows. 
In Section~\ref{sec:function_kernel} we introduce a 
one-parameter family of functions in the kernel of the Kasteleyn operator $\KF$; this key result allows us to prove 
one of the main theorems of this paper:
a \emph{local} formula for an inverse ${\KF}^{-1}$ of the 
operator $\KF$, see Theorem~\ref{thm:KFmoins_un} of Section~\ref{sec:localKFinv}. Then in Section~\ref{sec:asymptKF} we derive asymptotics
of this inverse. In Section~\ref{sec:KFperio} we handle the case where the graph $\Gs$ is periodic. Finally in 
Section~\ref{subsec:dimer_model_GF} we derive results for the dimer model on $\GF$: we prove a local expression for the dimer Gibbs measure, see
Theorem~\ref{thm:Gibbs_measure}, and a local formula for the dimer and Ising free energies, see Theorem~\ref{thm:free_energy_dimer}  and 
Corollary~\ref{cor:free_energy_Ising}; we then show that up to an additive constant the Ising model free energy is equal to $\frac{1}{2}$
of the spanning forest free energy, see Corollary~\ref{cor:link_free_energies}.

\paragraph{Notation.} 
Throughout this section, we use the following notation. A vertex 
$\xs$ of $\GF$ belongs to a decoration corresponding to a unique vertex $\xb$ of $\Gs$. 
Vertices of $\GF$ corresponding to a vertex $\xb$ of $\Gs$ are labeled as
follows. Let $d(\xb)$ be the degree of the vertex 
$\xb$ in $\Gs$, then the decoration consists of $d(\xb)$ triangles, labeled from
$1$ to $d(\xb)$ in counterclockwise order. 
For the $j$-th triangle, we let $\vs_j(\xb)$ be the vertex incident to an edge
of $\Gs$, and $\ws_j(\xb),\ws_{j+1}(\xb)$
be the two adjacent vertices in counterclockwise order, see Figure~\ref{Fig:GF_notation}.

There is a natural way of assigning rhombus unit-vectors of $\GR$ to vertices of $\GF$: 
for every vertex $\xb$ of $\Gs$ and every $k\in\{1,\dots,d(\xb)\}$, let us
associate the rhombus vector $e^{i\overline{\alpha}_j(\xb)}$ to $\ws_j(\xb)$,
and the rhombus vectors 
$e^{i\overline{\alpha}_j(\xb)}, e^{i\overline{\alpha}_{j+1}(\xb)}$ to $\vs_j(\xb)$,
see Figure~\ref{Fig:GF_notation}; we let $\overline{\theta}_j(\xb)$ be the half-angle
at the vertex $\xb$ of the rhombus defined by $e^{i\overline{\alpha}_j(\xb)}$
and $e^{i\overline{\alpha}_{j+1}(\xb)}$, with 
$\overline{\theta}_j(\xb)\in(0,\frac{\pi}{2})$.

\begin{figure}[ht]
  \centering
\begin{overpic}[width=13cm]{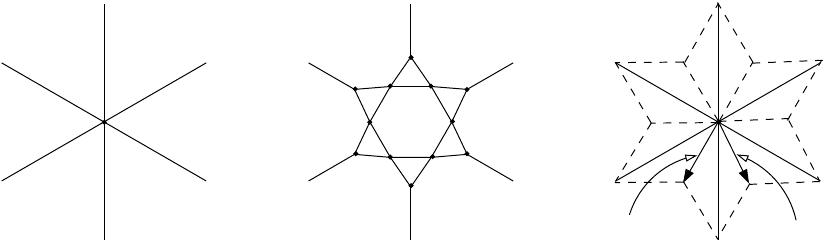}
\put(10,-2){$\Gs$}
\put(45,-2){$\GF$}
\put(82,-2){$\GR$}
\put(13,12){\scriptsize $\xb$}
\put(45,8){\scriptsize $\ws_j$}
\put(47.5,5.5){\scriptsize $\vs_j$}
\put(52,8){\scriptsize $\ws_{j+1}$}
\put(57,11){\scriptsize $\vs_{j+1}$}
\put(49,14){\scriptsize $\xb$}
\put(87,11){\scriptsize $\xb$}
\put(73,0){\scriptsize $e^{i\overline{\alpha}_j}$}
\put(92,0){\scriptsize $e^{i\overline{\alpha}_{j+1}}$}
\end{overpic}
\caption{Notation for vertices of decorations, and rhombus vectors assigned to vertices. Since no confusion occurs, the argument $\xb$ is 
omitted.}
\label{Fig:GF_notation}
\end{figure}

Recall the notation $\theta_e=\overline{\theta}_e \frac{2K}{\pi}$ and $\alpha=\overline{\alpha} \frac{2K}{\pi}$ for the 
elliptic versions of $\overline{\theta}_e$ (rhombus half-angle) and  $\overline{\alpha}$ 
(angle of the rhombus vector $e^{i\overline{\alpha}}$ of $\GR$).

\subsection{Functions in the kernel of the Kasteleyn operator $\KF$}\label{sec:function_kernel}

The definition of the one-parameter family of functions in the kernel of the Kasteleyn operator $\KF$ requires two 
ingredients: the function $\fs$ of Definition~\ref{def:function_f}
and the massive discrete exponential 
function of~\cite{BdTR1}.

\label{subsubsec:fonction_f}

The function $\fs$ uses the angles $(\overline{\alpha}_j(\xb))$ assigned to vertices of $\GF$, the latter being 
\textit{a priori} defined in $\RR/2\pi\ZZ$. For the function $\fs$ to be well defined, we
actually need them to be defined in $\RR/4\pi\ZZ$, 
which is equivalent to a coherent choice for the determination of the square
root of $e^{i\overline{\alpha}_j(\xb)}$. 
This construction is done iteratively, relying on our choice of Kasteleyn
orientation.

Fix a vertex $\xb_0$ of 
$\Gs$ and set the value of $\overline{\alpha}_1(\xb_0)$ to some value, say $0$. 
In the following, we use the index $j$ (resp.\ $\ell$)
to refer to vertices of $\GF$ belonging to a decoration $\xb$ (resp.\ $\yb$) of
$\Gs$; with this convention, we omit the arguments $\xb$ and $\yb$
from the notation. For vertices in a decoration
of a vertex $\xb$ of $\Gs$, define
\begin{equation}
\label{equ:def_angle}
\overline{\alpha}_{j+1}= 
\begin{cases}
\overline{\alpha}_j+2\overline{\theta}_{j}&\text{if $\ws_j \to \ws_{j+1}$,}\\
\overline{\alpha}_j+2\overline{\theta}_{j}+2\pi&\text{if $\ws_{j+1} \to \ws_j$.}
\end{cases}
\end{equation}
Given a directed path $\gamma$, let 
$\co(\gamma)$ be the number of co-oriented edges.
Here is the rule defining angles in the decoration corresponding to a vertex
$\yb$ of $\Gs$, neighbor of the vertex~$\xb$.
Let $j$ and $\ell$ be such that $\vs_j$ is incident to $\vs_{\ell}$, as in
Figure~\ref{Fig:fig_GF_1}. 
Consider the length-three directed path
$\ws_j,\vs_j,\vs_\ell,\ws_\ell$ from $\ws_j$ to $\ws_\ell$. Then
\begin{equation}
\label{eq:relative_definition_a_a'}
\overline{\alpha}_{\ell}= 
\begin{cases}
\overline{\alpha}_{j}-\pi& \text{if $\co(\ws_j,\vs_j,\vs_\ell,\ws_\ell)$ is odd,}\\
\overline{\alpha}_{j}+\pi& \text{if $\co(\ws_j,\vs_j,\vs_\ell,\ws_\ell)$ is even}.
\end{cases}
\end{equation}

\begin{figure}[ht]
  \centering
\begin{overpic}[width=4cm]{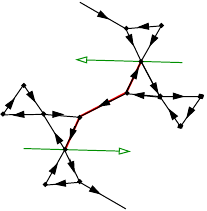}
\put(40,40){\scriptsize $\vs_j$}
\put(20,25){\scriptsize $\ws_j$}
\put(60,50){\scriptsize $\vs_\ell$}
\put(72,72){\scriptsize $\ws_\ell$}
\put(10,31){\scriptsize $\xb$}
\put(85,65){\scriptsize $\yb$}
\put(49,31){\scriptsize $e^{i\overline{\alpha}_j}$}
\put(48,74){\scriptsize $e^{i\overline{\alpha}_{\ell}}$}
\end{overpic}
\caption{Defining angles in neighboring decorations.}
\label{Fig:fig_GF_1}
\end{figure}

\begin{lem}\label{lem:def_angles}
The angles $(\overline{\alpha}_j(\xb))_{\xb\in\Vs,\, j\in\{1,\dots,d(\xb)\}}$ 
are well defined in $\RR/4\pi\ZZ$.
\end{lem}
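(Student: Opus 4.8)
The plan is to read the rules \eqref{equ:def_angle} and \eqref{eq:relative_definition_a_a'} as a \emph{transport rule} for $\overline{\alpha}$ along the edges of $\GF$, and to prove path-independence. Since $\GF$ is connected, fixing $\overline{\alpha}_1(\xb_0)=0$ determines a value at every vertex once an edge-path from $\xb_0$ is chosen, and the assignment is well defined in $\RR/4\pi\ZZ$ precisely when the total signed increment accumulated around every closed path vanishes mod $4\pi$. As $\GF$ is planar and embedded in the simply connected plane, the cycle space is generated by the boundaries of the bounded faces, so it suffices to check the holonomy around each face of $\GF$.

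First I would record the decisive reduction. Read modulo $2\pi$, rule \eqref{equ:def_angle} reads $\overline{\alpha}_{j+1}=\overline{\alpha}_j+2\overline{\theta}_j$ and rule \eqref{eq:relative_definition_a_a'} reads $\overline{\alpha}_\ell=\overline{\alpha}_j+\pi$, which is exactly the geometric relation between the directions of the rhombus vectors attached to consecutive vertices of $\GF$: a rotation by the rhombus angle $2\overline{\theta}_j$ inside a decoration, and a flip by $\pi$ when crossing an edge of $\Gs$. Since these directions come from the fixed isoradial embedding, they are globally consistent, so the increment around any closed path is automatically $\equiv 0 \pmod{2\pi}$. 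Consequently the only remaining point is a \emph{parity} statement: around each face the increment must be an \emph{even} multiple of $2\pi$, i.e.\ $\equiv 0 \pmod{4\pi}$.

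Then I would check this parity face by face. The triangle faces $\ws_j,\vs_j,\ws_{j+1}$ are immediate: the jump $\overline{\alpha}_j\to\overline{\alpha}_{j+1}$ realized through the vertex $\vs_j$ (which carries both angles) and the jump realized along the edge $\ws_j\ws_{j+1}$ via \eqref{equ:def_angle} coincide, so the holonomy around a triangle is $0$ by construction. The substance lies in the inner $d(\xb)$-gon of a decoration and in the faces inherited from $\Gs$. For the inner $d(\xb)$-gon, \eqref{equ:def_angle} gives increment $\sum_{j}2\overline{\theta}_j + 2\pi r = 2\pi + 2\pi r$, using that the rhombus angles at $\xb$ sum to $2\pi$, where $r$ is the number of edges $\ws_{j+1}\to\ws_j$, i.e.\ the number of edges of this cycle co-oriented with its clockwise traversal; admissibility of the Kasteleyn orientation (clockwise odd) forces $r$ odd, so the increment equals $2\pi(1+r)\equiv 0 \pmod{4\pi}$. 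For a face inherited from $\Gs$ I would run the same computation along its boundary, now combining \eqref{equ:def_angle} inside each traversed decoration with \eqref{eq:relative_definition_a_a'} across each edge of $\Gs$: the geometric part contributes a multiple of $2\pi$, each reversed edge contributes an extra $2\pi$ or an extra $\pi$ adjustment, and collecting these while invoking the clockwise-odd condition along that face shows the total adjustment is an even multiple of $2\pi$.

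The main obstacle is the bookkeeping for the faces inherited from $\Gs$, where the boundary alternates between the two transport rules and one must translate the clockwise-odd count into the correct parity of the accumulated $\pi$'s and $2\pi$'s. The reduction modulo $2\pi$ of the second step is what makes this tractable, since it isolates a single parity to track rather than an actual angle; with the holonomy shown to vanish mod $4\pi$ on a generating set of cycles, $\overline{\alpha}$ is globally well defined in $\RR/4\pi\ZZ$.
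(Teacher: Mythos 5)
Your overall strategy --- fix a base value, transport along the defining relations, and check that the holonomy vanishes mod $4\pi$ on a generating set of cycles, with the nice observation that consistency mod $2\pi$ is automatic from the geometry so that only a parity remains --- is sound, and your treatment of the inner cycle of a decoration coincides with the paper's. But there is a genuine gap in your list of cycles. The rules \eqref{equ:def_angle} and \eqref{eq:relative_definition_a_a'} are not an edge-by-edge transport on $\GF$: the first is attached to an inner-cycle edge $\ws_j\ws_{j+1}$, while the second is attached to the whole length-three path $\ws_j,\vs_j,\vs_\ell,\ws_\ell$, and each edge $\xb\yb$ of $\Gs$ carries \emph{two} such crossing relations (one for the pair $(\ws_j,\ws_\ell)$ and one for $(\ws_{j+1},\ws_{\ell+1})$). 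The graph on which the transport actually lives has the $\ws$-vertices as its vertices, and its faces are the inner $d$-gons, the cycles coming from faces of $\Gs$, \emph{and} one quadrilateral per edge of $\Gs$, bounded by the two inner-cycle edges and the two crossing relations around that edge; an Euler-formula count shows all three families are needed to generate the cycle space. Your proposal checks the first two families but omits the quadrilaterals. This is exactly the paper's first verification (``compatibility around a rhombus''), and it is not vacuous: it is precisely where the clockwise-odd condition on the \emph{triangles} of $\GF$ enters, through the parity of $\co(\ws_{j+1},\vs_j,\ws_j,\ws_{j+1})+\co(\ws_{\ell+1},\vs_\ell,\ws_\ell,\ws_{\ell+1})$. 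Your claim that the triangle faces contribute nothing ``by construction'' cannot substitute for this, because no increment is ever assigned to the individual edges $\ws_j\vs_j$, $\vs_j\ws_{j+1}$ or $\vs_j\vs_\ell$, so there is no triangle holonomy to speak of; and if you did split the rules into edge increments so as to genuinely reduce everything to faces of $\GF$, verifying the triangle condition would be equivalent to the missing rhombus check.

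A secondary point: for the faces inherited from $\Gs$ you only assert that collecting the contributions ``shows the total adjustment is an even multiple of $2\pi$.'' This is the computation that actually has to be done. Writing the boundary cycle as alternating applications of the two rules, one gets an increment $\sum_{j=1}^m\bigl(\pi-2\overline{\theta}_1(\xb_j)\bigr)-2\pi n(C)=2\pi\bigl(1-n(C)\bigr)$, where $n(C)$ counts the crossing paths along $C$ with odd $\co$, and admissibility of the orientation of that face of $\GF$ forces $n(C)$ to be odd. Once the rhombus check is added and this computation carried out, your argument matches the paper's.
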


The proof is postponed to Appendix~\ref{app:proof_angles_4pi}. It is reminiscent of the proof of Lemma 4 of~\cite{BoutillierdeTiliere:iso_gen}
but has to be adapted since we are working with a different version of the Fisher graph.

\begin{defi}\label{def:function_f}
The function $\fs:\VF\times \CC \rightarrow\CC$ is defined by
\begin{equation}
\label{equ:rewriting_fs}
\left\{\begin{array}{lll}
  \fs(\ws_j,u)&\hspace{-2.5mm}:=\fs_{\ws_j}(u) &\hspace{-2.5mm}=
    \nc(\frac{u-\alpha_j}{2}),\vspace{0.1cm} \\
  \fs(\vs_j,u)&\hspace{-2.5mm}:=\fs_{\vs_j}(u) &\hspace{-2.5mm}=
  \displaystyle \KF_{\vs_j,\ws_j} \fs_{\ws_j}(u)+ \KF_{\ws_{j+1},\vs_j} \fs_{\ws_{j+1}}(u)\\
  &&\hspace{-2.5mm}=\KF_{\vs_j,\ws_j} \nc(\frac{u-\alpha_j}{2})+ \KF_{\ws_{j+1},\vs_j} \nc(\frac{u-\alpha_{j+1}}{2}).
  \end{array}\right.
\end{equation}
\end{defi}

\begin{defi}\label{def:fonction_g}
The function $\gs:\VF\times\VF\times\CC\rightarrow\CC$ is defined by
\begin{equation}\label{equ:function_g}
  \gs(\xs,\ys,u):=\gs_{\xs,\ys}(u)= \fs_{\xs}(u+2K) \fs_{\ys}(u) \expo_{(\xb,\yb)}(u),
\end{equation}
where $\expo_{(\cdot,\cdot)}(\cdot)$ is the massive exponential function of~\cite{BdTR1}, whose definition is recalled in Section~\ref{sec:massive_Lap}.
\end{defi}

\begin{rem}
The function $\gs$ is meromorphic and biperiodic:
\begin{equation*}
     \gs_{(\xs,\ys)}(u+4K) = \gs_{(\xs,\ys)}(u+4iK') = \gs_{(\xs,\ys)}(u),
\end{equation*}
so that we restrict the domain of definition to $\TT(k):=\CC /(4K\ZZ + 4iK'\ZZ)$. 
Note however that taken separately, $\fs_{\xs}(\cdot+2K)$ and $\fs_{\ys}(\cdot)$
are not periodic on $\TT(k)$: only their product is.
\end{rem}

The function $\gs$ can also be seen as a one-parameter family of matrices
$(\gs(u))_{u\in\TT(k)}$, where for every $u\in\TT(k)$, $\gs(u)$
has rows and columns indexed by vertices of $\GF$, and
$\gs(u)_{\xs,\ys}:=\gs_{(\xs,\ys)}(u)$. We have the following key proposition.

\begin{prop}
\label{prop:function_f_kernel_Kasteleyn_Fisher}
  For every $u\in\TT(k)$,
  $
    \KF \gs(u) =  \gs(u) \KF= 0$.
\end{prop}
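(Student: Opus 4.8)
The plan is to prove the two operator identities by a purely local, vertex-by-vertex verification, exploiting the product structure of $\gs_{(\xs,\ys)}(u)=\fs_\xs(u+2K)\fs_\ys(u)\expo_{(\xb,\yb)}(u)$. I first treat $\KF\gs(u)=0$. Fixing $\ys$ and $u$, the factor $\fs_\ys(u)$ is constant in the first index and can be pulled out, so that $(\KF\gs(u))_{\xs,\ys}=0$ for all $\xs$ is equivalent to saying that $\phi_\xs:=\fs_\xs(u+2K)\,\expo_{(\xb,\yb)}(u)$ lies in the kernel of $\KF$, i.e.\ $\sum_{\zs\sim\xs}\KF_{\xs,\zs}\phi_\zs=0$ at every vertex $\xs$ of $\GF$. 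Since $\GF$ has exactly two types of vertices, there are two families of local equations to check: those centered at an inner vertex $\ws_j$ and those centered at an outer vertex $\vs_j$. The identity $\gs(u)\KF=0$ is then established by the mirror computation: one pulls out $\fs_\xs(u+2K)$ and checks that $\zs\mapsto \fs_\zs(u)\expo_{(\xb,\zb)}(u)$ is annihilated by $\KF$ acting on the column index, which runs the same two local verifications but with the elliptic argument $u$ in place of $u+2K$.

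For an inner vertex $\ws_j$ all four neighbours $\ws_{j-1},\ws_{j+1},\vs_{j-1},\vs_j$ belong to the same decoration $\xb$, so $\expo_{(\xb,\yb)}(u)$ is common to every term and cancels; the equation becomes a relation among the values $\fs_\bullet(u+2K)$ inside the decoration. Substituting the definitions of $\fs_{\vs_{j-1}}$ and $\fs_{\vs_j}$ from Definition~\ref{def:function_f} and collecting the coefficients of $\fs_{\ws_{j-1}},\fs_{\ws_j},\fs_{\ws_{j+1}}$, each coefficient vanishes separately: the $\fs_{\ws_j}$-coefficient is $\KF_{\ws_j,\vs_{j-1}}^2+\KF_{\ws_j,\vs_j}\KF_{\vs_j,\ws_j}=1-1=0$, using that decoration edges carry weight $1$ and that $\KF$ changes sign under reversal of orientation; the $\fs_{\ws_{j\pm1}}$-coefficients vanish because they are precisely the sign relations among the three edges of a triangular face, which hold by the clockwise-odd (admissible) orientation of $\GF$. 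Thus the inner equations hold identically in $u$ and are of a purely combinatorial nature; this is where the particular form of $\fs_{\vs}$ has been designed.

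The outer equations at $\vs_j$ are the \emph{analytic core}. The three neighbours are $\ws_j,\ws_{j+1}$ in the decoration $\xb$ and the matching outer vertex $\vs_\ell$ of the adjacent decoration $\zb$ across the edge $e=\xb\zb$ of $\Gs$. Dividing by the common factor $\expo_{(\xb,\yb)}(u)$ and using the multiplicativity of the massive exponential, the external term picks up the one-edge factor $\expo_{(\zb,\xb)}(u)$, which by~\eqref{eq:recursive_def_expo} is the product of two terms of the form $i\sqrt{k'}\,\sc(\tfrac{u-\alpha}{2})$ along the rhombus of $e$, whose sides have directions $\alpha_j$ and $\alpha_{j+1}$. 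I would then rewrite $\fs_{\ws}(u+2K)=\nc(\tfrac{u-\alpha}{2}+K)$ via the quarter-period shift (which turns $\nc(\,\cdot+K)$ into an explicit constant multiple of $\dn(\cdot)/\sn(\cdot)$), expand $\fs_{\vs_\ell}$ through Definition~\ref{def:function_f} using the angle relation $\alpha_\ell=\alpha_j\pm\pi$ of~\eqref{eq:relative_definition_a_a'}, and insert the explicit edge weight $\nu_e=\sc\tfrac{\theta_e}{2}\dn\tfrac{\theta_e}{2}$ together with $\theta_e=\tfrac{\alpha_{j+1}-\alpha_j}{2}$. What remains is a single identity among Jacobi elliptic functions evaluated at $\tfrac{u-\alpha_j}{2}$ and $\tfrac{u-\alpha_{j+1}}{2}$, which I would reduce to standard addition and quarter-period formulas collected in Appendix~\ref{app:elliptic}.

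The hard part will be this last step at the outer vertices. The difficulty is threefold: one must keep track of the half-angle arguments $\tfrac{u-\alpha}{2}$ and of the coherent determination of the square root of $e^{i\overline\alpha}$, which is exactly what Lemma~\ref{lem:def_angles} guarantees by working in $\RR/4\pi\ZZ$; one must pin down every Kasteleyn sign entering $\fs_{\vs_\ell}$ and the edge coefficient, all of which are fixed by the admissible orientation and by the conventions~\eqref{equ:def_angle}--\eqref{eq:relative_definition_a_a'}; and one must control the interaction of the shift $u\mapsto u+2K$ with the single-step exponential factors, so that the inner and outer contributions cancel for every $u\in\TT(k)$. The bulk of the computation, and essentially all the risk of sign errors, is concentrated here; once this identity is established for the argument $u+2K$, the right-kernel identity $\gs(u)\KF=0$ follows from the same elliptic identity run with $u$.
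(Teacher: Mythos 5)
Your proposal follows essentially the same route as the paper: reduce everything to the local relation $\sum_{\zs\sim\xs}\KF_{\xs,\zs}\,\fs_{\zs}(u+2K)\expo_{(\zb,\xb)}(u)=0$, split into the two vertex types, handle the inner vertices $\ws_j$ by the purely combinatorial cancellation coming from the clockwise-odd orientation (your coefficient computation for $\fs_{\ws_j}$ and the triangle sign relations match the paper's \eqref{eq:sum_coeff_0}), and deduce the right-kernel identity from the left one via the $2K$-shift. Your handling of $\gs(u)\KF=0$ is a slightly different phrasing of the paper's observation that $\gs_{(\zs,\xs)}(u+2K)=-\gs_{(\xs,\zs)}(u)$ by skew-symmetry of $\KF$ and the relation $\expo_{(\zb,\xb)}(u+2K)=\expo_{(\xb,\zb)}(u)$, and it is correct for the reason you indicate.

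The one place where the proposal stops short of a proof is exactly the step you flag as the hard part: the outer-vertex relation at $\vs_j$ is described as ``a single identity among Jacobi elliptic functions'' to be reduced to addition formulas, but the identity is never written down or verified, and this is the analytic content of the proposition. For the record, after factoring out the exponential and the edge weight $\frac{\sn\theta}{1+\cn\theta}$, the relation collapses to
$(1+\cn\theta)\bigl\{\cn(\frac{u-\beta}{2})-\cn(\frac{u-\alpha}{2})\bigr\}=\sn\theta\,\bigl\{\sn(\frac{u-\alpha}{2})\dn(\frac{u-\beta}{2})+\sn(\frac{u-\beta}{2})\dn(\frac{u-\alpha}{2})\bigr\}$,
which follows by applying the addition formula $\cn(u+v)\cn u=\cn v-\sn(u+v)\sn u\dn v$ twice (with the roles of $\alpha$ and $\beta$ exchanged, taking $u+v=\theta$) and subtracting. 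Until that identity, with all Kasteleyn signs and the $\alpha_\ell=\alpha_j-2K$ convention of \eqref{eq:relative_definition_a_a'} tracked through, is actually checked, the argument is a correct plan rather than a complete proof.
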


\begin{proof}
  Note that since $\KF$ is skew-symmetric, and that up to a sign,
   the functions $\xs\mapsto \gs_{(\xs,\zs)}(u)$ and $\xs\mapsto \gs_{(\zs,\xs)}(u+2K)$
  are equal:
  \begin{equation*}
    \gs_{(\zs,\xs)}(u+2K) =
    \fs_{\zs}(u+4K)\fs_{\xs}(u+2K)\expo_{(\zb,\xb)}(u+2K)=
    -\fs_{\xs}(u+2K)\fs_{\zs}(u)\expo_{(\xb,\zb)}(u)=
    -\gs_{(\xs,\zs)}(u),
  \end{equation*}
  it is enough to check the first equality, \emph{i.e.}, $\KF \gs(u)=0$.
  
  Let us fix $\zs$. We need to check that for every vertex $\xs$ of $\GF$,
  \begin{equation*}
    \sum_{i=1}^d \KF_{\xs,\xs_i} \gs_{(\zs,\xs_i)}(u) = 0,
  \end{equation*}
  where $\xs_1,\ldots,\xs_d$ are the $d$ (equal to three or four) neighbors of
  $\xs$ in $\GF$.
  We distinguish two cases depending on whether the vertex $\xs$ is of type $\ws$ or $\vs$.

$\bullet$ If $\xs=\ws_j(\xb)$ for some $j$, then $\xs$ has four neighbors:
      $\ws_{j-1}(\xb)=\ws_{j-1}$, $\ws_{j+1}(\xb)=\ws_{j+1}$, $\vs_{j-1}(\xb)=\vs_{j-1}$ and $\vs_j(\xb)=\vs_j$, see
      Figure~\ref{Fig:GF_notation}. Since all these vertices belong to the same
      decoration, the part $\fs_{\zs}(u+2K)\expo_{(\zb,\xb)}(u)$ is common to all the terms
      $\gs_{\zs,\xs_i}(u)$. One is left with proving the following identity:
      \begin{equation*}
        (\KF \fs)_{\ws_j}=
	\KF_{\ws_j,\ws_{j-1}} \fs_{\ws_{j-1}}
	+
	\KF_{\ws_j,\ws_{j+1}} \fs_{\ws_{j+1}}
	+
	\KF_{\ws_j,\vs_{j-1}} \fs_{\vs_{j-1}}
	+
	\KF_{\ws_j,\vs_j} \fs_{\vs_j}
	= 0.
      \end{equation*}
      Using the second line in Equation \eqref{equ:rewriting_fs} to express $\fs_{\vs_j}$ and $\fs_{\vs_{j-1}}$ in terms
      of $\fs_{\ws}$'s, one gets for the left-hand side of the previous equation:
      \begin{multline*}
	(\KF_{\ws_j,\ws_{j-1}} + \KF_{\ws_j,\vs_{j-1}}\KF_{\vs_{j-1},\ws_{j-1}})
	\fs_{\ws_{j-1}}
	+
	(\KF_{\ws_j,\vs_j}\KF_{\vs_j,\ws_j} 
	+ \KF_{\ws_j,\vs_{j-1}}\KF_{\ws_j,\vs_{j-1}}) 
	\fs_{\ws_j} 
	\\+
	(\KF_{\ws_j,\ws_{j+1}} + \KF_{\ws_j,\vs_j} \KF_{\ws_{j+1}, \vs_j})
	\fs_{\ws_{j+1}}.
      \end{multline*}
      The coefficient in front of $\fs_{\ws_j}$,
      \begin{equation*}
	\KF_{\ws_j,\vs_j}\KF_{\vs_j,\ws_j} 
	+ \KF_{\ws_j,\vs_{j-1}}\KF_{\ws_j,\vs_{j-1}} = -(\KF_{\ws_j,\vs_j})^2 +
	(\KF_{\ws_j,\vs_{j-1}})^2 = -1+1,
      \end{equation*}
      is trivially equal to zero.
      Moreover, because of the condition on the orientation of the triangles in the
      Kasteleyn orientation, we have:
      \begin{equation}
      \label{eq:sum_coeff_0}
      	\KF_{\ws_j,\ws_{j-1}} + \KF_{\ws_j,\vs_{j-1}}\KF_{\vs_{j-1},\ws_{j-1}} =
	\KF_{\ws_j,\ws_{j+1}} + \KF_{\ws_j,\vs_j} \KF_{\ws_{j+1}, \vs_j} =0.
      \end{equation}
      Indeed, to check this, it is enough to look at the case where the edges of a
      triangle are all oriented clockwise, and notice that the quantity is
      invariant if we simultaneously change the orientation of any pair of edges
      of the triangle, which is a transitive operation on all the (six)
      clockwise odd orientations of a triangle. So $(\KF \fs)_{\ws_j}$ is identically zero.

    $\bullet$ If $\xs=\vs_j(\xb)=\vs_j$ for some $j$, then $\xs$ has three neighbors:
      $\ws_j(\xb)=\ws_j$, $\ws_{j+1}(\xb)=\ws_{j+1}$ and
      $\vs_\ell(\yb)=\vs_{\ell}$. Factoring out $\fs_{\zs}(u+2K)\expo_{(\zb,\xb)}(u)$, it is sufficient to prove that
      \begin{equation*}
	\KF_{\vs_j,\ws_j} \fs_{\ws_j}(u) +
	\KF_{\vs_j,\ws_{j+1}} \fs_{\ws_{j+1}}(u) +
	\KF_{\vs_j,\vs_\ell} \fs_{\vs_\ell}(u)\expo_{(\xb,\yb)}(u)=0.
      \end{equation*}
      Note that under inversion of the orientation of all edges around any of
      the vertices $\ws_j$, $\ws_{j+1}$ and $\vs_j$, all the signs of three terms
      either stay the same, or change at the same time. To fix ideas, we can
      thus suppose that the edges of the triangles
      $\ws_j,\ws_{j+1},\vs_j$ and $\ws_\ell,\ws_{\ell+1},\vs_\ell$ are all
      oriented clockwise, and that the edge between $\vs_j$ and $\vs_\ell$ is
      oriented from $\vs_\ell$ to $\vs_j$, as in Figure~\ref{Fig:fig_GF_1}.
      Returning to the definition of the angles mod $4\pi$, see \eqref{equ:def_angle} and \eqref{eq:relative_definition_a_a'}, and 
      simplifying notation, we obtain
      \begin{equation*}
      \alpha=\alpha_j(\xb),\ \  \beta=\alpha_{j+1}(\xb)=\alpha+2\theta,
      \ \ \alpha'=\alpha_{\ell}(\yb)=\alpha-2K,
      \ \ \beta'=\alpha_{\ell+1}(\yb)=\alpha'+2\theta=\beta-2K.
      \end{equation*}
      We have:
      \begin{equation*}
	\KF_{\vs_j,\ws_j} \fs_{\ws_j}(u) +
	\KF_{\vs_j,\ws_{j+1}} \fs_{\ws_{j+1}}(u) =
	\fs_{\ws_j}(u) - \fs_{\ws_{j+1}}(u) = 
	\frac{\cn(\frac{u-\beta}{2}) - \cn(\frac{u-\alpha}{2})}{\cn(\frac{u-\alpha}{2})\cn(\frac{u-\beta}{2})}.
      \end{equation*}
  On the other hand, \eqref{equ:rewriting_fs} entails that
      \begin{equation*}
      \textstyle
	\fs_{\vs_\ell}(u)=\fs_{\ws_{\ell}}(u)+\fs_{\ws_{\ell+1}}(u)=
	\nc(\frac{u-\alpha'}{2}) +\nc(\frac{u-\beta'}{2}) =\displaystyle
	-\frac{1}{k'}\frac{\sd(\frac{u-\alpha}{2})+\sd(\frac{u-\beta}{2})}{\sd(\frac{u-\alpha}{2})\sd(\frac{u-\beta}{2})}.
      \end{equation*}
      This has to be multiplied by $\KF_{\vs_j ,\vs_{\ell}} =
      -\frac{\sn\theta}{1+\cn\theta}$ and by the exponential function
      $\expo_{(\xb,\yb)}(u)$, so that:
      \begin{align*}
      \textstyle
	\KF_{\vs_j,\vs_\ell} \expo_{(\xb,\yb)}(u) \fs_{\vs_\ell}(u) &=
	\frac{\sn\theta}{1+\cn\theta}
	\frac{1}{k'}\frac{\sd(\frac{u-\alpha}{2})+\sd(\frac{u-\beta}{2})}{\sd(\frac{u-\alpha}{2})\sd(\frac{u-\beta}{2})}\textstyle
	(-k')\sc(\frac{u-\alpha}{2})\sc(\frac{u-\beta}{2})\\
	&=
	-\frac{\sn\theta}{1+\cn\theta}
      \frac{\sn(\frac{u-\alpha}{2})\dn(\frac{u-\beta}{2})+\sn(\frac{u-\beta}{2})\dn(\frac{u-\alpha}{2})} {\cn(\frac{u-\alpha}{2})\cn(\frac{u-\beta}{2})}.
      \end{align*}
  Proving that 
  \begin{equation}
  \label{eq:harmonicity_g}
  \KF_{\vs_j,\ws_j} \fs_{\ws_j}(u) +
	\KF_{\vs_j,\ws_{j+1}} \fs_{\ws_{j+1}}(u)+\KF_{\vs_j,\vs_\ell}
	\expo_{(\xb,\yb)}(u) \fs_{\vs_\ell}(u)=0
	 \end{equation}
	 amounts to showing that
	\begin{equation*}
	\textstyle
	  (1+\cn\theta)\bigl\{\cn(\frac{u-\beta}{2})-\cn(\frac{u-\alpha}{2})\bigr\}=
	\sn\theta\,\bigl\{\sn(\frac{u-\alpha}{2})\dn(\frac{u-\beta}{2})+
	\sn(\frac{u-\beta}{2})\dn(\frac{u-\alpha}{2})\bigr\}.
	\end{equation*}
	However, the addition formula (see Exercice 32 (v) in \cite[Chapter 2]{La89} and also the similar
	relation~\eqref{eq:(iii)32}) reads:
	\begin{equation*}
	  \cn(u+v)\cn u = \cn v-\sn(u+v)\sn u\dn v.
	\end{equation*}
	Evaluated at $u=\frac{u-\alpha}{2}$, $v=-\frac{u-\beta}{2}$ and $u+v=\theta$ (and
	exchanging the role of $\alpha$ and $\beta$ for the second equation), we obtain
	\begin{equation*}
	  \left\{\begin{array}{l}
	  \textstyle\cn(\frac{u-\beta}{2})\cn\theta = \cn(\frac{u-\alpha}{2}) + \sn(\frac{u-\beta}{2})\dn(\frac{u-\alpha}{2})
	  \sn\theta,\medskip\\
	  \textstyle\cn(\frac{u-\alpha}{2})\cn\theta = \cn(\frac{u-\beta}{2})
	  -\sn(\frac{u-\alpha}{2})\dn(\frac{u-\beta}{2})\sn\theta.
	  \end{array}\right.
	\end{equation*}
	Taking the difference of these two equations yields the result.
\end{proof}

\begin{rem}
  \label{rem:int_g_mesure}
  If $\mu$ is a measure on $\TT(k)$ and if we define $\hs_{\xs,\ys} = \int
  \gs_{\xs,\ys}(u) \ud\mu(u)$, then by linearity of the integral, one also has
  $\KF\hs =\hs\KF =0$. A particular case, which will be important for what
  follows (see also~\cite{Kenyon3,BoutillierdeTiliere:iso_gen}), is the case when
  $\mu$ is the integration along a contour on $\TT(k)$.
\end{rem}

\subsection{Local expression for the inverse of the Kasteleyn operator $\KF$}\label{sec:localKFinv}

We now state Theorem~\ref{thm:KFmoins_un}, proving an explicit, local formula
for coefficients of the inverse ${\KF}^{-1}$ of the Kasteleyn operator
$\KF$. 
 This formula is
constructed from the function $\gs$ of Definition~\ref{def:fonction_g}.

\begin{thm}
\label{thm:KFmoins_un}
Consider the dimer model on the Fisher graph $\GF$ arising from the
$Z$-invariant Ising model on the isoradial graph $\Gs$, and let 
$\KF$ be the corresponding Kasteleyn operator.
Define the operator ${\KF}^{-1}$ by its coefficients:
\begin{align}
\forall\,\xs,\ys\in\VF,\quad     \Ks^{-1}_{\xs,\ys}&=
\frac{ik'}{8\pi}\int_{\Gamma_{\xs,\ys}} \gs_{\xs,\ys}(u)\ud u +C_{\xs,\ys}\\
&=\frac{ik'}{8\pi}\int_{\Gamma_{\xs,\ys}} \fs_\xs(u+2K)\fs_{\ys}(u)\expo_{(\xb,\yb)}(u)\ud u+C_{\xs,\ys},
\label{equ:KF_inverse}
\end{align}
where
the contour of integration $\Gamma_{\xs,\ys}$ is a simple closed curve winding
once vertically around the torus $\TT(k)$ (along which the second coordinate globally
increases), which intersects the horizontal axis in the angular sector
(interval) $s_{x,y}$ of length larger than or equal to $2K$ (see
Section~\ref{subsubsec:sectors}),
and the constant $C_{\xs,\ys}$ is given by
\begin{equation}
\label{eq:expression_C_x_y}
C_{\xs,\ys}=
-\frac{1}{4}\cdot
\begin{cases}
1 &\text{if $\xs=\ys=\ws_j(\xb)$,}\\
(-1)^{n(\xs,\ys)} & \text{if $\xs=\ws_j(\xb)$ and $\ys=\ws_\ell(\xb)$ for some $j\neq \ell$,}\\
0&\text{otherwise,}
\end{cases}
\end{equation}
where $n(\xs,\ys)$ is the number of edges oriented clockwise in the counterclockwise arc from $\xs$
to $\ys$ in the inner decoration.

Then ${\KF}^{-1}$ is an inverse of the Kasteleyn operator $\KF$ on $\GF$.

When $k\neq 0$, it is the unique inverse with bounded coefficients.

Alternatively, the coefficients of the inverse of the Kasteleyn operator admit
the expression
\begin{equation}
\label{equ:KF_inverseH}
     \Ks^{-1}_{\xs,\ys}=
\frac{ik'}{8\pi}\oint_{\C_{\xs,\ys}} \fs_\xs(u+2K)\fs_{\ys}(u)\expo_{(\xb,\yb)}(u)\Hh(u)\ud u+C_{\xs,\ys},
\end{equation}
where the function $\Hh$ is defined in~\eqref{eq:definition_Hh_Hv_k2>0}--\eqref{eq:definition_Hh_Hv_k2<0},
$\C_{\xs,\ys}$ is a trivial contour oriented counterclockwise on the torus, not crossing
$\Gamma_{\xs,\ys}$ and containing in its interior all the poles of
$\gs_{(\xs,\ys)}$ and the pole of $\Hh$, and $C_{\xs,\ys}$ is defined in
\eqref{eq:expression_C_x_y}. 
\end{thm}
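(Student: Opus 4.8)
The plan is to check directly that the matrix of \eqref{equ:KF_inverse} is a two-sided inverse of $\KF$, then to establish uniqueness among bounded operators when $k\neq0$, and finally to deduce the alternative expression \eqref{equ:KF_inverseH}. Fix $\xs,\zs\in\VF$ and write $\KF^{-1}_{\ys,\zs}=I_{\ys,\zs}+C_{\ys,\zs}$, where $I_{\ys,\zs}=\frac{ik'}{8\pi}\int_{\Gamma_{\ys,\zs}}\gs_{\ys,\zs}(u)\,\ud u$. The first step is to compute $(\KF\KF^{-1})_{\xs,\zs}=\sum_{\ys\sim\xs}\KF_{\xs,\ys}\bigl(I_{\ys,\zs}+C_{\ys,\zs}\bigr)$, the sum running over the three or four neighbours $\ys$ of $\xs$. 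The difficulty is that the contours $\Gamma_{\ys,\zs}$ lie in \emph{different} angular sectors $s_{\ys,\zs}$, so the integrals cannot be combined as they stand. I would therefore fix a single reference contour $\Gamma$ and deform each $\Gamma_{\ys,\zs}$ onto $\Gamma$; by the residue theorem this produces the residues of $\gs_{\ys,\zs}$ at the poles lying in the region swept between the two contours. Once all integrands share the contour $\Gamma$, linearity and Proposition~\ref{prop:function_f_kernel_Kasteleyn_Fisher} give $\int_{\Gamma}\sum_{\ys}\KF_{\xs,\ys}\gs_{\ys,\zs}(u)\,\ud u=\int_{\Gamma}(\KF\gs(u))_{\xs,\zs}\,\ud u=0$, so the common-contour contribution vanishes identically and $(\KF\KF^{-1})_{\xs,\zs}$ reduces to a finite sum of residues plus the explicit constant sum $\sum_{\ys}\KF_{\xs,\ys}C_{\ys,\zs}$.

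The heart of the argument is then a local computation. When $\xs$ and $\zs$ are far apart in $\GF$, the directions $\yb\to\zb$ for the neighbours $\ys$ of $\xs$ essentially coincide, so the sectors $s_{\ys,\zs}$ admit a common contour, no pole is crossed, all constants vanish, and $(\KF\KF^{-1})_{\xs,\zs}=0=\delta_{\xs,\zs}$. When $\xs$ and $\zs$ belong to the same or to adjacent decorations, I would carry out a case analysis according to the types ($\vs$ or $\ws$) and the relative positions of $\xs$ and $\zs$: the poles of $\gs_{\ys,\zs}(u)=\fs_{\ys}(u+2K)\fs_{\zs}(u)\expo_{(\yb,\zb)}(u)$ are the simple poles of the factors $\nc$ and $\sc$, located at points governed by the rhombus angles; their residues are evaluated with the elliptic identities of Appendix~\ref{app:elliptic}, and one checks that the residue contributions and the constants $C_{\ys,\zs}$ of \eqref{eq:expression_C_x_y} combine to give exactly $\delta_{\xs,\zs}$. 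This case analysis is the main obstacle: it is where the normalisation $\frac{ik'}{8\pi}$, the shift $+2K$, and the precise definition of the constants $C_{\xs,\ys}$ (including the sign $(-1)^{n(\xs,\ys)}$) are forced, and it is the elliptic analogue of the critical computation of~\cite{BoutillierdeTiliere:iso_gen}. The dual identity $\KF^{-1}\KF=\mathrm{Id}$ follows from the same computation run from the other side, using $\gs(u)\KF=0$ together with the skew relation $\gs_{\zs,\xs}(u+2K)=-\gs_{\xs,\zs}(u)$ already exploited in the proof of Proposition~\ref{prop:function_f_kernel_Kasteleyn_Fisher}.

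For uniqueness when $k\neq0$, suppose $J$ is another inverse of $\KF$ with bounded coefficients. Each column of $D=\KF^{-1}-J$ is then a bounded element of the kernel of $\KF$. Here non-criticality is essential: for $k\neq0$ the model is strictly massive, and I would use the exponential decay of $\KF^{-1}$ established in Section~\ref{sec:asymptKF}, together with a Liouville-type argument, to conclude that $\KF$ admits no nonzero bounded element in its kernel, whence $D=0$. Finally, for the alternative expression \eqref{equ:KF_inverseH}, I would use the function $\Hh$, whose only singularity on $\TT(k)$ is a simple pole and whose monodromy around the vertical period accounts for one vertical winding. Multiplying the integrand by $\Hh$ allows one to replace the open vertical contour $\Gamma_{\xs,\ys}$ by a contractible contour $\C_{\xs,\ys}$ enclosing all poles of $\gs_{\xs,\ys}$ and the pole of $\Hh$: applying the residue theorem on the torus and comparing, via the biperiodicity of $\gs_{\xs,\ys}$, the contributions on the two sides of $\Gamma_{\xs,\ys}$ shows that $\frac{ik'}{8\pi}\oint_{\C_{\xs,\ys}}\gs_{\xs,\ys}(u)\Hh(u)\,\ud u$ coincides with the vertical-contour integral of \eqref{equ:KF_inverse}, the constant $C_{\xs,\ys}$ being unaffected. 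Tracking which poles fall inside $\C_{\xs,\ys}$ and on which side of $\Gamma_{\xs,\ys}$ they lie requires some care but is routine once $\Hh$ is fixed.
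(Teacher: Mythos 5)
Your proposal follows essentially the same route as the paper: deform the contours $\Gamma_{\ys,\zs}$ to a common one so that Proposition~\ref{prop:function_f_kernel_Kasteleyn_Fisher} kills the integral part, track the residues and the constants $C$ in the few local configurations where the sectors are incompatible, deduce $\KF^{-1}\KF=\mathrm{Id}$ by skew-symmetry, obtain uniqueness from the exponential decay of Theorem~\ref{thm:asymptotics_inverse_Kasteleyn} combined with $\KF^{-1}\KF=\mathrm{Id}$, and pass to \eqref{equ:KF_inverseH} via the jump of $\Hh$ (note that the relevant jump is across the \emph{horizontal} period $4K$, not the vertical one as you state). The only caveat is that the case analysis you flag as ``the main obstacle'' is precisely where all the work of the paper's proof lies --- the choice of standard versus non-standard sector conventions for the neighbours of $\xs$, and the explicit cylinder-residue computations in the two genuinely diagonal cases $\xs=\ys=\ws_j$ and $\xs=\ys=\vs_j$ --- so your text is a correct plan rather than a complete argument.
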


Before we go on with the proof of this theorem, let us make a few comments about
the formula of the inverse Kasteleyn matrix:
\begin{itemize}
  \item As soon as $\xs$ and $\ys$ are not in the same decoration, or one of
    them is of type $\vs$, then the constant $C_{\xs,\ys}$ is zero, 
    and the formula for $\KF^{-1}_{\xs,\ys}$ as a contour integral has the same
    flavour as the Green function of the
    $Z$-invariant massive Laplacian introduced
    in~\cite[Theorem~12]{BdTR1}. 
\item The constant $C_{\xs,\xs}$ is here to ensure that $\KF^{-1}_{\xs,\xs}$
    is $0$ if $\xs$ is of type $\ws$ (the integral is $0$ when $\xs$ is of type
    $\vs$ as we shall see later).
  \item As one can expect, the full formula is skew-symmetric in $\xs$ and $\ys$.
  \item To obtain the alternative expression \eqref{equ:KF_inverseH} from
    \eqref{equ:KF_inverse}, one can make use of a meromorphic multivalued
    function with a horizontal period of $1$, like the function $\Hh$ defined in
    \eqref{eq:definition_Hh_Hv_k2>0}--\eqref{eq:definition_Hh_Hv_k2<0},
    originally introduced in~\cite{BdTR1} for $k^2\in(0,1)$. Following this way, one may rewrite
    the integral as an integral over a contour bounding a disk, allowing one to
    perform explicit computation with Cauchy's residue theorem. One can add to
    $\Hh$ any elliptic function on $\TT(k)$ without changing the result of the
    integral, given that $\C_{\xs,\ys}$ encloses all the poles of the new
    integrand.
  \item Adding to the columns of $\KF^{-1}$ functions in the kernel of $\KF$
    yield other inverses, with different behaviour at infinity. Such a function
    in the kernel is obtained by integrating $\gs_{(\xs,\ys)}(\cdot)$ along a
    horizontal contour in $\TT(k)$, see Remark~\ref{rem:int_g_mesure}. As a
    consequence, if we replace in \eqref{equ:KF_inverse} the contour
    $\Gamma_{\xs,\ys}$ by a contour winding $a$ times vertically and $b$ times
    horizontally, with $a$ and $b$ coprimes, and divide the integral by $a$,
    then we get a new inverse for the Kasteleyn operator $\KF$, which has an
    alternative expression as a trivial contour integral involving integer linear combinations
    of functions $\Hh$ and $\Hv$, as defined in Appendix \ref{app:HhHv}.
  \item When $k=0$, the ``torus'' $\TT(k)$ is in fact a cylinder, with two points
    at infinity. The contour $\Gamma_{\xs,\ys}$ has infinite length. The
    function $\gs_{\xs,\ys}(u)$ decays sufficiently fast at infinity to ensure
    convergence of the integral. By performing the change of variable
    $\lambda=-e^{iu}$ in the integrals \eqref{equ:KF_inverse} or
    \eqref{equ:KF_inverseH}, one gets the adaptation to this variant of the
    Fisher graph of the formula for the inverse Kasteleyn operator
    in~\cite{BoutillierdeTiliere:iso_gen}, as an integral along a ray from $0$ to
    $\infty$, or as an integral over a closed contour with a $\log$.
\end{itemize}

We now turn to the proof of~Theorem~\ref{thm:KFmoins_un}. We show that the
operator $\KF^{-1}$ with those coefficients satisfy $\KF \KF^{-1} =
\operatorname{Id}$ and 
$\KF^{-1} \KF = \operatorname{Id}$. These identities, understood as products of
infinite matrices, make sense since $\KF$ has a finite number of non zero
coefficients on each row and column. Moreover, by skew-symmetry, it is enough to
check the first one.
When $k\neq 0$, it turns out that these coefficients for $\KF^{-1}$ go to zero
exponentially
fast, see Theorem~\ref{thm:asymptotics_inverse_Kasteleyn}. This property
together with $\KF^{-1} \KF = \operatorname{Id}$ imply injectivity of $\KF$ on
the space of bounded functions on vertices of $\GF$, which in turn implies
uniquess of an inverse with bounded coefficients.

The general idea for proving $\KF \KF^{-1}=\operatorname{Id}$ follows~\cite{Kenyon3}, but it is complicated by 
the fact that the Fisher graph $\GF$ itself is not isoradial. In this respect, the proof follows more closely that
of Theorem~5 of~\cite{BoutillierdeTiliere:iso_gen}
with two main differences: we work with a different Fisher graph $\GF$ and more importantly we handle the elliptic case, making it a non-trivial extension.
Section~\ref{subsubsec:encoding_poles} corresponds to Sections 6.3.1 and 6.3.2 of~\cite{BoutillierdeTiliere:iso_gen}. It consists  
in the delicate issue of encoding the poles of the integrand $\fs_\xs(u+2K)\fs_{\ys}(u)\expo_{(\xb,\yb)}(u)$; for this question there are no
additional difficulties so that we have made it as short as possible and refer to the paper~\cite{BoutillierdeTiliere:iso_gen} for 
more details and figures. Section~\ref{subsubsec:sectors} consists in obtaining 
a sector $s_{\xs,\ys}$ on the horizontal axis of the torus $\TT(k)$ from the encoding of the poles; this is then used to define the contour
of integration $\Gamma_{\xs,\ys}$. It corresponds to Section 6.3.3 
of~\cite{BoutillierdeTiliere:iso_gen} but requires adaptations to pass to the elliptic case. Section~\ref{subsubsec:proof_main_thm} is a 
non-trivial adaptation of Section 6.4 of~\cite{BoutillierdeTiliere:iso_gen}, handling a different Fisher graph $\GF$ and more importantly 
handling the elliptic case.

\subsubsection{Preliminaries: encoding the poles of the integrand}
\label{subsubsec:encoding_poles}

Let $\Gs$ be an infinite isoradial graph and let $\GR$ be the corresponding diamond graph. 
In order to encode poles of the integrand of $\Ks^{-1}_{\xs,\ys}$, we need the notion of \emph{minimal path}
which relies on the notion of train-tracks, see Section~\ref{sec:iso_star_triangle} for definition.
A train-track is said to \emph{separate} two vertices $\xb,\yb$ of $\GR$ if every path connecting $\xb$ and $\yb$ crosses this train-track.
A path from $\xb$ to $\yb$ in $\GR$ is said to be \emph{minimal} if all its edges cross train-tracks that separate $\xb$ from $\yb$, 
and each such train-track is crossed exactly once.
A minimal path from $\xb$ to $\yb$ is in fact a geodesic for the graph metric on
$\GR$. Since $\GR$ is connected, it always exists. In general, there are several
minimal paths between two vertices, but they all consist of the same steps taken
in a different order.

For every pair of vertices $\xs,\ys$ of $\GF$, we now define an 
edge-path $\gamma_{\xs,\ys}$ of $\GR$ encoding the poles of the integrand of 
$\KF^{-1}_{\xs,\ys}$.
Consider a minimal path from $\xb$ to $\yb$
and let $e^{i\overline{\alpha}_{\ell}}$ be one of the steps of the path,
then the corresponding pole of the exponential function is $\alpha_\ell+2K$. Since 
$e^{i\overline{\alpha_\ell+2K}}=e^{i\overline{\alpha_\ell}+i\pi}=-e^{i\overline{\alpha}_{\ell}}$, this pole is encoded in the reverse step. As a consequence,
poles of the exponential function are encoded in the steps $\{-e^{i\overline{\alpha}_{\ell}}\}$ of a minimal path from $\yb$ to $\xb$.

We now have to add the poles of the functions $\fs_\xs(u+2K)$ and $\fs_\ys(u)$. The difficulty lies in the fact that some of them might be canceled by 
factors in the numerator of the exponential function. By definition, the function $\fs_\xs(u+2K)$ has either one or two poles $\{\alpha_j\}$, encoded in the edge(s) 
$\{e^{i\overline{\alpha}_{j}}\}$ of the diamond graph $\GR$; let $T_\xs=\{T_\xs^j\}$ be the corresponding 
train-track(s). Similarly, the pole(s) of $\fs_\ys(u)$ are at $\{\alpha_j'+2K\}$ and are encoded in the edge(s) $\{-e^{i\overline{\alpha}_{j}'}\}$ of $\GR$, and 
$T_\ys=\{T_\ys^j\}$ are the corresponding train-track(s). 

Let us start from a minimal path $\gamma_{\xs,\ys}$ from $\yb$ to $\xb$. For every $j$, do the following procedure: if $T_\ys^j$ separates $\yb$ from $\xb$, then
the pole $\alpha_j'+2K$ is canceled by the exponential and we leave $\gamma_{\xs,\ys}$ unchanged. If not, this pole remains, and we extend 
$\gamma_{\xs,\ys}$ by adding the step $-e^{i\overline{\alpha}_j'}$ at the beginning of $\gamma_{\xs,\ys}$. The path $\gamma_{\xs,\ys}$ 
obtained is still a path of $\GR$,
denote by $\widehat{\yb}$ the new starting point, at distance at most $2$ from $\yb$. 

When dealing with a pole of $\fs_{\xs}(u+2K)$, one needs to be careful since, even when the corresponding train-track separates $\yb$ from $\xb$,
the exponential function might not cancel the pole, if it has already canceled the same pole of $\fs_{\ys}(u)$; this happens when $T_\xs$ and 
$T_\ys$ have a common train-track. The procedure to extend $\gamma_{\xs,\ys}$ runs as follows: for each $j$, if $T_\xs^j$ separates $\yb$ and $\xb$
and is not a train track of $T_\ys$, then the pole $\alpha_j$ is canceled by the exponential function, and we leave $\gamma_{\xs,\ys}$ unchanged.
If not, this pole remains, and we extend $\gamma_{\xs,\ys}$ by attaching the step $e^{i\overline{\alpha}_j}$ at the end of $\gamma_{\xs,\ys}$. The path
obtained in this way is still a path of $\GR$, starting from $\widehat{\yb}$. Denote by $\widehat{\xb}$ its ending point, which is at distance at most 
$2$ from $\xb$.

\subsubsection{Obtaining a sector $s_{\xs,\ys}$ from $\gamma_{\xs,\ys}$}
\label{subsubsec:sectors}

Let $\xs,\ys$ be two vertices of $\GF$ and let $\gamma_{\xs,\ys}$ 
be the path encoding poles of the integrand of 
$\Ks^{-1}_{\xs,\ys}$ constructed above. Denote by 
$\{e^{i\overline{\tau}_j}\}$ the steps of the path, seen as vectors
in the unit disk; the corresponding poles of the integrand are $\{\tau_j\}$. 
Using these poles, we now define an interval/sector $s_{\xs,\ys}$ in the horizontal axis $\mathbb{R}/4K\mathbb{Z}$ of the torus $\TT(k)$. 
Given the sector $s_{\xs,\ys}$, the contour 
of integration $\Gamma_{\xs,\ys}$ of $\Ks^{-1}_{\xs,\ys}$ is then defined to be  
a simple closed curve winding once around the torus vertically, \emph{i.e.}, in the direction $i$,
along which the second coordinate globally increases, and which intersects the
horizontal axis in $s_{\xs,\ys}$, see Figure~\ref{Fig:repr_sectors} below.

\emph{General case}. This case contains all but the three mentioned below. We
know by Lemmas~17 and 18 of~\cite{BoutillierdeTiliere:iso_gen} that
there exists a sector in the unit circle, of size greater than or equal to $\pi$, containing none of the steps 
$\{e^{i\overline{\tau}_{j}}\}$. Equivalently, there exists a sector in
the horizontal axis $\mathbb{R}/4K\mathbb{Z}$ of the torus $\TT(k)$, of size larger than or equal to $2K$,
containing none of the poles $\{\tau_j\}$. We let $s_{\xs,\ys}$ be this sector, it is represented in Figure~\ref{Fig:repr_sectors}.

\begin{figure}[htb]
  \centering\smallskip
  \begin{overpic}[width=12cm]{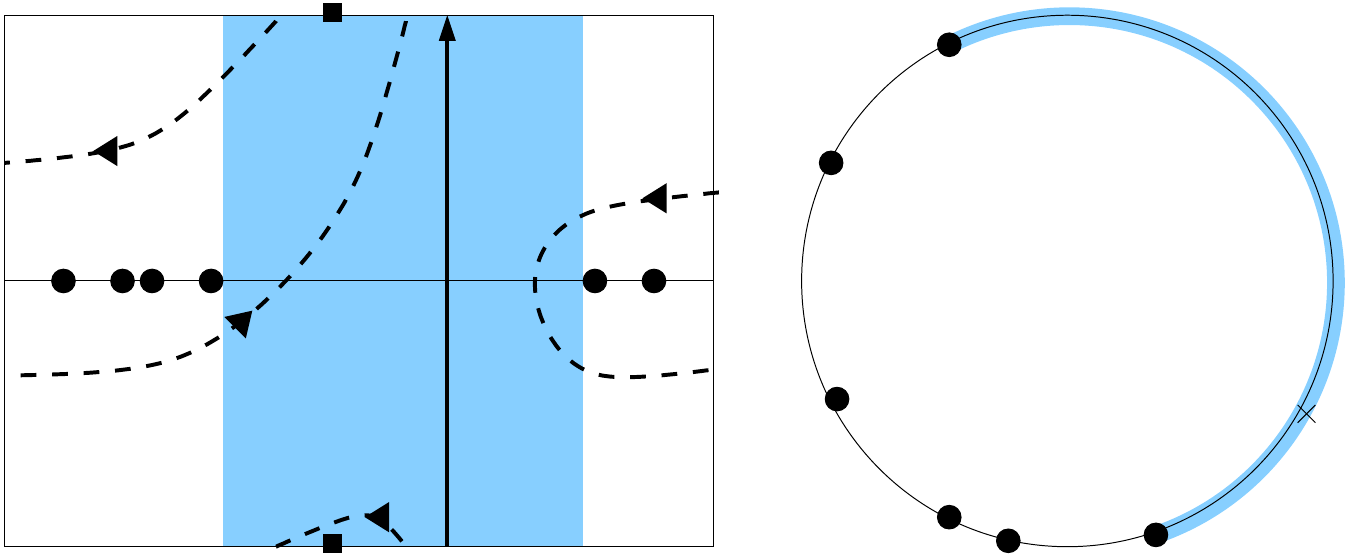}
  \put(22.6,42.3){$2iK'$}
  \put(34,31){$\Gamma_{\xs,\ys}$}
  \put(26,24){$\C_{\xs,\ys}$}
  \put(96,33){$s_{\xs,\ys}$}
  \put(99,10){$\Gamma_{\xs,\ys}$}
  \end{overpic}
  \caption{Left: the torus $\TT(k)$ with the contours of integration $\Gamma_{\xs,\ys}$ and $\C_{\xs,\ys}$;
  the poles of the integrand $\fs_\xs(u+2K)\fs_{\ys}(u)\expo_{(\xb,\yb)}(u)$ are represented by
  black bullets and the pole $2iK'$ of the function $\Hh$ by a black square. Right: in blue the sector $s_{\xs,\ys}$ used
  to define the contour of integration $\Gamma_{\xs,\ys}$.}
  \label{Fig:repr_sectors}
\end{figure}

Here are the three cases which do not fit in the general situation. 

\emph{Case 1}. \label{pageref:case_1_2}The path $\gamma_{\xs,\ys}$ consists of two steps that are opposite.
Then, the two poles separate
the real axis of $\TT(k)$ in two sectors of size exactly $2K$, leaving an ambiguity.
This can only happen when $\xs=\ys=\ws_j(\xb)$ and the two poles
are $\{\alpha_j$, $\alpha_j+2K\}$.
In this case, the \emph{standard convention}
is that\footnote{When indicating sectors on the circle, the convention we adopt is that
  $(\alpha,\beta)$ represents the sector where the horizontal coordinate
  increases from $\alpha$ to $\beta$.
} $s_{\ws_j,\ws_j}=(\alpha_j,\alpha_j+2K)$, 
and by Lemma~\ref{lem:integrale_fisher} we have
\begin{equation}
\label{equ:case_1}
     \frac{ik'}{8\pi}\int_{\Gamma_{\ws_j,\ws_j}}\fs_{\ws_j}(u+2K)\fs_{\ws_j}(u)\ud u =\frac{1}{4}.
\end{equation}
We choose the value of the constant $C_{\ws_j,\ws_j}$ to compensate exactly the value of this integral
so as to have $\Ks^{-1}_{\ws_j,\ws_j}=0$, that is $C_{\ws_j,\ws_j}=-\frac{1}{4}$, 
and we recover the first line of the definition of 
$C_{\xs,\ys}$ of Equation~\eqref{eq:expression_C_x_y}.

\begin{figure}[ht]
  \centering
\begin{overpic}[width=3.5cm]{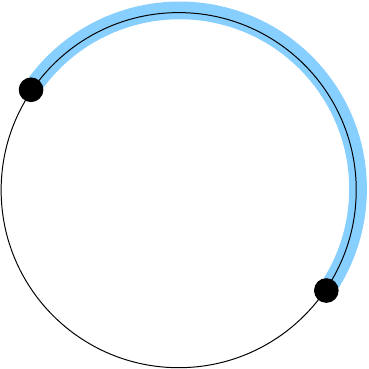}
 \put(77,23){\scriptsize $\alpha_j$}
 \put(13,72){\scriptsize $\alpha_j+2K$}
\end{overpic}
\caption{Standard convention for the definition of the sector $s_{\ws_j,\ws_j}$.\label{Fig:fig_preuve_1}}
\end{figure}

\begin{rem}\label{rem:Case1}
It will be useful for the proof to consider also the \emph{non-standard convention} 
with the complementary sector, defining a contour
$\Gamma_{\ws_j,\ws_j}'$. Returning to the definition of the function 
$\fs_{\ws_j}(u)$, we have that the integral over $\Gamma_{\ws_j,\ws_j}'$
is equal to minus the one on the contour $\Gamma_{\ws_j,\ws_j}$, 
so that in order to have $\Ks^{-1}_{\ws_j,\ws_j}=0$, we set
$C_{\ws_j,\ws_j}'=-C_{\ws_j,\ws_j}$.
\end{rem}

The two other cases correspond to situations when $\vert T_\xs\bigcap T_\ys\vert =2$. The
corresponding path $\gamma_{\xs,\ys}$ does not enter the
framework of Lemmas 17 and 18 of~\cite{BoutillierdeTiliere:iso_gen}.
They occur when $\xs$ and $\ys$ are equal or are neighbors in $\GF$, and both of type
`$\vs$'.
In other words, one has $\xs=\ys=\vs_j(\xb)$, or 
$(\xs,\ys)=(\vs_j(\xb),\vs_{\ell}(\yb))$, with $\xb\sim\yb$ in $\Gs$, $j$ and
$\ell$ being such that $\vs_j(\xb)\sim \vs_{\ell}(\yb)$ in $\GF$.

\emph{Case 2}. Suppose first that $\xs=\ys=\vs_j(\xb)$. Then the 
poles are $\{\alpha_j,\alpha_j+2K,\alpha_{j+1},\alpha_{j+1}+2K\}$ 
(the exponential function is equal to $1$ and
cancels no pole). If we take $s_{\vs_j,\vs_j}=(\alpha_{j},\alpha_{j+1})$, then
  the integral is zero by symmetry. Indeed, the change of variable $u\to \alpha_{j+1}+\alpha_{j}- u$ leaves
  the contour invariant (up to homotopy)
  and $\fs_{\vs_j}(u+2K)$ is changed into
  its opposite, whereas $\fs_{\vs_j}(u)$ is invariant. Note that taking
  $s_{\vs_j,\vs_j}=(\alpha_{j}+2K,\alpha_{j+1} + 2K)$ also gives a zero integral,
  because it is related to the previous one by the change of variable $u\to
  u+2K$. These two choices of sectors will be useful in the proof of Theorem~\ref{thm:KFmoins_un}, see Figure~\ref{Fig:fig_preuve_2} (center).

\emph{Case 3}.
Suppose now that $(\xs,\ys)=(\vs_j(\xb),\vs_{\ell}(\yb))$. Then $\fs_{\vs_j(\xb)}$ and $\fs_{\vs_\ell(\yb)}$ induce twice the same 
poles $\{\alpha_j,\alpha_{j+1}\}$. The exponential adds the poles $\{\alpha_j+2K,\alpha_{j+1}+2K\}$ and the 
numerator cancels one pair of $\{\alpha_j,\alpha_{j+1}\}$, implying that there remains the poles
$\{\alpha_j,\alpha_j+2K,\alpha_{j+1},\alpha_{j+1}+2K\}$. We set the convention given in Figure~\ref{Fig:fig_preuve_2} (right).

\begin{figure}[ht]
  \centering
\begin{overpic}[width=12cm]{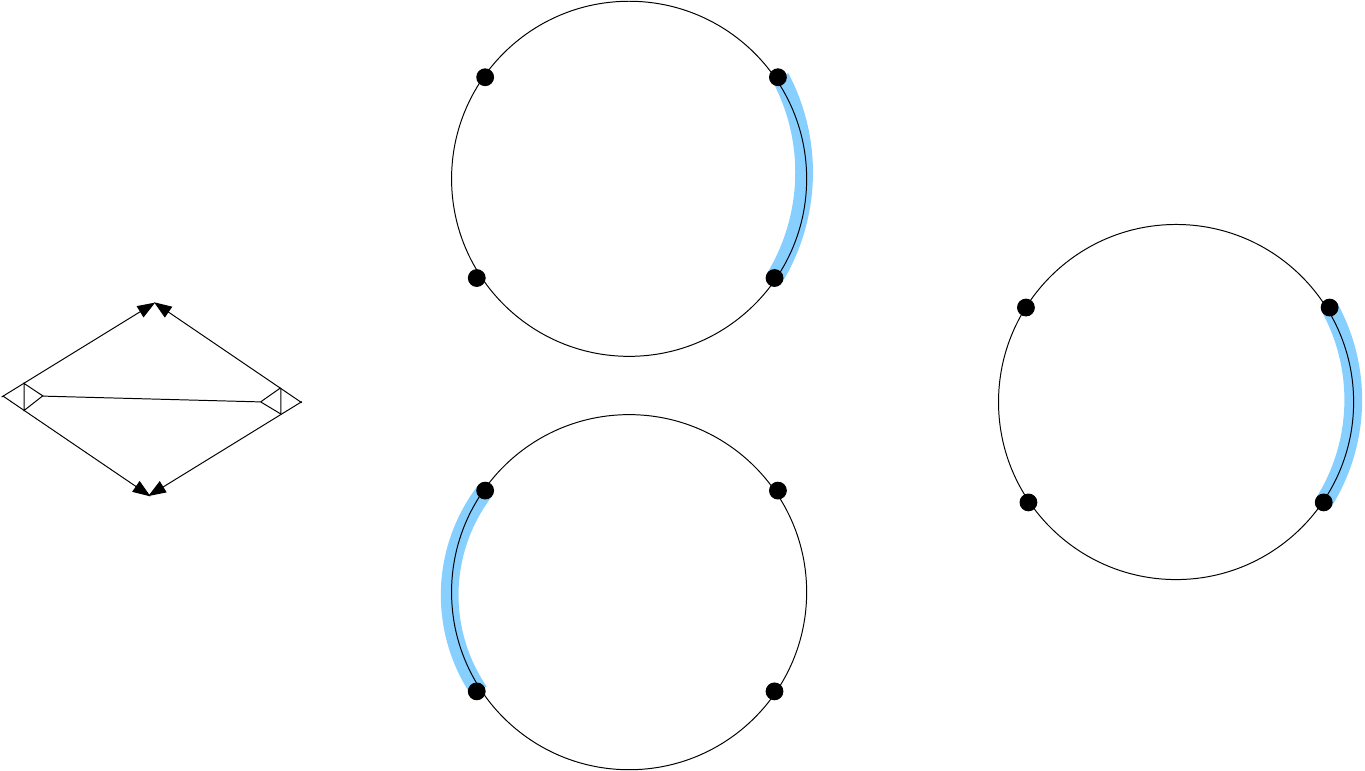}
\put(37,50){\scriptsize $\alpha_j+2K$}
\put(37,36){\scriptsize $\alpha_{j+1}+2K$}
\put(59,50){\scriptsize $\alpha_{j+1}$}
\put(59,36){\scriptsize $\alpha_j$}

\put(37,20){\scriptsize $\alpha_j+2K$}
\put(37,06){\scriptsize $\alpha_{j+1}+2K$}
\put(59,20){\scriptsize $\alpha_{j+1}$}
\put(59,06){\scriptsize $\alpha_j$}

\put(77,34){\scriptsize $\alpha_j+2K$}
\put(77,20){\scriptsize $\alpha_{j+1}+2K$}
\put(99,34){\scriptsize $\alpha_{j+1}$}
\put(99,20){\scriptsize $\alpha_j$}

\put(4,25){\scriptsize $\vs_j$}
\put(3,33){\scriptsize $e^{i\overline{\alpha}_{j+1}}$}
\put(3,20){\scriptsize $e^{i\overline{\alpha}_j}$}
\put(14,20){\scriptsize $-e^{i\overline{\alpha}_{j+1}}$}
\put(14,33){\scriptsize $-e^{i\overline{\alpha}_j}$}
\put(16,25){\scriptsize $\vs_\ell$}
\put(-3,26){\scriptsize $\xb$}
\put(23,26){\scriptsize $\yb$}
\put(43,28){\scriptsize $s_{\vs_j,\vs_j}$}
\put(84,11){\scriptsize $s_{\vs_j,\vs_\ell}$}
\end{overpic}
\caption{Definition of the sectors $s_{\vs_j,\vs_j}$ (center) and
$s_{\vs_j,\vs_\ell}$ (right).\label{Fig:fig_preuve_2}} 
\end{figure}

\subsubsection{Proof of the local formula for $\Ks^{-1}$ of
Theorem~\ref{thm:KFmoins_un}}\label{subsubsec:proof_main_thm}

We need to prove that
\begin{equation*}
\forall\,\xs,\ys\in\VF,\quad (\Ks\Ks^{-1})_{\xs,\ys}=\delta_{\xs,\ys}.
\end{equation*}

We use the following notation.
The vertex $\ys$ is $\ys=\ws_j(\yb)$ or $\vs_j(\yb)$, for some vertex $\yb$ of
$\Gs$ and some $j\in\{1,\dots,d(\yb)\}$.
If $\ys=\ws_j(\yb)$, it has four
neighbors $\ws_{j-1}(\yb),\vs_{j-1}(\yb),\vs_j(\yb),\ws_{j+1}(\yb)$;
if $\ys=\vs_j(\yb)$, it has three neighbors
$\ws_{j}(\yb),\ws_{j+1}(\yb),\vs_\ell(\yb')$, see Figure~\ref{fig:fig_preuve_9}.
We denote by $\xs_i$ the neighbors of~$\xs$, with $i$ ranging from $1$ to $3$ or $4$.

\begin{figure}[ht!]
  \centering
\begin{overpic}[width=4cm]{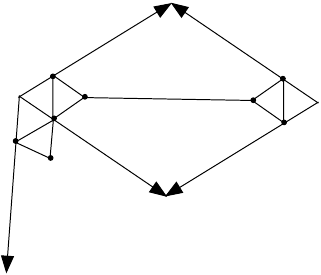}
\put(0,55){\scriptsize{$\yb$}}
\put(102,52){\scriptsize{$\yb'$}}
\put(20,44){\scriptsize{$\ws_j$}}
\put(29,58){\scriptsize{$\vs_j$}}
\put(16,31){\scriptsize{$\vs_{j-1}$}}
\put(-5,36){\scriptsize{$\ws_{j-1}$}}
\put(12,65){\scriptsize{$\ws_{j+1}$}}
\put(71,58){\scriptsize{$\vs_\ell$}}
\put(90,62){\scriptsize{$\ws_\ell$}}
\put(90,42){\scriptsize{$\ws_{\ell+1}$}}
\end{overpic}
\caption{Notation for the cases where the general argument for proving $(\KF\KF^{-1})_{\xs,\ys}=\delta_{\xs,\ys}$ does not work and which have to be treated
separately. \label{fig:fig_preuve_9}}
\end{figure}

As long as the computation of $(\KF \KF^{-1})_{\xs,\ys}=\sum_{i}\KF_{\xs,\xs_i}\KF^{-1}_{\xs_i,\ys}$ 
  only involves
  terms $\KF^{-1}_{\xs_i,\ys}$ for which the constant $C_{\xs_i,\ys}$ is $0$, and
  the sector $s_{\xs_i,\ys}$ defining the contour $\Gamma_{\xs_i,\ys}$ does not use any special convention, that is when
  \begin{itemize}
    \item $\xs$ is not in the same decoration as $\ys$, if $\ys=\ws_j(\yb)$,
    \item $\xs\notin\{\ws_j(\yb), \ws_{j+1}(\yb), \vs_\ell(\yb')\} \bigcup
      \{\ws_\ell(\yb'),\ws_{\ell+1}(\yb'),\vs_j(\yb)\}$, if $\ys=\vs_j(\yb)$,
  \end{itemize}
  then
  by the argument of \cite{Kenyon3,BoutillierdeTiliere:iso_gen}, all contours of
  integration $\Gamma_{\xs_i,\ys}$
  can be deformed into a common contour $\Gamma$, crossing the horizontal axis
  in the nonempty intersection of the sectors $\bigcap_i s_{\xs_i,\ys}$, so
  that by Proposition~\ref{prop:function_f_kernel_Kasteleyn_Fisher} (see also Remark~\ref{rem:int_g_mesure}) we have:
  \begin{equation*}
    \sum_{i} \Ks_{\xs,\xs_i}
    \oint_{\Gamma_{\xs_i,\ys}} \fs_{\xs_i}(u+2K) \fs_{\ys}(u) \expo_{(\xb,\yb)}(u) \ud u 
    = \oint_{\Gamma} \sum_{i} \Ks_{\xs,\xs_i} \fs_{\xs_i}(u+2K)
    \fs_{\ys}(u)\expo_{(\xb,\yb)}(u) \ud u 
    = 0. 
  \end{equation*}
  Let us check the remaining cases separately. 
\paragraph{Suppose that $\ys=\ws_j(\yb)$.} The degree of the vertex $\yb$ is $d(\yb)$ and indices below should be 
thought of as being modulo $d$. We have to handle all cases where the vertex $\xs$ belongs to the decoration
$\yb$, whether it is of type `$\vs$' or `$\ws$'.

$\bullet$ We first compute $(\KF \KF^{-1})_{\xs,\ws_j}$ when $\xs=\vs_{r}(\yb)$ for some $r\in\{1,\dots,d(\yb)\}$. 
The vertex $\xs$ has three neighbors $\ws_r(\yb),\ws_{r+1}(\yb)$
and a vertex of type `$\vs$' in a neighboring decoration. We now omit the argument $\yb$ from the notation.

When $r\in\{j-d+1,\dots,j-2\}$, we are in the general case of the definition of $\Gamma_{\xs_i,\ys}$;
when $r=j-1$, we choose the standard convention of Case 1, that is $\Gamma_{\ws_j,\ws_j}$ and $C_{\ws_j,\ws_j}$; when $r=j$, we choose the equivalent, non-standard convention 
of Case 1, that is $\Gamma_{\ws_j,\ws_j}'$ and $C_{\ws_j,\ws_j}'$.

With these choices, the three sectors
appearing in the expressions of $\KF^{-1}_{\xs_i,\ws_j}$
have non-empty intersection, so that
the contours $\Gamma_{\xs_i,\ys}$ in the three integrals can be 
continuously deformed into the same contour $\Gamma$, 
and thus the combination of the integral parts gives zero. 

Since vertices of type `$\vs$' have no constant contribution $C_{\xs_i,\ys}$, we are left with proving that
\begin{equation}
\label{equ:preuve_vj_wj_0}
     \forall\,r\in\{j-d+1,\dots,j-1\},\quad 
     \KF_{\vs_r,\ws_r}C_{\ws_r,\ws_j}+\KF_{\vs_r,\ws_{r+1}}C_{\ws_{r+1},\ws_j}=0,
\end{equation}
and that
\begin{equation}
\label{equ:preuve_vj_wj_1}    
\KF_{\vs_{j},\ws_j}C_{\ws_j,\ws_j}'+\KF_{\vs_{j},\ws_{j-d+1}}C_{\ws_{j-d+1},\ws_j}=0.
\end{equation}
Multiplying each of the equations of \eqref{equ:preuve_vj_wj_0} by $\KF_{\vs_r,\ws_{r}}$, and using that 
$-\KF_{\vs_r,\ws_r}\KF_{\vs_r,\ws_{r+1}}=\KF_{\ws_{r},\ws_{r+1}}$ by the clockwise odd condition on triangles,
we have that the first set of equations is equivalent to, for all $r\in\{j-d+1,\dots,j-1\}$, $C_{\ws_{r},\ws_j}=\KF_{\ws_{r},\ws_{r+1}}C_{\ws_{r+1},\ws_j}$, which in turn holds if and only if
\begin{equation}
\label{equ:rec_C}
     C_{\ws_{r},\ws_j}=\Biggl( \prod_{m=r}^{j-1} \KF_{\ws_{m},\ws_{m+1}} \Biggr) C_{\ws_j,\ws_j}
=(-1)^{n(\ws_{r},\ws_{j})}C_{\ws_j,\ws_j}.
\end{equation}
Recalling that $C_{\ws_j,\ws_j}=-\frac{1}{4}$ and returning to the second line
of the definition of $C_{\xs,\ys}$, we see that this is indeed the case, whence
\eqref{equ:preuve_vj_wj_0} is proved.

We are left with proving that Equation~\eqref{equ:preuve_vj_wj_1} is satisfied.
Doing the same steps as above, and using that
$C_{\ws_{j-d+1},\ws_j}=(-1)^{n(\ws_{j-d+1},\ws_{j})}C_{\ws_j,\ws_j}$, this is
equivalent to proving that 
\[
C_{\ws_j,\ws_j}'=C_{\ws_j,\ws_j}(-1)^{n(\ws_{j-d+1},\ws_{j})}\KF_{\ws_j,\ws_{j-d+1}}.
\]
Observing that $(-1)^{n(\ws_{j-d+1},\ws_j)} \KF_{\ws_j,\ws_{j-d+1}}=-1$ because
of the clockwise odd condition on the inner circle of decorations, and
recalling that $C_{\ws_j,\ws_j}'=-C_{\ws_j,\ws_j}$ (see Remark~\ref{rem:Case1}),
we deduce that this equation is indeed true, thus ending the proof when
$\xs=\vs_{r}(\yb)$.

$\bullet$ We now compute $(\KF \KF^{-1})_{\xs,\ws_j}$ when $\xs=\ws_{r}(\yb)$ for some $r\in\{1,\dots,d(\yb)\}$. 
The vertex $\xs=\ws_r(\yb)$ has four neighbors: $\ws_{r-1}(\yb)$, $\ws_{r+1}(\yb)$,
$\vs_{r-1}(\yb)$ and $\vs_r(\yb)$, and we now omit the argument $\yb$.

Let us first handle the integral part. When $r\neq j+1$, we are either in the
general case of the definition of $\Gamma_{\xs_i,\ys}$ or in
Case 1, and we choose the standard definition. When $r=j+1$, we choose the
non-standard definition of Case 1, that
is $\Gamma_{\ws_j,\ws_j}'$ and $C_{\ws_j,\ws_j}'$. With these choices, as long
as $r\neq j$, the four sectors have non-empty intersection,
so that the combination of the integral parts is equal to zero. 

When $r=j$, then the four sectors enter the framework of the general case and
are {not} compatible, see
Figure~\ref{fig:fig_preuve_6}.

\begin{figure}[ht!]
  \centering
\begin{overpic}[width=\linewidth]{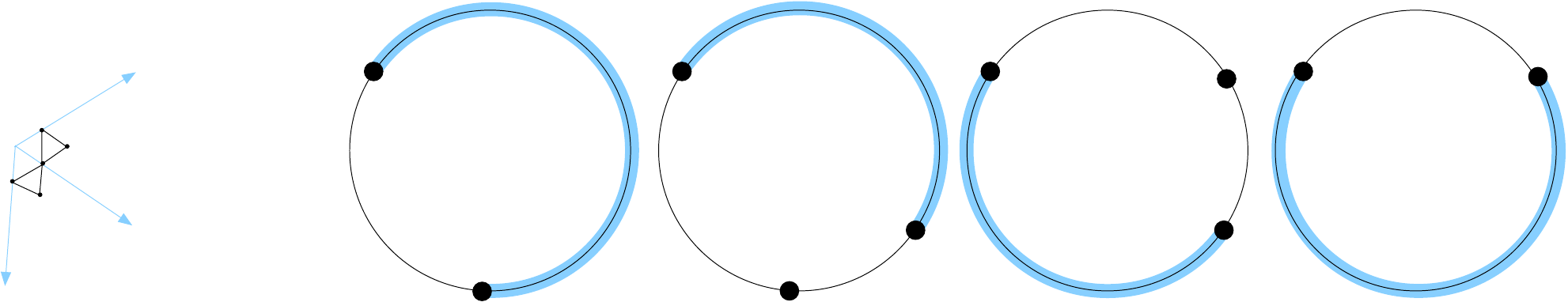}
 \put(24,16){\scriptsize $\alpha_j+2K$}
 \put(43,16){\scriptsize $\alpha_j+2K$}
 \put(62,16){\scriptsize $\alpha_j+2K$}
 \put(82,16){\scriptsize $\alpha_j+2K$}

 \put(56,6){\scriptsize $\alpha_j$}
 \put(76,6){\scriptsize $\alpha_j$}
 \put(31,2){\scriptsize $\alpha_{j-1}$}
 \put(51,2){\scriptsize $\alpha_{j-1}$}

 \put(75,16){\scriptsize $\alpha_{j+1}$}
 \put(94,16){\scriptsize $\alpha_{j+1}$}

 \put(3.5,8.2){\scriptsize $\ws_j$}
 \put(2,11.5){\scriptsize $\ws_{j+1}$}
 \put(-2,8.5){\scriptsize $\ws_{j-1}$}
 \put(5,10){\scriptsize $\vs_j$}
 \put(2,5){\scriptsize $\vs_{j-1}$}
 \put(30,-1){\scriptsize $s_{\ws_{j-1},\ws_j}$}
 \put(48,-1){\scriptsize $s_{\vs_{j-1},\ws_j}$}
 \put(70,-1){\scriptsize $s_{\vs_{j},\ws_j}$}
 \put(87,-1){\scriptsize $s_{\ws_{j+1},\ws_j}$}
\end{overpic}
\caption{Sectors $s_{\xs_i,\ys}$ when $\xs=\ys=\ws_j$.\label{fig:fig_preuve_6}}
\end{figure}

A vertical contour $\Gamma'$ passing between $\alpha_j$ and $\alpha_{j+1}$ is
contained in the three sectors 
$s_{\ws_{j-1},\ws_j}$, $s_{\vs_{j-1},\ws_j}$, $s_{\ws_{j+1},\ws_j}$. If the
fourth integral was taken along this contour, 
the combination of the four would be zero. By adding and subtracting the
integral for the pair $(\vs_j,\ws_j)$ along $\Gamma'$, we
have that the contribution of the integral part of $(\KF\KF^{-1})_{\ws_j,\ws_j}$
is equal to
\begin{equation}
  \frac{ik'}{8\pi}\KF_{\ws_j,\vs_j} \left(
  \oint_{\Gamma_{\vs_j,\ws_j}} - \oint_{\Gamma'} \right)
  \fs_{\vs_j}(u+2K) \fs_{\ws_j}(u) \ud u.
  \label{eq:integrale_cylindre}
\end{equation}

The contour $\Gamma_{\vs_j,\ws_j} -\Gamma'$ is the (negatively oriented)
boundary of a cylinder in the torus, which contains only one pole of the
integrand, at
$u=\alpha_j$. The function $\fs_{\ws_j}(u)$ has no pole in the cylinder, and only the term involving $\fs_{\ws_j}$ of the function 
$\fs_{\vs_j}(u+2K)=\KF_{\vs_j,\ws_j}\fs_{\ws_j}(u+2K)+\KF_{\ws_{j+1},\vs_j}\fs_{\ws_{j+1}}(u+2K)$
has a pole at $u=\alpha_j$. As a consequence, the contribution of the integral
part is equal to
\begin{equation*}
-\frac{ik'}{8\pi}\left(
\oint_{\Gamma_{\vs_j,\ws_j}} - \oint_{\Gamma'} \right)
\fs_{\ws_j}(u+2K) \fs_{\ws_j}(u) \ud u\\=-\frac{ik'}{8\pi}\left(
\oint_{\Gamma_{\ws_j,\ws_j}'} - \oint_{\Gamma_{\ws_j,\ws_j}} \right)
\fs_{\ws_j}(u+2K) \fs_{\ws_j}(u) \ud u=\frac{1}{2},
\end{equation*}
by continuously deforming the contours to those of Case 1, and using
Equation~\eqref{equ:case_1} and Remark~\ref{rem:Case1}.

We now handle the constant part of $(\KF\KF^{-1})_{\ws_r,\ws_j}$, keeping in mind that vertices of type `$\vs$' have no constant contribution.
As long as $r+1\notin\{j+1,j+2\}$, we have by Equation~\eqref{equ:rec_C}
\begin{equation*}
     C_{\ws_{r},\ws_j}=\KF_{\ws_{r-1},\ws_{r}}\KF_{\ws_{r},\ws_{r+1}}C_{\ws_{r+1,j}},
\end{equation*}
so that
\begin{equation*}
     \Ks_{\ws_r,\ws_{r-1}}C_{\ws_{r-1},\ws_j}+\Ks_{\ws_r,\ws_{r+1}}C_{\ws_{r+1},\ws_j}
=(\Ks_{\ws_r,\ws_{r-1}}\KF_{\ws_{r-1},\ws_{r}}\KF_{\ws_{r},\ws_{r+1}}+\Ks_{\ws_r,\ws_{r+1}})C_{\ws_{r+1},\ws_j}=0.
\end{equation*}
When $r=j+1$, recalling that we have chosen the non-standard definition from Case 1, factoring $\Ks_{\ws_{j},\ws_{j+1}}$,
using Equation~\eqref{equ:rec_C} to write $C_{\ws_{j+2},\ws_j}=C_{\ws_{j-d+2},\ws_j}$, and finally remembering 
that $C_{\ws_j,\ws_j}'=-C_{\ws_j,\ws_j}$, we have
\begin{multline*}
     \quad\Ks_{\ws_{j+1},\ws_{j}}C_{\ws_{j},\ws_j}'+\Ks_{\ws_{j+1},\ws_{j+2}}C_{\ws_{j+2},\ws_j}\\=
\Ks_{\ws_{j},\ws_{j+1}}\bigl(1+\Ks_{\ws_{j},\ws_{j+1}}\Ks_{\ws_{j+1},\ws_{j+2}}(-1)^{n(\ws_{j-d+2},\ws_j)}\bigr)C_{\ws_j,\ws_j},\quad
\end{multline*}
which is equal to $0$ by the Kasteleyn orientation condition on inner cycles of decorations. 

When $r=j$, using a similar argument, we obtain
\begin{equation*}
     \Ks_{\ws_{j},\ws_{j-1}}C_{\ws_{j-1},\ws_j}+\Ks_{\ws_{j},\ws_{j+1}}C_{\ws_{j+1},\ws_j}=-2C_{\ws_j,\ws_j}=\frac{1}{2}.
\end{equation*}
Wrapping up, we have proved that $(\KF\KF^{-1})_{\ws_r,\ws_j}$ is equal to $0$ when $r\neq j$, and to 
$\frac{1}{2}+\frac{1}{2}=1$ when $r=j$.

\paragraph{Suppose that $\ys=\vs_j(\yb)$.} Note that since $\yb$ is of type `$\vs$', we always have $C_{\xs_i,\ys}=0$. We have to handle the cases
where $\xs\in\{\ws_j(\yb),\ws_{j+1}(\yb),\vs_\ell(\yb')\}\bigcup \{\ws_\ell(\yb'),\ws_{\ell+1}(\yb'),\vs_j(\yb)\}$,
and need to check whether the sectors defining the contours $\Gamma_{\xs_i,\ys}$ in the integral part of $\KF_{\xs_i,\ys}$ have non-empty intersections.

There are three values of $\xs$ where one of the neighbors of $\xs$ is
$\ys=\vs_j(\yb)$: namely when $\xs\in\{\ws_j(\yb),\ws_{j+1}(\yb),\vs_\ell(\yb')\}$. We now omit the arguments $\yb,\yb'$ from the notation.
In these three cases, the sectors $s_{\ws_j,\vs_j}$ and $s_{\ws_{j+1},\vs_j}$
are compatible and
intersect, either in the arc from $\alpha_j$ to $\alpha_{j+1}$ (2 first cases),
or from $\alpha_{j}+2K$ to $\alpha_{j+1}+2K$ (last case). In all these
situations, using the two possible definitions of Case 2 to write
$\KF^{-1}_{\vs_j,\vs_j} = 0$ as the integral with a
contour in that common sector, then by the general argument, we get that
$(\KF\KF^{-1})_{\xs,\vs_j} = 0$.

We now need to check the remaining three cases where the combination uses
$\KF^{-1}_{\vs_\ell,\vs_j}$, corresponding to the situation where
$\xs\in\{\ws_\ell(\yb'),\ws_{\ell+1}(\yb'),\vs_j(\yb)\}$.

In the two first situations, using the general Case and Case 3, we see that the
sectors are compatible, and we can conclude with
the general argument that $(\KF \KF^{-1})_{\xs,\vs_j}=0$.

Suppose now that $\xs=\ys=\vs_j$. Its three neighbors are
$\ws_j$, $\ws_{j+1}$ and $\vs_\ell$ and the corresponding sectors are {not}
compatible, see Figure~\ref{Fig:fig_preuve_7}.

\begin{figure}[h]
  \centering
\begin{overpic}[width=\linewidth]{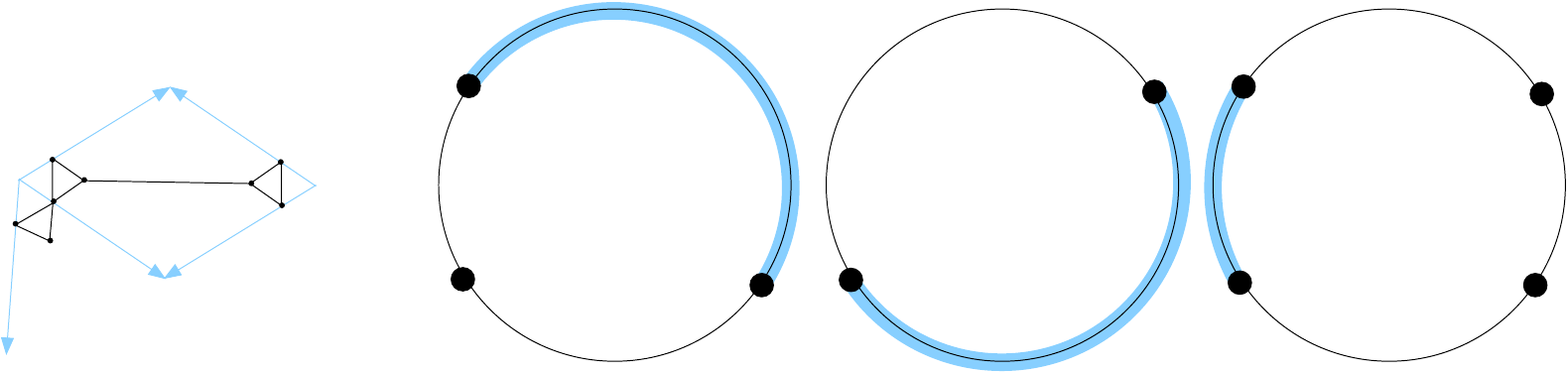}
 \put(31,18){\scriptsize $\alpha_j+2K$}
 \put(81,18){\scriptsize $\alpha_j+2K$}

 \put(46,7){\scriptsize $\alpha_j$}
 \put(95,7){\scriptsize $\alpha_j$}
 
 \put(31,7){\scriptsize $\alpha_{j+1}+2K$}
 \put(56,7){\scriptsize $\alpha_{j+1}+2K$}
 \put(81,7){\scriptsize $\alpha_{j+1}+2K$}
 \put(68,18){\scriptsize $\alpha_{j+1}$}
 \put(93,18){\scriptsize $\alpha_{j+1}$}

 \put(6,11){\scriptsize $\vs_j$}
 \put(5,9){\scriptsize $\ws_j$}
 \put(2,14){\scriptsize $\ws_{j+1}$}
 \put(2,7){\scriptsize $\vs_{j-1}$}
 \put(14,11){\scriptsize $\vs_\ell$}
 \put(38,-1){\scriptsize $s_{\ws_{j},\vs_j}$}
 \put(63,-1){\scriptsize $s_{\ws_{j+1},\vs_j}$}
 \put(88,-1){\scriptsize $s_{\vs_{\ell},\vs_j}$}
 \put(1,12){\scriptsize $\xb$}
 \put(20,12){\scriptsize $\xb'$}
\end{overpic}
\caption{Sectors $s_{\xs_i,\ys}$ when $\xs=\ys=\vs_j$.\label{Fig:fig_preuve_7}}
\end{figure}

The two sectors for $\ws_j$ and
$\ws_{j+1}$ are compatible and intersect in the arc from $\alpha_j$ to
$\alpha_{j+1}$, whereas according to the convention of Case 3, the one for
$\vs_\ell$ is the arc from $\alpha_{j}+2K$ to $\alpha_{j+1}+2K$. A vertical
contour $\Gamma'$ passing between $\alpha_j$ and 
$\alpha_{j+1}$ is contained in the three sectors $s_{\ws_j,\vs_j}$,
$s_{\ws_{j+1},\vs_j}$ and $s_{\vs_\ell,\vs_j}$. If the third integral
was taken along this contour, the combination of the three would be $0$. By
adding and subtracting the integral of the pair 
$(\vs_\ell,\vs_j)$ along $\Gamma'$, and using
Proposition~\ref{prop:function_f_kernel_Kasteleyn_Fisher} to write
\begin{equation*}
\KF_{\vs_j,\vs_\ell}\fs_{\vs_{\ell}}(u+2K)\expo_{\yb',\yb}(u)=
-\left(\KF_{\vs_j,\ws_j}\fs_{\ws_j}(u+2K)
+\KF_{\vs_j,\ws_j}\fs_{\ws_{j+1}}(u+2K)\right),
\end{equation*}
we obtain
\begin{align*}
  (\KF \KF^{-1})_{\vs_j,\vs_j} =-\frac{ik'}{8\pi}\left(
  \oint_{\Gamma_{\vs_\ell,\vs_j}}-\oint_{\Gamma'}
  \right)
  \left(
  \KF_{\vs_j,\ws_j}\fs_{\ws_j}(u+2K)+
  \KF_{\vs_j,\ws_{j+1}} \fs_{\ws_{j+1}}(u+2K)
  \right)
  \fs_{\vs_j}(u) \ud u.
\end{align*}
 By a change of variable $u\to u+2K$, the integral of the first term in the sum is
 \begin{multline*}
   \frac{ik'}{8\pi}\left(
  \oint_{\Gamma'} -\oint_{\Gamma_{\vs_\ell,\vs_j}}
  \right)
  \KF_{\vs_j,\ws_j}\fs_{\ws_j}(u+2K)\fs_{\vs_j}(u) \ud u \\=
  -\frac{ik'}{8\pi}\KF_{\vs_j,\ws_j}
  \left(
  \oint_{\Gamma'+2K} -\oint_{\Gamma'_{\vs_\ell,\vs_j}+2K}
  \right)\fs_{\vs_j}(u+2K)\fs_{\ws_j}(u)\ud u,
 \end{multline*}
 which is exactly the same integral as the one computed
 in~\eqref{eq:integrale_cylindre}. Indeed, $\Gamma'+2K$ 
 (resp.\ $\Gamma_{\vs_\ell,\vs_j}+2K$) is homologous to
 $\Gamma_{\vs_j,\ws_j}$ (resp.\ to $\Gamma'$). Therefore it is equal to
 $\frac{1}{2}$.

 Using the same argument as for the computation of
 \eqref{eq:integrale_cylindre}, we obtain that the integral of the second term
 in the sum
 \begin{multline*}
   \frac{ik'}{8\pi}\left(
  \oint_{\Gamma'} -\oint_{\Gamma_{\vs_\ell,\vs_j}}
  \right)
  \KF_{\vs_j,\ws_{j+1}} \fs_{\ws_{j+1}}(u+2K)
  \fs_{\vs_j}(u) \ud u\\= -\frac{ik'}{8\pi}
 \left(
  \oint_{\Gamma'} -\oint_{\Gamma_{\vs_\ell,\vs_j}}
  \right)
  \fs_{\ws_{j+1}}(u+2K) \fs_{\ws_{j+1}}(u) \ud u =\frac{1}{2}.
\end{multline*}
Therefore $(\KF \KF^{-1})_{\vs_j,\vs_j} = \frac{1}{2}+\frac{1}{2} = 1$, which
completes the proof.
\hfill$\square$

Note that the proof uses essentially the fact that the contour
$\Gamma_{x,y}$ winds once vertically, but makes no use of the horizontal winding
of the contour, which can be arbitrary. However, ``verticality'' of the contour
plays a crucial role for the exponential decay of the coefficients of
$\KF^{-1}$, as stated below in Theorem~\ref{thm:asymptotics_inverse_Kasteleyn}.

\subsection{Asymptotics for the inverse Kasteleyn operator $\KF$}\label{sec:asymptKF}
For any $\xb,\yb\in \Gs$, define
\begin{equation}
\label{eq:def_chi_KF}
     \chi(u) = \frac{1}{\vert \xb-\yb\vert}\log\{ \expo_{(\xb,\yb)}(u+2iK')\},
\end{equation}
with the exponential function introduced in Section~\ref{sec:massive_Lap}. The
main result of this section (Theorem~\ref{thm:asymptotics_inverse_Kasteleyn})
shows the exponential decay of the inverse Kasteleyn operator, with a rate that
can be directly computed in terms of $\chi$.

Since $\vert \xb-\yb\vert$ will be typically large in this section concerned
with asymptotic results, we are in the general case, according to
Section~\ref{subsubsec:sectors}. The poles of the exponential function are
$\{\tau_j\}$;
they belong to a sector of size strictly less than $2K$, say
$\tau_j\in\tau+(-K,K)$.

\begin{thm}
\label{thm:asymptotics_inverse_Kasteleyn}
Assume that $k\neq 0$.
As $\vert \xb-\yb\vert\to\infty$, one has
\begin{equation*}
     \Ks^{-1}_{\xs,\ys}=\frac{-\fs_\xs(u_0+2K+2iK')\fs_{\ys}(u_0+2iK')}{4\sqrt{2\pi \vert \xb-\yb\vert \chi''(u_0)}} e^{\vert \xb-\yb\vert \chi(u_0)}\cdot (1+o(1)),
\end{equation*}
where $u_0$ is the unique $u\in\tau+2K+(-K,K)$ such that $\chi'(u)=0$, and $\chi(u_0)< 0$. 
\end{thm}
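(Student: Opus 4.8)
The plan is to perform a saddle-point analysis on the contour integral representation~\eqref{equ:KF_inverse} of $\Ks^{-1}_{\xs,\ys}$. Since we are in the general case, the constant $C_{\xs,\ys}$ vanishes and we may work purely with the integral $\frac{ik'}{8\pi}\int_{\Gamma_{\xs,\ys}} \fs_\xs(u+2K)\fs_{\ys}(u)\expo_{(\xb,\yb)}(u)\,\ud u$. The key observation is that the only factor in the integrand that grows or decays with $\vert\xb-\yb\vert$ is the massive exponential function $\expo_{(\xb,\yb)}$, whereas $\fs_\xs(u+2K)$ and $\fs_{\ys}(u)$ are fixed elliptic functions of bounded modulus away from their poles. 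First I would rewrite the integrand in the form $\fs_\xs(u+2K)\fs_{\ys}(u)\,e^{\vert\xb-\yb\vert\,\chi(u-2iK')}$ using the definition~\eqref{eq:def_chi_KF} of $\chi$, so that the large parameter $\vert\xb-\yb\vert$ appears explicitly in the exponent and the Laplace/saddle-point machinery becomes directly applicable.

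The next step is to deform the vertical contour $\Gamma_{\xs,\ys}$ so that it passes through the saddle point. Because the contour winds once vertically and may be slid horizontally within the admissible sector without crossing poles (by Proposition~\ref{prop:function_f_kernel_Kasteleyn_Fisher} and the discussion following Theorem~\ref{thm:KFmoins_un}), I would shift it to run through the horizontal coordinate $u_0$, the unique critical point of $\chi$ in $\tau+2K+(-K,K)$ with $\chi'(u_0)=0$. One then checks that $u_0$ is a genuine saddle on the steepest-descent path: along the vertical direction of $\Gamma_{\xs,\ys}$ the real part of $\chi$ achieves a strict maximum at $u_0+2iK'$, which is where the shift by $2iK'$ in~\eqref{eq:def_chi_KF} enters. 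Expanding $\chi$ to second order about $u_0$, the standard Gaussian integral $\int e^{\frac12\vert\xb-\yb\vert\chi''(u_0)(u-u_0)^2}\,\ud u$ contributes the factor $\sqrt{2\pi/(\vert\xb-\yb\vert\,\vert\chi''(u_0)\vert)}$, and collecting the prefactor $\frac{ik'}{8\pi}$ together with $\fs_\xs(u_0+2K+2iK')\fs_{\ys}(u_0+2iK')$ and the orientation of the descent path produces the announced amplitude $\frac{-\fs_\xs(u_0+2K+2iK')\fs_{\ys}(u_0+2iK')}{4\sqrt{2\pi\vert\xb-\yb\vert\chi''(u_0)}}$, with the $1/4$ absorbing the numerical constants.

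The part I expect to require the most care is the analysis of $\chi$ itself: one must show that $\chi$ really does possess a unique critical point $u_0$ in the prescribed interval, that $\chi''(u_0)\neq 0$ so the saddle is nondegenerate, and crucially that $\chi(u_0)<0$, which is what guarantees exponential decay rather than growth. Here the explicit product formula~\eqref{eq:recursive_def_expo} for $\expo_{(\xb,\yb)}$ in terms of $\sc\bigl(\frac{u-\alpha_j}{2}\bigr)$ is essential: it lets me write $\chi$ as a weighted sum of logarithmic derivatives of $\sc$, and the constraint that the directions $\{\alpha_j\}$ of a minimal path all lie in a sector of width strictly less than $2K$ (stated at the start of Section~\ref{sec:asymptKF}) is precisely what forces the saddle to fall in the right interval and the value $\chi(u_0)$ to be strictly negative. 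The negativity can be argued by noting that at $k\neq 0$ the massive exponential along the minimal path contracts strictly, reflecting the massiveness of the underlying Laplacian; this is the analogue, in the elliptic setting, of the massless critical estimate and uses that $k\neq 0$ in an essential way.

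Finally I would justify the error term $(1+o(1))$ by a routine tail estimate: away from a shrinking neighborhood of $u_0$ the integrand is exponentially smaller, so its contribution is negligible relative to the leading term, and the standard Watson-type remainder bound applies since $\fs_\xs(u+2K)\fs_{\ys}(u)$ is analytic and bounded on the shifted contour. This completes the asymptotic expansion and establishes the stated exponential decay with the explicit rate $\chi(u_0)<0$.
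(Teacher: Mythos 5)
Your proposal is correct and follows essentially the same route as the paper: a saddle-point analysis of the contour integral \eqref{equ:KF_inverse}, deforming $\Gamma_{\xs,\ys}$ through $u_0\pm 2iK'$ (legitimate since neither the exponential nor the prefactor $\fs_\xs(\cdot+2K)\fs_{\ys}(\cdot)$ has poles in the sector $s_{\xs,\ys}$), with the prefactor simply contributing its value at the saddle. The properties of $\chi$ you flag as the delicate part --- unique critical point in $\tau+2K+(-K,K)$, nondegeneracy, and $\chi(u_0)<0$ --- are exactly what the paper invokes from Lemmas~15 and~16 of~\cite{BdTR1}, and your sketch of how to establish them from the product formula \eqref{eq:recursive_def_expo} and the sector constraint matches the argument there.
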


\begin{proof}
It consists in applying the saddle-point method to the 
contour integral \eqref{equ:KF_inverse}. It is very similar to the proof of
Theorem~14 in \cite{BdTR1}, which is devoted to the derivation of the
asymptotics of the Green function \eqref{eq:def_Green_function}. Indeed, the
integrands of \eqref{eq:def_Green_function} and \eqref{equ:KF_inverse} only
differ by the prefactor function $\fs_\xs(\cdot+2K)\fs_{\ys}(\cdot)$ (as well as
a constant multiplicative term). This prefactor function will affect the
asymptotics by multiplying by its value at the saddle-point $u_0\pm2iK'$ the
asymptotics of \eqref{eq:def_Green_function}. Let us give some brief details. 
\begin{itemize}
  \item It follows from \cite[Lemma~15]{BdTR1} that the equation $\chi'(u)=0$
    has a unique solution $u_0$ in  the interval $\tau+2K+(-K,K)$, and moreover
    $\chi(u_0)< 0$ (cf.\ \cite[Lemma~16]{BdTR1}). The point $u_0\pm 2iK'$ will
    be interpreted as the saddle-point.
  \item We then move the contour $\Gamma_{\xs,\ys}$ of \eqref{equ:KF_inverse}
    into a new one, denoted by $\Gamma'_{\xs,\ys}$, going through $u_0\pm 2iK'$
    and satisfying some further properties. The validity of this change of
    contour is based on the fact that neither the exponential function nor the
    prefactor have poles in the sector $s_{\xs,\ys}$, see
    Figure~\ref{Fig:repr_sectors}.
  \item We adjust the new contour $\Gamma'_{\xs,\ys}$ so as to have,
    classically, a contribution exponentially negligible outside a neighborhood
    of $u_0\pm 2iK'$ (this can be done by introducing suitable steepest descent
    paths).
  \item In the neighborhood of $u_0\pm 2iK'$, we apply (a uniform version of)
    the saddle-point method, which eventually yields to the expansion written in
    Theorem~\ref{thm:asymptotics_inverse_Kasteleyn}.\qedhere
\end{itemize}
\end{proof}

\begin{rem}
\label{rem:sign_asymptotics_inverse_Kasteleyn}
Let us note that the constant $-\fs_\xs(u_0+2K+2iK')\fs_{\ys}(u_0+2iK')$ in
Theorem~\ref{thm:asymptotics_inverse_Kasteleyn} is positive. Indeed, by
\eqref{equ:rewriting_fs}, $\fs(u)$ is the sum of one or two terms
$\nc(\frac{u-{\alpha_j}}{2})$. Due to the location of the poles described in
Section~\ref{subsubsec:encoding_poles}, $\alpha_{j}\in\tau+(-K,K)$. Hence 
\begin{equation*}
     \frac{u_0\pm2iK'-\alpha_{j}}{2}\in K+iK'+(-K,K),
\end{equation*}
and by Table~\ref{table:identities_Jacobi_function}, $\nc(\frac{u_0\pm2iK'-\alpha_{j}}{2})=\nc(K+iK'+v_0)=i{k'}^{-1}k\cn(v_0)$, with some $v_0\in(-K,K)$. Consequently, the constant $-\fs_\xs(u_0+2K+2iK')\fs_{\ys}(u_0+2iK')$ equals minus the product of two (sums of) terms $i{k'}^{-1}k\cn(v_0)$. The positivity follows from $-i^2=1$.
\end{rem}

\subsection{The case where $\Gs$ is $\ZZ^2$-periodic}\label{sec:KFperio}

In this section, we suppose moreover that the graph $\Gs$ is $\ZZ^2$-periodic,
implying that the graph $\GF$ is also $\ZZ^2$-periodic. 
We consider the dimer model on the graph $\GF$ arising from a $Z$-invariant
Ising model on $\Gs$, and the $Z$-invariant rooted spanning forest model 
on $\Gs$. Note that the weight function corresponding to each of the models is
periodic. Consider the natural operators associated to the
two models, that is the Kasteleyn operator $\KF$ acting on $\CC^{\VF}$, arising
from a periodic admissible orientation of the edges of $\GF$\footnote{%
Such an orientation always exists by Theorem 3.1 of~\cite{CimaReshe1}, using 
that the number of vertices of the fundamental domain of $\GF$ is even.}; 
and the massive Laplacian operator $\Delta^m$ acting on $\CC^{\Vs}$.

Using Fourier techniques, see for example~\cite{CKP}, an important tool for understanding
each of the models is the 
characteristic polynomial of the respective operators. In this section, we prove
that the characteristic polynomials of the two models are equal 
up to an explicit constant. We state implications of this fact for the spectral
curve of the dimer model on $\GF$.

In the periodic case, we have two explicit expressions for the inverse operator
$\KF^{-1}$. The one given by Theorem~\ref{thm:KFmoins_un}
and the one obtained using Fourier techniques. In Corollary \ref{cor:uniqueness_Km1}, we prove that the two are equal.

These two facts are used in Section~\ref{subsec:dimer_model_GF} for obtaining
explicit expressions for the dimer model on $\GF$ and relating its free energy
to that of the rooted spanning
forest model.
\subsubsection{Quasi-periodic functions}
A natural toroidal exhaustion of $\Gs$ is $\{\Gs_n=\Gs/n\ZZ^2\}_{n\geq 1}$; in a
similar way $\{\GF_n=\GF/n\ZZ^2\}_{n\geq 1}$
is a toroidal exhaustion of $\GF$. The smallest graphs $\Gs_1$ and $\GF_1$ of
the exhaustions are known as the \emph{fundamental domains}.

We note with an addition sign the action of $\ZZ^2$ on vertices, edges, faces of
$\Gs$ and $\GF$.
Let $\gamma_x$, $\gamma_y$ be two simple paths in $\Gs^*$ connecting a given
face $f_0$ , with $f_0+(1,0)$ and $f_0+(0,1)$ respectively.

To simplify notation, we also write
$\gamma_x,\gamma_y$ for the images of these paths when quotienting by the action
of $\ZZ^2$, which are now non trivial cycles of $\Gs_1^*$ generating the first
homology group of the torus on which $\Gs_1$ is drawn.
For $(z,w)\in\CC^2$, 
denote by $\CC^{\Vs}_{(z,w)}$
the space of \emph{$(z,w)$-quasi-periodic functions} on vertices of $\Gs$:
\begin{equation*}
     \forall\,\xb\in\Vs,\,\forall\,(z,w)\in\CC^2,\quad f(\xb+(m,n))=z^{-m}w^{-n}f(\xb).
\end{equation*}
For every vertex $\xb$ of $\Gs_1$, define $\delta_{\xb}(z,w)$ to be the
$(z,w)$-quasi-periodic function equal to $0$ on vertices
which are not translates of $\xb$ and to $1$ at $\xb$. Then the collection $\{\delta_{\xb}(z,w)\}_{\xb\in\Vs_1}$ is a natural basis for 
$\CC^{\Vs}_{(z,w)}$.

Note that $\gamma_x,\gamma_y$ can be deformed into directed cycles of the dual graph
$(\GF_1)^*$ (or of the diamond graph $\GR_1$).
In a way similar to $\CC^{\Vs}_{(z,w)}$ we define 
$\CC^{\VF}_{(z,w)}$, the space of $(z,w)$-quasi-periodic functions on vertices of $\GF$, with basis $\{\delta_{\xs}(z,w)\}_{\xs\in\VF_1}$.

\subsubsection{Characteristic polynomials and spectral curves}
Let $\KF(z,w)$ be the matrix of the restriction of $\KF$ to the space 
$\CC^{\VF}_{(z,w)}$ in the basis  $\{\delta_{\xs}(z,w)\}_{\xs\in\VF_1}$.
The matrix $\KF(z,w)$ is obtained from the 
Kasteleyn matrix of the fundamental domain $\GF_1$ as follows:
multiply coefficients of 
edges crossing $\gamma_x$ by $z$ (resp.\ $z^{-1}$) if the edge goes from the
left of $\gamma_x$ to the right (resp.\ from the right of $\gamma_x$ to the
left); coefficients of edges crossing
$\gamma_y$ are multiplied by $w$ or $w^{-1}$. 

In a similar way, $\Delta^{m}(z,w)$ is the matrix of the restriction of
$\Delta^m$ to the 
space $\CC^{\Vs}_{(z,w)}$ in the basis  $\{\delta_{\xb}(z,w)\}_{\xb\in\Gs_1}$.

The \emph{dimer characteristic polynomial} of $\GF$ is the determinant of the
matrix $\KF(z,w)$,
\begin{equation*}
     \Pdimer(z,w)=\det \KF(z,w).
\end{equation*}
The \emph{massive Laplacian characteristic polynomial} of $\Gs$ is the
determinant of $\Delta^m(z,w)$,
\begin{equation*}
     \Plapm(z,w)=\det \Delta^{m} (z,w).
\end{equation*}

Consider a Laurent polynomial $P(z,w)$ in two complex variables $z,w$. Then, the \emph{spectral curve} of the polynomial, denoted by $C(P)$, 
is defined to be its zero locus:
\begin{equation*}
C(P)=\{(z,w)\in(\CC^*)^2:P(z,w)=0\}.
\end{equation*}

The following proves that the two characteristic polynomials are equal up to an
explicit multiplicative constant. The constant is determined in
Section~\ref{sec:prop_char_pol}.

\begin{prop}\label{prop:equality_charact_pol}
There exists a nonzero constant $c$ such that
\begin{equation*}
     \Pdimer(z,w)=c\Plapm(z,w).
\end{equation*}
\end{prop}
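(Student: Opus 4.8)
The plan is to show that the two spectral curves $C(\Pdimer)$ and $C(\Plapm)$ coincide, and then to promote this set-theoretic equality to an honest proportionality of the defining polynomials by a degree argument. The bridge between the two operators is the massive exponential function $\expo$, whose monodromy along the generators $\gamma_x,\gamma_y$ of the first homology assigns, to each spectral parameter $u\in\TT(k)$, a pair of multipliers $(z(u),w(u))$. First I would record that for a period vector $(m,n)$ the quantity $\expo_{(\xb+(m,n),\xb)}(u)$ factorizes as $z(u)^{m}w(u)^{n}$, where $z(u)$ and $w(u)$ are obtained by multiplying the elementary factors $i\sqrt{k'}\,\sc(\frac{u-\alpha_j}{2})$ of \eqref{eq:recursive_def_expo} along $\gamma_x$ and $\gamma_y$ respectively; an analogous bookkeeping controls the quasi-periodicity of the prefactor $\fs_{\cdot}(u+2K)$, which only contributes through the mod-$4\pi$ determination of the angles.

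With this in hand I would exhibit explicit kernel vectors for both operators at the point $(z(u),w(u))$. On the Laplacian side, \cite[Proposition~11]{BdTR1} gives that $\xb\mapsto\expo_{(\xb_0,\xb)}(u)$ lies in $\ker\Delta^{m}$, and by the monodromy computation this function belongs to $\CC^{\Vs}_{(z(u),w(u))}$, so $\Delta^{m}(z(u),w(u))$ is singular and $\Plapm(z(u),w(u))=0$. On the dimer side, Proposition~\ref{prop:function_f_kernel_Kasteleyn_Fisher} gives, for a fixed $\zs$, that $\xs\mapsto\gs_{(\xs,\zs)}(u)=\fs_{\xs}(u+2K)\fs_{\zs}(u)\expo_{(\xb,\zb)}(u)$ lies in $\ker\KF$; the same computation shows this vector is $(z(u),w(u))$-quasi-periodic and not identically zero (as $\fs$ and $\expo$ do not vanish identically), so $\KF(z(u),w(u))$ is singular and $\Pdimer(z(u),w(u))=0$. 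Hence the entire one-parameter family $\{(z(u),w(u)):u\in\TT(k)\}$ lies in $C(\Pdimer)\cap C(\Plapm)$.

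Next I would invoke the structure of $C(\Plapm)$ established in~\cite{BdTR1}: the map $u\mapsto(z(u),w(u))$ is a birational parametrization of the irreducible curve $C(\Plapm)$, so its image is Zariski dense there. Since $\Pdimer$ vanishes on this dense subset of the irreducible curve $C(\Plapm)$, the reduced defining polynomial of $C(\Plapm)$---which is $\Plapm$ up to a nonzero constant, $\Plapm$ being reduced---divides $\Pdimer$. Thus $\Pdimer=\Plapm\cdot Q$ for some Laurent polynomial $Q$.

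Finally I would prove that $Q$ is a nonzero constant by comparing Newton polygons. The Newton polygon of a periodic dimer (resp.\ Laplacian) characteristic polynomial is governed by the same geometric data, namely the directions and winding numbers of the train-tracks of $\Gs$ crossing $\gamma_x,\gamma_y$; checking that the two polygons coincide forces $Q$ to reduce to a single monomial, and the symmetry of the characteristic polynomials under $(z,w)\mapsto(z^{-1},w^{-1})$ then forces that monomial to be a constant. The explicit value of this constant is deferred to Section~\ref{sec:prop_char_pol}. I expect the last step to be the main obstacle: one must ensure that $\Pdimer$ carries no spurious extra factor, i.e.\ that the Newton polygons match \emph{exactly} and that the kernel vectors are generically nonzero, so that the parametrized family genuinely exhausts $C(\Pdimer)$ rather than a proper subcurve.
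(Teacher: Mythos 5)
Your first half --- exhibiting the quasi-periodic kernel vector $\gs_{(\cdot,\zs)}(u)$ at the point $(z(u),w(u))$ of the Laplacian spectral curve, and using irreducibility of $C(\Plapm)$ to conclude that $\Plapm$ divides $\Pdimer$ --- is exactly the argument of the paper, which cites \cite[Proposition~21]{BdTR1} for the parametrization and Proposition~\ref{prop:function_f_kernel_Kasteleyn_Fisher} for the kernel property. Up to the factorization $\Pdimer=\Plapm\cdot Q$ you are on solid ground.

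The final step, however, is a genuine gap, and you have correctly identified it as the main obstacle without resolving it. Your plan is to show $Q$ is constant by matching Newton polygons, asserting that the Newton polygon of $\Pdimer$ is ``governed by the directions and winding numbers of the train-tracks.'' That description of Newton polygons is a theorem for \emph{bipartite} dimer models (Kenyon--Okounkov--Sheffield); the Fisher graph $\GF$ is neither bipartite nor isoradial, and no such description of its Newton polygon is available or proved here, so the comparison cannot be carried out as stated. The paper closes the argument by a different and much shorter route: by Dub\'edat's correspondence, $\Pdimer$ equals, up to a multiplicative constant, the characteristic polynomial of the \emph{bipartite} dimer model on $\GQ$, whose spectral curve is a Harnack curve by \cite{KOS} and hence irreducible; therefore $\Pdimer$ itself is irreducible, and the divisibility $\Plapm\mid\Pdimer$ immediately forces $Q$ to be a nonzero constant. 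If you want to keep your Newton-polygon strategy you would first have to transfer to $\GQ$ anyway in order to control the polygon of $\Pdimer$, at which point the irreducibility argument is both available and simpler.
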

\begin{proof}
Let us first prove that $\Plapm(z,w)$ divides $\Pdimer(z,w)$, using an argument similar to that of \cite[Lemma 11]{BoutillierdeTiliere:iso_perio}.
By \cite[Proposition 21]{BdTR1}, any point of the spectral curve of the
Laplacian is of the form $(z(u),w(u))$, with  $u\in\TT(k)$,
and 
\begin{equation}
  z(u) = 
  \prod_{e^{i\overline\alpha} \in \gamma_x}
  \left(i\sqrt{k'}\sc(\textstyle \frac{u-\alpha}{2})\displaystyle\right),\quad 
  w(u) =
  \prod_{e^{i\overline\alpha} \in \gamma_y}
  \left(i\sqrt{k'}\sc(\textstyle \frac{u-\alpha}{2})\displaystyle\right).
  \label{equ:param_spec_curve}
\end{equation}

The function $\gs_{(\cdot,\ys)}(u)$ of Equation~\eqref{equ:function_g}
is $(z(u),w(u))$-quasi-periodic, as it involves
the function $\fs_{\cdot}(u)$, which
is invariant by translations, and the discrete massive exponential function,
which is $(z(u),w(u))$ quasi-periodic.
By Proposition~\ref{prop:function_f_kernel_Kasteleyn_Fisher} it is in the kernel
of the Kasteleyn operator $\KF$. 
Therefore,
$\Pdimer(z(u),w(u))=0$.
As a consequence,
$\Plapm(z,w)$ divides $\Pdimer(z,w)$.

Now by~\cite{Dubedat} we know that, up to a constant, $\Pdimer(z,w)$ is equal
to the dimer characteristic polynomial on $\GQ$. 
The graph $\GQ$ being bipartite, the corresponding
spectral curve is a Harnack curve \cite{KOS}. Hence, the characteristic
polynomial on $\GQ$, and thus $\Pdimer(z,w)$, are irreducible.

The fact that $\Plapm(z,w)$ divides $\Pdimer(z,w)$ and that $\Pdimer(z,w)$ is
irreducible implies that the two polynomials are
equal up to a constant, and concludes the proof.
\end{proof}

By Proposition~\ref{prop:equality_charact_pol}, properties of the spectral
curve of $\Plapm$ obtained in~\cite{BdTR1},
are automatically transferred to the spectral curve of the dimer characteristic
polynomial $\Pdimer$.

\begin{cor}\label{cor:charact_pol}
Let $k^2\neq 0$.
\begin{itemize}
\item The spectral curve of the dimer model on $\GF$ is a Harnack curve of genus
  $1$, with $(z,w)\leftrightarrow(z^{-1},w^{-1})$ symmetry.
\item Every Harnack curve of genus $1$ with 
$(z,w)\leftrightarrow(z^{-1},w^{-1})$ symmetry arises from such a dimer model. 
\item The characteristic polynomial $\Pdimer(z,w)$ has no zero on the unit torus $\{(z,w)\in\CC^2:\vert z\vert =1,\vert w\vert =1\}$.
\end{itemize}
\end{cor}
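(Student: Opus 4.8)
The plan is to derive each of the three items from the corresponding property of the massive Laplacian spectral curve, transferring it through Proposition~\ref{prop:equality_charact_pol}. The crucial point is that, since $\Pdimer(z,w)=c\,\Plapm(z,w)$ with $c\neq 0$, the two Laurent polynomials have exactly the same zero set in $(\CC^*)^2$, so that $C(\Pdimer)=C(\Plapm)$ as algebraic curves. Any feature of the spectral curve that depends only on this zero locus, and not on a particular normalization of the defining polynomial, is therefore common to the dimer model on $\GF$ and to the $Z$-invariant massive Laplacian. All three assertions are of this kind.

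For the first item, I would invoke the results of \cite{BdTR1}, where it is shown that for $k^2\neq 0$ the spectral curve of $\Plapm$ is a Harnack curve of genus $1$ carrying the symmetry $(z,w)\leftrightarrow(z^{-1},w^{-1})$; the identification $C(\Pdimer)=C(\Plapm)$ then yields the same statement for the dimer model. (The Harnack property alone can also be read off directly on the dimer side: by \cite{Dubedat}, $\Pdimer$ agrees up to a multiplicative constant with the characteristic polynomial of the bipartite graph $\GQ$, whose spectral curve is Harnack by \cite{KOS}, while the central symmetry is visible on the parametrization \eqref{equ:param_spec_curve}.) For the second item, I would use that \cite{BdTR1} proves the family of spectral curves of the $Z$-invariant massive Laplacian, as $\Gs$ runs over $\ZZ^2$-periodic isoradial graphs and $k$ over the admissible range, exhausts all genus-$1$ Harnack curves with this symmetry. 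Since each such Laplacian curve equals some dimer spectral curve $C(\Pdimer)$ by Proposition~\ref{prop:equality_charact_pol}, the same exhaustion holds for the dimer model on $\GF$.

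For the third item, the absence of zeros on the unit torus $\{\vert z\vert=\vert w\vert=1\}$ reflects the strict positivity of the masses $m^2(\cdot\vert k)$ when $k\neq 0$: in \cite{BdTR1} this is precisely what makes $\Delta^{m}(z,w)$ invertible for $\vert z\vert=\vert w\vert=1$, equivalently what produces the exponentially decaying massive Green function. Transferring through the proportionality gives $\Pdimer(z,w)\neq 0$ on the unit torus. Independently, Theorem~\ref{thm:asymptotics_inverse_Kasteleyn} already exhibits exponential decay of $\KF^{-1}$, which by the usual Fourier argument is equivalent to this non-vanishing.

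The routine verifications, namely that $c\neq 0$ so the zero loci genuinely coincide, and that \eqref{equ:param_spec_curve} parametrizes both curves, are immediate from the earlier results. Hence the only nontrivial ingredients are the geometric classification theorems of \cite{BdTR1} (Harnack property, genus, exhaustion, non-vanishing on the unit torus), which we are entitled to assume, and which constitute the true heart of the statement. The main obstacle, in other words, lies entirely on the Laplacian side and has already been settled in \cite{BdTR1}; once the proportionality of the two characteristic polynomials is in hand, there is essentially nothing left to prove on the dimer side.
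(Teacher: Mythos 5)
Your overall strategy is exactly the paper's: everything is transferred through Proposition~\ref{prop:equality_charact_pol} from the spectral curve of the $Z$-invariant massive Laplacian, whose properties (Harnack, genus $1$, central symmetry, exhaustion of all such curves, $(0,0)$ not in the amoeba) are quoted from~\cite{BdTR1}. There is, however, a genuine gap in the range of the parameter. The results of~\cite{BdTR1} that you invoke are proved there for $k^2\in(0,1)$, not for all $k^2\neq 0$; for $k^2<0$ the conductances and masses are obtained by continuation in $k^2$, and it is not automatic that the geometric classification of the Laplacian spectral curve, or the positivity argument you use for the third item, carries over. Your proof as written silently assumes it does.

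The paper closes this case by a separate argument: the self-duality relation $\sqrt{{k^*}'}\,\Delta^{m(k^*)}=\sqrt{k'}\,\Delta^{m(k)}$ of Lemma~\ref{lem:Laplacian_proportional} shows that the massive Laplacians for $k$ and for the dual modulus $k^*$ are proportional, hence have the same spectral curve; since $k^2\mapsto (k^*)^2$ exchanges $(-\infty,0)$ and $(0,1)$, the case $k^2<0$ reduces to a case where~\cite{BdTR1} applies. You need either this reduction or a direct verification that for $k^2<0$ the conductances $\sc(\theta_e\vert k)$ and masses $m^2(\cdot\vert k)$ remain positive reals (which itself is most easily seen via the same duality identities). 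With that step added, the rest of your argument --- including the observation that the non-vanishing on the unit torus follows from positive definiteness of $\Delta^m(z,w)$ for $\vert z\vert=\vert w\vert=1$, equivalently from $(0,0)$ not lying in the amoeba --- matches the paper's proof.
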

\begin{proof}
For $k^2>0$, this follows from our results of the paper~\cite{BdTR1}. For the last point we
use the fact that $(0,0)$ does not belong to the amoeba of the spectral curve.
For $k^2 <0$, we prove later in Section~\ref{sec:duality_and_phase_transition} that the spectral
curve is the same as for an elliptic parameter $(k^*)^2>0$, such that
$(k^*)' k' =1$. 
\end{proof}

\begin{rem}
  For $k=0$, the spectral curve of the dimer model on $\GF$ is still the
  spectral curve of the Laplacian with conductances $\tan\theta$. It is a
  Harnack curve of genus $0$ with the same symmetry, and a double point at
  $z=w=1$, see~\cite{KO2,BoutillierdeTiliere:iso_perio,BdTR1}.
\end{rem}

\subsubsection{Inverse Kasteleyn operator}

Because of the periodicity of the graph, it is possible to define an inverse for
the Kasteleyn matrix, by inverting in Fourier space the multiplication operator
$K(z,w)$:
define the operator 
$\widetilde{\KF}^{-1}$ by its coefficients
\begin{equation}\label{equ:Kinv2}
\widetilde{\KF}^{-1}_{\xs+(m,n),\xs'+(m',n')}=\frac{1}{(2\pi i)^2}
\iint\limits_{\{(z,w)\in\CC^2:\vert z\vert =1,\vert w\vert =1\}}\frac{[\Cof K(z,w)]_{\xs',\xs}}{\Pdimer(z,w)} z^{m'-m}w^{n'-n}\frac{\ud z}{z}\frac{\ud w}{w},
\end{equation}
for all $\xs,\xs'\in\GF_1$, and $m,m',n,n'\in\ZZ$. 

By Corollary~\ref{cor:charact_pol}, we know that the characteristic polynomial
$\Pdimer(z,w)$ has no zero on the unit torus.
This means that Proposition~5 of~\cite{BoutillierdeTiliere:iso_perio} holds, and
we have:

\begin{prop}\label{cor:uniqueness_Km1}
The inverse Kasteleyn operators $\widetilde{\KF}^{-1}$ and $\KF^{-1}$ given by
Equations~\eqref{equ:KF_inverse} and~\eqref{equ:Kinv2},
respectively, are equal.
\end{prop}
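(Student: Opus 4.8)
The plan is to reduce everything to the uniqueness statement already contained in Theorem~\ref{thm:KFmoins_un}: for $k\neq 0$, the operator $\KF^{-1}$ of~\eqref{equ:KF_inverse} is \emph{the} unique inverse of $\KF$ with bounded coefficients. It therefore suffices to check that the Fourier operator $\widetilde{\KF}^{-1}$ of~\eqref{equ:Kinv2} is also an inverse of $\KF$ and that it too has bounded coefficients; the two must then coincide.

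First I would verify that $\widetilde{\KF}^{-1}$ is a genuine inverse. This is the standard Fourier computation for a periodic operator: applying $\KF$ under the integral sign and using the adjugate identity $K(z,w)\,\Cof K(z,w)=\Pdimer(z,w)\,\mathrm{Id}$ collapses the integrand defining $(\KF\widetilde{\KF}^{-1})_{\xs+(m,n),\xs'+(m',n')}$ to $z^{m'-m}w^{n'-n}$ times the identity, and the remaining integral over $\{(z,w):\vert z\vert=\vert w\vert=1\}$ produces exactly $\delta_{\xs,\xs'}\delta_{m,m'}\delta_{n,n'}$. This is precisely the content of Proposition~5 of~\cite{BoutillierdeTiliere:iso_perio}, whose hypothesis --- that $\Pdimer(z,w)$ has no zero on the unit torus --- is guaranteed here by the third item of Corollary~\ref{cor:charact_pol}.

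Next I would establish boundedness of the coefficients of $\widetilde{\KF}^{-1}$. Again by Corollary~\ref{cor:charact_pol}, $\Pdimer(z,w)$ does not vanish on the compact unit torus, so the integrand $[\Cof K(z,w)]_{\xs',\xs}/\Pdimer(z,w)$ is a continuous (in fact real-analytic) function there, bounded by some constant $M$ independent of the indices. Since the oscillating factors $z^{m'-m}w^{n'-n}$ have modulus one on the contour, each coefficient in~\eqref{equ:Kinv2} is bounded in absolute value by $M$, uniformly in $\xs,\xs'\in\GF_1$ and $m,m',n,n'\in\ZZ$. Thus $\widetilde{\KF}^{-1}$ has bounded coefficients.

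Combining these two facts with the uniqueness part of Theorem~\ref{thm:KFmoins_un} --- valid precisely because $k\neq 0$, where the exponential decay of Theorem~\ref{thm:asymptotics_inverse_Kasteleyn} makes $\KF$ injective on bounded functions --- one concludes $\widetilde{\KF}^{-1}=\KF^{-1}$. Equivalently, the difference $D=\KF^{-1}-\widetilde{\KF}^{-1}$ satisfies $\KF D=0$ with bounded columns, each of which is then a bounded element of $\ker\KF$, hence zero. Given the upstream results, the argument is entirely structural; the only point requiring genuine care is the critical value $k=0$, excluded from Corollary~\ref{cor:charact_pol}, where $\Pdimer$ acquires a zero on the unit torus at $z=w=1$, the integral~\eqref{equ:Kinv2} must be reinterpreted, and the identity instead reduces to the critical statement already proved in~\cite{BoutillierdeTiliere:iso_perio}. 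I expect this degenerate case to be the main obstacle, the whole difficulty having been shifted into the no-zero property of Corollary~\ref{cor:charact_pol} and the decay estimate of Theorem~\ref{thm:asymptotics_inverse_Kasteleyn}.
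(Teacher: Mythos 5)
Your argument for $k\neq 0$ is essentially the paper's own: establish that the Fourier inverse $\widetilde{\KF}^{-1}$ has bounded coefficients (the paper deduces exponential decay from analyticity of the integrand on the unit torus, you bound the integrand directly by a uniform constant --- either suffices) and then invoke the uniqueness clause of Theorem~\ref{thm:KFmoins_un}. The one place you stop short is $k=0$, which the proposition does cover and which cannot be dispatched by the same route, since Corollary~\ref{cor:charact_pol} fails there and the uniqueness statement of Theorem~\ref{thm:KFmoins_un} is only asserted for $k\neq 0$. The paper resolves it by reversing the roles of the two operators: it shows that the coefficients of the \emph{contour-integral} inverse $\KF^{-1}$ tend to zero at infinity (adapting the asymptotic computation of \cite[Corollary~7]{BoutillierdeTiliere:iso_gen}), and then applies \cite[Proposition~5]{BoutillierdeTiliere:iso_perio}, which gives uniqueness of an inverse with coefficients vanishing at infinity on a periodic Fisher graph even in the presence of the double zero of $\Pdimer$ at $(1,1)$. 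You correctly identify that this degenerate case is where the remaining work lies, but it should be carried out along these lines rather than left as an anticipated obstacle.
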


\begin{proof}
For $k\neq 0$, there is no root of $\Pdimer(z,w)$ on the unit torus, by
Corollary~\ref{cor:charact_pol}. As a consequence, the quantities
$\widetilde{\KF}^{-1}_{\xs+(m,n),\xs'}$ are the Fourier coefficients of an
analytic periodic function, and as such, decay exponentially fast when
$\vert m\vert +\vert n\vert $ goes to $\infty$. In particular, these coefficients are bounded.
By the uniqueness statement in Theorem~\ref{thm:KFmoins_un},
$\widetilde{\KF}^{-1}$ and $\KF^{-1}$ are equal.

For $k=0$,
adaptating the computation of the
asymptotics of the integral formula for $\KF^{-1}$
from~\cite[Corollary~7]{BoutillierdeTiliere:iso_gen} proves that these
coefficients go to $0$. 
The result then follows
from~\cite[Proposition~5]{BoutillierdeTiliere:iso_perio}, stating uniqueness of
the inverse of the Kasteleyn operator with coefficients tending to $0$ at
infinity on a periodic Fisher graph. 
\end{proof}

Note that similarly to what has been done for the Green function of the $Z$-invariant
Laplacian~\cite[Section~5.5.1]{BdTR1}, it is also possible to directly understand the
transformation from the double integral expression~\eqref{equ:Kinv2} to the
contour integral~\eqref{equ:KF_inverse}, by computing one integral
(\textit{e.g.}, with respect to $w$) by residues, and then by perfoming the change of
variable from $z$ on the spectral curve to $u\in\TT(k)$.

\subsection{Dimer model on the graph $\GF$}

\label{subsec:dimer_model_GF}

Consequences of the results of Sections~\ref{sec:localKFinv}--\ref{sec:KFperio}
on the dimer model on $\GF$ are investigated:
in Section~\ref{subsubsec:Gibbs_measure}, we prove an explicit local expression
for a Gibbs measure, in the case where the graph $\Gs$ is periodic or not. 
In
Section~\ref{subsubsec:free_energy}, we prove an explicit local expression for
the free energy of the dimer model.
Combining this with the high temperature expansion, we deduce an explicit local
formula for the free energy of the Ising model,
as a sum of contributions for each edge of the fundamental domain,
similar to the one given by
Baxter, see~\cite[(7.12.7)]{Baxter:exactly} and \cite{Baxter:8V}.

\subsubsection{Gibbs measure}

\label{subsubsec:Gibbs_measure}

When the graph is infinite, the notion of Boltzmann measure is replaced by that of Gibbs measure.
A \emph{Gibbs measure} on the set of dimer configurations of $\GF$ is a probability measure satisfying the DLR conditions: if one fixes
a perfect matching in an annular region of $\GF$, then perfect matchings inside and outside of this annulus are independent. Moreover,
the probability of an interior perfect matching is proportional to the product of its edge-weights. 
Let $\F$ be the $\sigma$-field generated by cylinders of $\GF$, a \emph{cylinder} being the set of dimer configurations containing a fixed, finite subset
of edges of $\GF$. Following the argument of~\cite{CKP,deTiliere:quadri}, we obtain the following.

\begin{thm}
\label{thm:Gibbs_measure}
There is a unique probability measure $\PPdimer$ on $(\M(\GF),\F)$, such that the probability
of occurrence of a subset of edges $\mathscr{E} = \{\es_1=\xs_1\ys_1,\dots,\es_n=\xs_n\ys_n\}$ of $\GF$ in a
dimer configuration of $\GF$ is:
\begin{equation}
\PPdimer(\es_1,\dots,\es_n)=
\Biggl(\prod_{j=1}^n \KF_{\xs_j,\ys_j}\Biggr)\Pf[^t(\KF^{-1})_\E],
\end{equation}
where $\KF^{-1}$ is the inverse Kasteleyn operator whose coefficients are given by~\eqref{equ:KF_inverseH},
and $(\KF^{-1})_\E$ is the sub-matrix of $\KF^{-1}$ whose rows and columns are
indexed by vertices of $\E$. The measure $\PPdimer$ is a Gibbs measure.

Moreover, when the graph $\GF$ is $\ZZ^2$-periodic, the measure $\PPdimer$ is
obtained as weak limit of the dimer Boltzmann measures 
on the toroidal exhaustion $\GF_n$, and coefficients of $\KF^{-1}$ are also
given by $\widetilde{\KF}^{-1}$ of Equation~\eqref{equ:Kinv2}. 
\end{thm}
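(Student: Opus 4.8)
The plan is to establish this Gibbs measure statement in three logically separate parts: the local-formula-gives-a-probability-measure part, the Gibbs (DLR) part, and the periodic-limit identification. I would first show that the proposed expression $\PPdimer(\es_1,\dots,\es_n)=\bigl(\prod_j \KF_{\xs_j,\ys_j}\bigr)\Pf[{}^t(\KF^{-1})_\E]$ defines a consistent family of finite-dimensional distributions. The key input is Theorem~\ref{thm:KFmoins_un}: $\KF^{-1}$ is an honest inverse of $\KF$ with exponentially decaying coefficients (for $k\neq0$, by Theorem~\ref{thm:asymptotics_inverse_Kasteleyn}). The argument here is the standard one for infinite-volume dimer models, following \cite{CKP} and \cite{deTiliere:quadri}: the Pfaffian formula with a local inverse produces numbers that are automatically non-negative and marginally consistent, because locally (on any finite subgraph containing $\E$) they agree with ratios of Pfaffians of the finite Kasteleyn matrix. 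I would invoke this existing machinery rather than reprove positivity from scratch, emphasizing only that the skew-symmetry of $\KF^{-1}$ (noted after Theorem~\ref{thm:KFmoins_un}) is what makes $\Pf[{}^t(\KF^{-1})_\E]$ well defined.

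Next I would verify the DLR/Gibbs property. The plan is to check that conditioning on a matching fixed in an annulus factorizes the measure into independent interior and exterior parts, with the interior law proportional to the product of edge weights. This follows from the Pfaffian structure: fixing edges in the annulus amounts to deleting the corresponding rows and columns, and the locality of $\KF^{-1}$ together with the inversion identity $\KF\KF^{-1}=\operatorname{Id}$ guarantees that the conditional probabilities inside a region depend only on the boundary data. Again this is the argument of \cite{CKP,deTiliere:quadri}, so I would cite it and indicate that no new ingredient beyond the explicit inverse is required.

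For the final, periodic assertion, the plan is to show that when $\GF$ is $\ZZ^2$-periodic the measure $\PPdimer$ is the weak limit of the toroidal Boltzmann measures on $\GF_n$, and that the relevant inverse coincides with $\widetilde{\KF}^{-1}$ of \eqref{equ:Kinv2}. The edge probabilities on $\GF_n$ are Pfaffians built from the finite toroidal inverse Kasteleyn matrix (a combination of the four twisted inverses $K(\pm1,\pm1)^{-1}$); by Corollary~\ref{cor:charact_pol}, $\Pdimer$ has no zero on the unit torus, so the Fourier-coefficient expression \eqref{equ:Kinv2} converges and its coefficients decay exponentially. Proposition~\ref{cor:uniqueness_Km1} then identifies $\widetilde{\KF}^{-1}$ with the local $\KF^{-1}$, so the finite-volume edge probabilities converge to the claimed expression as $n\to\infty$, and the limit is automatically Gibbs and translation-invariant.

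I expect the main obstacle to be the first part: rigorously establishing that the Pfaffian expression built from the \emph{local, infinite-volume} inverse defines a genuine probability measure (non-negativity and consistency) without passing through a periodic exhaustion, since we want the statement to hold even when $\Gs$ is aperiodic. The delicate point is that $\KF^{-1}$ is only \emph{an} inverse of $\KF$, so one must argue that it is the inverse that gives the physically correct local marginals, using its exponential decay (Theorem~\ref{thm:asymptotics_inverse_Kasteleyn}) and the uniqueness of a bounded inverse for $k\neq0$; the borderline case $k=0$, where decay is only algebraic and $\TT(k)$ degenerates to a cylinder, requires separately invoking the known critical results of \cite{BoutillierdeTiliere:iso_gen}.
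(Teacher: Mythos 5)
Your treatment of the periodic case and of the identification with $\widetilde{\KF}^{-1}$ is essentially the paper's argument: convergence of the torus Boltzmann measures follows \cite{CKP} once Corollary~\ref{cor:charact_pol} rules out zeros of $\Pdimer$ on the unit torus for $k\neq 0$ (with $k=0$ delegated to the more delicate analysis of \cite{BoutillierdeTiliere:iso_perio}), and Proposition~\ref{cor:uniqueness_Km1} identifies the Fourier inverse with the local one. The gap is in your first part. You propose to establish non-negativity and consistency of $\bigl(\prod_j\KF_{\xs_j,\ys_j}\bigr)\Pf[{}^t(\KF^{-1})_\E]$ directly, on the grounds that these numbers ``agree with ratios of Pfaffians of the finite Kasteleyn matrix'' on any finite subgraph containing $\E$. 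That is not true: the Pfaffian built from the restriction of the infinite-volume inverse to $\E$ does not coincide with the corresponding ratio of Pfaffians for a finite subgraph with free (or any naive) boundary conditions, and there is no way to read off positivity directly from the contour-integral formula \eqref{equ:KF_inverseH}. Moreover, you frame the aperiodic case as the one where a periodic exhaustion must be \emph{avoided}, which inverts the logic of the machinery you cite.

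The actual route of \cite{deTiliere:quadri}, which the paper follows, is: first obtain the measure in the periodic case as a weak limit of torus Boltzmann measures, which yields non-negativity and consistency for free, since one is taking limits of genuine probabilities; then, for a general (possibly aperiodic) isoradial graph, use the \emph{locality} of the formula --- the probability of a finite edge set depends only on a finite neighbourhood in $\Gs$ --- together with the fact that any such finite neighbourhood can be completed into a periodic isoradial graph, and the uniqueness of the inverse with coefficients decaying at infinity, to conclude that the aperiodic formula agrees locally with a periodic one and hence inherits positivity and consistency from the periodic case. You list exactly these three ingredients (periodic convergence, uniqueness of the decaying inverse, locality) in your closing paragraph, so the fix is to reassemble them in this order rather than to seek a direct infinite-volume positivity argument.
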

\begin{proof}
Convergence of the Boltzmann measures in the periodic case follows the argument
of~\cite{CKP}. The delicate issue in the convergence comes from 
the possible zeros of the dimer characteristic polynomial on the torus
$\{(z,w):\vert z\vert =1,\vert w\vert =1\}$. By Corollary~\ref{cor:charact_pol}, we know that 
whenever $k\neq 0$, it has no zero, and the argument goes trough. 
When
$k=0$, it has a double zero at $(1,1)$, the argument is more delicate and has
been done 
in~\cite{BoutillierdeTiliere:iso_perio}.

The argument in the non-periodic case follows that of~\cite{deTiliere:quadri}. The key
requirements are the convergence of the Boltzmann measure in the 
periodic case, uniqueness of the inverse Kasteleyn operator decreasing at
infinity, and the locality property of the formula given by
Theorem~\ref{thm:KFmoins_un}.
\end{proof}

\paragraph{Example.}
Consider an edge $\es=\vs_j(\xb)\vs_\ell(\yb)=\vs_j\vs_\ell$ of $\GF$ corresponding to an edge $e=\xb\yb$ of $\Gs$ with rhombus half-angle $\theta_e$. Then,
the probability $\PPdimer(\es)$ that this edge occurs in a dimer configuration of $\GF$, or equivalently the probability that this edge occurs in a 
high temperature polygon configuration of $\Gs$, is equal to 
\begin{equation}\label{ex:proba_comput}
\PPdimer(\es)=\KF_{\vs_j\vs_\ell}\KF^{-1}_{\vs_\ell,\vs_j}=\frac{1}{2}-\frac{1-2\Hh(2\theta_e)}{2\cn\theta_e}.
\end{equation}
This is computed in Lemma~\ref{lem:inverse_Kasteleyn_edge} of Appendix~\ref{app:explicit_integrals}.

To compute probabilities of edges occurring in low temperature polygon
configurations one uses the duality relation of Section~\ref{sec:duality}.

\subsubsection{Free energy}

\label{subsubsec:free_energy}

Suppose that the isoradial graph $\Gs$ is $\ZZ^2$-periodic.
The \emph{free energy} $F$ of a model is defined to be minus the exponential growth
rate of its partition function, that is
\begin{align*}
\Fdimer^k&=-\lim_{n\rightarrow\infty}\frac{1}{n^2}\log \Zdimer(\GF_n,\nu),\\
\Fising^k&=-\lim_{n\rightarrow\infty}\frac{1}{n^2}\log \Zising(\Gs_n,\Js),\\
\Fforest^k&=-\lim_{n\rightarrow\infty}\frac{1}{n^2}\log \Zforest(\Gs_n,\rho,m),
\end{align*}
where $\nu$, $\Js$, $m^2$ and $\rho$ are given by \eqref{eq:dimer_weights_GQ},
\eqref{eq:def_weight}, \eqref{eq:mass} and 
\eqref{eq:conductances}, respectively. 

In Theorem~\ref{thm:free_energy_dimer} we prove an explicit \emph{local} formula
for the \emph{free energy $\Fdimer^k$ of the dimer model} on the graph $\GF$
arising from a $Z$-invariant Ising model on $\Gs$. From this we deduce
an explicit formula for the \emph{free energy $\Fising^k$
of the $Z$-invariant Ising model}, see Corollary~\ref{cor:free_energy_Ising}.
Then in Corollary~\ref{cor:link_free_energies}, we compare the latter to the 
\emph{free energy $\Fforest^k$ of $Z$-invariant spanning forests on $\Gs$}.

\begin{thm}\label{thm:free_energy_dimer}
The free energy of the dimer model on the Fisher graph $\GF$ arising from the $Z$-invariant Ising model on $\Gs$ is equal to,
\begin{multline*}
\Fdimer^k= 
-(\vert \Es_1\vert +\vert\Vs_{1}\vert)\frac{\log 2}{2} \\
+\sum_{e\in\Es_1}
\left(
-\frac{1}{2}\log\left(\sc\frac{\theta_e}{2}\dn\frac{\theta_e}{2}\right)+
\left(\frac{1-2\Hh(2\theta_e)}{2}\right)\log\sc\theta_e
+\int_{\theta_e^{\mathrm{flat}}}^{\theta_e} 2{\Hh}'(2\theta)\log\sc \theta \,\ud\theta
\right).
\end{multline*}
\end{thm}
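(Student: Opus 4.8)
The plan is to write the free energy as a double integral of the dimer characteristic polynomial, to differentiate it with respect to the rhombus angles—which produces a purely \emph{local} quantity thanks to the explicit inverse $\KF^{-1}$—and to integrate back, fixing the additive constant from a flat reference graph. First, since $\GF$ is $\ZZ^2$-periodic and, by Corollary~\ref{cor:charact_pol}, the dimer characteristic polynomial $\Pdimer(z,w)=\det\KF(z,w)$ has no zero on the unit torus when $k\neq 0$, the partition functions on the toroidal exhaustion $\GF_n$ are governed by a linear combination of Pfaffians of $\KF(z,w)$, and the standard growth-rate argument of~\cite{CKP,BoutillierdeTiliere:iso_perio} yields
\[
\Fdimer^k=-\frac{1}{2}\,\frac{1}{(2\pi i)^2}\iint_{\vert z\vert=\vert w\vert=1}\log\Pdimer(z,w)\,\frac{\ud z}{z}\,\frac{\ud w}{w}.
\]

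Next I would differentiate this expression with respect to a single rhombus half-angle $\overline{\theta}_e$, $e\in\Es_1$. The crucial observation is that among the entries of $\KF(z,w)$, only the two skew-symmetric ones carrying the weight $\nu_e=\sc\frac{\theta_e}{2}\dn\frac{\theta_e}{2}$ of the edge $\vs_j(\xb)\vs_\ell(\yb)$ depend on $\overline{\theta}_e$; the decoration weights, the Kasteleyn signs and the $z,w$ factors are all combinatorial. Writing $\partial\log\det\KF(z,w)=\operatorname{tr}\bigl(\KF(z,w)^{-1}\partial\KF(z,w)\bigr)$, integrating over the unit torus, and using that the Fourier inverse coincides with the local inverse $\KF^{-1}$ (Proposition~\ref{cor:uniqueness_Km1}), the two symmetric contributions combine with the prefactor $-\tfrac12$ to give
\[
\frac{\partial\Fdimer^k}{\partial\overline{\theta}_e}=-\frac{\partial\log\nu_e}{\partial\overline{\theta}_e}\,\KF_{\vs_j\vs_\ell}\,\KF^{-1}_{\vs_\ell,\vs_j}=-\frac{\partial\log\nu_e}{\partial\overline{\theta}_e}\,\PPdimer(\es),
\]
with $\PPdimer(\es)$ the explicit edge-probability of~\eqref{ex:proba_comput}. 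This derivative depends on $\overline{\theta}_e$ alone. Since the genuine free parameters are the train-track directions and each $\overline{\theta}_e$ is a function of them, the chain rule shows that $\Fdimer^k-\sum_{e\in\Es_1}G(\overline{\theta}_e)$ is independent of the isoradial embedding at fixed $k$, where $G$ is a primitive of the right-hand side above.

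It remains to identify $G$ with the claimed per-edge integrand and to fix the constant. Differentiating the proposed summand and inserting $\PPdimer(\es)=\tfrac12-\frac{1-2\Hh(2\theta_e)}{2\cn\theta_e}$, the terms proportional to $\Hh'$ cancel, and the matching reduces to the elliptic identity
\[
\frac{\ud}{\ud\theta}\log\sc\theta=\frac{1}{\cn\theta}\,\frac{\ud}{\ud\theta}\log\nu_e,
\]
which follows from $(\sc)'=\dn\,\cn^{-2}$ and $\frac{\ud}{\ud\theta}\log\nu_e=\dn\theta/\sn\theta$ (both sides equal $\dn\theta/(\sn\theta\cn\theta)$). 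This accounts in particular for the $-\tfrac12\log\nu_e$ and $\frac{1-2\Hh(2\theta_e)}{2}\log\sc\theta_e$ terms and for the lower integration bound $\theta_e^{\mathrm{flat}}$. I would then evaluate $\Fdimer^k$ at a flat reference configuration, where the rhombi degenerate and the dimer model becomes explicitly solvable, to identify the embedding-independent constant as $-(\vert\Es_1\vert+\vert\Vs_1\vert)\frac{\log 2}{2}$.

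The main obstacle I anticipate is this last step, the control of the additive constant: the local derivative is essentially forced once~\eqref{ex:proba_comput} is available and the elliptic identity is routine, but isolating the purely combinatorial constant requires a careful flat-limit analysis (ensuring that the entire $k$-dependence is carried by the edge sum) and correct bookkeeping of the $\log 2$ contributions coming from the decoration fillings and from the Pfaffian normalization. As a cross-check one may also use $\Pdimer=c\,\Plapm$ from Proposition~\ref{prop:equality_charact_pol} together with the known free energy of $Z$-invariant spanning forests to recover the same constant.
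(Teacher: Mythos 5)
Your proposal is correct and follows essentially the same route as the paper: the Fourier/Kasteleyn expression for $\Fdimer^k$, differentiation along a deformation of the train-track angles using $\partial\log\det\KF(z,w)=\operatorname{tr}(\KF(z,w)^{-1}\partial\KF(z,w))$ together with the identification of the Fourier inverse with the local inverse and the edge probability~\eqref{ex:proba_comput}, then integration by parts against $\Hh$ and evaluation of the additive constant on the flat graph. The only part you leave implicit — the boundary values of the per-edge primitive at $\theta\in\{0,K\}$ and the count $\vert\{e:\overline{\theta}_e^{\mathrm{flat}}=0\}\vert=\vert\Es_1\vert-\vert\Vs_1\vert$ together with $\Fdimer(0)=-\vert\Vs_1\vert\log 2$ — is exactly how the paper pins down the constant, so your anticipated ``main obstacle'' is resolved there as you expect.
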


\begin{proof}
By Corollary~\ref{cor:charact_pol} the dimer characteristic polynomial
$\Pdimer(z,w)$ has no zero on the unit torus. This implies \cite{CKP} that
the free energy $\Fdimer$ is equal to
\begin{equation}
\Fdimer=-\frac{1}{2}\iint_{\vert z\vert =\vert w\vert =1} \log \Pdimer(z,w)\frac{\ud z}{2i\pi z} \frac{\ud w}{2i\pi w}.
\label{equ:free_energy_fourier}
\end{equation}

Following an idea of Kenyon~\cite{Kenyon3}, the next step consists in studying its variation as the embedding
of the graph is modified by tilting the train-tracks. Note that the idea of tilting the train-tracks can already be found in 
the section \emph{free energy} of~\cite{Baxter:8V}. 

Let us consider a smooth deformation of the isoradial graph $\Gs$, \emph{i.e.}, a
continuous family of isoradial graphs $(\Gs(t))_{t\in[0,1]}$ obtained by varying
the directions $(\alpha_T(t))$ of the train-tracks smoothly with $t$, in such a way that
$\Gs(1)=\Gs$ and $\Gs(0)=\Gs_{\text{flat}}$, where $\Gs_{\text{flat}}$ is an isoradial graph whose edges
have half-angles $\overline{\theta}^{\mathrm{flat}}$ equal to $0$ or $\frac{\pi}{2}$. Let $\GF(t)$ be the Fisher graph corresponding to the isoradial graph $\Gs(t)$, and let 
$\Fdimer(t)$ be the corresponding free energy. We thus have $\Fdimer=\Fdimer(1)$.

As the angles of the train-tracks vary smoothly with $t$, recalling that $\Pdimer(z,w)=\det \KF(z,w)$, one has:
\begin{align*}
\frac{\ud \Fdimer(t)}{\ud t}&=-\frac{1}{2}
 \iint_{\vert z\vert =\vert w\vert =1}  \frac{\ud}{\ud t} \log\det \KF(z,w) \frac{\ud z}{2\pi i z}\frac{\ud w}{2\pi i w}\\
   &=-\frac{1}{2}\iint_{\vert z\vert =\vert w\vert =1} \sum_{\xs,\ys\in\VF_1} 
      \frac{\partial \log\det \KF(z,w)}
           {\partial \KF(z,w)_{\xs,\ys}} 
      \frac{\ud \KF(z,w)_{\xs,\ys}}{\ud t}
   \frac{\ud z}{2\pi i z} \frac{\ud w}{2\pi i w} \\ 
   & = -\frac{1}{2}\sum_{\xs,\ys\in\VF_1} \iint_{\vert z\vert =\vert w\vert =1} (\KF(z,w)^{-1})_{\ys,\xs}
     \frac{\ud \KF(z,w)_{\xs,\ys}}{\ud t} 
   \frac{\ud z}{2\pi i z} \frac{\ud w}{2\pi i w}\\
   &=-\sum_{\es=\xs\ys\in\EF_1} \widetilde{\KF}^{-1}_{\ys,\xs}  \frac{\ud \KF_{\xs,\ys}}{\ud t}=
    -\sum_{\es=\xs\ys\in\EF_1} \PPdimer(\es)\frac{\ud \log \nu_\es}{\ud t}.
\end{align*}
In the penultimate line we used that for an invertible matrix $M=(M_{i,j})$, one has
\[\frac{\partial \log\det M}{\partial M_{i,j}} = (M^{-1})_{j,i}.\] In the last line we
used: the explicit expression for $\widetilde{\KF}^{-1}$ given by Equation~\eqref{equ:Kinv2}, Corollary~\ref{cor:uniqueness_Km1} 
and Theorem~\ref{thm:Gibbs_measure} implying that $\KF_{\xs,\ys}\widetilde{\KF}^{-1}_{\ys,\xs}=\KF_{\xs,\ys}\KF^{-1}_{\ys,\xs}=\PPdimer(\es)$, and
the fact that $\nu_\es=\vert\KF_{\xs,\ys}\vert$.

The weight of an edge $\es$ of a decoration of $\GF_1$ is equal to $1$, \emph{i.e.}, is 
independent of the rhombus angle, so that it does not contribute to $\frac{\ud \Fdimer(t)}{\ud t}$.

If $\es$ corresponds to an edge of $\Gs_1$ having rhombus angle $\overline{\theta}_e$, we have by 
Equations~\eqref{ex:proba_comput} and~\eqref{eq:dimer_weights_GF}:
\begin{equation}\label{eq:heart}
\PPdimer(\es)=\frac{1}{2}-\frac{1-2\Hh(2\theta_e)}{2\cn\theta_e}:=\PP(\theta_e),\quad 
\nu_\es=\frac{\sn\theta_e}{1+\cn\theta_e}:=\nu(\theta_e).
\end{equation}
Integrating the identity
\begin{align*}
\frac{\ud \Fdimer(t)}{\ud t}=-\sum_{e\in\Es_1} \PP(\theta_e) \frac{\ud \log\nu(\theta_e)}{\ud \theta_e}\frac{\ud \theta_e}{\ud t}
\end{align*}
along the deformation, we have
\begin{align}\label{equ:Fdimer}
\Fdimer(1)&=\Fdimer(0)+\int_0^1 \sum_{e\in\Es_1}\frac{\ud \Fdimer(t)}{\ud t}\ud t 
=\Fdimer(0)-
\sum_{e\in\Es_1} \int_{\theta_e^{\mathrm{flat}}}^{\theta_e}\PP(\theta)\frac{\ud \log \nu(\theta)}{\ud \theta} \ud \theta. 
\end{align}

We first consider the integral part of the above equation, and then the term $\Fdimer(0)$. 
Replacing $\PP(\theta)$ using~\eqref{eq:heart}, we have:
\begin{align*}
-\int \PP(\theta)\frac{\ud \log \nu(\theta)}{\ud \theta} \ud\theta &=
 -\int \left(\frac{1}{2}-\frac{1-2\Hh(2\theta)}{2\cn\theta }\right)\frac{\ud \log
 \nu(\theta)}{\ud \theta}\ud\theta\\
&=-\frac{1}{2}\log\nu(\theta)
+\int \left(\frac{1-2\Hh(2\theta)}{2\cn\theta }\right)\frac{\ud \log
\nu(\theta)}{\ud \theta}\ud\theta.
\end{align*}
Using the explicit expression of $\nu(\theta)$ (see again~\eqref{eq:heart}) and the formulas $\sn'=\cn\dn,\,\cn'=-\sn\dn$, $\cn^2+\sn^2=1$,
gives
\begin{equation*}
\frac{\ud \nu(\theta)}{\ud \theta}=\frac{(\cn\theta +\cn^2\theta+\sn^2\theta)\dn\theta}{(1+\cn\theta )^2}
=\frac{\dn\theta }{1+\cn\theta },
\end{equation*}
which readily implies that
\begin{equation*}
     \frac{\ud \log \nu(\theta)}{\ud \theta}=\frac{\dn\theta }{\sn\theta}=\ds\theta.
\end{equation*}
As a consequence,
\begin{align*}
\left(\frac{1-2\Hh(2\theta)}{2\cn\theta }\right)\frac{\ud \log\nu(\theta)}{\ud\theta}
&=\left(\frac{1-2\Hh(2\theta)}{2\cn\theta }\right)\ds\theta=
\frac{1-2\Hh(2\theta)}{2}\frac{\ud \log \sc\theta }{\ud \theta},
\end{align*}
using that $\sc'=\frac{\dn}{\cn^2}\,\Rightarrow\, (\log\sc)'=\frac{\dn}{\sn\cn}$.
Integrating by parts, we obtain
\begin{align*}
-\int \PP(\theta)\frac{\ud \log \nu(\theta)}{\ud \theta} \ud \theta &=
-\frac{1}{2}\log\frac{\sn\theta }{1+\cn\theta }+
\left(\frac{1-2\Hh(2\theta)}{2}\right)\log\sc\theta 
+\int 2{\Hh}'(2\theta)\log\sc\theta\,\ud\theta.
\end{align*}
We now need to compute $-\frac{1}{2}\log\frac{\sn\theta }{1+\cn\theta }+(\frac{1-2\Hh(2\theta)}{2})\log\sc\theta $ in the limit 
$\theta\rightarrow\theta^{\mathrm{flat}}$ for possible values of $\theta^{\mathrm{flat}}$, \emph{i.e.}, for $\theta^{\mathrm{flat}}\in\{0,K\}$
since $\overline{\theta}^{\mathrm{flat}}\in\{0,\frac{\pi}{2}\}$.

\underline{When $\theta\rightarrow K$.} 
We have $\sn K =1$, $\cn K =0$ and $\Hh(2K)=\frac{1}{2}$, see \eqref{eq:expressions_Hh_Hv_Z} for $k^2>0$ and the duality relation \eqref{eq:definition_Hh_Hv_k2<0} for $k^2<0$; so that
\[
-\frac{1}{2}\log\frac{\sn K}{1+\cn K}+
\left(\frac{1-2\Hh(2K)}{2}\right)\log\sn K=0.
\] 
Moreover as $\theta\rightarrow K$,
$\frac{1-2\Hh(2K)}{2}=O(\theta-K)$ implying that $\lim_{\theta\rightarrow K}\frac{1-2\Hh(2K)}{2}\log\cn\theta =0$. 
Wrapping up, we have
\[
\lim_{\theta\rightarrow K}-\frac{1}{2}\log\frac{\sn\theta }{1+\cn\theta }+\left(\frac{1-2\Hh(2\theta)}{2}\right)\log\sc\theta =
0.
\]

\underline{When $\theta\rightarrow 0$.}
We have $\sn 0 =0$, $\cn 0 =1$, $\Hh(0)=\frac{K'}{\pi}Z(0)=0$; implying that
\[
-\frac{1}{2}\log\frac{1}{1+\cn 0 }-\left(\frac{1-2\Hh(0)}{2}\right)\log\cn 0 =\frac{1}{2}\log 2.
\]
We are left with handling,
$
(-\frac{1}{2}+\frac{1}{2}-\Hh(2\theta))\log\sn\theta=-
\Hh(2\theta)\log\sn\theta. 
$
As $\theta\rightarrow 0$, $\Hh(2\theta)=O(\theta)$, thus $\lim_{\theta\rightarrow 0}\Hh(2\theta)\log\sn\theta=0$.
Wrapping up, we have
\[
\lim_{\theta\rightarrow 0}-\frac{1}{2}\log\frac{\sn\theta }{1+\cn\theta }+\left(\frac{1-2\Hh(2\theta)}{2}\right)\log\sc\theta =
\frac{1}{2}\log 2.
\]

Plugging this into Equation~\eqref{equ:Fdimer}, we obtain
\begin{align*}
\Fdimer(1)=&\Fdimer(0) -\frac{\log 2}{2}\vert\{ e\in\Es_1:\overline{\theta}_e^{\mathrm{flat}}=0\}\vert+\\
&+\sum_{e\in\Es_1}
\left(
-\frac{1}{2}\log\frac{\sn\theta }{1+\cn\theta }+
\Bigl(\frac{1-2\Hh(2\theta)}{2}\Bigr)\log\sc\theta 
+\int_{\theta_e^{\mathrm{flat}}}^{\theta_e} 2{\Hh}'(2\theta)\log\sc\theta\,\ud\theta
\right).
\end{align*}

Let us now compute $\Fdimer(0)$, that is the free energy of the dimer model on the flat graph $\GF_{\mathrm{flat}}$.
By Fisher's correspondence, for every $n\geq 1$, the number of  dimer configurations of the toroidal graph
$\GF_{n,\mathrm{flat}}=\GF_{\mathrm{flat}}/n\ZZ^2$ is equal to $2^{\vert\Vs_{1}\vert n^2}$ times the weighted number of
polygon configurations of $\Gs_{n,\mathrm{flat}}$ with edge-weights $\frac{\sn\theta }{1+\cn\theta }$ arising from the high temperature expansion.
Let us describe $\Gs^{\mathrm{flat}}$, see also~\cite{Kenyon3,BoutillierdeTiliere:iso_gen}.
Since the sum of the rhombus-angles around vertices of $\Gs_{\mathrm{flat}}$ is $2\pi$, and since rhombus half-angles are equal to $0$ or $\frac{\pi}{2}$,
there is around every vertex exactly two rhombi with rhombus half-angle $\frac{\pi}{2}$, with corresponding high temperature expansion weight 
equal to $1$. The other rhombi have rhombus half-angle $0$; with corresponding weight equal to $0$. The graph $\Gs_{n,\mathrm{flat}}$ thus consists of 
$p$ disjoint cycles covering all vertices, for some $p=O(n)$. A polygon configuration has even degree at every vertex
so that for each such cycle there is exactly two polygon configurations. As a consequence, 
$\Zdimer(\Gs_{n,\mathrm{flat}})=2^{\vert\Vs_{1}\vert n^2}2^p$, implying that $\Fdimer(0)=-\vert\Vs_{1}\vert\log 2$. 

From the geometric description of the graph $\Gs^{\mathrm{flat}}$, we also know that 
\begin{equation*}
     \vert\{ e\in\Es_1:\overline{\theta}_e^{\mathrm{flat}}=0\}\vert=\vert\Es_1\vert-\vert\{ e\in\Es_1:\overline{\theta}_e^{\mathrm{flat}}=\pi/2\}\vert=
     \vert\Es_1\vert-\vert\Vs_1\vert.
\end{equation*}

As a consequence, the dimer free energy $\Fdimer=\Fdimer(1)$ is equal to
{\begin{multline*}
-(\vert \Es_1\vert +\vert\Vs_{1}\vert)\frac{\log 2}{2}
\\+\sum_{e\in\Es_1}
\left(
-\frac{1}{2}\log\frac{\sn\theta_e}{1+\cn\theta_e}+
\left(\frac{1-2\Hh(2\theta_e)}{2}\right)\log\sc\theta_e 
+\int_{\theta_e^{\mathrm{flat}}}^{\theta_e} 2{\Hh}'(2\theta)\log\sc\theta\,\ud\theta
\right),
\end{multline*}}
and the proof is concluded by recalling that $\frac{\sn\theta_e}{1+\cn\theta_e}=\sc \frac{\theta_e}{2}\dn\frac{\theta_e}{2}$.
\end{proof}

We obtain the free energy of the $Z$-invariant Ising model, similar to the one given by
Baxter, see~\cite[(7.12.7)]{Baxter:exactly} and \cite{Baxter:8V}.

\begin{cor}
\label{cor:free_energy_Ising}
The free energy of the $Z$-invariant Ising model is equal to 
\begin{multline*}
\Fising^k=-\vert\Vs_{1}\vert\frac{\log 2}{2}-\vert\Vs_{1}\vert\int_{0}^{K} 2{\Hh}'(2\theta)\log\sc\theta\,\ud\theta
\\+\sum_{e\in\Es_1}\left(
-\Hh(2\theta_e)\log\sc\theta_e +\int_{0}^{\theta_e} 2{\Hh}'(2\theta)\log\sc\theta\,\ud\theta
\right).
\end{multline*}
\end{cor}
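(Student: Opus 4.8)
The plan is to transfer the explicit formula for $\Fdimer^k$ of Theorem~\ref{thm:free_energy_dimer} through Fisher's correspondence. Starting from the high temperature expansion~\eqref{equ:HTE}, taking logarithms, dividing by $n^2$ and letting $n\to\infty$, the $\ZZ^2$-periodicity turns the product over all edges into a sum over the fundamental domain, so that
\[
\Fising^k=-\sum_{e\in\Es_1}\log\cosh\Js_e+\Fdimer^k.
\]
Everything then reduces to expressing $\log\cosh\Js_e$ in elliptic terms and matching the two edge-sums.

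For the first step I would use~\eqref{equ:dheart}, namely $\cosh(2\Js_e)=\nc\theta_e$, together with $\cosh^2\Js_e=\frac{1}{2}(1+\cosh(2\Js_e))$, to obtain
\[
\log\cosh\Js_e=\frac{1}{2}\log\frac{1+\cn\theta_e}{\cn\theta_e}-\frac{1}{2}\log 2.
\]
Summing over $\Es_1$ and substituting the formula for $\Fdimer^k$, the $\log 2$ contributions combine: the term $+\frac{\log 2}{2}\vert\Es_1\vert$ coming from $-\sum_{e}\log\cosh\Js_e$ cancels part of the $-(\vert\Es_1\vert+\vert\Vs_1\vert)\frac{\log 2}{2}$ in $\Fdimer^k$, leaving exactly $-\vert\Vs_1\vert\frac{\log 2}{2}$, the first term of the target.

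Next I would collapse the logarithmic edge-terms. Using the identity $\frac{\sn\theta_e}{1+\cn\theta_e}=\sc\frac{\theta_e}{2}\dn\frac{\theta_e}{2}$, the contribution $-\frac{1}{2}\log(\sc\frac{\theta_e}{2}\dn\frac{\theta_e}{2})$ from $\Fdimer^k$ equals $-\frac{1}{2}\log\frac{\sn\theta_e}{1+\cn\theta_e}$; adding the term $-\frac{1}{2}\log\frac{1+\cn\theta_e}{\cn\theta_e}$ coming from $\log\cosh\Js_e$ telescopes to $-\frac{1}{2}\log\sc\theta_e$. Combining this with the factor $\bigl(\frac{1-2\Hh(2\theta_e)}{2}\bigr)\log\sc\theta_e$ already present in $\Fdimer^k$ produces exactly $-\Hh(2\theta_e)\log\sc\theta_e$, the desired per-edge prefactor.

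The only genuinely non-cosmetic step is reconciling the integral lower limits. In $\Fdimer^k$ each integral starts at $\theta_e^{\mathrm{flat}}\in\{0,K\}$, whereas the target starts at $0$. For an edge with $\theta_e^{\mathrm{flat}}=K$ one writes $\int_{K}^{\theta_e}=\int_{0}^{\theta_e}-\int_{0}^{K}$, so each such edge contributes a surplus $-\int_{0}^{K}2\Hh'(2\theta)\log\sc\theta\,\ud\theta$. The number of these edges is $\vert\{e\in\Es_1:\overline{\theta}_e^{\mathrm{flat}}=\frac{\pi}{2}\}\vert=\vert\Vs_1\vert$, a count already established in the proof of Theorem~\ref{thm:free_energy_dimer}. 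Summing the surpluses reproduces precisely the global term $-\vert\Vs_1\vert\int_{0}^{K}2\Hh'(2\theta)\log\sc\theta\,\ud\theta$ and simultaneously replaces every lower limit by $0$, completing the identification. I expect this bookkeeping of the flat-angle count to be the main (and essentially only) subtlety; everything else is routine simplification of Jacobi elliptic identities.
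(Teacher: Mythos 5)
Your proposal is correct and follows essentially the same route as the paper's own proof: both start from the high temperature expansion \eqref{equ:HTE}, express $\log\cosh\Js_e$ via $\cosh(2\Js_e)=\nc\theta_e$, cancel the $\log 2$ and logarithmic edge-terms against those in Theorem~\ref{thm:free_energy_dimer}, and shift the integral lower limits using the count $\vert\{e\in\Es_1:\overline{\theta}_e^{\mathrm{flat}}=\pi/2\}\vert=\vert\Vs_1\vert$. No gaps.
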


\begin{proof}
From the high temperature expansion, see Equation~\eqref{equ:HTE}, we have for every $n\geq 1$,
\begin{equation*}
     \Zising(\Gs_n,\Js)=\left(\prod_{e\in\Es_n}\cosh (\Js_e)\right)\Zdimer(\GF_n,\nu).
\end{equation*}
Let us compute $\cosh(\Js_e)$ for the $Z$-invariant coupling constants.
By Equation~\eqref{equ:dheart}, we have $\cosh(2\Js_e)=\nc\theta_e$, so that
\begin{equation}\label{equ:utile_2}
     \cosh(\Js_e)=\sqrt{\frac{1+\cosh(2\Js_e)}{2}}=\sqrt{\frac{1+\cn\theta_e}{2\cn\theta_e}}
=\sqrt{\frac{1+\cn\theta_e}{2\sn\theta_e}\sc\theta_e }.
\end{equation}
As a consequence, 
\begin{align}
\Fising&=\Fdimer-\sum_{e\in\Es_1} \log \cosh(\Js_e)\label{equ:utile}\\
&=\Fdimer+\vert \Es_1\vert\frac{\log 2}{2} +\sum_{e\in\Es_1} \Bigl(
\frac{1}{2}\log \frac{\sn\theta_e}{1+\cn\theta_e} - \frac{1}{2}\log\sc\theta_e \Bigr)\nonumber\\
&=-\vert\Vs_{1}\vert\frac{\log 2}{2}+\sum_{e\in\Es_1}
\left(
-\Hh(2\theta_e)\log\sc\theta_e +\int_{\theta_e^{\mathrm{flat}}}^{\theta_e} 2{\Hh}'(2\theta)\log\sc\theta\,\ud\theta
\right).\nonumber
\end{align}
We can moreover write
\begin{align*}
\sum_{e\in\Es_1}\int_{\theta_e^{\mathrm{flat}}}^{\theta_e} 2{\Hh}'(2\theta)\log\sc\theta\,\ud\theta&=
\sum_{e\in\Es_1}\Biggl(\int_{0}^{\theta_e} 2{\Hh}'(2\theta)\log\sc\theta\,\ud\theta 
-\int_{0}^{\theta_e^{\mathrm{flat}}} 2{\Hh}'(2\theta)\log\sc\theta\,\ud\theta\Biggr)\\
&=\sum_{e\in\Es_1}\Biggl(\int_{0}^{\theta_e} 2{\Hh}'(2\theta)\log\sc\theta\,\ud\theta\Biggr)
-\vert\Vs_{1}\vert\int_{0}^{K} 2{\Hh}'(2\theta)\log\sc\theta\,\ud\theta,
\end{align*}
since there are $\vert\Vs_{1}\vert$ edges whose rhombus half-angle is $\frac{\pi}{2}$ in $\Gs^{\mathrm{flat}}$. This concludes the proof.
\end{proof}

\begin{cor}
\label{cor:link_free_energies}
The free energy of the $Z$-invariant Ising model and spanning forests are related by
\begin{equation}
\label{eq:link_free_energies}
\Fising^{k}=-\vert\Vs_{1}\vert\frac{\log 2}{2}+\frac{\Fforest^k}{2}.
\end{equation}
\end{cor}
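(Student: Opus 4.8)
The plan is to route everything through the Fourier (double-integral) representations of the three free energies and the proportionality of characteristic polynomials from Proposition~\ref{prop:equality_charact_pol}, reconciling the additive constants at the end by means of the high-temperature identity already recorded in the proof of Corollary~\ref{cor:free_energy_Ising}.

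First I would express the spanning-forest free energy as a double integral. Since $\Zforest(\Gs_n,\rho,m)=\det(\Delta^{m})$ by Kirchhoff's matrix-tree theorem and $\Delta^{m}$ is $\ZZ^2$-periodic, the standard periodic asymptotics give
\begin{equation*}
\Fforest^k=-\iint_{\vert z\vert=\vert w\vert=1}\log\Plapm(z,w)\,\frac{\ud z}{2i\pi z}\frac{\ud w}{2i\pi w},
\end{equation*}
the integral converging because, for $k\neq 0$, $\Plapm$ has no zero on the unit torus (it defines the same spectral curve as $\Pdimer$ by Proposition~\ref{prop:equality_charact_pol} and Corollary~\ref{cor:charact_pol}), the degenerate case $k=0$ being handled as in the dimer computation. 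On the dimer side, Equation~\eqref{equ:free_energy_fourier} reads
\begin{equation*}
\Fdimer^k=-\frac{1}{2}\iint_{\vert z\vert=\vert w\vert=1}\log\Pdimer(z,w)\,\frac{\ud z}{2i\pi z}\frac{\ud w}{2i\pi w}.
\end{equation*}

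Next I would insert $\Pdimer=c\,\Plapm$ from Proposition~\ref{prop:equality_charact_pol}. Since the normalized measure on the unit torus has total mass $1$, the constant splits off and
\begin{equation*}
\Fdimer^k=-\frac{1}{2}\log c-\frac{1}{2}\iint\log\Plapm\,\frac{\ud z}{2i\pi z}\frac{\ud w}{2i\pi w}=-\frac{1}{2}\log c+\frac{1}{2}\Fforest^k.
\end{equation*}
Combining with the high-temperature identity $\Fising=\Fdimer-\sum_{e\in\Es_1}\log\cosh\Js_e$ from Equation~\eqref{equ:utile}, the claim reduces to the purely local identity $\frac{1}{2}\log c+\sum_{e\in\Es_1}\log\cosh\Js_e=\vert\Vs_1\vert\frac{\log 2}{2}$. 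Using $\cosh^2\Js_e=\frac{1+\cn\theta_e}{2\cn\theta_e}$ from Equation~\eqref{equ:utile_2}, this is in turn equivalent to
\begin{equation*}
c=2^{\vert\Vs_1\vert}\prod_{e\in\Es_1}\frac{2\cn\theta_e}{1+\cn\theta_e},
\end{equation*}
which is precisely the value of $c$ computed in Section~\ref{sec:prop_char_pol}.

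The main obstacle is thus the explicit evaluation of the proportionality constant $c$: Proposition~\ref{prop:equality_charact_pol} only asserts proportionality, and one must pin down its value by a careful comparison of $\det\KF(z,w)$ and $\det\Delta^{m}(z,w)$ — for instance by matching a single conveniently chosen monomial coefficient, or by evaluating both determinants at a special point where the computation collapses. An alternative route, bypassing $c$ altogether, is to compare the explicit local formula of Corollary~\ref{cor:free_energy_Ising} term by term with the explicit local formula for $\Fforest^k$ established in~\cite{BdTR1}: the per-edge contributions $-\Hh(2\theta_e)\log\sc\theta_e+\int_{0}^{\theta_e}2{\Hh}'(2\theta)\log\sc\theta\,\ud\theta$ and the per-vertex contributions $\int_{0}^{K}2{\Hh}'(2\theta)\log\sc\theta\,\ud\theta$ occur in $\Fforest^k$ with exactly twice the weight they carry in $\Fising^k$, which produces the factor $\frac{1}{2}$ and the shift $-\vert\Vs_1\vert\frac{\log 2}{2}$ directly and avoids any constant tracking.
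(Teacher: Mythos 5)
Your closing ``alternative route'' is in fact the paper's entire proof: Corollary~\ref{cor:link_free_energies} is obtained by directly comparing the local formula of Corollary~\ref{cor:free_energy_Ising} with the local formula for $\Fforest^k$ in \cite[Theorem~2]{BdTR1}, where each per-edge and per-vertex term appears with exactly twice the weight. That part of your proposal is correct and complete, and it is the intended argument.

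Your primary route, however, is circular as written. You reduce the claim to the identity $c=2^{\vert\Vs_1\vert}\prod_{e\in\Es_1}\frac{2\cn\theta_e}{1+\cn\theta_e}$ and then invoke ``the value of $c$ computed in Section~\ref{sec:prop_char_pol}'' — but in the paper that value is \emph{derived from} Corollary~\ref{cor:link_free_energies}: the proof there starts from $\log c=\Fforest-2\Fdimer$ and immediately substitutes $\Fforest=2\Fising+\vert\Vs_1\vert\log 2$ using the very corollary you are trying to prove. Proposition~\ref{prop:equality_charact_pol} only gives proportionality, with the constant left undetermined, so your first route is incomplete until you supply an independent evaluation of $c$ (e.g.\ by matching a monomial of $\det\KF(z,w)$ against one of $\det\Delta^m(z,w)$, which is nontrivial and is nowhere carried out in the paper). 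The algebraic reduction itself — splitting off $-\tfrac{1}{2}\log c$ from the double integral and using \eqref{equ:utile} and \eqref{equ:utile_2} — is sound, and your target value of $c$ does agree with the one eventually recorded in the paper; the only defect is the direction of the logical dependency. Rely on the term-by-term comparison with \cite[Theorem~2]{BdTR1} as your actual proof and present the $c$-based computation, if at all, as a consistency check.
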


\begin{proof}
This is obtained by comparing the expressions for the free energies proved in Corollary~\ref{cor:free_energy_Ising} and \cite[Theorem 2]{BdTR1}.
\end{proof}

\begin{rem}
In the case $k=0$, Corollary \ref{cor:free_energy_Ising} is obtained in \cite[Theorem 3]{BoutillierdeTiliere:iso_gen}; the free energy becomes (the key point being that as $k\to0$, $H(u)\to\frac{u}{2\pi}$, see Lemma \ref{lem:properties_Hh_Hv})
\begin{equation*}
     \Fising^0=-\vert\Vs_{1}\vert\frac{\log 2}{2}-\sum_{e\in\Es_1}\left[\frac{\theta_e}{\pi}\log\tan\theta_e+\frac{1}{\pi}\Bigl(L(\theta_e)+L\Bigl(\frac{\pi}{2}-\theta_e\Bigr)\Bigr)\right],
\end{equation*}
where $L$ is the Lobachevsky function: $L(\theta_e)=-\int_{0}^{\theta_e}\log(2\sin(t))\text{d}t$. Moreover, for $k=0$, Corollay \ref{cor:link_free_energies} is derived in \cite{BoutillierdeTiliere:iso_gen}, below Theorem 3.
\end{rem}

\begin{rem}
\label{rem:feb}
It is possible to recover Baxter's ``local'' formula for the free energy when
$k\neq0$ without deformation argument, but rather by computing more directly the
double integral~\eqref{equ:free_energy_fourier}, by first fixing $w$ and evaluating the
integral in $z$ with residues: since $\Pdimer$ is reciprocal, and has no root on
the unit torus, then up to a multiplicative constant, one can rewrite
$\Pdimer(z,w)$ for $\vert z\vert =1$ as
\begin{equation*}
  \prod_{j=1}^d z_j(w)^+\left(1-\frac{z}{z_j^+(w)}\right)
    \left(1-\frac{z_j^-(w)}{z}\right),
\end{equation*}
where $z_j^{\pm}(w)$, $j=1,\ldots,d$ are the roots of $\Pdimer(\cdot,w)$,
$\vert z_j^+(w)\vert>1$ and $z_j^-(w) = (z_j^+(w))^{-1}$.
The $\log$ of this product can be expanded in series in $z$, whose term of
degree 0 is the sum of logarithms of the roots $z_j^+(w)$: this is the
contribution we get when dividing by $2i\pi z$  and integrating over the unit
circle. Ending here the computation for the square lattice yields the expression
of the free energy of the Ising model on $\ZZ^2$ given
in~\cite[Equation~(7.9.16)]{Baxter:exactly}. We can go even further, for general
periodic isoradial graphs, if we perform the change of variable from $w$ to $u$, as
in~\cite[Section~5.5]{BdTR1} to pass from the Fourier expression to the local
expression of the massive Green function on periodic isoradial graphs.
The roots $(z,w)$ of $\Pdimer$ where $\vert z\vert>1$ and $\vert w\vert =1$ form a closed curve on
the spectral curve, which is mapped under $\log\vert \cdot\vert $ to a horizontal segment
joining the two connected components of the boundary of the amoeba, and lifts on
$\TT(k)$ as a contour $\Gamma$ winding once vertically. We can then write
\begin{equation*}
  \int_{\vert w\vert =1} \sum_{j=1}^d \log z^+_j(w) \frac{\ud w}{2i\pi w}=
  \oint_\Gamma \log z(u) \frac{w'(u)}{w(u)} \frac{\ud u}{2i\pi},
\end{equation*}
where $z(u)$ and $w(u)$ are given by~\eqref{equ:param_spec_curve}. As
in~\cite[Section~5.3.3]{BdTR1} when computing the area of the hole of the
amoeba, the product of $\log z(u)$ and $w'(u)/w(u)$ can be rewritten as a sum
over intersections of train-tracks on $\Gs_1$, \textit{i.e.}, over edges of the
fundamental domain, thus yielding a local formula.
\end{rem}

\subsubsection{Computing the constant in $\Pdimer(z,w)=c\Plapm(z,w)$}
\label{sec:prop_char_pol}

With the computations of free energies, we can now compute the constant in Proposition~\ref{prop:equality_charact_pol}.

\begin{cor}
The dimer characteristic polynomial of the graph $\GF$ arising from the $Z$-invariant Ising model and the $Z$-invariant spanning forests
characteristic polynomial are related by 
\begin{equation}
\Pdimer(z,w)=2^{\vert \Vs_1\vert +\vert \Es_1\vert } \prod_{e\in\Es_1} 
\left(\frac{\cn\theta_e}{1+\cn\theta_e}\right) \Plapm(z,w).
\end{equation}
\end{cor}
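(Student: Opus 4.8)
The plan is to start from Proposition~\ref{prop:equality_charact_pol}, which already provides $\Pdimer(z,w)=c\,\Plapm(z,w)$ for some nonzero constant $c$, and to pin down $c$ by comparing free energies. First I would write both free energies as torus integrals of the logarithms of the characteristic polynomials. Since $\Pdimer$ has no zero on the unit torus by Corollary~\ref{cor:charact_pol}, neither does $\Plapm=c^{-1}\Pdimer$, so the Fourier formula \eqref{equ:free_energy_fourier} applies and gives
\[
\Fdimer=-\frac{1}{2}\iint_{\vert z\vert=\vert w\vert=1}\log\Pdimer(z,w)\,\frac{\ud z}{2i\pi z}\frac{\ud w}{2i\pi w}.
\]
On the forest side, Kirchhoff's matrix-tree theorem gives $\Zforest(\Gs_n,\rho,m)=\det(\Delta^m)$ on $\Gs_n$, and Fourier diagonalization of the periodic operator $\Delta^m$ writes this determinant as $\prod_{z^n=w^n=1}\Plapm(z,w)$; passing to the limit as a Riemann sum (again legitimate thanks to the absence of zeros on the unit torus) yields
\[
\Fforest=-\iint_{\vert z\vert=\vert w\vert=1}\log\Plapm(z,w)\,\frac{\ud z}{2i\pi z}\frac{\ud w}{2i\pi w}.
\]

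Next I would take the difference. Writing $\log\Pdimer=\log c+\log\Plapm$ and using that the normalized Haar measure integrates the constant $1$ to $1$, the two displays combine into $\Fdimer=\frac{1}{2}\Fforest-\frac{1}{2}\log c$. Independently, relation \eqref{equ:utile} in the proof of Corollary~\ref{cor:free_energy_Ising} gives $\Fdimer=\Fising+\sum_{e\in\Es_1}\log\cosh(\Js_e)$, while Corollary~\ref{cor:link_free_energies} gives $\Fising^k=-\vert\Vs_1\vert\frac{\log 2}{2}+\frac{1}{2}\Fforest^k$. Substituting the latter into the former produces a second expression for $\Fdimer$, and comparing it with $\Fdimer=\frac{1}{2}\Fforest-\frac{1}{2}\log c$ cancels the $\frac{1}{2}\Fforest$ terms and isolates
\[
-\frac{1}{2}\log c=-\vert\Vs_1\vert\frac{\log 2}{2}+\sum_{e\in\Es_1}\log\cosh(\Js_e).
\]

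Finally I would make $c$ explicit using \eqref{equ:utile_2}, namely $\cosh^2(\Js_e)=\frac{1+\cn\theta_e}{2\cn\theta_e}$, so that $2\log\cosh(\Js_e)=\log(1+\cn\theta_e)-\log\cn\theta_e-\log 2$. Plugging this in gives
\[
\log c=\vert\Vs_1\vert\log 2-\sum_{e\in\Es_1}\bigl(\log(1+\cn\theta_e)-\log\cn\theta_e-\log 2\bigr)=(\vert\Vs_1\vert+\vert\Es_1\vert)\log 2+\sum_{e\in\Es_1}\log\frac{\cn\theta_e}{1+\cn\theta_e},
\]
which is exactly $c=2^{\vert\Vs_1\vert+\vert\Es_1\vert}\prod_{e\in\Es_1}\frac{\cn\theta_e}{1+\cn\theta_e}$, as claimed. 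The argument is essentially bookkeeping once the two free-energy-to-integral identities are in place; the only points requiring care are getting the factor $\frac{1}{2}$ right in the dimer free energy (it comes from the Pfaffian-versus-determinant relation) and confirming that $\Plapm$ inherits the absence of zeros on the unit torus, which legitimizes the Fourier/Riemann-sum limit defining $\Fforest$.
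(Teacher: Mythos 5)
Your proposal is correct and follows essentially the same route as the paper: both start from $\Pdimer=c\,\Plapm$, express $\Fdimer$ and $\Fforest$ as torus integrals of $\log\Pdimer$ and $\log\Plapm$ to get $\log c=\Fforest-2\Fdimer$, and then eliminate the free energies via Corollary~\ref{cor:link_free_energies}, Equation~\eqref{equ:utile} and Equation~\eqref{equ:utile_2}. The extra remarks on Kirchhoff's theorem and the Riemann-sum limit are fine but not needed beyond what the paper already establishes.
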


\begin{proof}
In Proposition~\ref{prop:equality_charact_pol}, we proved that $\Pdimer(z,w)=c\Plapm(z,w)$. Moreover, 
the dimer and spanning forests free energies can be computed using the characteristic polynomials:
\begin{align*}
\Fdimer&=-\frac{1}{2}\iint_{\{\vert z\vert =\vert w\vert =1\}} \log \Pdimer(z,w)\frac{\ud z}{2i\pi z} \frac{\ud w}{2i\pi w},\\ 
\Fforest&=-\iint_{\{\vert z\vert =\vert w\vert =1\}} \log \Plapm(z,w)\frac{\ud z}{2i\pi z} \frac{\ud w}{2i\pi w}.
\end{align*}
This implies that
\begin{align*}
\log c&=\Fforest-2\Fdimer\\
      &=2\Fising-2\Fdimer+\vert \Vs_1\vert \log 2,\quad \text{ by Corollary~\ref{cor:link_free_energies}},\\
      &=-2\sum_{e\in\Es_1} \log \cosh\Js_e+\vert \Vs_1\vert \log 2,\quad \text{ by Equation~\eqref{equ:utile},}\\
      &=\vert \Es_1\vert \log 2 +\sum_{e\in\Es_1}\log 
      \left(\frac{\cn\theta_e}{1+\cn\theta_e}\right)
      +\vert \Vs_1\vert \log 2,\quad\text{ by \eqref{equ:utile_2}}\qedhere.
\end{align*}
\end{proof}
This is coherent with \cite[Corollary 14]{BoutillierdeTiliere:iso_gen}, concerning the case $k=0$.

\section{Duality and phase transition in the $Z$-invariant Ising model}
\label{sec:duality_and_phase_transition}

This section is about the behavior of the $Z$-invariant Ising model as the elliptic parameter $k$ varies. 
Section~\ref{sec:duality} exhibits a duality relation in the sense of Kramers and Wannier, see also~\cite{CGNP,McCM}. 
In Section~\ref{sec:phase_transition} we derive
the phase diagram of the model and compare it to that of the $Z$-invariant spanning forests of~\cite{BdTR1}. 
In Section~\ref{subsec:self_duality_Ising} we extend to all isoradial graphs a self duality relation proved by Baxter in the case of the 
triangular lattice. Finally in Section~\ref{sec:modular_group} we relate duality transformations to the 
modular group.

\subsection{Dual elliptic modulus}

\label{subsec:appendix_negative/dual}

Let $k\in[0,1)$ be a fixed elliptic modulus, and recall the notation $k'=\sqrt{1-k^2}$ for the complementary parameter. 
By definition, the dual parameter of $k$ is:
\begin{equation}
\label{equ:duality_parameter}
     k^* = i\frac{k}{k'}, \text{ or equivalently } {k^*}' = \frac{1}{k'}.
\end{equation}
Notice that $k^2\mapsto {k^{*2}}$ is an involutive bijection between $[0,1)$ and $(-\infty,0]$. As we shall see, the dual parameter $k^*$ parametrizes the dual temperature. 

We need the following
duality identities relating elliptic integrals and Jacobi elliptic functions with parameters $k$ and $k^*$:
\begin{align}
&\sqrt{k'}K(k)=\sqrt{{k^*}'}K(k^*), \quad\label{id:K_imaginary_modulus}\text{\cite[17.4.17]{AS}}\\
&\sqrt{k'}\sc(u\vert k)=\sqrt{{k^*}'}\sc(k'u\vert k^*),\quad\label{id:sc_imaginary_modulus} 
\text{\cite[16.10.2 and 16.10.3]{AS}}.
\end{align}

\subsection{Duality in the $Z$-invariant Ising model}\label{sec:duality}

Kramers and Wannier's duality~\cite{KramersWannier1,KramersWannier2} says the following. Consider an Ising model on a planar graph $\Gs$ with coupling constants 
$\Js$ and an Ising model on $\Gs^*$ with coupling constants $\Js^*$. Perform the high temperature expansion of the first Ising model,
and low temperature expansion of the second. Both expansions yield polygon configurations on $\Gs$. 
The Ising models are said to be \emph{dual} if both induce the 
same measure on polygon configurations. This is true if the coupling constants satisfy the following \emph{duality relation}:
\begin{equation}
\label{equ:duality0}
\forall\,e\in\Es, \, \tanh(\Js_e)=e^{-2\Js^*_{e^*}} \quad \Longleftrightarrow \quad \sinh(2\Js_e)\sinh(2\Js^*_{e^*})=1,
\end{equation}
where $e^*$ is the dual edge of $e$. Duality maps a high temperature Ising model on a low temperature one and vice versa.

In this setting, the $Z$-invariant Ising model on $\Gs$ with 
parameter $k$ and the one on $\Gs^*$ with parameter $k^*$ are dual models, see also~\cite{CGNP} for the case of the triangular/hexagonal lattices. Indeed, 
for the first and second model we respectively have, for every edge $e$ of $\Gs$ and dual edge $e^*$ of $\Gs^*$,
\begin{equation*}
     \left\{\begin{array}{rl}
\sinh(2\Js_e) =\hspace{-2mm}& \displaystyle\sinh(2\Js(\overline{\theta}_e\,\vert \,k))=
\sc\Bigl(\overline{\theta}_e \frac{2K(k)}{\pi} \Big\vert k\Bigr),\smallskip\\
\sinh(2\Js^*_{e^*}) =\hspace{-2mm}& \displaystyle \sinh\Bigl(2\Js\Bigl(\frac{\pi}{2}-\overline{\theta}_e\Big\vert k^*\Bigr)\Bigr)=
\sc\Bigl(K(k^*)-\overline{\theta}_e \frac{2K(k^*)}{\pi} \Big\vert k^*\Bigr).\end{array}\right.
\end{equation*}
Moreover, 
\begin{align*}
\sc\Bigl(K(k^*)-\overline{\theta}_e \frac{2K(k^*)}{\pi} \Big\vert k^*\Bigr)&=
\frac{1}{{k^*}'}\cs\Bigl(\overline{\theta}_e \frac{2K(k^*)}{\pi} \Big\vert k^*\Bigr),\quad \text{ by Table~\ref{table:identities_Jacobi_function},}\\
&=\cs\Bigl(\overline{\theta}_e \frac{2K(k)}{\pi} \Big\vert k\Bigr),\quad  
\text{ by \eqref{equ:duality_parameter}, \eqref{id:K_imaginary_modulus} and \eqref{id:sc_imaginary_modulus},}
\end{align*}
from which we deduce the duality relation \eqref{equ:duality0} for $Z$-invariant Ising models:
\begin{equation}
\label{equ:duality}
     \sinh(2\Js(\overline{\theta}_e\vert k))\sinh\Bigl(2\Js\Bigl(\frac{\pi}{2}-\overline{\theta}_e\Big\vert k^*\Bigr)\Bigr)=1.
\end{equation}
The elliptic parameters $k$ and $k^*$ can be interpreted as parametrizing dual temperatures.

Note that this duality relation together with the computation of Equation~\eqref{ex:proba_comput}
can be used to obtain the probability of an edge occurring in a polygon configuration arising from the low temperature 
expansion of the Ising model.

\subsection{Range of the $Z$-invariant Ising model and phase transition}\label{sec:phase_transition}

The following lemma shows that the $Z$-invariant coupling constants are analytic in $k^2$ and that
the whole range of temperatures is covered as the parameter $k$ varies.

\begin{lem}
  \label{lem:poids_croissants}
  Let $\Gs$ be an isoradial graph and consider an edge $e$ of $\Gs$. Then the coupling constant
  \begin{equation*}
  \Js(\overline{\theta}_e\vert k)=\frac{1}{2} 
    \log\left(\frac{1+\sn(\theta_e\vert k)}{\cn(\theta_e\vert k)}\right)
  \end{equation*}
defined in \eqref{eq:def_weight}, seen as a
function of $k^2$, is analytic on $(-\infty,1)$ and increases from $0$ to
$\infty$ as $k^2$ goes from $-\infty$ to $1$. 
\end{lem}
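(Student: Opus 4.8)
The plan is to reduce everything to the single quantity $\sc(\theta_e\vert k)$. By \eqref{eq:def_weight} we have $\sinh(2\Js(\overline{\theta}_e\vert k))=\sc(\theta_e\vert k)$, hence $\Js(\overline{\theta}_e\vert k)=\tfrac12\operatorname{arcsinh}\bigl(\sc(\theta_e\vert k)\bigr)$, and since $\operatorname{arcsinh}$ is an increasing real-analytic bijection of $[0,\infty)$ onto $[0,\infty)$, it suffices to prove that $\sc(\theta_e\vert k)$ is analytic in $k^2$ and increases from $0$ to $\infty$ as $k^2$ runs over $(-\infty,1)$. The key device is an amplitude reparametrization: setting $t=\tfrac{2\overline{\theta}_e}{\pi}\in(0,1)$, we have $\theta_e=tK(k)$ and $\sc(\theta_e\vert k)=\tan\bigl(\operatorname{am}(\theta_e\vert k)\bigr)=\tan\phi$, where $\phi=\operatorname{am}(tK(k)\vert k)\in(0,\tfrac\pi2)$ is characterized implicitly by
\[
R(\phi,k^2):=\frac{F(\phi\vert k)}{K(k)}=t,\qquad F(\phi\vert k)=\int_0^\phi\frac{\ud\psi}{\sqrt{1-k^2\sin^2\psi}},\quad K(k)=F(\tfrac\pi2\vert k).
\]
Since $\tan$ is increasing on $(0,\tfrac\pi2)$, the whole statement reduces to controlling $\phi$ as a function of $k^2$.

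For monotonicity I would differentiate the implicit relation. As $1-k^2\sin^2\psi>0$ throughout $(-\infty,1)$, the integrand $w(\psi)=(1-k^2\sin^2\psi)^{-1/2}$ is smooth and $\partial_\phi R=\tfrac1K(1-k^2\sin^2\phi)^{-1/2}>0$, so by the implicit function theorem $\tfrac{\ud\phi}{\ud k^2}$ has the sign opposite to $\partial_{k^2}R$. Writing $\partial_{k^2}w=g\,w$ with $g(\psi)=\tfrac{\frac12\sin^2\psi}{1-k^2\sin^2\psi}$, one finds $\operatorname{sign}\partial_{k^2}R=\operatorname{sign}\bigl(\tfrac{\int_0^\phi g\,w}{\int_0^\phi w}-\tfrac{\int_0^{\pi/2}g\,w}{\int_0^{\pi/2}w}\bigr)$, i.e.\ the difference between the average of $g$ against the measure $w\,\ud\psi$ over the subinterval $[0,\phi]$ and over the full interval $[0,\tfrac\pi2]$. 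A direct check shows $g$ is strictly increasing in $\psi$ (set $s=\sin^2\psi$ and differentiate $\tfrac{s}{1-k^2s}$), so the average over $[0,\phi]$ is $\le g(\phi)\le$ the average over $[\phi,\tfrac\pi2]$; since $\tfrac{\int_0^{\pi/2}g\,w}{\int_0^{\pi/2}w}$ is the mediant of these two averages it lies strictly between them, making the displayed difference strictly negative when $\phi<\tfrac\pi2$. Hence $\partial_{k^2}R<0$, $\tfrac{\ud\phi}{\ud k^2}>0$, and $\sc(\theta_e\vert k)=\tan\phi$, and therefore $\Js$, is strictly increasing in $k^2$.

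For the endpoints I would read off the limiting value of $\phi$ from $R(\phi,k^2)=t$, using monotonicity of $R$ in $\phi$. As $k^2\to1^-$ one has $K(k)\to\infty$ while $F(\phi\vert k)\to\int_0^\phi\sec\psi\,\ud\psi<\infty$ for each fixed $\phi<\tfrac\pi2$, so $R\to0<t$ pointwise; this forces $\phi\to\tfrac\pi2$, whence $\sc\to\infty$ and $\Js\to\infty$. As $k^2\to-\infty$, the substitution $\psi=x/\sqrt{-k^2}$ localizes both integrals near $0$ and gives $F(\phi\vert k)\sim(-k^2)^{-1/2}\log\bigl(2\phi\sqrt{-k^2}\bigr)$, with the same estimate at $\phi=\tfrac\pi2$, so $R\to1>t$ pointwise; this forces $\phi\to0$, whence $\sc\to0$ and $\Js\to0$. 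Analyticity in $k^2$ on $(-\infty,1)$ follows from the analyticity of $K(k)$ in $k^2$ (a hypergeometric function of $k^2$), the analyticity of the Jacobi functions in $k^2$, composition with the analytic argument $tK(k)$, and analyticity of $\operatorname{arcsinh}$ on $[0,\infty)$ applied to the positive function $\sc(tK(k)\vert k)$. The main obstacle is the monotonicity step: everything hinges on recognizing $\partial_{k^2}R$ as a comparison of two weighted averages of the increasing function $g$, and the $k^2\to-\infty$ regime additionally requires the rescaling argument to resolve the resulting $\infty/\infty$ indeterminacy.
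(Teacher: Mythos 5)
Your proof is correct, and your monotonicity argument takes a genuinely different route from the paper's. The paper proves the increase on $k^2\in[0,1)$ by writing $\sinh(2\Js)=\tan(\am(\overline{\theta}_e\frac{2K(k)}{\pi}\vert k))$ and invoking monotonicity of $\sinh^{-1}$, $\tan$, $\am$ and $K$, then transfers to $k^2\le 0$ via the duality relation \eqref{equ:duality}, obtains the $k^2\to1$ limit from the known limit of $\am$ in \cite[16.15.4]{AS}, and handles analyticity at $k^2=0$ with the Nome expansion \eqref{eq:expansion_sc_Nome}. You instead characterize $\phi=\am(\theta_e\vert k)$ by the implicit equation $F(\phi\vert k)/K(k)=t$ and show $\partial_{k^2}(F/K)<0$ by comparing weighted averages of the increasing function $g$ over $[0,\phi]$ and over $[0,\pi/2]$. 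This buys a uniform treatment of the whole range $k^2\in(-\infty,1)$ with no appeal to duality, and it is in fact more complete than the paper's one-line composition argument: since $F(\phi\vert\cdot)$ increases in $k^2$ at fixed $\phi$, the amplitude $\am(u\vert\cdot)$ at fixed $u$ \emph{decreases} in $k^2$, so there is a genuine competition with the growth of $K(k)$ that your averaging inequality resolves and that the paper's phrasing glosses over. Your direct endpoint asymptotics (logarithmic divergence of both $F$ and $K$ as $k^2\to-\infty$, so that $F/K\to1$; and $K\to\infty$ while $F(\phi\vert\cdot)$ stays bounded as $k^2\to1^-$) are correct substitutes for the paper's use of the explicit limit of $\am$ and of duality. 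The only spot where you are as terse as the paper is analyticity across $k^2=0$: the bare assertion that the Jacobi functions are jointly analytic in $k^2$ there is precisely the point the paper takes care to justify via the Nome series, and a one-line justification (for instance that the Taylor coefficients of $\sn$ and $\cn$ in $u$ are polynomials in $k^2$, or the Nome expansion itself) would round out your argument.
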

\begin{proof}
Jacobi's amplitude $\am(\cdot\vert k)$, defined by $\am(u\vert k)=\int_{0}^{u}\dn(v\vert k)\ud v$, 
relates Jacobi and classical 
trigonometric functions. For the function $\sc$ we have
\begin{equation*}
     \sc(u\vert k)=\tan(\am(u\vert k)).
\end{equation*}
The fact that $\Js(\overline{\theta}_e\vert k)$ is an increasing function of $k^2$ on $[0,1)$ comes from the relation
\begin{equation}
\label{eq:Ising_weights_tan_am}
     \sinh(2\Js(\overline{\theta}_e\vert k))=\sc\Bigl(\overline{\theta}_e\frac{2K(k)}{\pi} \vert k\Bigr)
     =\tan(\am(\overline{\theta}_e \frac{2K(k)}{\pi}\vert k))
\end{equation}
together with the fact that $\sinh^{-1}$, $\tan$, $\am$ and $K$ are all
increasing functions. Moreover, as $k\to1$, $\am(u\vert k)\to
2\arctan(e^u)-\frac{\pi}{2}$, see \cite[16.15.4]{AS}, and thus
$\am(\overline{\theta}_e \frac{2K(k)}{\pi}\vert k)\to\frac{\pi}{2}$. Using
\eqref{eq:Ising_weights_tan_am} then leads to
\begin{equation*}
  \lim_{k\to1} \Js(\overline{\theta_e}\vert k)=\lim_{k\to1}
  \sinh(2\Js(\overline{\theta_e}\vert k))=\infty.
\end{equation*}
From the duality relation~\eqref{equ:duality} and from the case
$k^2\in[0,1)$, we deduce that $\Js(\overline{\theta_e}\vert k)$ is increasing on $k^2\in(-\infty,0]$
and goes to $0$ as $k^2\to-\infty$.

The analyticity of $\Js(\overline{\theta_e}\vert k)$ is clear on $(-\infty ,0)$ and
$(0,1)$. For the neighborhood of $0$ we use the series expansion of $\sc$ in
terms of the Nome $q=\exp(-\pi K'(k)/K(k))$, see \cite[16.23.9]{AS}:
\begin{equation}
\label{eq:expansion_sc_Nome}
     \sc\Bigl(\overline{\theta}_e\frac{2K(k)}{\pi}\vert k\Bigr)=\frac{\pi}{2\sqrt{1-k^2}K(k)}\tan(\overline{\theta}_e)+\frac{2\pi}{\sqrt{1-k^2}K(k)}\sum_{n=1}^{\infty}(-1)^{n}\frac{q^{2n}}{1+q^{2n}}\sin(2n\overline{\theta}_e).
\end{equation}
\end{proof}

The following proves a \emph{second order} phase transition, which together with Lemma~\ref{lem:poids_croissants} allows to derive the 
phase diagram of the $Z$-invariant Ising model (see Figure \ref{Fig:phase_diagram}). Comments on the result are given in Remark~\ref{rem:phase_transition}.

\begin{cor}[Phase transition]
\label{cor:phase_transition}
The free energy $\Fising^{k}$ of the $Z$-invariant Ising model on an isoradial graph $\Gs$
is analytic for $k^2\in\mathbb R\setminus \{0\}$. The model has continuous
(or \emph{second order}) phase transition at $k=0$. Namely, one has
\begin{equation*}
     \Fising^{k}=\Fising^{0}-\vert k\vert^2\log \vert k\vert^{-1}\frac{\vert \Vs_{1}\vert}{2}+O(k^2).
\end{equation*}
\end{cor}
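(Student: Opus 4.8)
The plan is to transfer the phase transition for $Z$-invariant spanning forests, established in~\cite{BdTR1}, to the Ising model through the exact linear relation between the two free energies. The whole argument rests on Corollary~\ref{cor:link_free_energies}, which asserts
\begin{equation*}
\Fising^{k}=-\vert\Vs_1\vert\frac{\log 2}{2}+\frac{\Fforest^k}{2}.
\end{equation*}
Since the additive constant $-\vert\Vs_1\vert\frac{\log 2}{2}$ is independent of $k$, every regularity statement and every asymptotic expansion of $\Fising^k$ in the variable $k^2$ is, up to the factor $\frac12$, inherited verbatim from the corresponding statement for $\Fforest^k$. In particular, evaluating the relation at $k=0$ gives $\Fising^0=-\vert\Vs_1\vert\frac{\log 2}{2}+\frac12\Fforest^0$, which I will use below.

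First I would establish analyticity for $k^2\in\mathbb{R}\setminus\{0\}$. For $k^2\in(0,1)$ this is immediate from the analyticity of $\Fforest^k$ in $k^2$ proved in~\cite{BdTR1}; equivalently, one reads it off the explicit formula of Corollary~\ref{cor:free_energy_Ising}, whose integrand and summands are analytic functions of $k^2$ away from $0$ (the functions $\sc$, $\cn$, $\Hh$ and $K$ all being analytic there). For $k^2<0$, I would either invoke the same analyticity of the explicit expression, or use the duality of Section~\ref{sec:duality}: the model with parameter $k$ has the same spectral data as the one with dual parameter $k^*$, for which $(k^*)^2\in(0,1)$, see Corollary~\ref{cor:charact_pol}, so analyticity on the positive side carries over.

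For the behaviour at $k=0$, I would quote the expansion of the spanning forest free energy from~\cite{BdTR1}: there it is shown that $\Fforest^k$ is continuous at $k=0$ while its $k^2$-derivative carries a logarithmic singularity, the precise leading behaviour being
\begin{equation*}
\Fforest^k=\Fforest^0-\vert k\vert^2\log\vert k\vert^{-1}\,\vert\Vs_1\vert+O(k^2).
\end{equation*}
Substituting this into the relation above yields
\begin{equation*}
\Fising^k=\Fising^0-\vert k\vert^2\log\vert k\vert^{-1}\,\frac{\vert\Vs_1\vert}{2}+O(k^2),
\end{equation*}
which is exactly the claimed expansion. Continuity of $\Fising^k$ at $k=0$ follows since the singular term $\vert k\vert^2\log\vert k\vert^{-1}\to0$; the fact that this term is non-analytic (its $k^2$-derivative blows up logarithmically) while the free energy itself is continuous is precisely the meaning of a second order phase transition.

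The only genuinely non-routine ingredient is the input from~\cite{BdTR1}, namely the precise leading coefficient $\vert\Vs_1\vert$ in the expansion of $\Fforest^k$, which itself comes from the saddle-point/residue analysis of the forest free energy near the degeneration of the torus $\TT(k)$ as $k\to0$. Once that expansion is granted, the corollary reduces to a one-line substitution together with the bookkeeping of the factor $\frac12$, so I expect no new analytic difficulty beyond what is already carried out in~\cite{BdTR1}.
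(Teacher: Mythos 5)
Your proposal is correct and follows essentially the same route as the paper: the paper's proof also rests on Corollary~\ref{cor:link_free_energies} together with the analyticity and expansion of $\Fforest^k$ from \cite[Theorems~36 and 38]{BdTR1}, and handles $k^2<0$ either by extending the integral expression for the forest free energy to negative $k^2$ or, as you suggest, by passing to the dual modulus $k^*$ via the self-duality relation of Corollary~\ref{cor:free_energy_self_dual}.
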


\begin{proof}
We start from Corollary~\ref{cor:link_free_energies} relating the free energy of the $Z$-invariant spanning forests and of the Ising model, 
and from \cite[Theorems~36 and 38]{BdTR1}. The analyticity for $k^2>0$ and the phase transition when $k^2>0$ tends to $0$ immediately follow 
from these results.

In the case $k^2<0$, the key point is the expression of the free energy of rooted spanning forests, 
derived in \cite[Theorem~38]{BdTR1} whenever $k^2>0$. It is an integral expression in terms of the function $\Hh$. It turns out that a 
similar expression holds in the case $k^2<0$. 
Performing an asymptotic expansion (along the same lines as in the proof of \cite[Theorem~38]{BdTR1}) of the so-obtained expression of 
$\Fising^{k}$ when $k$ tends to $0$ then concludes the proof.

In the case $k^2<0$, we could also use the forthcoming Corollary~\ref{cor:free_energy_self_dual} relating $\Fising^{k}$ and $\Fising^{k^*}$. 
This allows us to express $\Fising^{k}$ in terms of the free energy associated with the elliptic modulus $k^*$, whose square is positive. 
In this way we can again use \eqref{eq:link_free_energies} and \cite[Theorem~38]{BdTR1}.
\end{proof}

\begin{rem}\label{rem:phase_transition}$\,$
\begin{itemize}
\item In the particular case of the square lattice, Corollary \ref{cor:phase_transition} is derived in \cite[(7.12.7)]{Baxter:exactly},
proving criticality at $k=0$ of the $Z$-invariant Ising model on $\ZZ^2$. Our result is thus a generalization of the latter to all 
isoradial graphs. 
\item Criticality for the $Z$-invariant Ising model has been proved in~\cite{Li:critical,CimasoniDuminil,Lis}, 
with a different parametrization of the temperature and with different techniques:
the authors multiply the $Z$-invariant weights at $k=0$ 
by an inverse temperature parameter $\beta$, and prove that when 
$\beta$ is equal to the particular value $\beta_c:=1$, the model
is critical.
In their setting, the difference $\beta-\beta_c$ is to be compared to the first
nonzero term in the expansion of $J(\overline{\theta}|k)-J(\overline{\theta}|0)$ as $k$ goes to zero.
This expansion, which takes into account the fact that
$\theta=\overline{\theta}\frac{2K(k)}{\pi}$ is itself a function of
$k$, reads:
\begin{equation*}
  J(\overline{\theta}|k)=J(\overline{\theta}|0)+\frac{k^2}{8}\sin(\bar{\theta})+O(k^4),
\end{equation*}
yielding that $k^2 \asymp (\beta-\beta_c)$, which allows to compare the free
energy as a function of $\beta$ close to $\beta_c$ (as the one given by
Onsager~\cite{Onsager}) and Corollary~\ref{cor:phase_transition}.

\item What is remarkable and not present in the physics literature is that the phase transition of the $Z$-invariant Ising model 
is (up to a multiplicative factor $\frac{1}{2}$) the same as the phase transition of the $Z$-invariant spanning forest model.
As explained in the proof, this follows from fact that the free energies of the two models are related by a simple formula proved in Corollary~\ref{cor:link_free_energies}.

\end{itemize}
\end{rem}
We deduce the following phase diagram for the $Z$-invariant Ising model:
\begin{itemize}
     \item $k^2=0$: critical Ising model,
     \item $k^2\in(0,1)$: low-temperature Ising model,
     \item $k^2\in(-\infty,0)$: high temperature Ising model.
\end{itemize}
Note that the phase diagram is nicer when expressed with the complementary elliptic modulus $(k')^2=1-k^2\in(0,\infty)$, 
see Figure~\ref{Fig:phase_diagram}. 
In the rest of the paper we have nevertheless chosen to use the elliptic parameter $k$ since it is the one classically 
used in the notation of elliptic functions.

\smallskip

\begin{figure}[ht]
  \centering
\begin{overpic}[width=12cm]{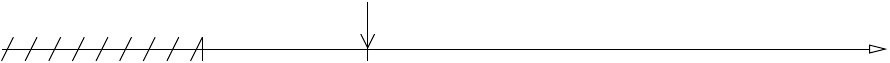}
\put(102,1){\scriptsize $(k')^2$}
\put(41,-3){\scriptsize $1$}
\put(22,-3){\scriptsize $0$}
\put(38,8){\scriptsize critical}
\put(26,3){\scriptsize sub-critical}
\put(52,3){\scriptsize super-critical}
\end{overpic}
\smallskip
\caption{Phase diagram of the $Z$-invariant Ising model on an isoradial graph $\Gs$ 
as a function of the complementary elliptic modulus $(k')^2$.\label{Fig:phase_diagram}}
\end{figure}

The domain $(k')^2<0$ (or equivalently $k^2>1$) on Figure \ref{Fig:phase_diagram} corresponds to the reciprocal parameter, also named Jacobi's real transformation. Jacobi elliptic functions are still defined for $k^2>1$, see \cite[16.11]{AS}. However, one main difference is that the period $K(k)$ is not real anymore. Accordingly, the angles $\theta_e=\overline{\theta}_e \frac{2K}{\pi}$ have a non-zero imaginary part.

We now examine the effect of this reciprocal transformation on the coupling constant $J(\overline{\theta}_e\vert k)$ defined in \eqref{eq:def_weight}.
As it does not seem natural to use complex angles, 
we extend the formula \eqref{eq:def_weight} in the regime $k^2>1$ with only the real parts of the angles. Using \cite[16.11]{AS} in \eqref{eq:def_weight}, one finds that for $k^2>1$
\begin{equation}
\label{eq:coupling_constant_reciprocal}
     J(\overline{\theta}_e\vert k)=\frac{1}{2}\log \left(\frac{1+\frac{1}{k}{\rm sn}(\overline{\theta}_e\frac{2K(1/k)}{\pi}\vert 1/k)}{{\rm dn}(\overline{\theta}_e\frac{2K(1/k)}{\pi}\vert 1/k)}\right).
\end{equation}
Using similar arguments as in Lemma \ref{lem:poids_croissants}, we see that when $k^2$ goes from $1$ to $+\infty$, the coupling constant \eqref{eq:coupling_constant_reciprocal} goes decreasingly from $+\infty$ to $0$. 
So this is the same as for the classical regime $k^2\in(-\infty,1)$; 
this range of parameter does not allow to reach a new regime, as for instance the anti-ferromagnetic regime.

\subsection{Self-duality for the $Z$-invariant Ising model}
\label{subsec:self_duality_Ising}

We now prove a self-duality relation for the $Z$-invariant massive Laplacian. 
From this and Corollary~\ref{cor:link_free_energies}, we deduce a self-duality relation for the 
$Z$-invariant Ising model,
see Remark~\ref{rem:self_duality}.

\begin{lem}
\label{lem:Laplacian_proportional}
The Laplacian operators associated to $k$ and $k^*$ satisfy the following self-duality relation:
\begin{equation*}
     \sqrt{{k^*}'}\Delta^{m(k^*)}=\sqrt{k'}\Delta^{m(k)},
\end{equation*}     
and hence the discrete massive harmonic functions are the same.
\end{lem}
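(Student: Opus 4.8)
The plan is to prove the operator identity coefficient by coefficient. Since $\Delta^{m}$ is determined by its conductances $\rho(\overline{\theta}_e\vert k)$ (the off-diagonal part) and its masses $m^2(\xb\vert k)$ (the diagonal part), and both are built from the angles $\theta_e=\overline{\theta}_e\frac{2K(k)}{\pi}$, I would first record how the angles transform. From \eqref{id:K_imaginary_modulus} together with ${k^*}'=1/k'$ one gets $K(k^*)=k'K(k)$, hence for every edge $\theta_e^{(k^*)}=\overline{\theta}_e\frac{2K(k^*)}{\pi}=k'\,\theta_e^{(k)}$. All subsequent identities are then evaluations of the two sides at the arguments $\theta$ and $k'\theta$.

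For the conductances, the claim $\sqrt{{k^*}'}\,\rho(\overline{\theta}_e\vert k^*)=\sqrt{k'}\,\rho(\overline{\theta}_e\vert k)$ is immediate from \eqref{id:sc_imaginary_modulus}, taking $u=\theta_e^{(k)}$ and using $\theta_e^{(k^*)}=k'\theta_e^{(k)}$; equivalently, $\sc(k'u\vert k^*)=k'\sc(u\vert k)$. This already settles the off-diagonal part of the operator identity. It then remains to treat the diagonal terms; since $m^2(\xb\vert k)=\sum_j(\Arm(\theta_j\vert k)-\sc(\theta_j\vert k))$ and the $\sc$-contribution is handled, everything reduces to the single, term-by-term identity $\sqrt{{k^*}'}\,\Arm(k'\theta\vert k^*)=\sqrt{k'}\,\Arm(\theta\vert k)$.

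The heart of the argument, and the main obstacle, is this last identity for $\Arm$, because it mixes the integral $\Dc$ with the normalizing constant $\frac{E-K}{K}$. First I would transform the $\dc$-integral using the Pythagorean-type relation $\dc^2(u\vert k)=1+{k'}^2\sc^2(u\vert k)$, which follows at once from $\dn^2=\cn^2+{k'}^2\sn^2$. Applying it at modulus $k^*$ and substituting $\sc(k'u\vert k^*)=k'\sc(u\vert k)$ gives $\dc^2(k'u\vert k^*)=1+\sc^2(u\vert k)$, and eliminating $\sc^2$ between the two relations yields the clean transformation ${k'}^2\dc^2(k'u\vert k^*)=\dc^2(u\vert k)-k^2$. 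Integrating in $u$ (via $v=k'w$) then produces $k'\,\Dc(k'\theta\vert k^*)=\Dc(\theta\vert k)-k^2\theta$, so the $\Dc$-parts match up to the explicit affine term $-k^2\theta$.

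To finish, I would verify that the affine terms balance, i.e. that $-k^2+{k'}^2\frac{E(k^*)-K(k^*)}{K(k^*)}=\frac{E(k)-K(k)}{K(k)}$. Using $K(k^*)=k'K(k)$ and the companion relation $E(k^*)=E(k)/k'$ — obtained from $E(k^*)=\int_0^{\pi/2}\sqrt{1-(k^*)^2\sin^2\phi}\,\ud\phi$ with $(k^*)^2=-k^2/{k'}^2$, after the substitution $\phi\mapsto\frac{\pi}{2}-\phi$ — the left-hand side collapses to $\frac{E}{K}-1=\frac{E-K}{K}$. Combining the $\Dc$-computation with this constant identity gives the desired $\Arm$-relation, completing the mass term. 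Putting the conductance and mass identities together shows $\sqrt{{k^*}'}\Delta^{m(k^*)}=\sqrt{k'}\Delta^{m(k)}$ as operators; since $\sqrt{{k^*}'}=1/\sqrt{k'}$, this means $\Delta^{m(k^*)}=k'\Delta^{m(k)}$ with $k'>0$, so the two operators are positive scalar multiples of one another and therefore have the same kernel, which is exactly the statement that the discrete massive harmonic functions coincide.
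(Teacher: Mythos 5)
Your proof is correct, but it takes a genuinely different route from the paper's for the mass term. The paper reduces the whole statement to the self-duality of $\sqrt{k'}\sc(\theta_e\vert k)$ (which, like in your argument, is just \eqref{id:sc_imaginary_modulus} combined with $K(k^*)=k'K(k)$) and then, for the masses, only proves that the \emph{sum} $\sqrt{k'}\sum_{j}\Arm(\theta_j\vert k)$ over the edges around a vertex is self-dual: writing $\theta_j$ as a half-difference of rhombus angles, applying the addition theorem \eqref{cor:Armbis:item3} $n$ times and the periodicity \eqref{cor:Armbis:item1}, the $\Arm$-terms telescope away and the sum becomes $-\sum_j\{\sqrt{k'}\sc(\alpha_{j+1}\vert k)\}\{\sqrt{k'}\sc(\alpha_j\vert k)\}\{\sqrt{k'}\sc(\alpha_{j+1}-\alpha_j\vert k)\}$, a combination of manifestly self-dual factors. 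That route never touches the complete elliptic integrals $E$ and $K$ beyond \eqref{id:K_imaginary_modulus}. You instead prove the stronger, edge-local identity $\sqrt{{k^*}'}\,\Arm(k'\theta\vert k^*)=\sqrt{k'}\,\Arm(\theta\vert k)$ by transforming the integrand via ${k'}^2\dc^2(k'u\vert k^*)=\dc^2(u\vert k)-k^2$ and balancing the affine terms with $K(k^*)=k'K(k)$ and $E(k^*)=E(k)/k'$; I checked the constant identity $-k^2+{k'}^2\frac{E(k^*)-K(k^*)}{K(k^*)}=\frac{E-K}{K}$ and it does collapse as you claim. The price of your approach is the extra imaginary-modulus identity for $E$ (standard, and your derivation via $\phi\mapsto\frac{\pi}{2}-\phi$ is fine, cf.\ \cite[17.4.18]{AS}); the payoff is a term-by-term self-duality of the $\Arm$-contributions, which the paper's telescoping argument does not give. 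Both yield the operator identity and hence the equality of the kernels.
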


\begin{proof}
Due to the particular form of the Laplacian operator \eqref{eq:Laplacian_operator}, Lemma \ref{lem:Laplacian_proportional} is equivalent to 
proving that $\sqrt{k'}\sc({\theta}_{e}\vert k)=\sqrt{k'}\sc(\frac{2K(k)}{\pi}\overline{\theta}_{e}\vert k)$ and 
$\sqrt{k'}\sum_{j=1}^{n}\Arm(\theta_j\vert k)$ are self-dual. For the first quantity, this directly follows from \eqref{id:sc_imaginary_modulus} and \eqref{id:K_imaginary_modulus}. For the second one, we write $\theta_j=\frac{\alpha_{j+1}-\alpha_j}{2}$ and use $n$ times the addition theorem \eqref{cor:Armbis:item3} and finally the periodicity relation~\eqref{cor:Armbis:item1}. We obtain
\begin{equation*}
     \sqrt{k'}\sum_{j=1}^{n}\Arm(\theta_j\vert k)=-\sum_{j=1}^{n}\{\sqrt{k'}\sc({\alpha}_{j+1}\vert k)\}\{\sqrt{k'}\sc({\alpha}_{j}\vert k)\}\{\sqrt{k'}\sc({\alpha}_{j+1}-{\alpha}_{j}\vert k)\},
\end{equation*}     
which is self-dual, for the same reasons as previously.
\end{proof}

\begin{cor}
\label{cor:free_energy_self_dual}
The free energy of the $Z$-invariant Ising model on the graph $\Gs$ satisfies the following self-duality relation.
\begin{equation*}
     \Fising^k+\frac{\vert\Vs_1\vert}{2}\log k'=\Fising^{k^*}+\frac{\vert\Vs_1\vert}{2}\log {k^{*}}'.
\end{equation*}
\end{cor}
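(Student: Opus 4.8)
The plan is to reduce the statement to a self-duality relation for the free energy of the $Z$-invariant spanning forests, for which the operator identity of Lemma~\ref{lem:Laplacian_proportional} does almost all the work. First I would invoke Corollary~\ref{cor:link_free_energies}, which asserts $\Fising^k=-\vert\Vs_1\vert\frac{\log 2}{2}+\frac12\Fforest^k$ together with the analogous identity for $k^*$. Since the additive term $-\vert\Vs_1\vert\frac{\log 2}{2}$ does not depend on the elliptic modulus, it cancels when one compares the $k$ and $k^*$ expressions, so that the desired self-duality for $\Fising$ becomes equivalent to a self-duality for $\Fforest$, with the constant multiplying $\log k'$ simply rescaled by the factor $\frac12$.

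Next I would establish that forest self-duality. By Kirchhoff's matrix-tree theorem, on each toroidal graph $\Gs_n=\Gs/n\ZZ^2$ one has $\Zforest(\Gs_n,\rho,m)=\det\Delta^{m}_n$, where $\Delta^m_n$ is the restriction of the massive Laplacian to $\Gs_n$. The key point is that the operator relation $\sqrt{{k^*}'}\,\Delta^{m(k^*)}=\sqrt{k'}\,\Delta^{m(k)}$ of Lemma~\ref{lem:Laplacian_proportional} is pointwise in the vertices, hence holds verbatim on the finite graph $\Gs_n$; setting $r=\sqrt{k'}/\sqrt{{k^*}'}$ we thus obtain $\Delta^{m(k^*)}_n=r\,\Delta^{m(k)}_n$. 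Taking determinants of these $\vert\Vs_1\vert n^2\times\vert\Vs_1\vert n^2$ matrices gives $\det\Delta^{m(k^*)}_n=r^{\,\vert\Vs_1\vert n^2}\det\Delta^{m(k)}_n$, so that $\log\det\Delta^{m(k^*)}_n=\vert\Vs_1\vert n^2\log r+\log\det\Delta^{m(k)}_n$. Dividing by $-n^2$ and letting $n\to\infty$ produces a clean relation between $\Fforest^{k^*}$ and $\Fforest^k$ whose only non-trivial term is proportional to $\vert\Vs_1\vert\log r$; using ${k^*}'=1/k'$ from \eqref{equ:duality_parameter} to rewrite $\log r$ as a multiple of $\log k'$ then turns this into a self-dual statement for $\Fforest$, symmetric under $k\leftrightarrow k^*$.

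Finally I would feed this forest self-duality back into the expression for $\Fising$ coming from Corollary~\ref{cor:link_free_energies}, collecting the $\log k'$ and $\log{k^*}'$ contributions to reach the claimed relation. The existence of the infinite-volume limits is not an issue: it is guaranteed for $k\neq0$ by the absence of zeros of the characteristic polynomial on the unit torus (Corollary~\ref{cor:charact_pol}), exactly as in the free-energy computations of Section~\ref{subsubsec:free_energy}. In truth the only genuinely delicate input is Lemma~\ref{lem:Laplacian_proportional} itself, whose proof rests on the duality identities \eqref{id:K_imaginary_modulus}--\eqref{id:sc_imaginary_modulus} for $K$ and $\sc$; once the proportionality of the two Laplacians is granted, the free-energy self-duality follows from the elementary determinant scaling above, the main care being to track correctly the multiplicative constants and the power $\vert\Vs_1\vert n^2$. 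As an alternative I could bypass the forest model and argue directly from the explicit formula of Corollary~\ref{cor:free_energy_Ising}, using the self-duality of $\sqrt{k'}\,\sc(\theta_e\vert k)$ and of the function $\Hh$, but the route through $\Fforest$ is shorter since it reuses Lemma~\ref{lem:Laplacian_proportional} with no further analysis.
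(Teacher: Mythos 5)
Your proof follows exactly the route the paper intends: Corollary~\ref{cor:free_energy_self_dual} is presented there as a direct consequence of Lemma~\ref{lem:Laplacian_proportional} and Corollary~\ref{cor:link_free_energies}, with the determinant scaling $\det\Delta^{m(k^*)}_n=(k')^{\vert\Vs_1\vert n^2}\det\Delta^{m(k)}_n$ (since $r=\sqrt{k'}/\sqrt{{k^*}'}=k'$) doing the work on each torus $\Gs_n$, so your argument is essentially the paper's own (which is only sketched). The one point you leave implicit --- as does the paper --- is that Corollary~\ref{cor:link_free_energies} must be available at the dual modulus $k^*$, i.e.\ for $k^2<0$, and you should genuinely carry out the final bookkeeping with $\log{k^*}'=-\log k'$ rather than assert it, since the interaction of the factor $\frac12$ from Corollary~\ref{cor:link_free_energies} with this sign is exactly where the prefactor of $\log k'$ is determined.
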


\begin{rem}\label{rem:self_duality}$\,$
\begin{itemize}
\item In the case of the triangular lattice this is proved in~\cite[(6.5.1)]{Baxter:exactly}, our result thus extends the latter
to all isoradial graphs.
\item There is no simple self-duality relation 
between the coupling constants $\Js(\overline{\theta}_e\vert k)$ and $\Js(\overline{\theta}_e\vert k^*)$ so that
this result is not straightforward. Baxter's argument in the triangular case reads as 
follows: he transforms the $Z$-invariant Ising model with parameter $k$ on the triangular lattice into the 
one on the honeycomb lattice with the same parameter $k$ by using $Y$-$\,\Delta$ moves, from this he deduces that the partition functions differ by
an explicit constant; then he uses Kramers and Wannier duality to map the partition function of the $Z$-invariant Ising model with parameter $k$ 
on the honeycomb lattice into the one of the triangular lattice (dual graph) with parameter $k^*$. Making
the constants explicit allows to relate the free energies with parameters $k$ and $k^*$ on the triangular lattice. 
It is not obvious that this argument should extend to general isoradial graphs,
even though one can
generically go from a periodic isoradial graph
to its
dual using $Y$-$\,\Delta$ moves\footnote{%
  If on the torus, there is a unique train-track with a given homology class,
  making it move across the torus through every pairwise intersection once by
  $Y$-$\,\Delta$ moves yields the same rhombic graph, but with the role of
  primal and dual vertices exchanged. If there are several of them, then moving
  them all to put the first one instead of the second one, the second instead of
  the third, etc., yields the result. Note though that this is not possible in the case
  when there are just two different homology classes, like in the case of the square
  lattice.
}:
when working out the constants in Baxter's computation, there seems to be 
some cancellations that are specific to the triangular and honeycomb lattices. 
\item This self-duality relation and the assumption of uniqueness of the critical point is used in~\cite[(6.5.5)--(6.5.7)]{Baxter:exactly}
to compute the critical temperature of the Ising model on the triangular and honeycomb lattice. Corollary~\ref{cor:free_energy_self_dual} 
allows to extend this physics argument to all isoradial graphs.
\end{itemize}
\end{rem}

\subsection{Dualities of the Ising model and the modular group}
\label{sec:modular_group}

Various changes of the elliptic modulus $k$ are considered throughout this article. Besides the intrinsic complementary transformation $k\mapsto k'$, we have seen the importance of the dual transformation $k\mapsto k^*$, as many quantities are self-dual: 
\begin{itemize}
     \item $\sqrt{k'}K(k)$, see \eqref{id:K_imaginary_modulus}; 
     \item $\sqrt{k'}\sc(\frac{2K(k)}{\pi}u\vert k)$, see \eqref{id:sc_imaginary_modulus}; 
     \item the exponential function \eqref{eq:recursive_def_expo}; 
     \item the modified Laplacian operator, see Lemma \ref{lem:Laplacian_proportional}; 
     \item the modified free energy, see Corollary \ref{cor:free_energy_self_dual};
     \item the rescaled function $\Hh$, as $\Hh(K(k)u\vert k)=\Hh(K(k^*)u\vert k^*)$, see \eqref{eq:definition_Hh_Hv_k2<0}.
\end{itemize}     
Moreover, in the proof of Theorem~\ref{thm:asymptotics_inverse_Kasteleyn}, we make use of the ascending Landen transformation $k\mapsto \frac{2-k^2-2\sqrt{1-k^2}}{k^2}$ (note, this does not appear explicitly in the proof, as we refer to the companion paper \cite{BdTR1} for the details). 

Our aim in this paragraph is twofold: first we reformulate the self-duality as a parity property of expansions in terms of the Nome $q$, 
then we relate the various transformations of $k$ to the modular group. Links between the Ising model and the modular group already exist in the 
physics literature, see in particular \cite[Chapter 8]{Krieger} as well as \cite{MaBo-01,Bostan-11}.

\subsubsection{Self-duality and expansions in terms of the Nome}

Let us first mention that a function of the elliptic modulus $k^2$ is analytic at $0$ if and only if it is analytic at $0$ as a function of the Nome $q=e^{-\pi K'/K}$. Indeed, $q$ is analytic in terms of $k^2$ and 
\begin{equation*}
     q=\frac{k^2}{16}+8\left(\frac{k^2}{16}\right)^2+84\left(\frac{k^2}{16}\right)^3+992\left(\frac{k^2}{16}\right)^4+\ldots,\quad\text{(\cite[17.3.21]{AS})}.
\end{equation*}
Accordingly, any generic quantity of our article admits an analytic expansion in terms of the Nome at $0$. The following simple criterion translates the self-duality property as a matter of parity:
\begin{lem}
\label{lem:self-duality_parity}
Let $f(k)$ be a function analytic in $k^2$ around $0$. Then $f$ is self-dual (\textit{i.e.}, $f(k)=f(k^*)$) if and only if its expansion in terms of the Nome is even.
\end{lem}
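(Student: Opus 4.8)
The plan is to transport the whole question to the Nome $q=e^{-\pi K'/K}$ and reduce it to a parity check on a single power series. Recall from the discussion preceding the lemma that $q$ is analytic in $k^2$ near $0$ with $q=\frac{k^2}{16}+\cdots$, and that a function is analytic in $k^2$ at $0$ if and only if it is analytic in $q$ at $0$. Hence I may write $f=\sum_{n\geq 0}a_n q^n$. The whole lemma then follows from the single key identity
\begin{equation}
\label{eq:nome_dual}
q(k^*)=-q(k),
\end{equation}
understood as an identity of analytic functions of $k^2$ near $0$: here $q(k^*)$ denotes the Nome series evaluated at ${k^*}^2=-k^2/{k'}^2$, which by \eqref{equ:duality_parameter} is again analytic in $k^2$ near $0$, and $k^2\mapsto{k^*}^2$ is involutive.

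For \eqref{eq:nome_dual} I would argue through the periods $K$ and $K'$. Since ${k^*}'=1/k'$, relation \eqref{id:K_imaginary_modulus} gives $\sqrt{k'}\,K(k)=\frac{1}{\sqrt{k'}}K(k^*)$, that is $K(k^*)=k'K(k)$. The companion imaginary-modulus relation for the complementary period (the $K'$-analogue of \eqref{id:K_imaginary_modulus}, see \cite[16.10--17.4]{AS}) reads $K'(k^*)=k'\bigl(K'(k)\mp iK(k)\bigr)$, whence
\[
\frac{K'(k^*)}{K(k^*)}=\frac{K'(k)}{K(k)}\mp i.
\]
Therefore $q(k^*)=\exp\bigl(-\pi K'(k^*)/K(k^*)\bigr)=e^{\pm i\pi}\,q(k)=-q(k)$, the sign of $i$ being immaterial since $e^{\pm i\pi}=-1$; this proves \eqref{eq:nome_dual}. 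Equivalently, one may identify $k^2$ with the modular lambda function $\lambda(\tau)$ at $\tau=iK'/K$ and invoke $\lambda(\tau+1)=\lambda/(\lambda-1)={k^*}^2$, so that $k^*$ corresponds to $\tau+1$ and $q(k^*)=e^{i\pi(\tau+1)}=-q(k)$; the branch is pinned near $\tau\to i\infty$ by $\lambda\sim 16q$, which matches $k^2\sim 16q$.

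With \eqref{eq:nome_dual} in hand the conclusion is purely formal. Substituting into the expansion gives
\[
f(k^*)=\sum_{n\geq 0}a_n\,q(k^*)^n=\sum_{n\geq 0}(-1)^n a_n\,q^n,
\]
so that $f(k)=f(k^*)$ holds identically if and only if $a_n=(-1)^n a_n$ for every $n$, i.e.\ $a_n=0$ for all odd $n$, which is exactly the evenness of the Nome expansion; this gives both implications at once. The only genuine content, and the step I expect to be the main obstacle, is \eqref{eq:nome_dual}: once the dual modulus is recognized as negating the Nome, the parity reformulation is immediate. The mild care required is to read self-duality as an identity of germs at $k^2=0$, which is legitimate precisely because ${k^*}^2=-k^2/{k'}^2$ is analytic at $0$ and involutive, so that comparing Taylor coefficients in $q$ is equivalent to comparing the functions themselves.
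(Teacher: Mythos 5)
Your proof is correct and follows essentially the same route as the paper: both establish $K(k^*)=k'K(k)$ and $K'(k^*)=k'(K'(k)\pm iK(k))$ from the imaginary-modulus transformations, deduce the key identity $q(k^*)=-q(k)$, and then read off the equivalence as a parity condition on the coefficients of the Nome expansion. Your additional remarks (the sign being immaterial, the modular-lambda reformulation, and the care about germs at $k^2=0$) are consistent with, and slightly more detailed than, the paper's argument.
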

\begin{proof}
When $k$ is replaced by $k^*$, the quarter-periods become $K(k^*) ={k'}K(k)$, see \eqref{id:K_imaginary_modulus}, and
\begin{equation}
\label{id:K*_imaginary_modulus}
     K'(k^*)=k'(K'(k)+iK(k))=\frac{K'(k)+iK(k)}{\sqrt{1-{k^*}^{2}}},\quad \text{(\cite[17.4.17]{AS})}.
\end{equation}
It becomes obvious that $q(k^*)=-q(k)$, Lemma \ref{lem:self-duality_parity} follows.
\end{proof}

With Lemma \ref{lem:self-duality_parity} the question of finding expansions in terms of the Nome comes up. In fact, such expansions typically appear rather indirectly, when writing Fourier expansions of Jacobi functions or elliptic integrals; cf.\ \eqref{eq:expansion_sc_Nome} for the Fourier expansion of the $\sc$ function, as well as \cite[16.23 and 16.38]{AS} and \cite[Section 8.7]{La89} for a more systematic treatment.

On the other hand, the Ising weights \eqref{eq:def_weight}, which are at the heart of our whole construction, are not self-dual. This default of duality is responsible for the non-analyticity (and in some sense of the phase transition, see Corollary \ref{cor:phase_transition}) of the free energy $\Fising^k$ at $0$.

\subsubsection{Dualities and modular group}

The modular group (see \cite[Chapter 9]{La89} for an introduction) is the group generated by the transformations $S(\tau)=-1/\tau$ and $T(\tau)=\tau+1$, acting on the upper half-plane. This group is the set of all transformations 
\begin{equation}
\label{eq:transformations_modular_group}
     \tau\mapsto \frac{c+d\tau}{a+b\tau},
\end{equation}
with $a,b,c,d\in\mathbb Z$ such that $ad-bc=1$. 

Two pairs of complex vectors $(1,\tau)$ and $(1,\tau')$ generate exactly the same lattice $\mathbb Z+\tau\mathbb Z=\mathbb Z+\tau'\mathbb Z$ if 
and only if $\tau'$ is obtained from $\tau$ by a modular transformation \eqref{eq:transformations_modular_group}. 

The quantity $\tau$ should be interpreted as ratios of quarter-periods; for instance $\tau=\frac{iK'}{K}$ (and then the Nome is $q=e^{i\pi\tau}$). 

It is interesting to notice that both generators of the modular group correspond to a duality: $S$ is the complementary duality and $T$ the self-duality, see Table~\ref{table:correspondance_modular}. 

\setlength{\doublerulesep}{\arrayrulewidth}
\begin{table}[ht]
\begin{center}
\begin{tabular}{|| l| l| l||}
  \hline\hline
   \begin{minipage}{0.1\textwidth}\smallskip
   Elliptic 
   modulus\smallskip
   \end{minipage} & \begin{minipage}{0.24\textwidth}\smallskip Name of the change of 
   modulus\smallskip\end{minipage}& \begin{minipage}{0.25\textwidth}\smallskip
   Transformations 
   of the modular group\smallskip\end{minipage}  \\ 
   \hline\hline
    \quad$k$ & No change &  \quad$\tau$ \\
    \hline
    \quad$k'$ & Complementary & \quad$S(\tau)=-1/\tau$  \\
    \hline
    \quad$k^*$ & (Self-)duality & \quad$T(\tau)=\tau+1$  \\
    \hline
    \begin{minipage}{0.14\textwidth}\smallskip   $\frac{2-k^2-2\sqrt{1-k^2}}{k^2}$
    
    \smallskip
    \end{minipage} & Landen transformation & \quad$2\tau$ \\
  \hline\hline
\end{tabular}
\end{center}
\caption{Correspondance between changes of the elliptic modulus and transformations of the modular group.}
\label{table:correspondance_modular}
\end{table}

The set of all transformations \eqref{eq:transformations_modular_group} with
$ad-bc\geq 1$ also forms a group, called the extended modular group. The
quantity $ad-bc$ is then named the order of the transformation
\eqref{eq:transformations_modular_group}. In our elliptic treatment of the Ising
model we have also encountered higher order transformations: namely, the Landen
ascending transformation (used in the proof of Theorem
\ref{thm:asymptotics_inverse_Kasteleyn}) has order $2$, see again Table
\ref{table:correspondance_modular}.

This short discussion suggests that combinatorial links could exist between any
two Ising models associated with elliptic modulus whose $\tau$'s are related by
a transformation \eqref{eq:transformations_modular_group}.

\section{The double $Z$-invariant Ising model via dimers on the graph~$\GQ$}
\label{sec:double_Ising}

In the whole of this section we consider the dimer model on the bipartite graph $\GQ$ arising from two independent 
$Z$-invariant Ising models defined on an infinite isoradial graph $\Gs$. Edges of $\GQ$ are assigned the weight function
$\overline{\nu}$ of~\eqref{eq:dimer_weights_GQ}.

In Section~\ref{sec:functions_kernel} we introduce a one parameter family of functions in the kernel of the Kasteleyn operator $\KQ$ of this dimer model. 
This is the key object used in Section~\ref{sec:invKQ} to prove a local expression for an inverse of the operator $\KQ$. 
In Section~\ref{sec:asymptKQ} we derive asymptotics of this operator, and in Section~\ref{sec:dimerKQ} we derive consequences for the 
dimer model Gibbs measure. We aslo give a few examples of computations.

\subsection{Kasteleyn matrix/operator}

Let us recall the construction of the bipartite graph $\GQ$. Every edge of the graph $\Gs$ is replaced by a ``rectangle'' 
and the latter are glued together in a circular way using \emph{external edges}. Each ``rectangle'' has two edges ``parallel''
to an edge of $\Gs$ and two edges ``parallel'' to the dual edge, see Figure~\ref{fig:GQ}. For instance, if $\Gs$ is the square lattice, $\GQ$ is the square-octagon lattice.

When the graph $\Gs$ is isoradial, so is the graph $\GQ$ with radii of circles being one half of those of $\Gs$. The isoradial embedding
of $\GQ$ is such that external edges have length 0 and ``rectangles'' are real rectangles;
vertices of the rectangles are in the middle of the edges of the diamond 
graph $\GR$, and each rectangle is included in a rhombus of $\GR$, see Figure~\ref{Fig:Kweights} or~\ref{Fig:example_f}. 
For the sequel it is useful to note that every vertex of $\GQ$ belongs to a unique rhombus of $\GR$.

The graph $\GQ$ being bipartite, its vertices can be split into white and black $\VQ=\WQ\bigcup\BQ$. 
In this case, the Kasteleyn matrix $\KQ$ has rows indexed by white vertices and columns by black ones. Following Kuperberg~\cite{Kuperberg}, 
instead of considering
an admissible orientation of $\GQ$ as we have done for $\GF$, one can assign phases $(e^{i\phi_{wb}})_{wb\in\EQ}$ to edges of $\GQ$, 
in such a way that, for every face of $\GQ$ whose boundary vertices are $w_1,b_1,\dots,w_n,b_n$ in counterclockwise order, we have
\begin{equation}\label{equ:flat_phasing}
(-1)^{n-1} \prod_{j=1}^{n} e^{i\phi_{w_j b_j}}e^{-i\phi_{w_{j+1}b_j}}=1.
\end{equation}
In our case, we define the phasing of the edges to be:
\begin{equation*}
e^{i\phi_{wb}}=
\begin{cases}
1&\text{if the edge $wb$ is parallel to an edge $e$ of $\Gs$,}\\
i&\text{if the edge $wb$ is parallel to the dual of an edge $e$ of $\Gs$,}\smallskip\\
-ie^{-i\overline{\theta}}& \begin{minipage}{11cm} if $wb$ is an external edge and $w$ belongs to a rhombus of $\GR$ having half-angle $\theta$.\end{minipage}
\end{cases}
\end{equation*}
The fact that Equation~\eqref{equ:flat_phasing} holds is proved in~\cite[Lemma 4.1]{detiliere_partition}, see also~\cite{Kenyon3}.

Coefficients of the Kasteleyn matrix $\KQ$ are then given by, for every white vertex $w$ and every black vertex $b$ of $\GQ$,
\begin{equation*}
\KQ_{w,b}=e^{i\phi_{wb}}\overline{\nu}_{wb},
\end{equation*}
where $\overline{\nu}$ is the dimer weight function~\eqref{eq:dimer_weights_GQ}, see also Figure~\ref{Fig:Kweights}.

\begin{figure}[ht]
  \centering
\begin{overpic}[width=4.5cm]{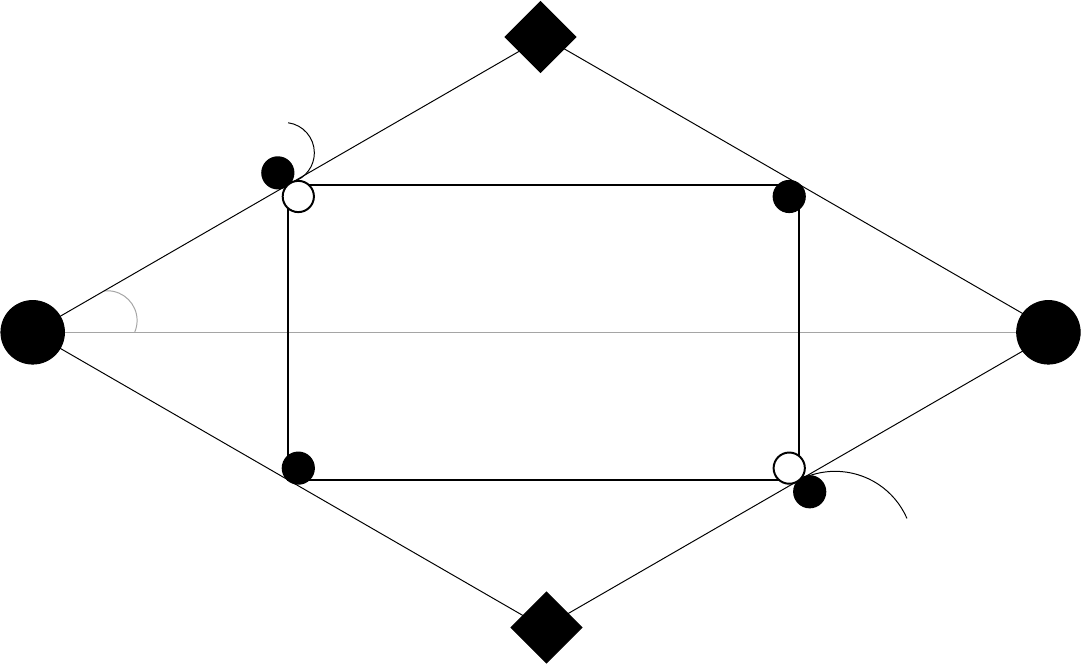}
\put(25,6){\scriptsize $b_1$}
\put(75,45){\scriptsize $b_2$}
\put(75,6){\scriptsize $b_3$}
\put(66,18){\scriptsize $w$}
\put(28,37){\scriptsize $w'$}
\put(15,30){\scriptsize $\theta$}
\put(50,30){\scriptsize $e$}
\put(28,25){\scriptsize $i\cn{\theta}$}
\put(76,25){\scriptsize $i\cn{\theta}$}
\put(46,18){\scriptsize $\sn{\theta}$}
\put(46,45){\scriptsize $\sn{\theta}$}
\put(20,51){\scriptsize $-ie^{-i\overline{\theta}}$}
\put(86,6){\scriptsize $-ie^{-i\overline{\theta}}$}
\end{overpic}
\caption{Coefficients of the Kasteleyn matrix $\KQ$ around a rectangle face of $\GQ$.\label{Fig:Kweights}}
\end{figure}

Note that $\KQ$ can also be seen as an operator
mapping $\CC^{\BQ}$ to $\CC^{\WQ}$:
\begin{equation*}
\forall\,f\in\CC^{\BQ},\,\forall\,w\in\WQ,\quad (\KQ f)_{w}=\sum_{b\in\BQ}\KQ_{w,b}\,f_b. 
\end{equation*}
More precisely, since every white vertex $w$ has degree $3$, denoting by $b_1,b_2,b_3$ its neighbors as in  
Figure~\ref{Fig:Kweights}, this relation can be rewritten as,
\begin{equation}\label{equ:MassiveDimer}
\forall\,f\in\CC^{\BQ},\,\forall\,w\in\WQ,\quad (\KQ f)_{w}=
\sn\theta f_{b_1}+i\cn\theta f_{b_2}-ie^{-i\overline{\theta}}f_{b_3}.
\end{equation}

\subsection{Functions in the kernel of the Kasteleyn operator $\KQ$}
\label{sec:functions_kernel}

We now define the function $f$ in the kernel of the Kasteleyn operator $\KQ$. 
Note that it generalizes to the elliptic case the function $f$ introduced by Kenyon in~\cite{Kenyon3} when the bipartite graph is 
$\GQ$.

\paragraph{Rhombus vectors.} In order to define the function $f$, we need to assign rhombus vectors to edges of the graph $\GQ$. 
Since the graph $\GQ$ is isoradial, it also has a diamond graph $(\GQ)^{\diamond}$; note that rhombi of $(\GQ)^{\diamond}$ are obtained by 
cutting those of $\GR$ in four identical rhombi, see Figure~\ref{Fig:example_f}.

Consider an edge $bw$ of $\GQ$.
Then we let
$\frac{1}{2}e^{i\overline{\alpha}}$ and $\frac{1}{2}e^{i\overline{\beta}}$ be 
the two rhombus vectors of $(\GQ)^{\diamond}$ of the edge $bw$, where
$\frac{1}{2}e^{i\overline{\alpha}}$ is on the right of the oriented edge
$(b,w)$. Some examples are given in Figure~\ref{Fig:example_f}.

\begin{rem}
\label{rem:definition_angles}
The angles $\overline{\alpha}$ and $\overline{\beta}$ above are defined so that 
$\overline{\beta}-\overline{\alpha}\in(0,2K)$.
\end{rem}

\begin{defi}
For every edge $bw$ of $\GQ$ and every $u\in\CC$, define
\begin{align}
&\label{eq:def_function_f}f_{(b,w)}(u)=\begin{cases}
\dc(\frac{u-\alpha}{2})\dc(\frac{u-\beta}{2})&\text{if $bw$ is parallel to an edge $e$ of $\Gs$,}\\
{-ik'}{\nc(\frac{u-\alpha}{2})\nc(\frac{u-\beta}{2})}&\text{if $bw$ is parallel to the dual of an edge $e$ of $\Gs$,}\smallskip\\
{ie^{i\overline{\theta}}}{\dc(\frac{u-\alpha}{2})\dc(\frac{u-\beta}{2})}&\begin{minipage}{9cm} if $bw$ is an external edge and $w$ belongs to a 
rhombus of $\GR$ having half-angle $\overline{\theta}$,\end{minipage}
\end{cases}\\
&f_{(w,b)}(u)=(f_{(b,w)}(u))^{-1}.\nonumber
\end{align}
The function $f:\BQ\times\WQ\times\CC\rightarrow \CC$ is then extended to all pairs $(b,w)$ inductively as follows. 
Let $b=b_1,w_1,b_2,w_2,\dots,b_n,w_n=w$ be a path from $b$ to $w$, then:
\begin{equation*}
\forall\,u\in\CC,\quad f_{(b,w)}(u)=\prod_{j=1}^n f_{(b_j,w_j)}(u) \prod_{j=1}^{n-1}f_{(w_j,b_{j+1})}(u).
\end{equation*}
\end{defi}
\begin{rem}
As the function $\gs$ of Definition~\ref{def:fonction_g}, the function $f$ is meromorphic and biperiodic:
\begin{equation*}
     f_{(b,w)}(u+4K) = f_{(b,w)}(u+4iK') = f_{(b,w)}(u).
\end{equation*}
This comes from \eqref{eq:def_function_f} and from the addition formulas of
$\cn$ and $\cd$ by $2K$, see Table~\ref{table:identities_Jacobi_function}.
We therefore also restrict the domain of definition to $\TT(k)=\CC /(4K\ZZ + 4iK'\ZZ).$
\end{rem}

Before proving that this function is well defined, \emph{i.e.}, independent of the choice of path from $b$ to $w$, we give
some examples of computation that are useful for the sequel.
\begin{exm}
\label{example:1}
We compute $f_{(b,w)}(u)$ for $b\in\{b_1,b_2,b_3\}$, where $b_1,b_2,b_3$
are the three black vertices incident to a white vertex $w$ of $\GQ$, see Figure \ref{Fig:Kweights}.
Let $e^{i\overline{\alpha}}$ and $e^{i\overline{\beta}}$ be the rhombus vectors of the rhombus of $\GQ$ containing the rectangle as in Figure \ref{Fig:example_f},
then the two rhombus vectors of $(\GQ)^{\diamond}$ of the edge:
\begin{itemize}
 \item $b_1 w$ are $\frac{1}{2}e^{i\overline{\alpha}}$ and $\frac{1}{2}e^{i\overline{\beta}}$, implying that 
 $f_{(b_1,w)}(u)={\dc(\frac{u-\alpha}{2})\dc(\frac{u-\beta}{2})}$,
 \item $b_2w$ are $\frac{1}{2}e^{i(\overline{\beta}-\pi)}$ and $\frac{1}{2}e^{i\overline{\alpha}}$, implying that 
 $f_{(b_2,w)}(u)={-ik'}{\nc(\frac{u-\beta+2K}{2})\nc(\frac{u-\alpha}{2})}$,
 \item $b_3w$ are $\frac{1}{2}e^{i\overline{\beta}}$ and $\frac{1}{2}e^{i(\overline{\beta}+\pi)}$, implying that 
 $f_{(b_3,w)}(u)={ie^{i\overline{\theta}}}{\dc(\frac{u-\beta}{2})\dc(\frac{u-\beta-2K}{2})}$.
\end{itemize}

\begin{figure}[ht!]
  \centering
\begin{overpic}[width=\linewidth]{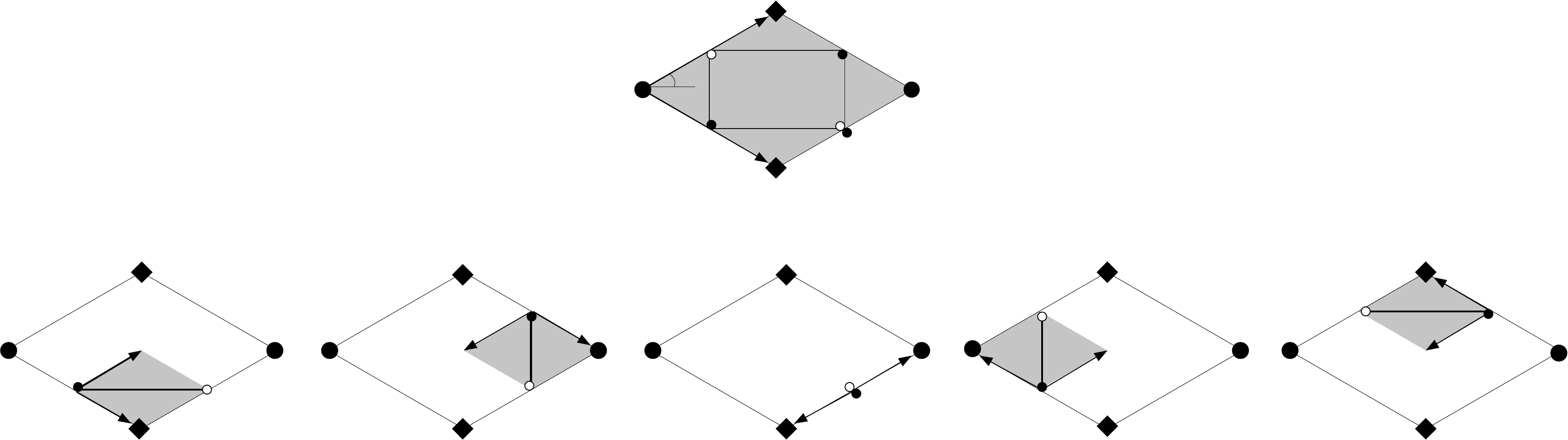}
\put(43.5,22.5){\scriptsize $\overline{\theta}$}

\put(45,18){\scriptsize $b_1$}
\put(54,25){\scriptsize $b_2$}
\put(54,18){\scriptsize $b_3$}
\put(52,20){\scriptsize $w$}
\put(45.5,23.5){\scriptsize $w'$}
\put(42,18){\scriptsize $e^{i\overline{\alpha}}$}
\put(42,24.5){\scriptsize $e^{i\overline{\beta}}$}

\put(2.5,3){\scriptsize $b_1$}
\put(14.2,3){\scriptsize $w$}
\put(5,-0.5){\scriptsize $e^{i\overline{\alpha}}$}
\put(5,5){\scriptsize $e^{i\overline{\beta}}$}

\put(34,9){\scriptsize $b_2$}
\put(34,2){\scriptsize $w$}
\put(37,7){\scriptsize $e^{i\overline{\alpha}}$}
\put(26.5,7){\scriptsize $e^{i(\overline{\beta}-\pi)}$}

\put(55,2){\scriptsize $b_3$}
\put(53,4){\scriptsize $w$}
\put(46,3){\scriptsize $e^{i(\overline{\beta}+\pi)}$}
\put(55,5){\scriptsize $e^{i\overline{\beta}}$}

\put(66,9){\scriptsize $w'$}
\put(66,1){\scriptsize $b_1$}
\put(58.5,2.5){\scriptsize  $e^{i(\overline{\alpha}+\pi)}$}
\put(68,2.5){\scriptsize $e^{i\overline{\beta}}$}

\put(85,9){\scriptsize $w'$}
\put(95,9){\scriptsize $b_2$}
\put(92.5,10.5){\scriptsize $e^{i(\overline{\alpha}+\pi)}$}
\put(92.5,5){\scriptsize $e^{i(\overline{\beta}+\pi)}$}
\end{overpic}
\caption{Computation of $f_{(b,w)}(u)$ for $b\in\{b_1,b_2,b_3\}$, and of $f_{(b,w')}(u)$ for $b\in\{b_1,b_2\}$.
To simplify the picture, the factor $\frac{1}{2}$ is omitted in the notation of the rhombus vectors in the bottom part of the picture.
\label{Fig:example_f}}
\end{figure}

We also compute $f_{(b,w')}(u)$ for $b\in\{b_1,b_2\}$, where $w'$ is the white vertex facing $w$ along the diagonal of the rectangle,
see Figure~\ref{Fig:example_f}. Then, the two rhombus vectors of $(\GQ)^{\diamond}$ of the edge:
\begin{itemize}
 \item $b_1 w'$ are $\frac{1}{2}e^{i \overline{\beta}}$ and $\frac{1}{2}e^{i(\overline{\alpha}+\pi)}$, implying that 
 $f_{(b_1,w')}(u)={-ik'}{\nc(\frac{u-\beta}{2})\nc(\frac{u-\alpha-2K}{2})}$,
 \item $b_2 w'$ are $\frac{1}{2}e^{i(\overline{\alpha}+\pi)}$ and $\frac{1}{2}e^{i (\overline{\beta}+\pi)}$, implying that 
 $f_{(b_2,w')}(u)={\dc(\frac{u-\alpha-2K}{2})\dc(\frac{u-\beta-2K}{2})}$.
\end{itemize}
\end{exm}

\begin{lem}
\label{lem:fs_well}
The function $f$ is well defined, that is independent of the choice of path from $b$ to $w$.
\end{lem}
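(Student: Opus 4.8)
The plan is to show that the product of $f$ along any closed walk of $\GQ$ equals $1$; since $f_{(w,b)}=f_{(b,w)}^{-1}$ by definition, the contribution of any backtracking pair of edges cancels, so this is exactly equivalent to independence of the path from $b$ to $w$. As $\GQ$ is embedded in the simply connected plane, its cycle space is generated by the boundaries of the bounded faces, so it suffices to check that for each face of $\GQ$ the product of $f$ over the directed edges bounding it equals $1$. The faces of $\GQ$ are of three types: the rectangle faces, one for each edge $e$ of $\Gs$, and the polygonal faces created by the circular gluing, which are in bijection with the vertices of $\Gs$ and with the faces of $\Gs$ (dual vertices). I would treat these types separately.

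Consider first a rectangle face, with white vertices $w,w'$ and black vertices $b_1,b_2$ as in Example~\ref{example:1}, the ambient rhombus of $\GR$ carrying the vectors $e^{i\overline{\alpha}},e^{i\overline{\beta}}$. Using the four values computed in Example~\ref{example:1} together with $f_{(w,b)}=f_{(b,w)}^{-1}$, the boundary product $b_1\to w\to b_2\to w'\to b_1$ reads
\begin{equation*}
\frac{f_{(b_1,w)}(u)\,f_{(b_2,w')}(u)}{f_{(b_2,w)}(u)\,f_{(b_1,w')}(u)}=
\frac{1}{-{k'}^2}\cdot\frac{\dc(\frac{u-\alpha}{2})\dc(\frac{u-\beta}{2})\dc(\frac{u-\alpha-2K}{2})\dc(\frac{u-\beta-2K}{2})}{\nc(\frac{u-\beta+2K}{2})\nc(\frac{u-\alpha}{2})\nc(\frac{u-\beta}{2})\nc(\frac{u-\alpha-2K}{2})}.
\end{equation*}
Pairing each $\dc$ with the $\nc$ of the same argument turns the ratio into a product of $\dn$'s; using $\nc(\,\cdot+2K)=-\nc(\,\cdot\,)$ to match the argument $\frac{u-\beta+2K}{2}$ with $\frac{u-\beta-2K}{2}$, together with the half-period identity $\dn(\,\cdot\,\pm K)=k'/\dn(\,\cdot\,)$ and the periodicity $\dn(\,\cdot+2K)=\dn(\,\cdot\,)$ from Table~\ref{table:identities_Jacobi_function}, one checks that all factors cancel and the product equals $1$. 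This elliptic identity, with its bookkeeping of the half-period shifts $\pm 2K$ and of the phases $-ik'$, is the main obstacle of the proof; it is the elliptic replacement for the trigonometric consistency relation used in the critical case of~\cite{Kenyon3}.

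For the remaining faces, those attached to a vertex $\xb$ of $\Gs$ (resp.\ to a face of $\Gs$), the boundary alternates external edges, carrying the phase $ie^{i\overline{\theta}_j}$, with edges of rectangles. Writing the boundary product and again using $f_{(w,b)}=f_{(b,w)}^{-1}$, the $\dc$ and $\nc$ factors telescope, so that the product collapses to the product of the external phases times a residual factor coming from the successive half-period shifts. Here the geometry of isoradiality enters: the rhombus angles around $\xb$ satisfy $\sum_j 2\overline{\theta}_j=2\pi$, equivalently $\sum_j 2\theta_j=4K$ is a full period of the Jacobi functions, while Remark~\ref{rem:definition_angles} fixes the determinations of $\overline{\alpha},\overline{\beta}$ so that the accumulated phases and half-period shifts recombine to $1$. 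Since all three types of faces yield product $1$, the map $f_{(b,w)}$ is independent of the chosen path, which proves the lemma.
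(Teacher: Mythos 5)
Your proposal is correct and follows essentially the same route as the paper: reduce well-definedness to the product of $f$ around each bounded face being $1$, treat the rectangle faces by the explicit elliptic identities $\nc(\cdot+2K)=-\nc(\cdot)$ and $\dn(\cdot-K)\dn(\cdot)=k'$, and treat the faces around primal and dual vertices by telescoping the pairwise products and closing up with the angle sum $\sum_j 2\overline{\theta}_j=2\pi$ (equivalently a shift of $4K$ in the elliptic argument). The only step you leave implicit is the intermediate identity $f_{(b_j,w_j)}(u)f_{(w_j,b_{j+1})}(u)=e^{-\frac{i}{2}(\overline{\alpha}_{j+1}-\overline{\alpha}_j)}\sn(\frac{u-\alpha_{j+1}}{2})\ns(\frac{u-\alpha_j}{2})$ (and its $\sd/\ds$ analogue at primal vertices), which is precisely what makes the telescoping and the final cancellation $e^{-i\pi}\cdot(-\sn)\cdot\ns=1$ go through.
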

\begin{proof}
It suffices to check that when traveling around each face of the graph, the product of the contributions of the edges is $1$.
There are three types of faces to consider: rectangles which correspond to edges of the graph $\Gs$ (or $\Gs^*$), faces
corresponding to those of the graph $\Gs$, and faces corresponding to those of the dual graph $\Gs^*$. 

Let us first check that this is true for rectangles, using the notation and computations of Example~\ref{example:1}.
Recalling that $f_{(w,b)}(u)=f_{(b,w)}(u)^{-1}$, we have
for a rectangle $b_1,w,b_2,w'$, 
\begin{align*}
&\textstyle f_{(b_1,w)}(u)f_{(w,b_2)}(u)f_{(b_2,w')}(u)f_{(w',b_1)}(u)=\\
&\textstyle =\dc(\frac{u-\alpha}{2})\dc(\frac{u-\beta}{2})
\dfrac{\cn(\frac{u-\beta+2K}{2})\cn(\frac{u-\alpha}{2})}{-ik'}
\dc(\frac{u-\alpha-2K}{2})\dc(\frac{u-\beta-2K}{2})
\dfrac{\cn(\frac{u-\beta}{2})\cn(\frac{u-\alpha-2K}{2})}{-ik'}\\
&\textstyle =-\cn(\frac{u-\beta+2K}{2})\nc(\frac{u-\beta-2K}{2})\dfrac{\dn(\frac{u-\alpha}{2})\dn(\frac{u-\beta}{2})\dn(\frac{u-\alpha-2K}{2})\dn(\frac{u-\beta-2K}{2})}{(k')^2}\\
&\textstyle =\dfrac{\dn(\frac{u-\alpha}{2})\dn(\frac{u-\beta}{2})\dn(\frac{u-\alpha-2K}{2})\dn(\frac{u-\beta-2K}{2})}{(k')^2},\
\text{ since $\cn(u+K)=-\cn(u-K)$,}\\
&\textstyle =1,\ \text{ because $\dn(u-K)\dn u=k'$ by Table~\ref{table:identities_Jacobi_function}}.
\end{align*}
We deduce that the function $f$ is well defined around rectangles. 
We now turn to faces which are not rectangles, and do some preliminary 
computations. Thanks to Example \ref{example:1} again, we have (see Table~\ref{table:identities_Jacobi_function} for the various simplifications involving Jacobi elliptic functions in \eqref{equ:biendef1} and \eqref{equ:biendef2})
\begin{align}
f_{(b_3,w)}(u)f_{(w,b_1)}(u)&
\textstyle=ie^{i\overline{\theta}}\dc(\frac{u-\beta}{2})\dc(\frac{u-\beta-2K}{2})
\textstyle\cd(\frac{u-\alpha}{2})\cd(\frac{u-\beta}{2})\nonumber \\
&\textstyle =ie^{i\overline{\theta}}\cd(\frac{u-\alpha}{2})\dc(\frac{u-\beta-2K}{2})
\textstyle{=}-ie^{i\overline{\theta}}\sn(\frac{u-(\alpha+2K)}{2})\ns(\frac{u-\beta}{2}).\label{equ:biendef1}
\end{align}
We also have
\begin{align}
f_{(b_2,w)}(u)f_{(w,b_3)}(u)&
\textstyle=-ik'\nc(\frac{u-\beta+2K}{2})\nc(\frac{u-\alpha}{2})
\dfrac{\cd(\frac{u-\beta}{2})\cd(\frac{u-\beta-2K}{2})}{ie^{i\overline{\theta}}} \nonumber \\ 
&\textstyle =\dfrac{\cn(\frac{u-\beta}{2})\nc(\frac{u-\alpha}{2})}{e^{i\overline{\theta}}}
=e^{-i\overline{\theta}}  \sd(\frac{u-\beta-2K}{2}) \ds(\frac{u-\alpha-2K}{2}).\label{equ:biendef2}
\end{align}
We have expressed the product $f_{(b_3,w)}f_{(w,b_1)}$ (resp.\ $f_{(b_2,w)}f_{(w,b_3)}$)
using the rhombus vectors rooted at the vertex of the dual graph $\Gs^*$ (resp.\ at the vertex of the primal graph $\Gs$), because 
this is what is needed to handle the product of local factors around faces of $\GQ$ corresponding to those of the graph $\Gs$ or $\Gs^*$.

Indeed, consider a face of $\GQ$ corresponding to a face of degree $n$ of the dual graph $\Gs^*$. Denote by $b_1,w_1,b_2,\dots, w_n,b_n,w_n$ 
its vertices in counterclockwise order. For every pair of black vertices $b_j,b_{j+1}$ denote by $\frac{1}{2}e^{i\alpha_j},\frac{1}{2}e^{i\alpha_{j+1}}$ the 
rhombus vectors rooted at the dual vertex corresponding to the face, and by
$\overline{\theta}_j$ the rhombus half-angle, see Figure~\ref{Fig:preuvef}
(left).

\begin{figure}[ht!]
  \centering
\begin{overpic}[width=13cm]{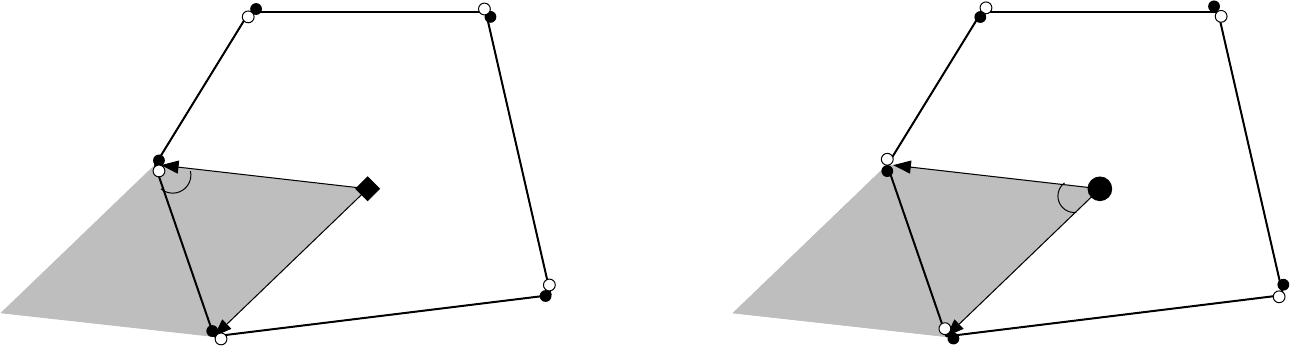}
\put(39,25){\scriptsize $b_1$}
\put(35,27){\scriptsize $w_1$}

\put(96,25){\scriptsize $w_1$}
\put(91,27){\scriptsize $b_1$}

\put(18,27){\scriptsize $b_2$}
\put(16,26){\scriptsize $w_2$}
\put(73,27){\scriptsize $w_2$}
\put(71,26){\scriptsize $b_2$}

\put(10,15){\scriptsize $b_j$}
\put(9,13){\scriptsize $w_j$}
\put(65,15){\scriptsize $w_j$}
\put(66,13){\scriptsize $b_j$}

\put(11,2){\scriptsize $b_{j+1}$}
\put(18,-1){\scriptsize $w_{j+1}$}
\put(67,2){\scriptsize $w_{j+1}$}
\put(75,-1){\scriptsize $b_{j+1}$}

\put(44,4){\scriptsize $w_n$}
\put(43,1.5){\scriptsize $b_n$}
\put(101,4){\scriptsize $w_n$}
\put(100,1.5){\scriptsize $b_n$}

\put(18,14){\scriptsize $e^{i\overline{\alpha}_j}$}
\put(24,6){\scriptsize $e^{i\overline{\alpha}_{j+1}}$}
\put(75,14){\scriptsize $e^{i\overline{\alpha}_j}$}
\put(81,6){\scriptsize $e^{i\overline{\alpha}_{j+1}}$}

\put(15,10){\scriptsize $\overline{\theta}_j$}
\put(78,10){\scriptsize $2\overline{\theta}_j$}
\end{overpic}
\caption{Faces around dual (left) and primal (right) vertices.
\label{Fig:preuvef}}
\end{figure}

Then, by \eqref{equ:biendef1} we have $f_{(b_j,w_{j})}(u)f_{(w_{j},b_{j+1})}(u)=
(-i)e^{i\overline{\theta}_j}\sn(\frac{u-\alpha_{j+1}}{2})\ns(\frac{u-{\alpha_j}}{2})$. Moreover, 
\begin{equation*}
 (-i)e^{i\overline{\theta}_j}=e^{-i(\pi/2- \overline{\theta}_j)}=e^{-\frac{i}{2}(\overline{\alpha}_{j+1}-\overline{\alpha}_{j})},
\end{equation*}
see Figure \ref{Fig:preuvef} (left), implying that
\begin{equation*}
\textstyle f_{(b_j,w_{j})}(u)f_{(w_{j},b_{j+1})}(u)= 
e^{-\frac{i}{2}(\overline{\alpha}_{j+1}-\overline{\alpha}_{j})}\sn(\frac{u-\alpha_{j+1}}{2})\ns(\frac{u-{\alpha_j}}{2}).
\end{equation*}
As a consequence, for every $k<\ell$ (with cyclic notation for indices), we have
\begin{equation}
\label{eq:teles-1}
     \prod_{j=k}^{\ell-1} f_{(b_j,w_{j})}(u)f_{(w_{j},b_{j+1})}(u)=\textstyle
     e^{-\frac{i}{2}(\overline{\alpha}_{\ell}-\overline{\alpha}_j)}\sn(\frac{u-\alpha_\ell}{2})\ns(\frac{u-{\alpha_j}}{2}).
\end{equation}
It is important to notice that the right-hand side of \eqref{eq:teles-1} is independent of the determination of the angles $\overline{\alpha}_{\ell}$ and $\overline{\alpha}_j$.

In particular, the product around the face is (with $\alpha_{n+1}=\alpha_{1}+4K$)
\begin{equation*}
 \textstyle  e^{-i\pi}\sn(\frac{u-\alpha_{n+1}}{2})\ns(\frac{u-\alpha_{1}}{2})=
  -\sn(\frac{u-\alpha_{1}}{2}-2K)\ns(\frac{u-\alpha_{1}}{2})=1.
\end{equation*}

Consider now a face of degree $n$ of the graph $\GQ$ corresponding to a face of the graph $\Gs$. Using similar notation,
the picture differs in that the vertex at the center of the face belongs to $\Gs$, that black and white vertices are exchanged
and that the angle $2\overline{\theta}_j$ is
at the center of the face, see Figure~\ref{Fig:preuvef} (right). By~\eqref{equ:biendef2}, we have  
\begin{equation*}
f_{(b_j,w_{j})}(u)f_{(w_{j},b_{j+1})}(u)=\textstyle
e^{-i\overline{\theta}_j}\sd(\frac{u-\alpha_{j+1}}{2})\ds(\frac{u-{\alpha_j}}{2})=
e^{-\frac{i}{2}(\overline{\alpha}_{j+1}-\overline{\alpha}_{j})}
\sd(\frac{u-\alpha_{j+1}}{2})\ds(\frac{u-{\alpha_j}}{2}),
\end{equation*}
since we have $\overline{\theta}_j=\frac{1}{2}(\overline{\alpha}_{j+1}
-\overline{\alpha}_{j})$, see Figure \ref{Fig:preuvef} (right). As a consequence,
for every $k<\ell$ (with cyclic notation for indices), we have
\begin{equation}
\label{eq:teles-2}
\prod_{j=k}^{\ell-1} f_{(b_j,w_{j})}(u)f_{(w_{j},b_{j+1})}(u)=\textstyle
e^{-\frac{i}{2}(\overline{\alpha}_{\ell}-\overline{\alpha}_j)}
\sd(\frac{u-\alpha_\ell}{2})\ds(\frac{u-{\alpha_j}}{2}).
\end{equation}
In particular, the product around the face is (with $\alpha_{n+1}=\alpha_1+4K$)
\begin{equation*}
\textstyle
  e^{-i\pi}\sd(\frac{u-\alpha_{n+1}}{2})\ds(\frac{u-\alpha_{1}}{2})=
  -\sd(\frac{u-\alpha_{1}}{2}-2K)\ds(\frac{u-\alpha_{1}}{2})=1.\qedhere
\end{equation*}
\end{proof}

Note that Equations \eqref{eq:teles-1} and \eqref{eq:teles-2} are used again in
the proof of Lemma \ref{lem:simp_exp_1}, which proves an alternative expression
for the function $f$.

Next is the key proposition used in proving the local expression for an inverse of the Kasteleyn operator $\KQ$.

\begin{prop}
\label{prop:function_f_kernel_Kasteleyn}
Fixing a white base vertex $w_0$ of $\WQ$, for every $u\in\TT(k)$, the function
$f_{(\cdot,w_0)}(u)$, seen as a function on $\BQ$, is in the kernel of
the Kasteleyn operator~$\KQ$ of the bipartite graph $\GQ$.
\end{prop}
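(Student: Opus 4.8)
The plan is to use that $\KQ$ maps $\CC^{\BQ}$ to $\CC^{\WQ}$ and that every white vertex of $\GQ$ has degree three, so it suffices to check that $(\KQ f_{(\cdot,w_0)}(u))_w=0$ separately at each white vertex $w$. Writing $b_1,b_2,b_3$ for the black neighbors of $w$ as in Figure~\ref{Fig:Kweights}, Equation~\eqref{equ:MassiveDimer} gives
\begin{equation*}
(\KQ f_{(\cdot,w_0)}(u))_w=\sn\theta\, f_{(b_1,w_0)}(u)+i\cn\theta\, f_{(b_2,w_0)}(u)-ie^{-i\overline{\theta}}f_{(b_3,w_0)}(u).
\end{equation*}
Since $f$ is path-independent by Lemma~\ref{lem:fs_well}, for each $j$ I would choose a path from $b_j$ to $w_0$ that first uses the single edge $b_jw$ and then a fixed path from $w$ to $w_0$; the concatenation rule then factorizes $f_{(b_j,w_0)}(u)=f_{(b_j,w)}(u)\,f_{(w,w_0)}(u)$, with the factor $f_{(w,w_0)}(u)$ the same for all three $j$. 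Pulling it out, the statement reduces to showing that the \emph{local} combination $\sn\theta\, f_{(b_1,w)}(u)+i\cn\theta\, f_{(b_2,w)}(u)-ie^{-i\overline{\theta}}f_{(b_3,w)}(u)$ vanishes.

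Next I would substitute the three explicit single-edge values computed in Example~\ref{example:1}, namely $f_{(b_1,w)}=\dc(\tfrac{u-\alpha}{2})\dc(\tfrac{u-\beta}{2})$, $f_{(b_2,w)}=-ik'\,\nc(\tfrac{u-\beta+2K}{2})\nc(\tfrac{u-\alpha}{2})$ and $f_{(b_3,w)}=ie^{i\overline{\theta}}\dc(\tfrac{u-\beta}{2})\dc(\tfrac{u-\beta-2K}{2})$. The numerical prefactors collapse, since $i\cn\theta\cdot(-ik')=k'\cn\theta$ and $-ie^{-i\overline{\theta}}\cdot ie^{i\overline{\theta}}=1$, and the quarter-period shifts are removed via Table~\ref{table:identities_Jacobi_function}, using $\nc(v+K)=-\tfrac{1}{k'}\ds(v)$ and $\dc(v-K)=\ns(v)$. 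Writing $A=\tfrac{u-\alpha}{2}$ and $B=\tfrac{u-\beta}{2}$, so that $A-B=\tfrac{\beta-\alpha}{2}=\theta$, and reducing over the common denominator $\cn A\,\cn B\,\sn B$, the numerator factors as $\dn B$ times
\begin{equation*}
\cn A-\cn\theta\,\cn B+\sn\theta\,\sn B\,\dn A.
\end{equation*}

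The proof then reduces to showing that this bracket vanishes, that is $\cn(B+\theta)=\cn\theta\,\cn B-\sn\theta\,\sn B\,\dn(B+\theta)$. This is exactly the \emph{alternative} addition formula already invoked in the proof of Proposition~\ref{prop:function_f_kernel_Kasteleyn_Fisher}, namely $\cn(u+v)\cn u=\cn v-\sn(u+v)\sn u\,\dn v$ from \cite[Chapter~2, Exercise~32]{La89}, see also~\eqref{eq:(iii)32}: expanding $\cn(B+\theta)$ and $\dn(B+\theta)$ with the standard addition theorems, the two terms proportional to $k^2\sn^2 B\,\sn^2\theta$ cancel and the identity follows. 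This settles the computation at an arbitrary white vertex $w$ and hence proves the proposition.

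The only genuine work is the bookkeeping of the phases $e^{\pm i\overline{\theta}}$ and of the half- and quarter-period shifts, so that the three local terms collapse onto a single addition identity; I expect this to be the main obstacle, but it is the exact bipartite analogue of the computation carried out for the Fisher graph in Proposition~\ref{prop:function_f_kernel_Kasteleyn_Fisher} and introduces no new difficulty.
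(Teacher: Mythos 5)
Your proof is correct and follows essentially the same route as the paper: reduce to the local relation $\sn\theta\, f_{(b_1,w)}+i\cn\theta\, f_{(b_2,w)}-ie^{-i\overline{\theta}}f_{(b_3,w)}=0$ at a single white vertex, substitute the explicit values of Example~\ref{example:1}, clear the quarter-period shifts with Table~\ref{table:identities_Jacobi_function}, and conclude with a Jacobi addition formula. The only (immaterial) difference is bookkeeping: the paper shifts all arguments by $-2K$ and lands on the identity \eqref{eq:(iii)32}, whereas you work at the unshifted arguments and invoke the equivalent form $\cn(u+v)\cn u=\cn v-\sn(u+v)\sn u\,\dn v$.
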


\begin{proof}
As we shall see, Proposition \ref{prop:function_f_kernel_Kasteleyn} follows
from the identity
\begin{equation}
\label{eq:(iii)32}
     \sn(u+v)\cn u-\cn(u+v)\dn v\sn u-\dn u\sn v=0, 
\end{equation}
which can be found in (iii) of Exercise 32 in~\cite[Chapter 2]{La89}.

By Equation~\eqref{equ:MassiveDimer}, we need to prove that, for every white
vertex $w$ with neighbors $b_1,b_2,b_3$ as in Figure~\ref{Fig:Kweights},
and every white base vertex $w_0$, we have:
\begin{equation*}
     \sn\theta\,f_{(b_1,w_0)}(u)
     +i \cn\theta\,f_{(b_2,w_0)}(u)
     -i e^{-i\overline{\theta}} f_{(b_3,w_0)}(u)=0.
\end{equation*}
Since the function $f$ is defined inductively on the edges of $\GQ$, it suffices to prove:
\begin{equation}
     \sn\theta\,f_{(b_1,w)}(u)+i\cn\theta\,f_{(b_2,w)}(u)-ie^{-i\overline{\theta}}f_{(b_3,w)}(u)=0.
     \label{eq:f_fund_rel}
\end{equation}
Using the computations of Example~\ref{example:1}, this reduces to showing:
\begin{multline}
\textstyle
     \sn\theta \dc(\frac{u-\alpha}{2})\dc(\frac{u-\beta}{2})+i\cn\theta (-ik')\nc(\frac{u-\beta+2K}{2})\nc(\frac{u-\alpha}{2})\\
     \textstyle-ie^{-i\overline{\theta}}{ie^{i\overline{\theta}}}{\dc(\frac{u-\beta}{2})\dc(\frac{u-\beta-2K}{2})}=0.
     \label{eq:f_fund_rel2}
\end{multline}
Using some identities from Table~\ref{table:identities_Jacobi_function}, this is
equivalent to proving
\begin{equation*}
\textstyle
\sn\theta \ns(\frac{u-\alpha-2K}{2})\ns(\frac{u-\beta-2K}{2})+\cn\theta \nc(\frac{u-\beta-2K}{2})\ds(\frac{u-\alpha-2K}{2})
\textstyle-\ns(\frac{u-\beta-2K}{2})\dc(\frac{u-\beta-2K}{2})=0.
\end{equation*}
Multiplying by $\sn(\frac{u-\alpha-2K}{2})\sn(\frac{u-\beta-2K}{2})\cn(\frac{u-\beta-2K}{2})$ and using that $\cn$ and $\dn$ are even functions and that $\sn$ is an odd function, this amounts to proving:
\begin{multline*}
\textstyle
\sn\theta \cn(-\frac{u-\beta-2K}{2})-\cn\theta \dn(\frac{u-\alpha-2K}{2})\sn(-\frac{u-\beta-2K}{2})
\textstyle-\dn(-\frac{u-\beta-2K}{2})\sn(\frac{u-\alpha-2K}{2})=0.
\end{multline*}
As announced, this is exactly \eqref{eq:(iii)32} with $u=-\frac{u-\beta-2K}{2}$, $v=\frac{u-\alpha-2K}{2}$ and $u+v=\theta$. 
\end{proof}

\subsection{Local expression for the inverse of the Kasteleyn operator $\KQ$}
\label{sec:invKQ}

We now state Theorem~\ref{thm:Kmoins_un}, proving an explicit, local formula for
an inverse ${\KQ}^{-1}$ of the Kasteleyn matrix $\KQ$, constructed from the function
$f$ defined in~\eqref{eq:def_function_f}.

\begin{thm}
\label{thm:Kmoins_un}
Define the infinite matrix ${\KQ}^{-1}$ whose coefficients are given,
for any $(b,w)\in\BQ\times\WQ$, by
\begin{equation}
  \label{eq:definition_C_b,w}
    {\KQ}^{-1}_{b,w}=\frac{1}{4i\pi} \int_{\Gamma_{b,w}} f_{(b,w)}(u) \ud u,
  \end{equation}
  where $\Gamma_{b,w}$ is a vertical contour directed upwards on $\mathbb T(k)$,
  crossing the real axis outside of the sector of size $2K$ containing all the
  poles of $f_{(b,w)}$.

  Then ${\KQ}^{-1}$ is an inverse operator of $\KQ$. For $k\neq 0$, it is the
  only inverse with bounded coefficients.

The quantity ${\KQ}^{-1}_{b,w}$ in \eqref{eq:definition_C_b,w} can alternatively be expressed as
\begin{equation}
\label{eq:fsbis}
    {\KQ}^{-1}_{b,w}=\frac{1}{4i\pi} \oint_{\C_{b,w}}  f_{(b,w)}(u) \Hh(u)\ud u,
\end{equation}
where $\Hh$ is related to Jacobi's zeta function and is defined in~\eqref{eq:definition_Hh_Hv_k2>0}--\eqref{eq:definition_Hh_Hv_k2<0},
$\C_{b,w}$ is a trivial contour on the torus, not crossing
$\Gamma_{b,w}$
and containing in its interior all the poles of $f_{(b,w)}$ and the pole of $\Hh$. 
\end{thm}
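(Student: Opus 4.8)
The plan is to verify directly that the matrix $\KQ^{-1}$ defined by \eqref{eq:definition_C_b,w} is a two-sided inverse, i.e.\ that $(\KQ\KQ^{-1})_{w,w'}=\delta_{w,w'}$ for all $w,w'\in\WQ$ and $(\KQ^{-1}\KQ)_{b,b'}=\delta_{b,b'}$ for all $b,b'\in\BQ$; these products of infinite matrices make sense since every white vertex of $\GQ$ has exactly three neighbors and $\KQ$ has finitely many nonzero entries on each row and column. By the symmetry between the two identities it suffices to treat the first. As a preliminary step, following Sections~\ref{subsubsec:encoding_poles}--\ref{subsubsec:sectors} — which here reduce to the elliptic adaptation of Kenyon's argument~\cite{Kenyon3} for the bipartite graph $\GQ$, the function $f$ having at most two poles per edge — I would encode, for each pair of neighbors, the poles of the integrand $f_{(b,w')}$ and extract a horizontal sector of size at least $2K$ free of these poles, in which the contour $\Gamma_{b,w'}$ crosses the real axis.

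For the off-diagonal entries, when $w\neq w'$ the three sectors attached to the neighbors $b_1,b_2,b_3$ of $w$ (relative to the fixed target $w'$) have a common crossing region, so the contours $\Gamma_{b_i,w'}$ can be deformed into a single vertical contour $\Gamma$. Pulling the finite sum inside the integral and invoking Proposition~\ref{prop:function_f_kernel_Kasteleyn}, which states that $\sum_{i}\KQ_{w,b_i}f_{(b_i,w')}(u)=0$ pointwise, immediately gives $(\KQ\KQ^{-1})_{w,w'}=0$.

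The diagonal term $(\KQ\KQ^{-1})_{w,w}$ is the crux, exactly as in the proof of Theorem~\ref{thm:KFmoins_un}. Here the three sectors $s_{b_i,w}$ have empty common intersection, so the contours cannot be merged. I would fix an auxiliary vertical contour $\Gamma'$ and decompose each integral as $\int_{\Gamma_{b_i,w}}=\int_{\Gamma'}+\bigl(\int_{\Gamma_{b_i,w}}-\int_{\Gamma'}\bigr)$; the three $\int_{\Gamma'}$ contributions cancel by the kernel identity of Proposition~\ref{prop:function_f_kernel_Kasteleyn}, while each bracketed difference is, by the residue theorem on $\TT(k)$, the sum of the residues of $f_{(b_i,w)}$ at the poles lying in the cylinder bounded by the two contours. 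Choosing $\Gamma'$ so that two of the three differences are trivial localizes the whole computation to a single pole; evaluating its residue from the explicit expressions of Example~\ref{example:1} and simplifying with the identities of Table~\ref{table:identities_Jacobi_function} should yield the value $1$. This explicit elliptic residue computation is the main obstacle: one must pin down the sector conventions carefully (the pole configuration of $f_{(b_3,w)}$ is borderline, of width exactly $2K$, reminiscent of Cases~1 and~3 of the previous theorem) and check that the Jacobi-function simplifications collapse to $1$. The identity $\KQ^{-1}\KQ=\operatorname{Id}$ then follows by the symmetric argument, using that the local relation \eqref{eq:f_fund_rel} also places $f_{(b_0,\cdot)}(u)$ in the left kernel of $\KQ$.

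For $k\neq 0$, uniqueness among inverses with bounded coefficients follows as in Theorem~\ref{thm:KFmoins_un}: the saddle-point asymptotics of Section~\ref{sec:asymptKQ} show that $\KQ^{-1}_{b,w}$ decays exponentially, hence is bounded, and this together with $\KQ^{-1}\KQ=\operatorname{Id}$ forces injectivity of $\KQ$ on bounded functions, whence uniqueness. Finally, the alternative expression \eqref{eq:fsbis} is the bipartite counterpart of the passage from \eqref{equ:KF_inverse} to \eqref{equ:KF_inverseH}: since $f_{(b,w)}$ is elliptic on $\TT(k)$ while $\Hh$ has horizontal quasi-period $\Hh(u+4K)=\Hh(u)+1$, comparing $\int_{\Gamma_{b,w}}f_{(b,w)}\Hh$ with its horizontal translate $\int_{\Gamma_{b,w}+4K}f_{(b,w)}\Hh$ produces exactly $-\int_{\Gamma_{b,w}}f_{(b,w)}\,\ud u$; deforming across the torus then rewrites the vertical-contour integral as the integral $\oint_{\C_{b,w}}f_{(b,w)}\Hh$ over a contractible contour enclosing all the poles of $f_{(b,w)}$ and the pole of $\Hh$, ready for evaluation by residues.
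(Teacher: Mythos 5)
Your overall architecture (off-diagonal cancellation by contour deformation plus the kernel identity of Proposition~\ref{prop:function_f_kernel_Kasteleyn}, uniqueness via the exponential decay of Section~\ref{sec:asymptKQ}, and the $\Hh$-jump mechanism for passing between \eqref{eq:definition_C_b,w} and \eqref{eq:fsbis}) matches the paper, but at the decisive step --- the diagonal entry $(\KQ\KQ^{-1})_{w,w}$ --- you take the route of the Fisher-graph proof of Theorem~\ref{thm:KFmoins_un} (vertical contours, a common auxiliary contour $\Gamma'$, and residues picked up in the cylinders between $\Gamma_{b_i,w}$ and $\Gamma'$), whereas the paper does something different: it works throughout with the closed-contour form \eqref{eq:fsbis}, deforms the three $\C_{b_i,w}$ to a common trivial contour, and computes \emph{all} residues of $\Hh\,f_{(b_i,w)}$ explicitly from Example~\ref{example:1}. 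After multiplying by the Kasteleyn entries, the terms are grouped by the value of $\Hh$ at each pole: the $\Hh(2K+\alpha)$ and $\Hh(2K+\beta)$ groups cancel, the residues at the pole $2iK'$ of $\Hh$ sum to zero because their sum is exactly the kernel relation \eqref{eq:f_fund_rel2} evaluated at $u=2iK'$, and the diagonal value $1$ emerges from the quasi-periodicity $\Hh(4K+\beta)=\Hh(\beta)+1$. This buys a computation with no sector bookkeeping at all.

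The gap in your proposal is precisely the step you defer with ``should yield the value $1$''. For your cylinder argument to go through you must first fix, for each of $b_1,b_2,b_3$, which representative of each pole lies in the fundamental domain cut along $\Gamma_{b_i,w}$; for $b_3$ the two poles $\beta$ and $\beta+2K$ are separated by exactly $2K$ on both sides, so the sector is genuinely ambiguous (the analogue of Case~1 of Theorem~\ref{thm:KFmoins_un}) and the two choices change $\KQ^{-1}_{b_3,w}$ by a nonzero residue --- i.e.\ they change the diagonal entry from $1$ to $0$. One convention must therefore be singled out (the one consistent with the residues $\{2K+\beta,\,4K+\beta\}$ used in the paper's computation, equivalently with $\Gamma_{b_3,w}$ crossing the real axis in $(\beta,\beta+2K)$), and only then does your plan work: $\Gamma'$ can be placed in $s_{b_1,w}\cap s_{b_2,w}$, the only pole swept is $u=\beta+4K$ of $f_{(b_3,w)}$, and the residue $2ie^{i\overline{\theta}}$ times $\KQ_{w,b_3}=-ie^{-i\overline{\theta}}$ times $\frac{2\pi i}{4i\pi}$ gives $1$. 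Without pinning down that convention and carrying out this residue, the proof of the diagonal identity --- which is the heart of the theorem --- is not complete. A smaller point: since $\KQ$ is not skew-symmetric here, $\KQ^{-1}\KQ=\operatorname{Id}$ does not follow ``by symmetry'' from $\KQ\KQ^{-1}=\operatorname{Id}$; it requires the companion relation $\sum_{w\sim b}f_{(b_0,w)}(u)\KQ_{w,b}=0$, which is not literally \eqref{eq:f_fund_rel} and would need its own (short) verification.
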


\begin{proof}
To show that the two expressions \eqref{eq:definition_C_b,w} and
\eqref{eq:fsbis} indeed coincide, we use the same argument as in the proof of Theorem \ref{thm:KFmoins_un}.

The structure of the proof of Theorem \ref{thm:Kmoins_un} is analogous to that of \cite[Theorem~1]{BdTR1}. 
Instead of using the form~\eqref{eq:definition_C_b,w} as in the proof of Theorem~\ref{thm:KFmoins_un},
we use the alternative expression \eqref{eq:fsbis} of $\KQ^{-1}_{b,w}$. Indeed,
from the computations done below to prove that
$(\KQ\KQ^{-1})_{w,w}=1$, one can extract as a by-product the explicit probability of a
given edge to be present in the random dimer configuration in the corresponding
dimer model.

Let $w$ be a white vertex of $\GQ$ and $b_1$, $b_2$, $b_3$ be its three black
neighbors, as in Figure~\ref{Fig:Kweights}. Let $w'$ be another white vertex,
different from $w$. The contours $\C_{b_1,w'}$, $\C_{b_2,w'}$ and $\C_{b_3,w'}$
entering into the definition of ${\KQ}^{-1}_{b_1,w'}$, ${\KQ}^{-1}_{b_2,w'}$ and
${\KQ}^{-1}_{b_3,w'}$ can be deformed into a common contour $\C$ without crossing
any pole. Therefore, the
entry $(\KQ {\KQ}^{-1})_{w,w'}$ can be written as:
\begin{equation*}
  (\KQ {\KQ}^{-1})_{w,w'} = \sum_{i=1}^{3}\KQ_{w,b_i} {\KQ}^{-1}_{b_i,w'}=
  \frac{1}{4i\pi}\oint_\C \Hh(u) \left(\sum_{i=1}^3 \KQ_{w,b_i}
  f_{(b_i,w')}(u)\right)\ud u =0,
\end{equation*}
by Proposition~\ref{prop:function_f_kernel_Kasteleyn}.
  
We now need to compute the entry $(\KQ \KQ^{-1})_{w,w}$. This is done explicitly,
via the residue theorem. In addition to the simple pole at $u=2iK'$ coming
from the function~$\Hh$ (with residue $2K'/\pi$, see Lemma \ref{lem:properties_Hh_Hv}), there
are other (simple) poles located at the zeros of the functions in the
denominator of $f$, \emph{i.e.}, when the argument of the functions $\cd$ and
$\cn$ is equal to $K$.

We shall successively compute $\KQ^{-1}_{b_1,w}$, $\KQ^{-1}_{b_2,w}$ and
$\KQ^{-1}_{b_3,w}$, using the different values of $f$ listed in
Example~\ref{example:1}. First, $\KQ^{-1}_{b_1,w}$ is obtained from (the
minus signs in the numerators in the right-hand side below come from the
expansion of $\cd$ around $K$, see \cite[Table 16.7]{AS})
\begin{multline*}
  2i\pi\{\res_{u=2K+\alpha}+\res_{u=2K+\beta}+\res_{u=2iK'}\}
  \left(\frac{1}{4i\pi}\Hh(u)f_{(b_1,w)}(u)\right) = \\
  \frac{-\Hh(2K+\alpha)}{\cd(K-\theta)}
  +
  \frac{-\Hh(2K+\beta)}{\cd(K+\theta)}
  +
  \frac{K'}{\pi}\frac{1}{\cd(iK'-\alpha/2)\cd(iK'-\beta/2)}.
\end{multline*}
Similarly, for the computation of $\KQ^{-1}_{b_2,w}$ we have:
\begin{multline*}
  2i\pi\{\res_{u=2K+\alpha}+\res_{u=\beta}+\res_{u=2iK'}\}
  \left(\frac{1}{4i\pi}\Hh(u)f_{(b_2,w)}(u) \right)= \\
  \frac{ \Hh(2K+\alpha)(-i{k'})}{-k'\cn(2K-\theta)}
  +
  \frac{ \Hh(\beta)(-i{k'})}{-k'\cn\theta }
  +
  \frac{K'}{\pi}\frac{(-i{k'})}{\cn(iK'-\alpha/2)\cn(K+iK'-\beta/2)}.
\end{multline*}
Finally, we obtain for $\KQ^{-1}_{b_3,w}$:
\begin{multline*}
  2i\pi\{\res_{u=2K+\beta}+\res_{u=4K+\beta}+\res_{u=2iK'}\}
  \left(\frac{1}{4i\pi}\Hh(u)f_{(b_3,w)}(u)\right) =\\
  \frac{{-} {ie^{i\overline{\theta}}}\Hh(2K+\beta)}{\cd0}
  +
  \frac{{-} {ie^{i\overline{\theta}}}\Hh(4K+\beta)}{\cd(2K)}
  +
  \frac{K'}{\pi}\frac{{ie^{i\overline{\theta}}}}{\cd(iK'-\beta/2)\cd(-K+iK'-\beta/2)}.
\end{multline*}
Multiplying these equations by the corresponding entries of the Kasteleyn matrix
(namely, $\sn\theta $, $i\cn\theta $ and $-ie^{-i\overline{\theta}}$, see
\eqref{equ:MassiveDimer}) and
summing them, one can group together terms having similar values of $\Hh$.

Terms with a $\Hh(2K+\alpha)$ give:
\begin{equation*}
  \Hh(2K+\alpha)\left(
  \frac{\sn\theta }{-\cd(K-\theta)} + \frac{\cn\theta }{\cn(2K-\theta)}\right)=
  \Hh(2K+\alpha)\left(
  \frac{\sn\theta }{-\sn\theta } + \frac{\cn\theta }{\cn\theta } \right)=
  0.
\end{equation*}
Similarly, those with a $\Hh(2K+\beta)$ give:
\begin{equation*}
  \Hh(2K+\beta)\left(
  \frac{\sn\theta }{-\cd(K+\theta)} + \frac{-ie^{-i\overline{\theta}}(-ie^{i\overline{\theta}})}{\cd(0)}\right) =
  \Hh(2K+\beta)\left(
  \frac{-\sn\theta }{-\sn\theta } -1\right)= 0.
\end{equation*}

We group the terms in $\Hh(\beta)$ and $\Hh(4K+\beta)$, and use the fact that
$\Hh(4K+\beta)=\Hh(\beta)+1$, stated in Lemma \ref{lem:properties_Hh_Hv}:
\begin{equation*}
  \Hh(\beta)\left(
  \frac{-ik'i\cn\theta }{-k'\cn\theta }\right)
  + \Hh(4K+\beta)\left(\frac{-ie^{-i\overline{\theta}}(-ie^{i\overline{\theta}})}{\cd(2K)}\right) =
  \Hh(\beta)\left( -1+1\right)
  + 1 =1.
\end{equation*}

We are left with computing the sum of residues at $2iK'$. It turns out that this 
boils down to~\eqref{eq:f_fund_rel2} for $u=2iK'$. Thus this sum equals $0$. 
Therefore
\begin{equation*}
  \sum_{i=1}^3 \KQ_{w,b_i}\KQ^{-1}_{b_i,w} = 1,
\end{equation*}
thereby completing the proof of Theorem \ref{thm:Kmoins_un}.
\end{proof}

\subsection{Asymptotics of the inverse Kasteleyn operator}
\label{sec:asymptKQ}

We first need to introduce some notation.
For any $b$ and $w$, there exists a path on the diamond graph
$\GR$ joining $b$ and $w$, see Figure \ref{fig:detailed_graphs}. The first and
the last edges of the path are half-edges of $\GR$, the other ones are plain
edges. We call $b=b_1$, and $b_n$ the black vertex adjacent to $w$. We further define
$b_2,\ldots ,b_{n-1}$ as the successive black vertices in the middle of the edges
of $\GR$ joining $b$ to $w$, see again Figure \ref{fig:detailed_graphs}. The $n$
edges are equal to 
\begin{equation*}
\textstyle
     \frac{1}{2}e^{i\overline{\alpha}_1},e^{i\overline{\alpha}_2},\ldots,e^{i\overline{\alpha}_{n-1}},
     \frac{1}{2}e^{i\overline{\alpha}_n}.
\end{equation*}
The $\overline{\alpha}_i$ are not well defined (in the sense that any multiple of
$2\pi$ could be added to $\overline{\alpha}_i$), but the $e^{\overline{\alpha}_i}$
are. The edges are orientated in such a way that 
\begin{equation}
\label{eq:orientation_edges}
     b_1+\frac{1}{2}e^{i\overline{\alpha}_1}+\sum_{j=2}^{n-1}e^{i\overline{\alpha}_j}+\frac{1}{2}e^{i\overline{\alpha}_n}=b_n.
\end{equation}
We also define the points $a_{j}$ ($j=1,\ldots,n-1$) as the vertices of the
diamond graph lying between $b_j$ and $b_{j+1}$. The notation of this paragraph is
illustrated on Figure~\ref{fig:detailed_graphs}.

\begin{figure}[ht]
  \centering
\begin{overpic}[width=\textwidth]{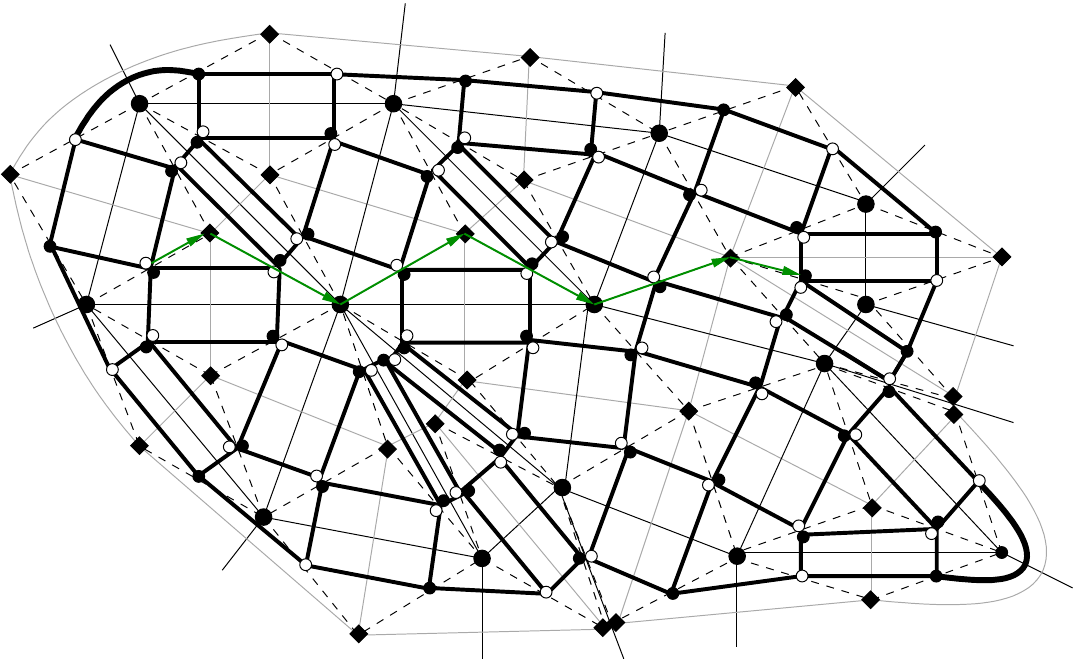}
  \put(14.2,34){\textcolor{blue}{$b=b_1$}}
  \put(24,38.5){\textcolor{blue}{$b_2$}}
  \put(38,34){\textcolor{blue}{$b_3$}}
  \put(48,38){\textcolor{blue}{$b_4$}}
  \put(61,32){\textcolor{blue}{$b_{n-1}$}}
  \put(75,36.5){\textcolor{blue}{$b_n$}}
  \put(74,32){\textcolor{blue}{$w$}}

  \put(20,38){$a_1$}
  \put(33,33){$a_2$}
  \put(41,41){$a_3$}
  \put(64,38.5){$a_{n-1}$}

  \put(12,39){\textcolor{darkgreen}{$\frac{e^{i\overline{\theta}_1}}{2}$}}
  \put(70,38){\textcolor{darkgreen}{$\frac{e^{i\overline{\theta}_n}}{2}$}}
  \put(22,33){\textcolor{darkgreen}{$e^{i\overline{\theta}_2}$}}
  \put(34.3,38){\textcolor{darkgreen}{$e^{i\overline{\theta}_3}$}}
  \put(58,37){\textcolor{darkgreen}{$e^{i\overline{\theta}_{n-1}}$}}
\end{overpic}
\caption{Notation for a path on the quad-graph
$\GR$ joining $b$ and $w$, see \eqref{eq:orientation_edges}.}
\label{fig:detailed_graphs}
\end{figure}

Finally, let $h$ as in \eqref{eq:def_function_h} and define
\begin{equation}
\label{eq:def_chi}
     \chi(u) = \frac{1}{\vert a_1-a_{n-1}\vert}\log\{ \expo_{(a_1,a_{n-1})}(u+2iK')\}.
\end{equation}
\begin{thm}
\label{thm:asymp_C}
Let $\Gs$ be a quasicrystalline isoradial graph. When the distance $\vert b-w\vert\rightarrow\infty$, we have 
\begin{equation*}
     \KQ^{-1}_{b,w}= \frac{e^{i\overline{\theta}}e^{-\frac{i}{2}(\overline{\alpha}_n-\overline{\alpha}_1)}h(u_0\pm2iK')}{2\sqrt{2\pi \vert a_1-a_{n-1}\vert \chi''(u_0)}} e^{\vert a_1-a_{n-1}\vert \chi(u_0)}\cdot (1+o(1)),
\end{equation*}
where $\overline{\theta}$ is the rhombus-angle of the rhombus to which $w$ belongs, $u_0$ is the unique $u\in(-K,K)$ such that $\chi'(u)=0$, and $\chi(u)< 0$. 
\end{thm}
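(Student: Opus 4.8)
The plan is to apply the saddle-point method to the contour integral defining $\KQ^{-1}_{b,w}$, exactly mirroring the strategy already used for Theorem~\ref{thm:asymptotics_inverse_Kasteleyn} and for the Green function asymptotics in~\cite[Theorem~14]{BdTR1}. The starting point is the integral representation~\eqref{eq:definition_C_b,w}, namely $\KQ^{-1}_{b,w}=\frac{1}{4i\pi}\int_{\Gamma_{b,w}}f_{(b,w)}(u)\,\ud u$, where $\Gamma_{b,w}$ is a vertical contour on $\TT(k)$. The first step is to factor the integrand $f_{(b,w)}$ into an exponentially large ``bulk'' part carrying the growth in $\vert b-w\vert$ and a bounded prefactor. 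Using the telescoping identities~\eqref{eq:teles-1} and~\eqref{eq:teles-2} (which rewrite products of consecutive factors $f_{(b_j,w_j)}f_{(w_j,b_{j+1})}$ along the path on $\GR$), together with the definition of the massive exponential function, I would express $f_{(b,w)}(u)$ in the form (prefactor)$\,\times\,\expo_{(a_1,a_{n-1})}(u)$, so that with $\chi$ defined in~\eqref{eq:def_chi} the bulk factor becomes $e^{\vert a_1-a_{n-1}\vert\,\chi(u-2iK')}$. The constant $e^{i\overline{\theta}}e^{-\frac{i}{2}(\overline{\alpha}_n-\overline{\alpha}_1)}$ in the statement is precisely the accumulated geometric phase from these telescoping products, and the bounded prefactor is the function $h$ of~\eqref{eq:def_function_h}.

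Next I would transport the analysis to the saddle point. By~\cite[Lemma~15]{BdTR1} the equation $\chi'(u)=0$ has a unique solution $u_0\in(-K,K)$, and by~\cite[Lemma~16]{BdTR1} one has $\chi(u_0)<0$; the genuine saddle points of the integrand, after the shift $u\mapsto u+2iK'$ built into~\eqref{eq:def_chi}, sit at $u_0\pm 2iK'$. The verticality of $\Gamma_{b,w}$, which crosses the real axis outside the sector of size $2K$ containing all poles of $f_{(b,w)}$, is exactly what guarantees that I may deform $\Gamma_{b,w}$ into a steepest-descent contour $\Gamma'_{b,w}$ passing through $u_0\pm 2iK'$ without sweeping any pole of $f_{(b,w)}$. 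I would then split $\Gamma'_{b,w}$ into a small neighborhood of the saddle points and its complement, argue that the complement contributes an exponentially negligible amount (standard, once the steepest-descent directions are fixed so that $\operatorname{Re}\chi$ strictly decreases away from $u_0$), and apply the usual Gaussian approximation near $u_0\pm 2iK'$. This produces the factor $\frac{1}{\sqrt{2\pi\vert a_1-a_{n-1}\vert\,\chi''(u_0)}}$, the exponential $e^{\vert a_1-a_{n-1}\vert\,\chi(u_0)}$, and the evaluation $h(u_0\pm 2iK')$ of the prefactor at the saddle, matching the claimed expression up to the overall factor $\frac{1}{2}$ coming from the $\frac{1}{4i\pi}$ normalization combined with the two symmetric saddles.

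The main obstacle I expect is not the saddle-point machinery itself, which is by now routine and can be cited almost verbatim from~\cite{BdTR1}, but rather the bookkeeping of the prefactor and phase. Because $\GQ$ is bipartite with three distinct edge types (primal, dual, external), the factorization of $f_{(b,w)}$ along an arbitrary path on $\GR$ requires care to ensure that the bounded part $h$ is genuinely independent of the chosen path (this is underwritten by Lemma~\ref{lem:fs_well}) and that the accumulated phase collapses precisely to $e^{i\overline{\theta}}e^{-\frac{i}{2}(\overline{\alpha}_n-\overline{\alpha}_1)}$, where $\overline{\theta}$ is the rhombus half-angle of the rhombus containing $w$. I would isolate this as a preliminary lemma (analogous to Lemma~\ref{lem:simp_exp_1}) so that the asymptotic argument proper reduces cleanly to a single scalar integral of the form $\int h(u)\,e^{\vert a_1-a_{n-1}\vert\,\chi(u-2iK')}\,\ud u$, to which the uniform saddle-point estimate applies. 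A secondary point requiring a remark (as in Remark~\ref{rem:sign_asymptotics_inverse_Kasteleyn}) is locating the saddle relative to the poles of $h$ to confirm that $h(u_0\pm 2iK')$ is finite and to identify the correct branch, but this follows from the pole-location analysis already established.
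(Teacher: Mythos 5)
Your proposal matches the paper's proof: the paper likewise reduces the statement to the saddle-point analysis of Theorem~\ref{thm:asymptotics_inverse_Kasteleyn} (itself modeled on \cite[Theorem~14]{BdTR1}), and devotes its effort precisely to the preliminary factorization you describe, namely Lemmas~\ref{lem:simp_exp_1} and~\ref{lem:simp_exp_2}, which use the telescoping identities \eqref{eq:teles-1}--\eqref{eq:teles-2} to write $f_{(b,w)}(u)=e^{i\overline{\theta}}e^{-\frac{i}{2}(\overline{\alpha}_n-\overline{\alpha}_1)}h(u)\expo_{(a_1,a_{n-1})}(u)$. Your plan, including the preliminary lemma isolating the phase and the bounded prefactor $h$, is exactly the route taken in the paper.
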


Theorem \ref{thm:asymp_C} should be compared to its genus $0$ counterpart, \emph{i.e.}, Theorem~4.3 of \cite{Kenyon3} where polynomial decrease of 
coefficients of the inverse Kasteleyn operator is proved. 

\begin{rem}
Contrary to Theorem \ref{thm:asymptotics_inverse_Kasteleyn}, where we have shown that the constant in front of the exponential function is always positive (see Remark \ref{rem:sign_asymptotics_inverse_Kasteleyn}), we have less control on the constant in Theorem \ref{thm:asymp_C}. First, it can have a phase, due to the terms $e^{i\overline{\theta}}$ and $e^{-\frac{i}{2}(\overline{\alpha}_n-\overline{\alpha}_1)}$. The main point is that the quantity $h(u_0\pm2iK')$, which is real, can be positive, negative and even $0$. This follows from \eqref{eq:def_function_h}.
\end{rem}

Note that the proof of Theorem~\ref{thm:asymp_C} is similar to that of 
Theorem~\ref{thm:asymptotics_inverse_Kasteleyn}; therefore, we omit the details. We only need to give an expression for the function $h$ appearing in the statement of Theorem \ref{thm:asymp_C}. First of all, we prove that the function $f_{(b_1,b_n)}$ looks very
much like the exponential function $\expo_{(a_1,a_{n-1})}$ (we use the previous notation).

\begin{lem}
\label{lem:simp_exp_1}
The following formula holds:
\begin{equation}
\label{eq:simp_exp_f_1}
     f_{(b_1,b_n)}(u)=ie^{-\frac{i}{2}(\overline{\alpha}_n-\overline{\alpha}_1)}g(u)\expo_{(a_1,a_{n-1})}(u),
\end{equation}
where
\begin{equation*}
     g(u)=\left\{\begin{array}{ll}
     {\sn(\frac{u-{\alpha_n}}{2})}{\dc(\frac{u-{\alpha_1}}{2})} & \text{if $a_1$ and $a_{n-1}$ are dual},\\
     {\sd(\frac{u-{\alpha_n}}{2})}{\dc(\frac{u-{\alpha_1}}{2})} \cdot(-\sqrt{k'}) & \text{if $a_1$ is dual and $a_{n-1}$ primal},\\
     {\sd(\frac{u-{\alpha_n}}{2})}{\nc(\frac{u-{\alpha_1}}{2})}\cdot({k'})& \text{if $a_1$ and $a_{n-1}$ are primal},\\
     {\sn(\frac{u-{\alpha_n}}{2})}{\nc(\frac{u-{\alpha_1}}{2})}\cdot(-\sqrt{k'})& \text{if $a_1$ is primal and $a_{n-1}$ dual}.
     \end{array}\right.
\end{equation*}
\end{lem}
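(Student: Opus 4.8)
The plan is to evaluate $f_{(b_1,b_n)}$ as a telescoping product along the path of Figure~\ref{fig:detailed_graphs} and to read off the massive exponential function from it. Since $f$ is path-independent by Lemma~\ref{lem:fs_well}, I can use the multiplicative cocycle property to write $f_{(b_1,b_n)}(u)=\prod_{j=1}^{n-1}f_{(b_j,b_{j+1})}(u)$, where each pair $b_j,b_{j+1}$ consists of two consecutive corners (midpoints of the two edges $e^{i\overline{\alpha}_j}$ and $e^{i\overline{\alpha}_{j+1}}$ of $\GR$ meeting at $a_j$) of the $\GQ$-face centered at the diamond-graph vertex $a_j$. As $a_j$ and $a_{j+1}$ are adjacent in $\GR$, which is bipartite with parts $\Gs$ and $\Gs^{*}$, the vertices $a_1,\dots,a_{n-1}$ alternate between primal and dual. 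I therefore apply the one-step case $\ell=k+1$ of the telescoping identity \eqref{eq:teles-1} at every dual $a_j$ and of \eqref{eq:teles-2} at every primal $a_j$.

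For this I first fix the angle conventions: the ray from $a_j$ to $b_{j+1}$ is directed along $e^{i\overline{\alpha}_{j+1}}$, hence carries elliptic angle $\alpha_{j+1}$, while the ray from $a_j$ to $b_j$ points backwards along $e^{i\overline{\alpha}_j}$ and carries angle $\alpha_j+2K$. With these, each factor $f_{(b_j,b_{j+1})}(u)$ equals a phase $i\,e^{-\frac{i}{2}(\overline{\alpha}_{j+1}-\overline{\alpha}_j)}$ (the extra $i$ coming from the backward $\pi$-rotation, i.e.\ the $2K$-shift in the ray toward $b_j$) times a product of two Jacobi functions, one attached to the ray toward $b_j$ and one to the ray toward $b_{j+1}$. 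Multiplying over $j$, the phases telescope to $i^{n-1}e^{-\frac{i}{2}(\overline{\alpha}_n-\overline{\alpha}_1)}$, and at each interior midpoint $b_j$ ($2\le j\le n-1$) the two Jacobi functions inherited from the faces $a_{j-1}$ and $a_j$ share the argument $\frac{u-\alpha_j}{2}$ up to a shift by $K$. Using the addition-by-$K$ identities of Table~\ref{table:identities_Jacobi_function} (for instance $\ds(v-K)=-k'\nc(v)$ together with $\sn(v)\nc(v)=\sc(v)$, and their analogues across the primal/dual alternation), each such pair collapses to a constant multiple of $\sc(\frac{u-\alpha_j}{2})$, reproducing exactly one factor $i\sqrt{k'}\,\sc(\frac{u-\alpha_j}{2})$ of $\expo_{(a_1,a_{n-1})}(u)=\prod_{j=2}^{n-1}i\sqrt{k'}\,\sc(\frac{u-\alpha_j}{2})$, see \eqref{eq:recursive_def_expo}.

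After the interior midpoints have been absorbed into $\expo_{(a_1,a_{n-1})}$, two boundary Jacobi functions remain: the one at $a_1$ attached to the ray toward $b_1$ (angle $\alpha_1+2K$) and the one at $a_{n-1}$ attached to the ray toward $b_n$ (angle $\alpha_n$). Depending on whether $a_1$ and $a_{n-1}$ are primal or dual, \eqref{eq:teles-1}--\eqref{eq:teles-2} give $\ns$ or $\ds$ for the former and $\sn$ or $\sd$ for the latter; applying once more the shift-by-$K$ identities ($\ns(v-K)=-\dc(v)$ and $\ds(v-K)=-k'\nc(v)$) turns the $a_1$-term into $\dc(\frac{u-\alpha_1}{2})$ or $\nc(\frac{u-\alpha_1}{2})$, which produces precisely the four cases of $g(u)$.

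The last step, and the main obstacle, is the bookkeeping of the accumulated constants: the phase $i^{n-1}$, the constant ($i\sqrt{k'}$, equivalently $-k'$) generated at each interior midpoint, and the residual constants ($-1$, $k'$, $\sqrt{k'}$) at the two boundaries must be reorganised so as to match, on the one hand, the $(i\sqrt{k'})^{\,n-2}$ already contained in $\expo_{(a_1,a_{n-1})}$, and on the other hand the prefactor $i\,e^{-\frac{i}{2}(\overline{\alpha}_n-\overline{\alpha}_1)}$ together with the case-dependent constant ($1$, $-\sqrt{k'}$, $k'$) of $g$ in \eqref{eq:simp_exp_f_1}. The half-integer powers of $k'$ in the two mixed cases arise here from the parity of the number of primal/dual vertices among the $a_j$: the types of $a_1$ and $a_{n-1}$ differ exactly when an odd number of $\sc$-factors is generated, leaving over a single uncancelled $\sqrt{k'}$.
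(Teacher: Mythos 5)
Your proposal follows essentially the same route as the paper's proof: decompose $f_{(b_1,b_n)}$ as the product $\prod_{j=1}^{n-1}f_{(b_j,b_{j+1})}$, evaluate each factor via the one-step cases of \eqref{eq:teles-1}--\eqref{eq:teles-2} with the reversed rhombus vector toward $b_j$ carrying the angle $\alpha_j+2K$, telescope the phases, and use the shift-by-$K$ identities of Table~\ref{table:identities_Jacobi_function} to collapse the interior midpoint pairs into the $\sc$ factors of $\expo_{(a_1,a_{n-1})}$, leaving the two boundary terms that form $g$. The constant bookkeeping you flag as the main obstacle is handled in the paper exactly as you describe (collecting $i^{n-1}$ and the $-k'$ per interior step into $(i\sqrt{k'})^{n-2}$, with the parity of primal/dual types accounting for the residual $\sqrt{k'}$), so the argument is correct and matches the paper's.
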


It is important to note that taken independently, the factors
$e^{-\frac{i}{2}(\overline{\alpha}_n-\overline{\alpha}_1)}$ and $g(u)$ are not
well defined: if ${\alpha}_n$ (or ${\alpha}_1$) is replaced by ${\alpha}_n+4K$, these
terms should be replaced by their opposite. However, the product
$e^{-\frac{i}{2}(\overline{\alpha}_n-\overline{\alpha}_1)}g(u)$ is well
defined, which suffices for our purpose.

\begin{proof}
With the previous notation we write
$f_{(b,b_{n})}(u)=\prod_{j=1}^{n-1}f_{(b_j,b_{j+1})}(u)$. There are two cases for
the computation of $f_{(b_j,b_{j+1})}$, according to whether $a_j$ is a primal
or a dual vertex: the identities \eqref{eq:teles-1} and
\eqref{eq:teles-2} yield
\begin{equation}
\label{eq:b_j,j+1-1}
     f_{(b_j,b_{j+1})}(u)=e^{-i (\overline{\alpha}_{j+1}-(\overline{\alpha_j\pm2K}))}\cdot \left\{\begin{array}{ll} 
     {\sd(\frac{u-\alpha_{j+1}}{2})}{\ds(\frac{u-\alpha_{j}\mp2K}{2})} & \text{if $a_j$ is primal},\\
    {\sn(\frac{u-\alpha_{j+1}}{2})}{\ns(\frac{u-\alpha_{j}\mp2K}{2})} & \text{if $a_j$ is dual}.
     \end{array}\right.     
\end{equation}
The term $\pm2K$ in \eqref{eq:b_j,j+1-1} comes from the fact that the
orientation of the rhombus vectors in \eqref{eq:teles-1}--\eqref{eq:teles-2} and in
\eqref{eq:orientation_edges} are reversed. The quantity
\eqref{eq:b_j,j+1-1} does not depend on the value of this sign (this is a
consequence of the addition formulas by $\pm K$ for the $\sn$ and $\sd$
functions, see Table~\ref{table:identities_Jacobi_function} in the appendix). 

Due to the fact that the value of $f_{(b_j,b_{j+1})}$ depends on the type
(primal or dual) of the vertex $a_j$, there are four cases for the computation
of $f_{(b,b_{n})}$, according to the types of $a_1$ and $a_{n-1}$. We write
down the computations in the particular case where both $a_1$ and $a_{n-1}$ are
dual, the other cases would follow in a very similar manner. We have (with all
signs $\pm = +$):
\begin{align*}
     f_{(b,b_{n})}(u)&\textstyle=e^{-\frac{i}{2} (\overline{\alpha}_{2}-(\overline{\alpha_1+2K}))}\times \cdots \times e^{-\frac{i}{2}  (\overline{\alpha}_{n}-(\overline{\alpha_{n-1}+2K}))}\dfrac{\sn(\frac{u-\alpha_{2}}{2})}{\sn(\frac{u-\alpha_{1}-2K}{2})}\times \cdots \times \dfrac{\sn(\frac{u-\alpha_{n}}{2})}{\sn(\frac{u-\alpha_{n-1}-2K}{2})}\\
     &\textstyle=e^{\frac{i}{2}(n-1)\pi}e^{-\frac{i}{2}(\overline{\alpha}_{n}-\overline{\alpha}_{1}) }\dfrac{\sn(\frac{u-\alpha_{n}}{2})}{\sn(\frac{u-{\alpha_1}}{2}-K)}\dfrac{\sn(\frac{u-\alpha_{2}}{2})}{\sd(\frac{u-\alpha_{2}}{2}-K)}\dfrac{\sd(\frac{u-\alpha_{3}}{2})}{\sn(\frac{u-\alpha_{3}}{2}-K)}\times \cdots \times \dfrac{\sd(\frac{u-\alpha_{n-1}}{2})}{\sn(\frac{u-\alpha_{n-1}}{2}-K)}\\
     &\textstyle=e^{\frac{i}{2}(n-1)\pi}e^{-\frac{i}{2}(\overline{\alpha}_{n}-\overline{\alpha}_{1}) }\dfrac{\sn(\frac{u-\alpha_{n}}{2})}{-\cd(\frac{u-{\alpha_1}}{2})}\dfrac{\sn(\frac{u-\alpha_{2}}{2})}{-k'^{-1}\cn(\frac{u-\alpha_{2}}{2})}\dfrac{\sd(\frac{u-\alpha_{3}}{2})}{-\cd(\frac{u-\alpha_{3}}{2})}\times \cdots \times \dfrac{\sd(\frac{u-\alpha_{n-1}}{2})}{-\cd(\frac{u-\alpha_{n-1}}{2})}\\
     &\textstyle=e^{\frac{i}{2}(n-1)\pi}e^{-\frac{i}{2}(\overline{\alpha}_{n}-\overline{\alpha}_{1}) }\dfrac{\sn(\frac{u-\alpha_{n}}{2})}{-\cd(\frac{u-{\alpha_1}}{2})}(-1)^{\frac{n-2}{2}}\prod_{j=2}^{n-1} i\sqrt{k'}\sc(\frac{u-{\alpha_j}}{2})\\
     &\textstyle=ie^{-\frac{i}{2}(\overline{\alpha}_{n}-\overline{\alpha}_{1}) }{\sn(\frac{u-\alpha_{n}}{2})}{\dc(\frac{u-{\alpha_1}}{2})}\expo_{(a_1,a_{n-1})}.
\end{align*}
The third equality above uses addition formulas of Table~\ref{table:identities_Jacobi_function}. In the last line, we have applied the definition
\eqref{eq:recursive_def_expo} of the exponential function. We also implicitly
used the fact that $n$ is even (because both $a_1$ and $a_{n-1}$ are dual
vertices). The first case of Equation \eqref{eq:simp_exp_f_1}, and thus of Lemma
\ref{lem:simp_exp_1}, is proved.
\end{proof}

We now give a formula for $f_{(b,w)}$ for general vertices $b$ and $w$.
\begin{lem}
\label{lem:simp_exp_2}
The following formula holds:
\begin{equation}
\label{eq:simp_exp_f_2}
     f_{(b,w)}(u)=e^{i\overline{\theta}}e^{-\frac{i}{2}(\overline{\alpha}_n-\overline{\alpha}_1)}h(u)\expo_{(a_1,a_{n-1})}(u),
\end{equation}
where $\overline{\theta}$ is the rhombus-angle of the rhombus to which $w$ belongs, and
\begin{equation}
\label{eq:def_function_h}
     h(u)=\left\{\begin{array}{ll}
     {\dc(\frac{u-{\alpha_1}}{2})\dc(\frac{u-{\alpha_n}}{2})}& \text{if $a_1$ and $a_{n-1}$ are dual},\\
     {\dc(\frac{u-{\alpha_1}}{2})\nc(\frac{u-{\alpha_n}}{2})} \cdot(\sqrt{k'}) & \text{if $a_1$ is dual and $a_{n-1}$ primal},\\
     {\nc(\frac{u-{\alpha_1}}{2})\nc(\frac{u-{\alpha_n}}{2})}\cdot(-{k'})& \text{if $a_1$ and $a_{n-1}$ are primal},\\
     {\nc(\frac{u-{\alpha_1}}{2})\dc(\frac{u-{\alpha_n}}{2})}\cdot(-\sqrt{k'})& \text{if $a_1$ is primal and $a_{n-1}$ dual}.
     \end{array}\right.
\end{equation}
\end{lem}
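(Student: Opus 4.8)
The plan is to reduce the statement to Lemma \ref{lem:simp_exp_1}, which already provides a formula for $f_{(b_1,b_n)}(u)$ in terms of the exponential function $\expo_{(a_1,a_{n-1})}$, and to account for the final half-step from $b_n$ to $w$. Recall that $b_n$ is the black vertex of $\GQ$ adjacent to $w$, and that the path from $b$ to $w$ in $\GR$ ends with the half-edge $\frac{1}{2}e^{i\overline{\alpha}_n}$. By the multiplicative definition of $f$, we have $f_{(b,w)}(u)=f_{(b,b_n)}(u)\,f_{(b_n,w)}(u)$, so the whole task is to multiply the expression of Lemma \ref{lem:simp_exp_1} by the elementary factor $f_{(b_n,w)}(u)$ and to identify the four resulting cases.

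First I would record $f_{(b_n,w)}(u)$ explicitly from Definition \eqref{eq:def_function_f}. Since $b_n w$ is one of the elementary edges (parallel to a primal edge, a dual edge, or an external edge), its value is one of $\dc(\frac{u-\alpha}{2})\dc(\frac{u-\beta}{2})$, $-ik'\,\nc(\frac{u-\alpha}{2})\nc(\frac{u-\beta}{2})$, or $ie^{i\overline{\theta}}\dc(\frac{u-\alpha}{2})\dc(\frac{u-\beta}{2})$, where $\theta$ is the half-angle of the rhombus containing $w$; this is exactly where the prefactor $e^{i\overline{\theta}}$ in \eqref{eq:simp_exp_f_2} is produced. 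The rhombus vectors attached to this last half-edge are $\frac{1}{2}e^{i\overline{\alpha}_n}$ together with one further vector whose angle differs from $\overline{\alpha}_n$ by $\pm 2K$; the latter contributes the second factor $\dc$ or $\nc$ appearing in \eqref{eq:def_function_h}, after an addition-by-$2K$ simplification from Table~\ref{table:identities_Jacobi_function}.

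The bookkeeping then splits according to the types (primal or dual) of the diamond vertices $a_1$ and $a_{n-1}$, matching the four cases of $g$ in Lemma \ref{lem:simp_exp_1}. In each case I would combine the factor $g(u)$ from \eqref{eq:simp_exp_f_1} with $f_{(b_n,w)}(u)$ and simplify the product of two $\sn/\sd/\nc/\dc$ type factors into the corresponding two-factor expression for $h(u)$ in \eqref{eq:def_function_h}, tracking the multiplicative constants $\pm\sqrt{k'}$ and $\pm k'$ as well as the extra $i$ from $g$ which combines with the $i$ in $f_{(b_n,w)}(u)$. The relations $\sn(\frac{u-\alpha_n}{2})\,\dc(\tfrac{u-\alpha_n}{2}+\text{shift})$ and analogous identities, all available from Table~\ref{table:identities_Jacobi_function}, are what convert a $\sn$ or $\sd$ factor carrying $\alpha_n$ into the required $\dc$ or $\nc$ factor.

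The main obstacle is purely organizational rather than conceptual: one must carefully reconcile the $\pm 2K$ sign ambiguities in the rhombus-vector angles and the ill-definedness (noted in the remark after the statement) of $e^{-\frac{i}{2}(\overline{\alpha}_n-\overline{\alpha}_1)}$ and of $h(u)$ taken separately, checking at each stage that only the \emph{product} $e^{-\frac{i}{2}(\overline{\alpha}_n-\overline{\alpha}_1)}h(u)$ is well defined and that the answer is invariant under $\alpha_n\mapsto\alpha_n+4K$. Once the four cases are verified to collapse to \eqref{eq:def_function_h} with the stated constants, the lemma follows. Since the argument is a direct elementary continuation of Lemma \ref{lem:simp_exp_1} and the computations are entirely analogous across the four cases, I would write one case in full and indicate that the remaining three follow identically.
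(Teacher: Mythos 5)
Your proposal is correct and follows essentially the same route as the paper: decompose $f_{(b,w)}=f_{(b_1,b_n)}f_{(b_n,w)}$, read off $f_{(b_n,w)}$ from the definition (it is the external-edge case, whence the $ie^{i\overline{\theta}}$ prefactor, with the second angle $\alpha_n\mp 2K$ fixed by Remark~\ref{rem:definition_angles}), and use the quarter-period identity $\cd(v\mp K)=\pm\sn v$ from Table~\ref{table:identities_Jacobi_function} to merge this factor with the $\sn$ or $\sd$ term of $g(u)$ from Lemma~\ref{lem:simp_exp_1}, yielding the four cases of $h$. The only imprecision is your hedging that $b_nw$ could be any of the three edge types; in this construction it is always an external edge, which is consistent with the unconditional $e^{i\overline{\theta}}$ in \eqref{eq:simp_exp_f_2}.
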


\begin{proof}
We have $f_{(b,w)}=f_{(b_1,b_n)}f_{(b_n,w)}$. The definition \eqref{eq:def_function_f} of the function $f$ provides
\begin{equation*}
     f_{(b_n,w)}(u)={ie^{i\overline{\theta}}}\cdot \left\{ \begin{array}{ll}
     {\dc(\frac{u-{\alpha_n}}{2})\dc(\frac{u-\alpha_{n}-2K}{2})}& \text{if $a_{n-1}$ is primal},\\
     {\dc(\frac{u-{\alpha_n}}{2})\dc(\frac{u-\alpha_{n}+2K}{2})}& \text{if $a_{n-1}$ is dual}.
     \end{array}\right.
\end{equation*}     
(The choice of the sign $\pm$ above comes from Remark \ref{rem:definition_angles}.) Using that 
$\cd(\frac{u-\alpha_{n}\mp2K}{2})=\mp \sn(\frac{u-{\alpha_n}}{2})$ together with Lemma \ref{lem:simp_exp_1} ends the proof of Lemma \ref{lem:simp_exp_2}.
\end{proof}

\subsection{Application to the dimer model on the graph $\GQ$}
\label{sec:dimerKQ}

In the same way as in Section~\ref{subsec:dimer_model_GF}, the inverse Kasteleyn operator $\KQ^{-1}$ can be used to obtain an explicit \emph{local} expression 
for a Gibbs measure $\PPdimer^{\Qs}$ on dimer configurations of the infinite graph $\GQ$ arising from two independent $Z$-invariant Ising models. It can also be used 
to obtain an explicit \emph{local} formula for the free energy of the model. By Dubédat~\cite{Dubedat} we know that this free energy is equal, up to an additive constant,
to that of the dimer model on $\GF$ (since the characteristic polynomials differ by a multiplicative constant), so that we feel it presents no real interest to derive the formula,
although it can be done using the approach of Theorem~\ref{thm:free_energy_dimer}.

For the Gibbs measure, everything works out in exactly the same way so that we do not write out the details. We obtain that the probability of occurrence of a subset of edges
$\E=\{w_1 b_1,\dots,w_k b_k\}$  in a dimer configuration of $\GQ$ is:
\begin{equation*}
\PPdimer^{\Qs}(w_1 b_1,\dots,w_k b_k)=\left(\prod_{j=1}^k \KQ_{w_j,b_j} \right)\det[(\KQ^{-1})_{\E}],
\end{equation*}
where $\KQ^{-1}$ is the inverse Kasteleyn operator whose coefficients are given by~\eqref{eq:definition_C_b,w} or~\eqref{eq:fsbis} and 
$(\KQ^{-1})_{\E}$ is the sub-matrix of $\KQ^{-1}$
whose rows are indexed by $b_1,\dots,b_k$ and columns by $w_1,\dots,w_k$.

\appendix
\section{Useful identities involving elliptic functions}

\label{app:elliptic}

In this section we list required identities satisfied by elliptic functions. 

\subsection{Identities for Jacobi's elliptic functions}

\paragraph{Change of argument.} 
Jacobi's elliptic functions satisfy various addition formulas by quarter-periods and half-periods, among which (cf.\ \cite[Table~16.8]{AS}):

\setlength{\doublerulesep}{\arrayrulewidth}
\begin{table}[ht]
\begin{center}
\begin{tabular}{|| c||c| c| c| c| c| c||}
  \hline\hline
    & $-u$ & $u\pm K$ & $u+2K$ & $u+iK'$ & $u+2iK'$  & $u+K+iK'$\\
  \hline 
  \hline
  $\sn$ &   $-\sn$ & $\pm\cd$ & $-\sn$ & $\frac{1}{k}\ns$ & $\sn$ & $\frac{1}{k}\dc$   \\
  \hline
  $\cn$ &   $\sn$ & $\mp k'\sd$ & $-\cn$ & $-\frac{i}{k}\ds$ & $-\cn$ & $-\frac{ik'}{k}\nc$   \\
  \hline
  $\dn$ &   $\dn$ & $k' \nd$ & $\dn$ & $-i\cs$ & $-\dn$ & $ik' \sc$   \\
  \hline
  $\cd$ &   $\cd$ & $\mp\sn$ & $-\cd$ & $\frac{1}{k}\dc$ & $\cd$ & $-\frac{1}{k}\ns$   \\
  \hline
  $\sc$ &   $-\sc$ & $-\frac{1}{k'}\cs$ & $\sc$ & $i\nd$ & $-\sc$ & $\frac{i}{k'}\dn$   \\
  \hline\hline
\end{tabular}
\end{center}
\caption{Addition formulas by quarter-periods and half-periods, taken from \cite[16.8]{AS}.}
\label{table:identities_Jacobi_function}
\end{table}

\subsection{Jacobi's epsilon, zeta and related functions}
\label{app:HhHv}

The explicit expressions of the inverse operators of
Theorems~\ref{thm:KFmoins_un} and~\ref{thm:Kmoins_un} involve the function~$\Hh$ defined as follows. In the remarks following Theorem \ref{thm:KFmoins_un}, we also use the function $\Hv$ mentioned below.

For $0<k^2<1$, let
\begin{equation}
\label{eq:definition_Hh_Hv_k2>0}
     \left\{\begin{array}{rl}
     \displaystyle\Hh(u\vert k)&\hspace{-2.5mm}=\displaystyle\frac{K'}{\pi}\Bigl\{\Erm\Bigl(\frac{u}{2}\Big\vert k\Bigr)+\frac{E'-K'}{K'}\frac{u}{2}\Bigr\},\smallskip\\ 
     \displaystyle\Hv(u\vert k)&\hspace{-2.5mm}=\displaystyle\frac{iK}{\pi}\Bigl\{\Erm\Bigl(\frac{u}{2}\Big\vert k\Bigr)-\frac{E}{K}\frac{u}{2}\Bigl\},
     \end{array}\right.,
\end{equation}
where $\Erm$ is \emph{Jacobi's epsilon function}: $\Erm(u)=\int_{0}^u \dn^2(v\vert k)\ud v$, see \cite[16.26.3]{AS}. For $k^2<0$, let 
\begin{equation}
\label{eq:definition_Hh_Hv_k2<0}
     \left\{\begin{array}{rr}
     \displaystyle\Hh(u\vert k)&\hspace{-2.5mm}=\Hh(k'u\vert k^*),\smallskip\\ 
     \displaystyle\Hv(u\vert k)&\hspace{-2.5mm}=\Hv(k'u\vert k^*).
     \end{array}\right.
\end{equation} 

Properties of these functions are presented below.
\begin{lem}
\label{lem:properties_Hh_Hv}   
The functions $\Hh$ and $\Hv$ admit jumps in the horizontal and vertical directions, respectively:
\begin{equation}
\label{eq:jumps_Hh_Hv}
     \left\{\begin{array}{l}
     \Hh(u+4K\vert k)-\Hh(u\vert k)=1,\smallskip\\
     \Hh(u+4i\Re K'\vert k)-\Hh(u\vert k)=0,
     \end{array}\right.\qquad
     \left\{\begin{array}{l}
     \Hv(u+4K\vert k)-\Hv(u\vert k)=0,\smallskip\\
     \Hv(u+4i\Re K'\vert k)-\Hv(u\vert k)=1.
     \end{array}\right.
     \end{equation}
In the fundamental rectangle $[0,4K]+[0,4i\Re K']$, the function $\Hh$ (resp.\ $\Hv$) has a simple pole, at $2i\Re K'$, with residue $\frac{2\Re K'}{\pi}$ (resp.\ $\frac{2iK}{\pi}$). Moreover,
\begin{equation*}
     \lim_{k\to 0}\Hh(u\vert k)=\frac{u}{2\pi},\qquad \lim_{k\to 0}\Hv(u\vert k)=0.
\end{equation*}   
The following addition formulas hold:
\begin{equation}
\label{eq:additionH}
     \Hh(v-u\vert k)=\Hh(v\vert k)-\Hh(u\vert k)+\frac{k^2 \Re K'}{\pi}
     \left\{\begin{array}{ll}
     \displaystyle\sn\Bigl(\frac{u}{2}\Big\vert k\Bigr)\sn\Bigl(\frac{v}{2}\Big\vert k\Bigr)\sn\Bigl(\frac{v-u}{2}\Big\vert k\Bigr) 
     &\text{if }0<k^2<1,\medskip\\
     \displaystyle(-{k'}^2) \sd\Bigl(\frac{u}{2}\Big\vert k\Bigr)\sd\Bigl(\frac{v}{2}\Big\vert k\Bigr)\sd\Bigl(\frac{v-u}{2}\Big\vert k\Bigr) &\text{if } k^2<0,
     \end{array}\right.
\end{equation}
\begin{equation}
\label{eq:additionV}
     \Hv(v-u\vert k)=\Hv(v\vert k)-\Hv(u\vert k)+\frac{ik^2 K}{\pi}
     \left\{\begin{array}{ll}
     \displaystyle\sn\Bigl(\frac{u}{2}\Big\vert k\Bigr)\sn\Bigl(\frac{v}{2}\Big\vert k\Bigr)\sn\Bigl(\frac{v-u}{2}\Big\vert k\Bigr) 
     &\text{if }0<k^2<1,\medskip\\
     \displaystyle(-{k'}^2) \sd\Bigl(\frac{u}{2}\Big\vert k\Bigr)\sd\Bigl(\frac{v}{2}\Big\vert k\Bigr)\sd\Bigl(\frac{v-u}{2}\Big\vert k\Bigr) &\text{if } k^2<0.
     \end{array}\right.
\end{equation}
\end{lem}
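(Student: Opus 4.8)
The plan is to reduce every assertion to standard facts about Jacobi's zeta function $Z(u|k)=\Erm(u|k)-\frac{E}{K}u$. First I would rewrite the two functions in terms of $Z$. Substituting $\Erm(\frac{u}{2})=Z(\frac{u}{2})+\frac{E}{K}\frac{u}{2}$ into~\eqref{eq:definition_Hh_Hv_k2>0} and collecting the linear terms, the coefficient of $\frac{u}{2}$ inside $\Hh$ becomes $\frac{E}{K}+\frac{E'-K'}{K'}=\frac{EK'+E'K-KK'}{KK'}$, which by Legendre's relation $EK'+E'K-KK'=\frac{\pi}{2}$ equals $\frac{\pi}{2KK'}$. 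Hence, for $0<k^2<1$,
\[ \Hh(u|k)=\frac{K'}{\pi}Z\Bigl(\frac{u}{2}\Bigr)+\frac{u}{4K},\qquad \Hv(u|k)=\frac{iK}{\pi}Z\Bigl(\frac{u}{2}\Bigr). \]
Legendre's relation is thus absorbed once and for all into the linear coefficient of $\Hh$, and all remaining computations are carried out on this form.

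For the jump relations~\eqref{eq:jumps_Hh_Hv} I would use the quasi-periodicity of the zeta function, $Z(w+2K)=Z(w)$ and $Z(w+2iK')=Z(w)-\frac{i\pi}{K}$ (equivalently $\Erm(w+2K)=\Erm(w)+2E$, $\Erm(w+2iK')=\Erm(w)+2i(K'-E')$). In the $Z$-form the horizontal jump of $\Hh$ comes entirely from its linear term ($\frac{4K}{4K}=1$, the zeta part being $2K$-periodic), the vertical jump of $\Hh$ is the cancellation $\frac{K'}{\pi}\bigl(-\frac{i\pi}{K}\bigr)+\frac{4iK'}{4K}=0$, the horizontal jump of $\Hv$ vanishes by periodicity, and the vertical jump of $\Hv$ equals $\frac{iK}{\pi}\bigl(-\frac{i\pi}{K}\bigr)=1$. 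For the pole I would invoke the standard fact that $\Erm$, hence $Z$, has a simple pole at $iK'$ with residue $1$ (coming from the double pole of $\dn^2$, which carries no residue so that $\Erm$ stays meromorphic); the factor $\frac{1}{2}$ in the argument moves this pole to $u=2iK'$ and doubles the residue, so that $\Hh$ and $\Hv$ acquire residues $\frac{2K'}{\pi}$ and $\frac{2iK}{\pi}$. The limits at $k=0$ are transparent in the $Z$-form: since $Z(w|k)$ vanishes at $k=0$ and is $O(k^2)$, the term $\frac{K'}{\pi}Z(\frac{u}{2})$ tends to $0$ (the logarithmic blow-up of $K'$ being dominated by $k^2$), whence $\Hh(u|k)\to\frac{u}{4K(0)}=\frac{u}{2\pi}$, while $\Hv(u|k)=\frac{iK}{\pi}Z(\frac{u}{2})\to0$ as $K\to\frac{\pi}{2}$.

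The addition formulas are the substantive point, and they are a direct consequence of the classical addition theorem for Jacobi's zeta function, $Z(a+b)=Z(a)+Z(b)-k^2\sn a\,\sn b\,\sn(a+b)$ (cf.\ \cite{La89}). Taking $a=\frac{u}{2}$ and $b=\frac{v-u}{2}$, so that $a+b=\frac{v}{2}$, the linear terms $\frac{u}{4K}$ cancel exactly in the combination $\Hh(v-u)-\Hh(v)+\Hh(u)$, and the zeta part yields $\frac{k^2K'}{\pi}\sn(\frac{u}{2})\sn(\frac{v}{2})\sn(\frac{v-u}{2})$, which is precisely~\eqref{eq:additionH} for $0<k^2<1$; the computation for $\Hv$ is identical with $\frac{K'}{\pi}$ replaced by $\frac{iK}{\pi}$, giving~\eqref{eq:additionV}.

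Finally, for $k^2<0$ every statement is obtained from the case $(k^*)^2\in(0,1)$ through the definition~\eqref{eq:definition_Hh_Hv_k2<0} together with~\eqref{id:K_imaginary_modulus},~\eqref{id:sc_imaginary_modulus} and the imaginary-modulus transformation of the Jacobi functions: the rescaling $u\mapsto k'u$ and the substitution $K'\mapsto\Re K'$ account for the appearance of $\Re K'$ in the residue and in the fundamental rectangle, while $\sn(\cdot|k^*)$ converts into $\sd(\cdot|k)$ and $(k^*)^2$ into $-k'^2$, yielding the second lines of~\eqref{eq:additionH}--\eqref{eq:additionV}. I expect this last transfer, and the associated bookkeeping of constants (in particular the $K'$ versus $\Re K'$ distinction and the normalization changes under $k\mapsto k^*$), to be the only genuinely delicate part of the argument; everything else is a mechanical consequence of the reduction to $Z$.
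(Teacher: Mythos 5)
Your proof is correct, and it is in substance the route the paper itself points to but does not carry out: the paper's primary proof of the $0<k^2<1$ case for $\Hh$ is a citation to Lemmas~44 and~45 of~\cite{BdTR1} (with ``$\Hv$ follows similarly''), and it then remarks that ``a slightly different proof'' would reformulate $\Hh$ and $\Hv$ via Jacobi's zeta function as in~\eqref{eq:expressions_Hh_Hv_Z} and invoke the properties of $Z$, in particular the addition formula \cite[17.4.35]{AS}. That alternative is exactly what you execute, and you do it completely: the reduction $\Hh(u|k)=\frac{K'}{\pi}Z(\frac{u}{2})+\frac{u}{4K}$ via Legendre's relation $EK'+E'K-KK'=\frac{\pi}{2}$ is the content of the paper's appeal to \cite[17.4.28 and 17.3.13]{AS}; the jumps follow correctly from $Z(w+2K)=Z(w)$ and $Z(w+2iK')=Z(w)-\frac{i\pi}{K}$; the residue computation (simple pole of $Z$ at $iK'$ with residue $1$, doubled by the substitution $w=u/2$) and the $k\to0$ limits (using $Z=O(k^2)$ against the logarithmic growth of $K'$) are right; and the addition formulas drop out of $Z(a)+Z(b)-Z(a+b)=k^2\sn a\,\sn b\,\sn(a+b)$ with $a=\frac{u}{2}$, $b=\frac{v-u}{2}$ exactly as you say. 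Your treatment of $k^2<0$ by transfer through~\eqref{eq:definition_Hh_Hv_k2<0}, \eqref{id:K_imaginary_modulus}--\eqref{id:sc_imaginary_modulus} and the imaginary-modulus transformation $\sn\mapsto\sd$ is identical to the paper's, and like the paper you leave the constant-tracking there at the level of a sketch; that is acceptable, though it is indeed where the $\Re K'$ versus $K'$ distinction and the factor $-k'^2$ must be checked carefully. The net difference is that your argument is self-contained modulo standard facts about $Z$, whereas the paper outsources the $k^2>0$ case to the companion paper.
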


\begin{proof}
We first prove the lemma in the case $0<k^2<1$. All properties concerning $\Hh$ are proved in \cite[Lemmas~44 and 45]{BdTR1}. The statements for $\Hv$ follow similarly. 

A slightly different proof would consist in using a reformulation of $H$ and $V$ in terms of \emph{Jacobi's zeta function} $Z$:
\begin{equation}
\label{eq:expressions_Hh_Hv_Z}
     \Hh(u\vert k)=\frac{K'}{\pi}Z\Bigl(\frac{u}{2}\Big\vert k\Bigr)+\frac{u}{4K},\qquad \Hv(u\vert k)=\frac{iK}{\pi}Z\Bigl(\frac{u}{2}\Big\vert k\Bigr).
\end{equation}
(Equation \eqref{eq:expressions_Hh_Hv_Z} is a consequence of \eqref{eq:definition_Hh_Hv_k2>0} and \cite[17.4.28 and 17.3.13]{AS}.) Then we could use the numerous properties satisfied by $Z$ (see in particular the addition formula \cite[17.4.35]{AS}) to derive Lemma \ref{lem:properties_Hh_Hv}.

In the case $k^2<0$, we use the definition \eqref{eq:definition_Hh_Hv_k2<0} of $\Hh$ and $\Hv$, allowing to transfer all properties from 
the case $k^2>0$. In doing so we have to: consider $\Re K'$ in \eqref{eq:jumps_Hh_Hv} because in the case $k^2<0$ the 
quarter-period $K'$ is non-real, see \eqref{id:K*_imaginary_modulus}, and use the 
transformation of the $\sn$ function into the $\sd$ one under the dual transformation, see \cite[16.10]{AS}. 
%
\end{proof}

The Laplacian operator~\eqref{eq:Laplacian_operator} uses the function $\Arm$, which satisfies the following properties.
\begin{lem}[Lemma 44 in \cite{BdTR1}]
\label{cor:Armbis}
The function $\Arm(\cdot\vert k)$ is odd and satisfies the following identities:
\begin{align}
\bullet\ &\Arm(v-u\vert k)=\Arm(v\vert k)-\Arm(u\vert k)-k'\sc(u\vert k)\sc(v\vert k)\sc(v-u\vert k),\label{cor:Armbis:item3}\\
\bullet\ &\Arm(u+2K\vert k)=\Arm(u\vert k).\label{cor:Armbis:item1}
\end{align}
\end{lem}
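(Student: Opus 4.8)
The plan is to first derive a closed form for $\Arm$ that trivializes the oddness and $2K$-periodicity and reduces the addition formula to Jacobi's zeta addition theorem. Starting from the definition $\Arm(u\vert k)=\frac{1}{k'}(\Dc(u\vert k)+\frac{E-K}{K}u)$ with $\Dc(u)=\int_0^u\dc^2(v)\,\ud v$, I would compute a primitive of $\dc^2$. Using $\sc'=\frac{\dn}{\cn^2}$, $\Erm'(u)=\dn^2(u)$ (with $\Erm(u)=\int_0^u\dn^2$ Jacobi's epsilon function) and $\cn^2+\sn^2=1$, a direct differentiation gives $\frac{\ud}{\ud u}\{\dn(u)\sc(u)-\Erm(u)+u\}=\dn^2\nc^2=\dc^2$. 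Since the bracket vanishes at $u=0$, we get $\Dc(u)=\dn(u)\sc(u)-\Erm(u)+u$. Plugging this in, using $u+\frac{E-K}{K}u=\frac{E}{K}u$ and the definition $Z(u)=\Erm(u)-\frac{E}{K}u$ of Jacobi's zeta function (cf.\ \eqref{eq:expressions_Hh_Hv_Z}), I obtain the key identity
\begin{equation*}
\Arm(u\vert k)=\frac{1}{k'}\bigl(\dn(u)\sc(u)-Z(u)\bigr).
\end{equation*}

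From this closed form the first two assertions are immediate. The function $\dn$ is even while $\sc$ and $Z$ are odd, so $\dn\sc-Z$ is odd and hence $\Arm$ is odd. Likewise $\dn(u+2K)=\dn(u)$ and $\sc(u+2K)=\sc(u)$ by Table~\ref{table:identities_Jacobi_function}, and $Z$ has period $2K$; therefore $\dn\sc-Z$, and hence $\Arm$, is $2K$-periodic, which is \eqref{cor:Armbis:item1}.

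For the addition formula \eqref{cor:Armbis:item3}, I would multiply through by $k'$ and substitute the closed form, turning the claim into
\begin{equation*}
\dn(v-u)\sc(v-u)-Z(v-u)=\bigl(\dn(v)\sc(v)-Z(v)\bigr)-\bigl(\dn(u)\sc(u)-Z(u)\bigr)-k'^2\sc(u)\sc(v)\sc(v-u).
\end{equation*}
Applying Jacobi's zeta addition theorem $Z(v-u)=Z(v)-Z(u)+k^2\sn(v-u)\sn(u)\sn(v)$ (see \cite[17.4.35]{AS}) cancels all the $Z$-terms and leaves the purely algebraic identity
\begin{equation*}
\dn(v-u)\sc(v-u)+k'^2\sc(u)\sc(v)\sc(v-u)=\dn(v)\sc(v)-\dn(u)\sc(u)+k^2\sn(v-u)\sn(u)\sn(v),
\end{equation*}
which no longer involves integrals and can be checked with the standard addition theorems for $\sn,\cn,\dn$.

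I expect this last algebraic identity to be the only real obstacle. The clean way to dispatch it is to clear denominators by multiplying by $\cn(u)\cn(v)\cn(v-u)$, expand $\sc=\sn/\cn$ and the argument $v-u$ through the $\sn,\cn,\dn$ addition formulas, and simplify using $\cn^2=1-\sn^2$ and $\dn^2=1-k^2\sn^2$; the computation is elementary but sign-sensitive. An alternative that avoids most of the bookkeeping is to set $F(v)=\Arm(v-u)-\Arm(v)+\Arm(u)+k'\sc(u)\sc(v)\sc(v-u)$, observe $F(u)=0$ (using $\Arm(0)=\sc(0)=0$), and show that $F'(v)=\frac{1}{k'}(\dc^2(v-u)-\dc^2(v))+k'\sc(u)\frac{\ud}{\ud v}\{\sc(v)\sc(v-u)\}$ vanishes identically, again via the addition theorems (here the relation $\dc^2=k'^2\nc^2+k^2$ is convenient); matching the value at $v=u$ then gives $F\equiv0$.
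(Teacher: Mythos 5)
This lemma is not proved in the paper at all: it is imported verbatim as Lemma~44 of~\cite{BdTR1}, so there is no in-paper argument to compare against. Judged on its own, your reduction is correct and is almost certainly the same mechanism used in~\cite{BdTR1}. The closed form checks out: with $\dn'=-k^2\sn\cn$ and $\sc'=\dn\nc^2$ one gets $\frac{\ud}{\ud u}\{\dn(u)\sc(u)-\Erm(u)+u\}=-k^2\sn^2(u)+\dc^2(u)-\dn^2(u)+1=\dc^2(u)$, whence $\Dc(u)=\dn(u)\sc(u)-\Erm(u)+u$ and $\Arm(u)=\frac{1}{k'}(\dn(u)\sc(u)-Z(u))$; oddness and $2K$-periodicity then follow from the corresponding properties of $\dn$, $\sc$ and $Z$ exactly as you say, and the substitution of Jacobi's zeta addition theorem correctly reduces \eqref{cor:Armbis:item3} to the algebraic identity you display. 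That identity is indeed true (at $k=0$ it degenerates to the tangent subtraction formula $\tan(v-u)(1+\tan u\tan v)=\tan v-\tan u$, and it is forced by the known validity of the lemma together with your reduction), so the only thing missing from your write-up is its actual verification. Of your two proposed completions, the second is the cleaner one: with $F(v)=\Arm(v-u)-\Arm(v)+\Arm(u)+k'\sc(u)\sc(v)\sc(v-u)$ you have $F(u)=0$, and using $\frac{\ud}{\ud v}\{\dn(v)\sc(v)\}=1+k'^2\sc^2(v)-k^2\sn^2(v)$ (from $\dc^2=1+k'^2\sc^2$) the identity $F'\equiv 0$ follows from the standard addition theorems; this avoids the sign bookkeeping of clearing denominators. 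As it stands the argument is sound but not complete until one of these routine verifications is actually carried out.
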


\section{Some explicit integral computations}
\label{app:explicit_integrals}

We gather here computations of some contour integrals appearing in the
expression of the Kasteleyn operator, in the Fisher case of Section \ref{sec:Ising_dimers}.

\subsection{An important contour integral}
\label{app:second_proof_lemma_fisher}

The following result has been used when proving Theorem \ref{thm:KFmoins_un}.
\begin{lem}
\label{lem:integrale_fisher} 
One has
\begin{equation*}
     \frac{1}{2i\pi} \int_\Gamma \fs(u+2K)\fs(u)\ud u=-\frac{1}{k'},
\end{equation*}
where $\Gamma$ is a vertical contour on $\TT(k)$ winding once vertically on the
torus and crossing the horizontal axis in the interval
$[\alpha,\alpha+2K]=\{x:\alpha \leq x \leq \alpha+2K\}$
and $\fs(u)= \nc(\frac{u-\alpha}{2})$.
\end{lem}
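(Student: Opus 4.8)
The plan is to compute this integral by the residue theorem, exploiting the explicit form of the integrand after writing both factors in terms of Jacobi elliptic functions. First I would record that $\fs(u)=\nc\bigl(\frac{u-\alpha}{2}\bigr)$, so by the addition formula for $\cn$ by $2K$ from Table~\ref{table:identities_Jacobi_function} (namely $\cn(v+K)=-k'\sd(v)$, equivalently $\cn(v+2K)=-\cn(v)$), one has
\begin{equation*}
\fs(u+2K)=\nc\Bigl(\frac{u-\alpha}{2}+K\Bigr)=-\frac{1}{k'}\ds\Bigl(\frac{u-\alpha}{2}\Bigr).
\end{equation*}
Hence the integrand is
\begin{equation*}
\fs(u+2K)\fs(u)=-\frac{1}{k'}\,\ds\Bigl(\frac{u-\alpha}{2}\Bigr)\nc\Bigl(\frac{u-\alpha}{2}\Bigr)=-\frac{1}{k'}\frac{\dn\bigl(\frac{u-\alpha}{2}\bigr)}{\sn\bigl(\frac{u-\alpha}{2}\bigr)\cn\bigl(\frac{u-\alpha}{2}\bigr)}.
\end{equation*}
After the change of variable $v=\frac{u-\alpha}{2}$ (which sends the contour $\Gamma$ to a contour winding once vertically on the torus $\CC/(2K\ZZ+2iK'\ZZ)$ and multiplies $\ud u$ by $2$), the task reduces to an integral of $\frac{\dn v}{\sn v\cn v}$ over a vertical loop.

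\textbf{Identifying the poles and residues.} The key step is to locate the poles of $\dn v/(\sn v\,\cn v)$ enclosed by the (homologically nontrivial) vertical contour and sum their residues. Since $\Gamma$ winds once vertically and crosses the real axis in $[\alpha,\alpha+2K]$, the transformed contour encloses, within one vertical period strip, the single pole at $v=0$ coming from the zero of $\sn$ (the zero of $\cn$ lies at $v=K$, on the real axis in the sector we avoid, and is handled by the contour placement). Near $v=0$ one has $\sn v\sim v$, $\cn v\to 1$, $\dn v\to 1$, so $\dn v/(\sn v\,\cn v)\sim 1/v$, giving residue $1$. I would verify that the vertical periodicity of the integrand means the contributions from the two vertical sides cancel, so the integral collapses to $2\pi i$ times the enclosed residue.

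\textbf{Assembling the answer.} Combining the prefactor $-\frac{1}{k'}$, the factor $2$ from $\ud u=2\,\ud v$, the $\frac{1}{2i\pi}$ normalization, and the residue contribution $2\pi i\cdot 1$, I expect
\begin{equation*}
\frac{1}{2i\pi}\int_\Gamma\fs(u+2K)\fs(u)\,\ud u=\frac{1}{2i\pi}\cdot\Bigl(-\frac{1}{k'}\Bigr)\cdot 2\cdot 2\pi i\cdot\operatorname{res}_{v=0}\frac{\dn v}{2\sn v\cn v},
\end{equation*}
and the bookkeeping of the factors of $2$ should produce exactly $-\frac{1}{k'}$. The main obstacle I anticipate is \emph{not} the residue computation itself but the careful accounting of which poles a vertically-winding contour on the torus actually encloses: because $\Gamma$ is homologically nontrivial, one cannot simply say "it bounds a disk," and I would argue instead that the difference of the integrand's values on the two vertical boundary segments (translates by a horizontal period $2K$ in the $u$-variable) vanishes by the $2K$-periodicity visible from Table~\ref{table:identities_Jacobi_function}, reducing the loop integral to a residue sum over the poles in the fundamental strip. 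I would also double-check the sign conventions for the orientation of $\Gamma$ (upward-directed, second coordinate increasing) to ensure the residue is counted with a $+$ sign, since this sign is what ultimately feeds into Equation~\eqref{equ:case_1} and the constant $C_{\ws_j,\ws_j}=-\frac14$.
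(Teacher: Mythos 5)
Your reduction of the integrand to $-\frac{1}{k'}\,\ds\bigl(\frac{u-\alpha}{2}\bigr)\nc\bigl(\frac{u-\alpha}{2}\bigr)$ and your identification of the pole structure are correct, but the evaluation of the vertical loop integral is where the argument breaks down. Set $v=\frac{u-\alpha}{2}$ and $\varphi(v)=\dn v/(\sn v\cn v)$; on $\CC/(2K\ZZ+2iK'\ZZ)$ this elliptic function has \emph{two} simple poles, at $v=0$ (residue $+1$) and $v=K$ (residue $-1$), and your contour crosses the real axis strictly between them. The cancellation you invoke is not available: the integrand $\fs(u+2K)\fs(u)$ is \emph{anti}-periodic under $u\mapsto u+2K$ (since $\nc(w+2K)=-\nc(w)$), not $2K$-periodic, so the two vertical sides of your rectangle do not carry equal values. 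And if you instead use the true period ($4K$ in $u$, i.e.\ $2K$ in $v$), the rectangle contains both poles, whose residues sum to zero, and the closed-contour identity reads $0=0$ — it determines nothing. There is no intrinsic sense in which a homologically nontrivial vertical loop ``encloses the pole at $v=0$'': moving the crossing point past $v=K$ changes the integral by $-2\pi i$, so a residue count alone cannot produce the value.

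The missing idea — and the one the paper's proof uses — is precisely this anti-periodicity. Taking the rectangle with vertical sides $\Gamma$ and $\Gamma-2K$, the horizontal sides cancel by $4iK'$-periodicity, while the two vertical sides contribute $\int_\Gamma-\int_{\Gamma-2K}=2\int_\Gamma$ rather than cancelling; the rectangle contains only the pole at $u=\alpha$, with residue $-2/k'$, whence $2\int_\Gamma=2\pi i\cdot(-2/k')$ and the crucial factor $\frac12$ appears. This is exactly the factor of $2$ your bookkeeping cannot account for: residue $+1$ at $v=0$, $\ud u=2\,\ud v$ and the prefactor $-1/k'$ give $-2/k'$, and the extra $\frac12$ you slip into $\res_{v=0}\frac{\dn v}{2\sn v\cn v}$ at the end is the entire content of the anti-periodicity argument, not a detail. (Equivalently: $\int_{\gamma_{c+K}}\varphi=-\int_{\gamma_c}\varphi$ by anti-periodicity and $\int_{\gamma_{c+K}}\varphi=\int_{\gamma_c}\varphi+2\pi i\,\res_{v=K}\varphi=\int_{\gamma_c}\varphi-2\pi i$ by residues, forcing $\int_{\gamma_c}\varphi=\pi i$, i.e.\ \emph{half} of $2\pi i\,\res_{v=0}\varphi$.)
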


On the rectangle, the contour $\Gamma$ is supposed to cross the horizontal axis
inside of the interval $[\alpha,\alpha+2K]$.
If the vertical contour crosses the horizontal axis in the other interval
$[\alpha+2K,\alpha(+4K)]$, the integral is equal to $+\frac{1}{k'}$, as it
corresponds to changing $\alpha$ into $\alpha+2K$ and
$\nc(\frac{u-(\alpha+4K)}{2})=-\nc(\frac{u-\alpha}{2})$.
Note further that Lemma \ref{lem:integrale_fisher} is independent of the choice of the angle $\alpha$ mod $4K$, and the integrand $\fs(u+2K)\fs(u)$ is.

\begin{proof}
First, it follows from the change of variable $u\to u+2K$ and the above-mentioned property of $\nc$ that
\begin{equation*}
     \frac{1}{2i\pi} \int_\Gamma \fs(u+2K)\fs(u)\ud u=-\frac{1}{2i\pi} \int_{\Gamma-2K} \fs(u+2K)\fs(u)\ud u=\frac{1}{2i\pi} \int_{\widetilde{\Gamma}} \fs(u+2K)\fs(u)\ud u,
\end{equation*}
where $\widetilde\Gamma$ is the contour $\Gamma-2K$ crossed in the opposite direction of $\Gamma$. Further, using the $4iK'$-periodicity of the integrand, we deduce that 
\begin{equation}
\label{eq:gamma_C_new_contour}
     \frac{1}{2i\pi} \int_\Gamma \fs(u+2K)\fs(u)\ud u=\frac{1}{2}\frac{1}{2i\pi} \int_{\C} \fs(u+2K)\fs(u)\ud u,
\end{equation}
where $\C$ is the closed contour $(\Gamma-2iK')\bigcup(\widetilde\Gamma-2iK')\bigcup S_1\bigcup S_2$, $S_1$ and $S_2$ being the horizontal segments joining $(\Gamma-2iK')$ and $(\widetilde\Gamma-2iK')$, see Figure \ref{fig:contour_C_gamma}.

\unitlength=0.5cm
\begin{figure}[ht]
\vspace{31mm}
\begin{center}
\begin{picture}(0,0)(0,0)
\put(-6,0){\line(1,0){12}}
\put(-6,0){\line(0,1){3}}
\put(6,0){\line(0,1){3}}
\put(-2,6){\line(1,0){4}}
\put(2,6){\line(1,0){4}}
\put(-6,6){\line(1,0){4}}
\put(-6,3){\line(0,1){3}}
\put(6,3){\line(0,1){3}}
\put(0.8,2.1){\textcolor{blue}{$\Gamma$}}
\put(-5.1,2.1){$\C$}
\put(-5,-0.18){{$\bullet$}}
\put(1,-0.18){{$\bullet$}}
\put(-6.18,-0.18){{$\bullet$}}
\put(-6.18,5.82){{$\bullet$}}
\put(5.82,-0.18){{$\bullet$}}
\put(-6.55,-0.8){$0$}
\put(5.74,-0.8){$4K$}
\put(-7.7,5.9){$4iK'$}
\put(-1.5,3.3){$S_1$}
\put(-1.5,-2.7){$S_2$}
\put(-5.05,-0.8){$\alpha$}
\put(0.95,-0.8){$\alpha+2K$}
\linethickness{0.2mm}
\put(-5.5,3){\vector(0,-1){3}}
\put(-5.5,0){\vector(0,-1){3}}
\put(-5.5,-3){\vector(1,0){3}}
\put(-2.5,-3){\vector(1,0){3}}
\put(0.5,-3){\vector(0,1){3}}
\put(0.5,0){\vector(0,1){3}}
\put(-5.5,-3){\line(1,0){6}}
\put(0.5,3){\vector(-1,0){3}}
\put(-2.5,3){\vector(-1,0){3}}
\linethickness{0.3mm}
\put(0.55,0){\textcolor{blue}{\vector(0,1){3}}}
\put(0.55,3){\textcolor{blue}{\vector(0,1){3}}}
\end{picture}
\end{center}
\bigskip\bigskip\bigskip
\caption{The contour $\C$ in \eqref{eq:gamma_C_new_contour} used in the proof of Lemma \ref{lem:integrale_fisher}.}
\label{fig:contour_C_gamma}
\end{figure}
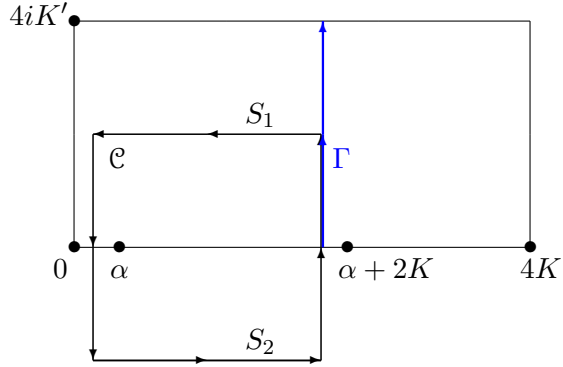
The main point is that the contour integral in the right-hand side of \eqref{eq:gamma_C_new_contour} can be computed with the residue theorem: the only pole (of order $1$) is at $\alpha$ and has residue $\frac{-2}{k'}$, see Table \ref{table:identities_Jacobi_function}. Lemma \ref{lem:integrale_fisher} follows.
\end{proof}

\subsection{Inverse Kasteleyn operator at an edge}

Here we compute the probability that a given edge $e=\xb\yb$ of the isoradial graph $\Gs$ with rhombus half-angle $\theta_e$
appears in the high temperature contour expansion of the Ising model. In terms of
dimers, it corresponds to the probability that the corresponding edge $\es=\vs_j(\xb)\vs_\ell(\yb)=\vs_j\vs_\ell$ belongs to 
a dimer configuration of $\GF$. By Theorem~\ref{thm:Gibbs_measure} this probability is given by 
$\PPdimer(\es)=\KF_{\vs_{j},\vs_{\ell}}\KF^{-1}_{\vs_{\ell},\vs_{j}}$.

\begin{lem}
\label{lem:inverse_Kasteleyn_edge}
One has
\begin{equation*}
     \KF_{\vs_{j},\vs_{\ell}}\KF^{-1}_{\vs_{\ell},\vs_{j}}=\frac{1}{2}-\frac{1-2\Hh(2\theta_e)}{2\cn\theta_e}.
\end{equation*}
\end{lem}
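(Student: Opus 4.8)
The plan is to combine the explicit Kasteleyn weight with the contour-integral formula for $\KF^{-1}$ and to evaluate the resulting integral by residues. Since $\es=\vs_j\vs_\ell$ joins two vertices of type `$\vs$', the constant term vanishes ($C_{\vs_j,\vs_\ell}=0$), so by \eqref{equ:KF_inverse} the inverse reduces to a pure contour integral. Using skew-symmetry of $\KF$ and $\KF^{-1}$, the quantity to compute equals $\KF_{\vs_\ell,\vs_j}\KF^{-1}_{\vs_j,\vs_\ell}$, which keeps us in the setting of Case~3 of Section~\ref{subsubsec:sectors}; writing $\alpha=\alpha_j(\xb)$, $\beta=\alpha_{j+1}(\xb)$ (so that $\tfrac{\beta-\alpha}{2}=\theta_e$), I would start from
\begin{equation*}
\KF_{\vs_\ell,\vs_j}\KF^{-1}_{\vs_j,\vs_\ell}=\frac{ik'}{8\pi}\int_{\Gamma_{\vs_j,\vs_\ell}}\KF_{\vs_\ell,\vs_j}\,\fs_{\vs_j}(u+2K)\,\fs_{\vs_\ell}(u)\,\expo_{(\xb,\yb)}(u)\,\ud u.
\end{equation*}

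First I would remove the exponential using harmonicity. The kernel relation \eqref{eq:harmonicity_g} at $\vs_j$ gives $\KF_{\vs_\ell,\vs_j}\expo_{(\xb,\yb)}(u)\fs_{\vs_\ell}(u)=\KF_{\vs_j,\ws_j}\fs_{\ws_j}(u)+\KF_{\vs_j,\ws_{j+1}}\fs_{\ws_{j+1}}(u)$, so the integrand becomes $\fs_{\vs_j}(u+2K)\bigl[\KF_{\vs_j,\ws_j}\fs_{\ws_j}(u)+\KF_{\vs_j,\ws_{j+1}}\fs_{\ws_{j+1}}(u)\bigr]$, a product of the $\nc$-functions $\fs_{\ws}$ of the decoration of $\xb$ with no exponential left. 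Expanding $\fs_{\vs_j}(u+2K)$ through Definition~\ref{def:function_f}, the integrand splits into \emph{diagonal} products $\nc(\tfrac{u+2K-\gamma}{2})\nc(\tfrac{u-\gamma}{2})$ with $\gamma\in\{\alpha,\beta\}$ and \emph{cross} products mixing $\alpha$ and $\beta$; the Kasteleyn signs enter only through the clockwise-odd product $\KF_{\vs_j,\ws_j}\KF_{\vs_j,\ws_{j+1}}$, and the poles are exactly $\{\alpha,\beta,\alpha+2K,\beta+2K\}$ as in Case~3.

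The two diagonal integrals are of the form treated in Lemma~\ref{lem:integrale_fisher}; with the Case~3 contour crossing the real axis between $\alpha$ and $\beta$, one gets $+\tfrac14$ and $-(-\tfrac14)$, producing the constant $\tfrac12$. For the cross terms I would pass to the closed-contour form \eqref{equ:KF_inverseH} and apply the residue theorem: each $\nc$-factor contributes a residue $-\tfrac{2}{k'}$ at its pole (the factor $2$ coming from the halved argument), $\Hh$ contributes residue $\tfrac{2K'}{\pi}$ at $2iK'$ (Lemma~\ref{lem:properties_Hh_Hv}), and the mixed factor evaluates as $\nc(\tfrac{\beta-\alpha}{2})=\nc\theta_e=\tfrac{1}{\cn\theta_e}$. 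Summing the residues at $\alpha,\beta,\alpha+2K,\beta+2K$ yields a combination of $\Hh(\alpha),\Hh(\beta),\Hh(\alpha+2K),\Hh(\beta+2K)$ weighted by $\tfrac{1}{\cn\theta_e}$, which the addition formula \eqref{eq:additionH} together with $\Hh(2K)=\tfrac12$ and $\Hh(\cdot+4K)=\Hh(\cdot)+1$ collapses into the single term $\tfrac{\Hh(\beta-\alpha)}{\cn\theta_e}=\tfrac{\Hh(2\theta_e)}{\cn\theta_e}$, up to explicit elliptic corrections.

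The main obstacle is this final simplification: the addition formula \eqref{eq:additionH} leaves correction terms of the shape $\tfrac{k^2K'}{\pi}\sn(\tfrac{\cdot}{2})\cd(\tfrac{\cdot}{2})$ and $\tfrac{k^2K'}{\pi}\sn(\tfrac{\alpha}{2})\sn(\tfrac{\beta}{2})\sn\theta_e$, which must be shown to cancel against the cross-term residue at $2iK'$ and to collapse to exactly $-\tfrac{1}{2\cn\theta_e}$; this is a computational step relying on the quarter- and half-period identities of Table~\ref{table:identities_Jacobi_function}. Once this is done, adding the diagonal contribution $\tfrac12$ and the cross contribution $-\tfrac{1}{2\cn\theta_e}+\tfrac{\Hh(2\theta_e)}{\cn\theta_e}$ gives $\tfrac12-\tfrac{1-2\Hh(2\theta_e)}{2\cn\theta_e}$. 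Finally, since the result is a probability it must be independent of the chosen admissible orientation, which I would note follows because reversing the orientation around any vertex flips the relevant signs in pairs, leaving the product $\KF_{\vs_\ell,\vs_j}\KF^{-1}_{\vs_j,\vs_\ell}$ unchanged.
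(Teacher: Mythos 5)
Your proposal is correct and follows essentially the same route as the paper's proof: skew-symmetry to switch to $\KF_{\vs_\ell,\vs_j}\KF^{-1}_{\vs_j,\vs_\ell}$, the harmonicity relation \eqref{eq:harmonicity_g} to eliminate the exponential, the split into two diagonal products (handled by Lemma~\ref{lem:integrale_fisher}, giving $\tfrac12$) and two cross products (handled by residues against $\Hh$), and finally the addition formula \eqref{eq:additionH} to collapse the $\Hh$-values into $\Hh(2\theta_e)$ with the elliptic correction terms cancelling. The computational step you flag as the main obstacle is exactly the one the paper carries out explicitly, and it closes as you expect.
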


\begin{proof}
Instead of $\KF_{\vs_{j},\vs_{\ell}}\KF^{-1}_{\vs_{\ell},\vs_{j}}$ we compute $\KF_{\vs_{\ell},\vs_{j}}\KF^{-1}_{\vs_{j},\vs_{\ell}}$. Both quantities are obviously equal, but the second one happens to be more convenient when applying the results of Section \ref{sec:Ising_dimers}.

We start from the expression of $\KF^{-1}$ given in \eqref{equ:KF_inverseH} of Theorem \ref{thm:KFmoins_un} (note that by \eqref{eq:expression_C_x_y}, $C_{\vs_{j},\vs_{\ell}}=0$):
\begin{equation*}
     \KF^{-1}_{\vs_{j},\vs_{\ell}}=\frac{i k'}{8\pi}\oint_{\C_{\vs_{j},\vs_{\ell}}}\gs_{(\vs_{j},\vs_{\ell})}(u)\Hh(u)\ud u=
     \frac{i k'}{8\pi}\oint_{\C_{\vs_{j},\vs_{\ell}}}\fs_{\vs_{j}}(u+2K)\fs_{\vs_{\ell}}(u)\expo_{(\xb,\yb)}(u)\Hh(u)\ud u.
\end{equation*}
Using the harmonicity property \eqref{eq:harmonicity_g} of $\gs_{(\zs,\cdot)}(u)$ enables us to rewrite
\begin{align*}
\KF_{\vs_j,\vs_\ell}\fs_{\vs_{\ell}}(u)\expo_{(\xb,\yb)}(u)=
-[\KF_{\vs_j,{\ws_j}}\fs_{\ws_j}(u)+\KF_{\vs_j,{\ws_{j+1}}}\fs_{\ws_{j+1}}(u)].
\end{align*}
By definition of the function $\fs_{\vs_j}$, see~\eqref{equ:rewriting_fs}, we thus have
\begin{multline*}
\KF_{\vs_j,\vs_\ell}\fs_{\vs_{j}}(u+2K)\fs_{\vs_{\ell}}(u)\expo_{(\xb,\yb)}(u)=\\
-[\KF_{\vs_j,\ws_j}\fs_{\ws_j}(u+2K)+\KF_{\ws_{j+1},\vs_j}\fs_{\ws_{j+1}}(u+2K)]
[\KF_{\vs_j,{\ws_j}}\fs_{\ws_j}(u)+\KF_{\vs_j,{\ws_{j+1}}}\fs_{\ws_{j+1}}(u)].
\end{multline*}
Recalling that the orientation of the triangle $(\ws_j,\vs_j,\ws_{j+1})$ is admissible, we moreover have 
$\KF_{\vs_j,{\ws_j}}\KF_{\vs_j,\ws_{j+1}}=-\KF_{\ws_j,\ws_{j+1}}$, and since $\KF_{\vs_j,\vs_\ell}=-\KF_{\vs_\ell,\vs_j}$, we have
\begin{multline}
\label{eq:sum_four_terms}
\KF_{\vs_\ell,\vs_j}\fs_{\vs_{j}}(u+2K)\fs_{\vs_{\ell}}(u)\expo_{(\xb,\yb)}(u)=\\
\fs_{\ws_j}(u+2K)\fs_{\ws_j}(u)-\fs_{\ws_{j+1}}(u+2K)\fs_{\ws_{j+1}}(u)
+\KF_{\ws_j,\ws_{j+1}}(\fs_{\ws_{j+1}}(u+2K)\fs_{\ws_{j}}(u)-\fs_{\ws_j}(u+2K)\fs_{\ws_{j+1}}(u)).
\end{multline} 
With \eqref{eq:sum_four_terms} the quantity $\KF_{\vs_{\ell},\vs_{j}}\KF^{-1}_{\vs_{j},\vs_{\ell}}$ is a sum of four terms. The first two ones are computed thanks to Lemma \ref{lem:integrale_fisher}: with the choice of contour $\C_{\vs_j,\vs_\ell}$, we have
\begin{equation*}
\frac{i k'}{8\pi}\oint_{\C_{\vs_j,\vs_\ell}}[\fs_{\ws_j}(u+2K)\fs_{\ws_j}(u)-\fs_{\ws_{j+1}}(u+2K)\fs_{\ws_{j+1}}(u)]\Hh(u)\ud u= 2\frac{i k'}{8\pi}\frac{-2\pi i}{k'}=\frac{1}{2}.
\end{equation*}
To compute the last two terms in \eqref{eq:sum_four_terms}, namely
\begin{equation}
\label{eq:last_two_terms}
\frac{i k'}{8\pi}\oint_{\C_{\vs_j,\vs_\ell}}\KF_{\ws_j,\ws_{j+1}}[\fs_{\ws_{j+1}}(u+2K)\fs_{\ws_{j}}(u)-\fs_{\ws_{j}}(u+2K)\fs_{\ws_{j+1}}(u)]\Hh(u)\ud u,
\end{equation}
recall that by  \eqref{equ:def_angle}
\begin{align*}
\fs_{\ws_{j+1}}(u)=
\textstyle \nc(\frac{u-\alpha_{j+1}}{2})=
\KF_{\ws_j,\ws_{j+1}}\nc(\frac{u-\alpha_j-2\theta_e}{2} ),
\end{align*}
so that 
\begin{align*}
\textstyle
\KF_{\ws_j,\ws_{j+1}}\fs_{\ws_{j+1}}(u+2K)\fs_{\ws_{j}}(u)=
\nc(\frac{u+2K-\alpha_j-2\theta_e}{2} )\nc(\frac{u-\alpha_j}{2} ).
\end{align*}
We therefore focus on the term 
\begin{equation*}
     \int_{\C_{\vs_{j},\vs_{\ell}}}\nc\Bigl(\frac{u+2K-\alpha_j-2\theta_e}{2} \Bigr)\nc\Bigl(\frac{u-\alpha_j}{2} \Bigr)\Hh(u)\frac{\ud u}{2\pi i},
\end{equation*}
in which we set, without loss of generality, $\alpha_j=0$. This is equivalent to replace the function $\Hh(u)$ by $\Hh(u-\alpha_j)$, which is possible because both functions satisfy the same jump conditions stated in Lemma \ref{lem:properties_Hh_Hv}. There are three residues, at $2\theta_e$, $2K$ and $2i\Re K'$. We thus have
\begin{align}
     \int_{\C_{\vs_{j},\vs_{\ell}}}& \nc\Bigl(\frac{u+2K-2\theta_e}{2} \Bigr)\nc\Bigl(\frac{u}{2} \Bigr)\Hh(u)\frac{\ud u}{2\pi i}\nonumber\\&\qquad=\frac{-2}{k'}\frac{\Hh(2\theta_e)}{\cn\theta_e}+\frac{-2}{k'}\frac{\Hh(2K)}{\cn(2K-\theta_e)}+\frac{2\Re K'}{\pi}\nc(i\Re K'+K-\theta_e)\nc(i\Re K')\nonumber\\&\qquad=\frac{2}{k'}\frac{\Hh(2K)-\Hh(2\theta_e)}{\cn \theta_e},\label{eq:partial_conclusion_mixed_1}
\end{align}
since $\nc(i\Re K')=0$. The same reasoning as above gives
\begin{multline}
\label{eq:partial_conclusion_mixed_2}
     \int_{\C_{\vs_{j},\vs_{\ell}}}\nc\Bigl(\frac{u-2\theta_e}{2} \Bigr)\nc\Bigl(\frac{u+2K}{2} \Bigr)\Hh(u)\frac{\ud u}{2\pi i}\\=\frac{2}{k'}\frac{\Hh(2\theta_e-2K)-\Hh(0)}{\cn\theta_e}+\frac{2\Re K'}{\pi}\nc(i\Re K'+K)\nc(i\Re K'-\theta_e),
\end{multline}
which may be slightly simplified, using that $\Hh(0)=0$.
Thanks to \eqref{eq:partial_conclusion_mixed_1}, \eqref{eq:partial_conclusion_mixed_2} and Table~\ref{table:identities_Jacobi_function}, we obtain that \eqref{eq:last_two_terms} equals
\begin{equation*}
     \KF_{\vs_{\ell},\vs_{j}}\KF^{-1}_{\vs_{j},\vs_{\ell}}=\frac{1}{2}+\frac{\Hh(2\theta_e)-\Hh(2K)+\Hh(2\theta_e-2K)}{2\cn\theta_e}+\frac{\Re K' k^2}{2\pi}\sd\theta_e.
\end{equation*}
The addition formula \eqref{eq:additionH} results in
\begin{align*}
     \Hh(2\theta_e-2K)-&\Hh(2K)\\&=\Hh(2\theta_e-4K)-\frac{\Re K'k^2}{\pi}\left\{\begin{array}{ll}
\sn K \sn(\theta_e-K)\sn(\theta_e-2K) & \text{if } k^2>0\medskip\\
(-k'^2)\sd K \sd(\theta_e-K)\sd(\theta_e-2K)& \text{if } k^2<0\end{array}\right.\\
&=\Hh(2\theta_e)-1-\frac{\Re K'k^2}{\pi}\cn\theta_e\sd\theta_e.
\end{align*} 
The proof is complete.
\end{proof}

Using that $\lim_{k\to 0}\Hh(u)=\frac{u}{2\pi}$, see Lemma~\ref{lem:properties_Hh_Hv}, we find that
\begin{equation*}
     \lim_{k\to 0}\KF_{\vs_{j},\vs_{\ell}}\KF^{-1}_{\vs_{\ell},\vs_{j}}=\frac{1}{2}-\frac{\pi-2\theta_e}{2\pi\cos\theta_e}.
\end{equation*}
 This is in accordance with the computation of~\cite[Appendix A]{BoutillierdeTiliere:iso_gen} 
for the case $k=0$. Indeed, the dimer model
considered in the latter paper corresponds to complementary polygon configurations meaning that the probability \eqref{ex:proba_comput}
is $1$ minus the probability of the paper~\cite{BoutillierdeTiliere:iso_gen}.

\section{Proof of Lemma~\ref{lem:def_angles}}\label{app:proof_angles_4pi}

This section consists in the proof of Lemma~\ref{lem:def_angles} stating that the angles 
$(\overline{\alpha}_j(\xb))_{\xb\in\Vs,\,j\in\{1,\dots,d(\xb)\}}$ defined in Equations~\eqref{equ:def_angle} and~\eqref{eq:relative_definition_a_a'}
are indeed well defined mod $4\pi$.

\begin{proof}
We first need to check that angles around a rhombus corresponding to two neighboring decorations are well defined, 
see Figure~\ref{Fig:fig_GF_3}.

\begin{figure}[ht]
  \centering
\begin{overpic}[width=4cm]{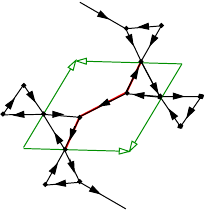}
\put(40,40){\scriptsize $\vs_j$}
\put(20,25){\scriptsize $\ws_j$}
\put(60,50){\scriptsize $\vs_\ell$}
\put(72,72){\scriptsize $\ws_\ell$}
\put(5,28){\scriptsize $\xb$}
\put(88,69){\scriptsize $\yb$}
\put(40,21){\scriptsize $e^{i\overline{\alpha}_j}$}
\put(48,73){\scriptsize $e^{i\overline{\alpha}_{\ell}}$}
\put(16,60){\scriptsize $e^{i\overline{\alpha}_{j+1}}$}
\put(66,33){\scriptsize $e^{i\overline{\alpha}_{\ell+1}}$}
\end{overpic}
\caption{Compatibility around a rhombus.}
\label{Fig:fig_GF_3}
\end{figure}

We want to prove that the following is equal to $0$ mod $4\pi$:
\begin{equation*}
(\overline{\alpha}_{\ell+1}-\overline{\alpha}_{\ell})-
(\overline{\alpha}_{j+1}-\overline{\alpha}_{j})+(\overline{\alpha}_{\ell}-\overline{\alpha}_{j})-
(\overline{\alpha}_{\ell+1}-\overline{\alpha}_{j+1}).
\end{equation*}
By definition of the angles within a decoration we have, mod $4\pi$,
\begin{equation*}
(\overline{\alpha}_{\ell+1}-\overline{\alpha}_{\ell})-(\overline{\alpha}_{j+1}-\overline{\alpha}_{j})=
\begin{cases}
0 & \text{if $\co(\ws_\ell,\ws_{\ell+1})+\co(\ws_j,\ws_{j+1})$ is even,}\\
2\pi& \text{if $\co(\ws_\ell,\ws_{\ell+1})+\co(\ws_j,\ws_{j+1})$ is odd}.
\end{cases}
\end{equation*}
By definition of the angles in neighboring decorations we have, mod $4\pi$,
\begin{equation*}
(\overline{\alpha}_{\ell}-\overline{\alpha}_{j})-(\overline{\alpha}_{\ell+1}-\overline{\alpha}_{j+1})=
\begin{cases}
0&\text{if $\co(\ws_{j+1},\vs_j,\ws_j)+\co(\ws_{\ell+1},\vs_\ell,\ws_\ell)$ is even,}\\
2\pi&\text{if $\co(\ws_{j+1},\vs_j,\ws_j)+\co(\ws_{\ell+1},\vs_\ell,\ws_\ell)$ is odd}.
\end{cases}
\end{equation*}
This implies that, mod $4\pi$, we have:
\begin{multline*}
(\overline{\alpha}_{\ell+1}-\overline{\alpha}_{\ell})-
(\overline{\alpha}_{j+1}-\overline{\alpha}_{j})+
(\overline{\alpha}_{\ell}-\overline{\alpha}_{j})-
(\overline{\alpha}_{\ell+1}-\overline{\alpha}_{j+1})\\
=
\begin{cases}
0&\text{if $\co(\ws_{j+1},\vs_j,\ws_j,\ws_{j+1})+\co(\ws_{\ell+1},\vs_\ell,\ws_\ell,\ws_{\ell+1})$ is even,} \\
2\pi& \text{if $\co(\ws_{j+1},\vs_j,\ws_j,\ws_{j+1})+\co(\ws_{\ell+1},\vs_\ell,\ws_\ell,\ws_{\ell+1})$ is odd}.
\end{cases}
\end{multline*}
But since the orientation of the graph is admissible, we have that $\co(\ws_{j+1},\vs_j,\ws_j,\ws_{j+1})$ and 
$\co(\ws_{\ell+1},\vs_\ell,\ws_\ell,\ws_{\ell+1})$ are odd, implying that the sum is even, thus concluding the proof for angles around a rhombus.

We now need to prove that when doing the inductive procedure around a cycle of $\GF$, we recover the same angle mod $4\pi$.
There are two types of cycles to consider: inner cycles of decorations and cycles arising from boundary of faces of $\Gs$. 

Consider a cycle of a decoration corresponding to a 
vertex $\xb$ of $\Gs$ of degree $d$, and let $n$ be the number of
edges of the inner cycle oriented clockwise. By definition of the angles, we have:
\begin{equation*}
\overline{\alpha}_{d+1}-
\overline{\alpha}_{1}=\sum_{j=1}^d (\overline{\alpha}_{j+1}-\overline{\alpha}_{j})\\
=\sum_{j=1}^d 2\overline{\theta}_{j}+2\pi n=2\pi(n+1).
\end{equation*}
Since the orientation of the edges is admissible, $n$ is odd, thus proving that $\overline{\alpha}_{d+1}-\overline{\alpha}_{1}=0\ [4\pi]$.

Now consider a cycle $C$ arising from the boundary 
$\xb_1,\dots,\xb_m$ of a face of $\Gs$, with vertices labeled in clockwise order, 
see also Figure~\ref{Fig:fig_GF_2}. Up to a relabeling 
of vertices of the decorations, this cycle can be written as
\begin{equation*}
     C=(\ws_{2}(\xb_1),\vs_2(\xb_1),\vs_{1}(\xb_2),\ws_{2}(\xb_2),\dots,\vs_{1}(\xb_1)).
\end{equation*}

\begin{figure}[ht]
  \centering
\begin{overpic}[width=4.5cm]{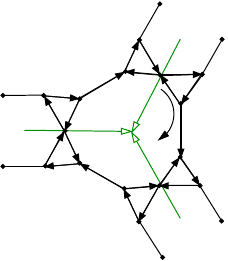}

 \put(62,23){\scriptsize $\ws_2(\xb_1)$}
 \put(29,24){\scriptsize $\vs_2(\xb_1)$}
 \put(20,31){\scriptsize $\vs_1(\xb_2)$}
 \put(3,45){\scriptsize $\ws_2(\xb_2)$}
 \put(70,10){\scriptsize $\xb_1$}
 \put(8,53){\scriptsize $\xb_2$}
 \put(69,87){\scriptsize $\xb_3$}
 \put(38,35){\scriptsize $e^{i\overline{\alpha}_2(\xb_1)}$}
 \put(30,51){\scriptsize $e^{i\overline{\alpha}_{2}(\xb_2)}$}
 \put(43,63){\scriptsize $C$}
\end{overpic}
\caption{Notation for a cycle $C$ arising from the boundary of a face of $\Gs$.\label{Fig:fig_GF_2}}
\end{figure}

Using the definition of the angles within a decoration and in neighboring ones we deduce that, mod $4\pi$, we have:
\begin{equation*}
\overline{\alpha}_{2}(\xb_j)-\overline{\alpha}_{2}(\xb_{j+1})=
\begin{cases}
-\pi-2\theta_1(\xb_j) &\text{if $\co(\ws_2(\xb_j),\vs_2(\xb_j),\vs_1(\xb_{j+1}),\ws_2(\xb_{j+1}))$ is odd,}\\
\pi-2\theta_1(\xb_j) &\text{if $\co(\ws_2(\xb_j),\vs_2(\xb_j),\vs_1(\xb_{j+1}),\ws_2(\xb_{j+1}))$ is even}.
\end{cases}
\end{equation*}
Let $n(C)$ denote the number of portions of the cycle $C$ where 
\begin{equation*}
     \co(\ws_2(\xb_j),\vs_2(\xb_j),\vs_1(\xb_{j+1}),\ws_2(\xb_{j+1}))
\end{equation*}
is odd.
Then, writing $\xb_{m+1}=\xb_1$, we have:
\begin{equation*}
     \alpha_2(\xb_{1})-\alpha_2(\xb_{m+1})=\sum_{j=1}^m \alpha_2(\xb_j)-\alpha_2(\xb_{j+1})\\
=\sum_{j=1}^m(\pi-2\theta_1(\xb_j))-2\pi n(C).
\end{equation*}
Since $\sum_{j=1}^m(\pi-2\theta_1(\xb_j))$ is the sum of angles at the center of the cycle, it is equal to $2\pi$. 
The orientation of the cycle being admissible, $n(C)$ is odd, thus concluding the proof.
\qedhere
\end{proof}

\bibliographystyle{abbrv}

\def\cprime{$'$}

\end{document}